\numberwithin{equation}{section}
\theoremstyle{plain}
\newtheorem{thm}{Theorem}[section]
\newtheorem{theorem}[thm]{Theorem}
\newtheorem{lemma}[thm]{Lemma}
\newtheorem{proposition}[thm]{Proposition}
\theoremstyle{remark}
\newtheorem{remark}[thm]{Remark}
\newtheorem{remarks}[thm]{Remarks}
\newtheorem{claim}{Claim}
\newtheorem{claim2}{Claim}
\theoremstyle{definition}
\newtheorem{definition}[thm]{Definition}
\newcounter{mnotecount}[section]
\newcommand{\definedas}{\coloneqq}
\def\epsilon{{\varepsilon}}
\def\phi{{\varphi}}
\newcommand{\hot}{\mathrm{h.o.t.}}
\DeclareMathOperator{\sh}{sh}
\DeclareMathOperator{\ch}{ch}
\DeclareMathOperator\supp{supp}
\DeclareMathOperator{\lot}{l.o.t.}
\newcommand{\ebar}{\overline{e}}
\newcommand{\bR}{\mathbb{R}}
\newcommand{\bS}{\mathbb{S}}
\newcommand{\bN}{\mathbb{N}}
\newcommand{\bH}{\mathbb{H}}
\newcommand{\bU}{\mathbb{U}}
\newcommand{\bL}{\mathbb{L}}
\newcommand{\cA}{\mathcal{A}}
\newcommand{\cB}{\mathcal{B}}
\newcommand{\cD}{\mathcal{D}}
\newcommand{\cE}{\mathcal{E}}
\newcommand{\cH}{\mathcal{H}}
\newcommand{\cI}{\mathcal{I}}
\newcommand{\cL}{\mathcal{L}}
\newcommand{\cN}{\mathcal{N}}
\newcommand{\cR}{\mathcal{R}}
\newcommand{\cQ}{\mathcal{Q}}
\newcommand{\cS}{\mathcal{S}}
\newcommand{\cT}{\mathcal{T}}
\newcommand{\cU}{\mathcal{U}}
\newcommand{\cV}{\mathcal{V}}
\newcommand{\cX}{\mathcal{X}}
\newcommand{\cY}{\mathcal{Y}}
\newcommand{\fX}{\mathfrak{X}}
\newcommand{\Bbar}{\overline{B}}
\newcommand{\Vbar}{\overline{V}}
\renewcommand{\hbar}{\overline{h}}
\newcommand{\Vtil}{\widetilde{V}}
\newcommand{\ptil}{\widetilde{p}}
\newcommand{\Gtil}{\widetilde{G}}
\newcommand{\vtil}{\widetilde{v}}
\newcommand{\wtil}{\widetilde{w}}
\newcommand{\Deltabar}{\overline{\Delta}}
\newcommand{\btil}{\widetilde{b}}
\newcommand{\chibar}{\overline{\chi}}
\DeclareMathOperator{\diag}{diag}
\DeclareMathOperator{\tr}{tr}
\DeclareMathOperator{\divg}{div}
\DeclareMathOperator{\vol}{Vol}
\DeclareMathOperator{\id}{Id}
\newcommand{\lie}{\mathcal{L}}
\newcommand{\rlie}{\mathring{\mathcal{L}}}
\newcommand{\pdiff} [2]{\frac{\partial #1}{\partial #2}}
\newcommand{\pdiffs}[2]{\frac{\partial^2 #1}{\partial {#2}^2}}
\newcommand{\pdiffm}[3]{\frac{\partial^2 #1}{\partial #2 \partial #3}}
\newcommand{\riem}{\mathcal{R}}
\newcommand{\riemudud}[4]{\riem^{#1 \phantom{#2} #3}_{\phantom{#1} #2 \phantom{#3} #4}}
\newcommand{\ric}{\mathrm{Ric}}
\newcommand{\ricud}[2]{\ric^{#1}_{\phantom{#1} #2}}
\newcommand{\scal}{\mathrm{Scal}}
\DeclareMathOperator{\hess}{Hess}
\newcommand{\hessbar}{\overline{\hess}}
\newif\ifshownotes
\begin{document}

\title[Weak definition of the mass aspect]
{A definition of the mass aspect function for weakly regular asymptotically hyperbolic manifolds}

\begin{abstract}
In contrast to the well-known and unambiguous notion of ADM mass for asymptotically Euclidean manifolds, the notion of mass for asymptotically hyperbolic manifolds admits several interpretations. Historically, there are two approaches to defining the mass in the asymptotically hyperbolic setting: the mass aspect function of Wang defined on the conformal boundary at infinity, and the mass functional of Chru\'sciel and Herzlich which may be thought of as the closest asymptotically hyperbolic analogue of the ADM mass. In this paper we unify these two approaches by introducing an ADM-style definition of the mass aspect function that applies to a broad range of asymptotics and in very low regularity.  Additionally, we show that the mass aspect function can be computed using the Ricci tensor. Finally, we demonstrate that this function exhibits favorable covariance properties under changes of charts at infinity, which includes a proof of the asymptotic rigidity of hyperbolic space in the context of weakly regular metrics.
\end{abstract}

\author[R. Gicquaud]{Romain Gicquaud}
\address[R. Gicquaud]{
  Institut Denis Poisson\\
  Universit\'e de Tours\\
  Parc de Grandmont\\ 37200 Tours \\ France}
\email{romain.gicquaud@idpoisson.fr}

\author[A. Sakovich]{Anna Sakovich}
\address[A. Sakovich]{Department of Mathematics, Uppsala University, Sweden}
\email{anna.sakovich@math.uu.se}

\keywords{Asymptotically hyperbolic manifolds, Mass aspect function, Eigenfunctions of the Laplacian}

\subjclass[2000]{53C21, (83C05, 83C30)}
%
%

\date{\today}
\maketitle
\tableofcontents

\section{Introduction}
The positive mass theorem is a longstanding question in both general relativity and Riemmanian geometry. The definition of the mass of an isolated system in general relativity dates back to the work of R. Arnowitt, S. Deser and C. W. Misner in 1959 \cite{ArnowittDeserMisner}. The interest of the mathematical community in this subject grew first of all because of the proof of the so-called positive mass conjecture in dimension 3 by Schoen and Yau \cite{SchoenYau79} as part of their seminal work on the influence of scalar curvature on minimal surfaces. A second major impetus came from the connection established by Schoen between the study of asymptotically Euclidean manifolds and the Yamabe problem \cite{SchoenYamabe}. The main theorem of \cite{SchoenYau79} states that, if $(M, g)$ is a complete Riemannian manifold of dimension $3$ with non-negative scalar curvature and whose geometry near infinity approaches that of Euclidean space in a certain sense, then a quantity  depending only on the asymptotic geometry of $(M, g)$, the ADM mass of $(M, g)$, is non-negative and vanishes if and only if $(M, g)$ is isometric to the Euclidean space. Shortly after, Schoen and Yau observed in  \cite{SchoenYau79b} that their proof can be extended to cover dimensions $3 \leq n< 8$, while Witten \cite{Witten81} and Bartnik \cite{BartnikMass} obtained a different proof for spin manifolds of arbitrary dimension. 

There is an important generalization of this result to a certain class of asymptotically hyperbolic manifolds $(M, g)$. In this context, the mass is a vector in Minkowski space defined, as before, solely in terms of the asymptotic geometry of the manifold. The positive mass theorem, proven under various assumptions in \cite{WangMass,ChruscielHerzlich,ChruscielDelay2019,HuangJangMartin} (see also \cite{AnderssonCaiGalloway, ChruscielGallowayNguyenPaetz}), states that, if $(M, g)$ has scalar curvature greater than or equal to $-n(n-1)$ (i.e. the scalar curvature of the hyperbolic space of the same dimension as $M$), then the mass vector is future pointing and time-like unless $(M, g)$ is isometric to the hyperbolic space.

We now describe the construction of the mass vector in greater detail, following the original definition of Wang \cite{WangMass} that involves the so called mass aspect function. Let $(\bH^n, b)$ denote the hyperbolic space of dimension $n \geq 3$:
\[
 \bH^n = B_1(0), \quad b = \rho^{-2} \delta,\quad  \rho = \frac{1-|x|^2}{2},
\]
where $B_1(0)$ denotes the open unit ball in $\bR^n$ and $\delta$ is the Euclidean metric. Assume that we are given a chart at infinity
\[
\Phi: M\setminus K \to \bH^n \setminus K'
\]
where $K$ (resp. $K'$) is a compact subset of $M$ (resp. $\bH^n$) and that the 2-tensor $e \definedas \Phi_* g - b$ extends smoothly to $\overline{B}_1(0) \setminus K'$ and satisfies
\begin{equation}\label{eqWangAsymptotics}
 e = \rho^{n-2} \ebar + \hot
\end{equation}
where $\ebar$ is a smooth 2-tensor on $\overline{B}_1(0)$, which in particular implies $|e|_b = O(\rho^n)$. Assume further that $e$ satisfies the transversality condition $\ebar_{ij}x^j \equiv 0$ (see \cite{CortierDahlGicquaud, WangMass} for the relevance of this condition). Then the restriction of $m \definedas \tr_\delta \ebar$ to $\bS_1(0)$\footnote{See our notation convention for the spheres on page \pageref{rkNotation}.} is called the \emph{mass aspect function} of $g$.

Let $x^1, \ldots, x^n$ denote the standard coordinate functions on $\bR^n$. The components of the mass vector $p$ are defined by
\[
 p^0 = \int_{\bS_1(0)} m(x) d\mu^\sigma(x),\quad  p^i = \int_{\bS_1(0)} x^i m(x) d\mu^\sigma(x).
\]
Assuming that $\scal^g \geq -n(n-1)$, the positive mass theorem states that
\[
 p^0 \geq \sqrt{\sum_{i=1}^n (p^i)^2},
\]
with equality if and only if $(M, g)$ is isometric to $(\bH^n, b)$.

An extensive discussion of the dependence of $p$ on the chart $\Phi$ can be found in \cite{CortierDahlGicquaud}, see also \cite{ChruscielNagy, ChruscielHerzlich, HerzlichMassFormulae} and \cite[Section 3]{ChruscielGallowayNguyenPaetz}.\\

Note that passing from $\ebar$ to $m$ and then to $p$ results in a drastic loss of information about the asymptotic geometry of $(M, g)$. However, the results in \cite{CortierDahlGicquaud} show that both $\ebar$ and $m$ enjoy covariance properties under a change of chart at infinity $\Phi$, indicating that $\ebar$ and $m$ probably have an intrinsic definition. At the same time, the above definition of mass aspect function requires very stringent asymptotic conditions to be satisfied, see \eqref{eqWangAsymptotics}, while the ADM-style definition of mass for asymptotically hyperbolic manifolds studied by Chrusciel and Herzlich in \cite{ChruscielHerzlich}, see also \cite{ChruscielNagy, HerzlichMassFormulae} merely assumes that $|e|_b+|De|_b = o(\rho^{n/2})$.

The aim of this paper is to unify the two approaches to defining the mass of asymptotically hyperbolic manifolds, namely the mass aspect function approach of Wang and the ADM-like definition of Chru\'sciel and Herzlich. More specifically, we obtain an ADM-style definition of the mass aspect function that applies to very general asymptotics by allowing for more general ``test'' functions in the ADM formalism of \cite{ChruscielHerzlich, ChruscielNagy, HerzlichMassFormulae}. We also extend the range of regularity of metrics allowed to define the mass to encompass metrics with local regularity $L^\infty \cap W^{1, 2}$ and with distributional scalar curvature as was done for asymptotically Euclidean manifolds in \cite{LeeLeFloch}. Further, we provide an approach where the integrand at infinity is related to the Ricci tensor of the metric (see e.g. \cite{HerzlichRicciMass}). This approach has the merit of being more geometric, but nevertheless requires greater regularity for the metric.\\

The outline of the paper is as follows. In Section \ref{secPrelim}, we recall basic facts about the hyperbolic space, function spaces on asymptotically hyperbolic manifolds and on the linearisation of the scalar curvature operator. Section \ref{secEstimates} is dedicated to Hessian estimates for eigenfunctions of the Laplacian. The main results of the paper are contained in Section \ref{secMassAspect} where we show how to define the mass aspect function in the ADM formalism (Section \ref{secADM}) and by using the Ricci tensor (Section \ref{secRicci}). Finally, Section \ref{secCoordinates} studies the behavior of the mass aspect function under  changes of a chart at infinity. This section essentially consists of two parts: in the first one (Section \ref{secRigidity}), we describe how two asymptotically hyperbolic charts at infinity are related to each other. Then, in the second one (Section \ref{secCovariance}), we derive the actual transformation law of the mass aspect function.

\section*{Acknowledgments} RG is partially supported by the grant ANR-23-CE40-0010-02 of the French National Research Agency (ANR): Einstein constraints: past, present, and future (EINSTEIN-PPF). AS was funded by the Swedish Research Council grants dnr. 2016-04511 and dnr. 2024-04845. We are grateful to Rafe Mazzeo for helpful discussions.

\section{Preliminaries}\label{secPrelim}

The main object of study in this paper is the class of asymptotically hyperbolic manifolds that we now introduce. Note that, while there is an essentially unique definition of an asymptotically Euclidean manifold, various nonequivalent definitions of asymptotically hyperbolic manifolds coexist in the literature depending on whether the conformal infinity is assumed to be the round sphere or more general conformal infinities are allowed. For an extensive discussion, see \cite{LeeFredholm}, \cite{GicquaudCompactification}, and \cite{CortierDahlGicquaud}. Here, following our earlier work, e.g. \cite{DahlGicquaudSakovichSmallMass}, we restrict our focus to a specific class of asymptotically hyperbolic manifolds that provides a natural framework for defining mass.

Throughout this paper, we will use different models for hyperbolic space $(\mathbb{H}^n, b)$, in particular the ball model as described in the introduction. Other models for the hyperbolic space will be introduced when needed. The Levi-Civita connection associated with the hyperbolic metric $b$ will be denoted by $D$, while the Levi-Civita connection for the metric $g$ under consideration will be denoted by $\nabla$. 

\begin{definition}\label{defAH}
Let $(M, g)$ be a complete Riemannian manifold. Then $(M, g)$ is said to
be \emph{asymptotically hyperbolic of order} $\tau > 0$ if there exist compact
subsets $K \subset M$ and $K' \subset \bH^n$, a diffeomorphism $\Phi: M\setminus K \to \bH^n \setminus K'$ and a constant $C > 1$ such that
$\frac{1}{C} b \leq \Phi_* g \leq C b$ and
\[
    e \definedas \Phi_* g - b
\]
satisfies 
\begin{equation}\label{eqEstimateE}
    \int_{\bH^n \setminus K'} \rho^{-2\tau} \left( |De|^2_b + |e|^2_b\right) d\mu^b < \infty.
\end{equation}
$\Phi$ is then called a \emph{chart at infinity} for $(M, g)$.
\end{definition}

Since we are only interested in the asymptotic regularity of the metric $g$, or more exactly of the difference $e$ between $g$ and $b$ as defined above, the precise shape of $K$ and $K'$ will be irrelevant. In particular, we will always assume that they have smooth boundaries to avoid the pitfalls associated with the boundary regularity of functions in Sobolev spaces.

\subsection{Function spaces}\label{secFunctSpaces}
Here we introduce the function spaces to be used in the sequel. Properties of these function spaces will be recalled when needed, the proofs can be found in \cite{LeeFredholm,GicquaudSakovich}, see also \cite{And93,AnderssonChrusciel,AllenIsenbergLeeStavrov,AllenLeeMaxwell}.

As we will work only with metrics that are close to the hyperbolic metric we only need to define the function spaces for the hyperbolic metric. General definitions are given in the aforementioned references. Let $E \to \bH^n$ be a geometric bundle (i.e. a vector bundle associated to the principal orthonormal frame bundle $SO(T\bH^n)$). We will work with two types of function spaces.

\begin{itemize}[leftmargin=*]
\item\textsc{Weighted Sobolev spaces:} Let $k \geq 0$ be an integer,
let $1 \leq p < \infty$ be a real number, and let $\delta \in \bR$.
The weighted Sobolev space $W^{k, p}_\delta(\bH^n, E)$ is the set of
sections $u$ of $E$ such that $u \in W^{k, p}_{loc}(\bH^n, E)$ and such
that the norm
\[
    \left\| u \right\|_{W^{k, p}_\delta(\bH^n, E)} = \sum_{i=0}^k \left( \int_{\bH^n} \left| \rho^{-\delta} D^{(i)} u \right|^p_g d\mu^b\right)^{\frac{1}{p}}
\]
is finite. For $k = 0$, we use the shorthand $L^p_\delta(\bH^n, E)$ to denote $W^{0, p}_\delta(\bH^n, E)$.

\item\textsc{Weighted H\"older spaces:} Let $k \geq 0$ be an
integer and $\alpha \in [0, 1]$, let $\delta \in \bR$
be given. The weighted H\"older space $C^{k, \alpha}_\delta(\bH^n, E)$
is the set of sections $u$ such that $u \in C^{k, \alpha}_{loc}(\bH^n, E)$
and such that the norm
\[
    \left\| u \right\|_{C^{k, \alpha}_\delta(\bH^n, E)} = \sup_{x \in \bH^n} \rho^{-\delta}(x) \left\|u\right\|_{C^{k, \alpha}(B_r(x))}
\]
is finite, where $B_r(x)$ denotes the ball of radius $r$ centered at $x$.
We also denote
\[
C^\infty_{\delta}(\bH^n, E) \definedas \bigcap_{k = 0}^\infty C^{k, \alpha}_\delta(\bH^n, \bR).
\]
\end{itemize}

\subsection{First order variation of the scalar curvature}
\label{secScalarCurvature}
As is well known, the scalar curvature operator has tight connection to the mass. In this paper, we will need its first order variation, but we will also need to keep track of the error terms, especially those containing second order derivatives as they will require some attention later.

We compute the first order Taylor expansion of $\scal^g$ at $b$ where, as this calculation may serve other purposes, we will not assume for the moment that $b$ is the hyperbolic metric. However, we will assume that there exists a constant $C > 1$ such that
\[
    \frac{1}{C} b \leq g \leq C b,
\]
where the inequalities are understood in the sense of quadratic forms.

In some given coordinate chart, we set
\[
    g_{ij} = b_{ij} + e_{ij}, \qquad g^{ij} = b^{ij} + f^{ij},
\]
where, as usual, $(g^{ij})_{i, j}$ (resp. $(b^{ij})_{i, j}$) denotes the matrix inverse of $(g_{ij})_{i, j}$ (resp. of $(b_{ij})_{i, j}$). We also make the assumption that, except for in the case of the metric $g$, all indices are raised and lowered using the metric $b$.

There is a nice relation between $e$ and $f$ that will be used in the
sequel. By definition, we have
\[
    \delta_i^j = g_{ik} g^{kj} = (b_{ik} + e_{ik})(b^{kj} + f^{kj}) = b_{ik} b^{kj} + b_{ik} f^{kj} + e_{ik} b^{kj} + e_{ik} f^{kj}.
\]
Since $b_{ik} b^{kj} = \delta_i^j$, we conclude that
\[
    0 = b_{ik} f^{kj} + e_{ik} b^{kj} + e_{ik} f^{kj}.
\]
Raising the $i$-index using the metric $b$, we get
\begin{equation}\label{eqEF}
    f^{ij} = - e^{ij} - e^i_{\phantom{i}k} f^{kj}.
\end{equation}
Note that if we adopt a matrix notation and write $I+F = (I+E)^{-1}$,
where $I$ is the identity matrix, this formula reads
$F = - E - EF$, which, by induction, yields the well-known Taylor
expansion for the matrix inverse
$\displaystyle (I + E)^{-1} = \sum_{i=0}^\infty (-1)^i E^i$.\\

We note that the inequalities $C^{-1} b \leq g \leq C b$ imply that the
eigenvalues $\lambda_i$ of the symmetric matrix $(g_{ij})_{i, j}$ in a
$b$-orthonormal basis all lie between $C^{-1}$ and $C$. The eigenvalues
of $e$ are then $\lambda_i - 1$ and those of $f$ are $\lambda_i^{-1} - 1$.
From $\lambda_i \geq C^{-1}$, we conclude that
\[
    \left|\lambda_i^{-1}-1\right| = \left|\frac{\lambda_i-1}{\lambda_i}\right| \leq C |\lambda_i-1|.
\]
As $|e|$ (resp. $|f|$) is the square root of the sum of the squares of
the eigenvalues of $e$ (resp. $f$), we obtain the following inequality:
\begin{equation}\label{eqEFnorm}
    |f| \leq C |e|.
\end{equation}
In particular, from \eqref{eqEF}, we have
\[
|f - (-e)| = |e \star f| \leq n C |e|^2,
\]
where $e \star f$ is the tensor $(e \star f)_i^{\phantom{i}j} =  e_{ik}f^{kj}$ and the coefficient $n$ in the inequality comes from the fact that $|\cdot|$ is the Frobenius norm on 2-tensors.

Let $D$ denote the Levi-Civita connection of $b$ and $\nabla$ that of $g$.
We introduce the ``Christoffel symbols'' $\Gamma$ as the difference between
both connections, namely, for any vector field $X$:
\[
    \nabla_i X^k - D_i X^k = \Gamma^k_{ij} X^j.
\]
These Christoffel symbols are given by the usual formula except that the partial derivatives are replaced by the covariant derivative with respect to $b$:
\begin{equation}\label{eqChristoffel}
\begin{aligned}
    \Gamma^k_{ij}
    & \definedas \frac{1}{2} g^{kl}\left(D_i g_{lj} + D_j g_{il} - D_l g_{ij}\right)\\
    & = \frac{1}{2} g^{kl}\left(D_i e_{lj} + D_j e_{il} - D_l e_{ij}\right).
\end{aligned}
\end{equation}

Note that, since $0 = D_l \delta_i^j = D_l (g_{ik} g^{kj})$, we have
\[
    0 = (D_l g_{ik}) g^{kj} + g_{ik} D_l g^{kj} = (D_l e_{ik}) g^{kj} + g_{ik} D_l f^{kj}.
\]
Raising the $i$-index with respect to the metric $g$, we get
\begin{equation}\label{eqEF2}
    D_l f^{kj} = - g^{kp} g^{jq} D_l e_{pq}.
\end{equation}

From the formula \eqref{eqChristoffel}, we easily deduce that the scalar curvature of $g$ can be written as follows:
\begin{equation}\label{eqScalar}
    \scal^g = g^{ij} \ric_{ij} + g^{jl} \left(D_i \Gamma^i_{jl} - D_l \Gamma^i_{ji} + \Gamma^i_{ip} \Gamma^p_{jl} - \Gamma^i_{lp} \Gamma^p_{ji}\right).
\end{equation}
In this formula we have made the following convention which will be used in the rest of the
paper:\\

\noindent\fbox{%
\parbox{\textwidth}{%
 All terms without further specification of the metric (e.g. $\divg$, $\Delta$, $\tr$, $\ric$...) are computed with respect to the metric $b$.
}%
}\\

Our goal now is to obtain the first order Taylor formula for the scalar curvature of $g$. For this, we  compute the Taylor expansion of each term in \eqref{eqScalar}. In what follows $\cQ(e, De)$ denotes
terms that are quadratic with respect to $e$ and $De$, that is such that we have
\[
 |\cQ(e, De)| \lesssim |e|^2 + |De|^2,
\]
where the notation $A \lesssim B$ means that there exists a constant $\Lambda > 0$
independent of $A$ and $B$ but depending on the value of $C$ such that
$A \leq \Lambda B$. We will also make use of \eqref{eqEF} and \eqref{eqEF2} without mentioning. We have:
\begin{align*}
g^{ij} \ric_{ij}
    &= (b^{ij} + f^{ij}) \ric_{ij}\\
    &= \scal + f^{ij} \ric_{ij}\\
    &= \scal - e^{ij} \ric_{ij} + \cQ(e, De),\\
g^{jl} \left(D_i \Gamma^i_{jl} - D_l \Gamma^i_{ji}\right)
    &= D_i \left(g^{jl} \Gamma^i_{jl}\right) - D_l \left(g^{jl} \Gamma^i_{ji}\right) - \Gamma^i_{jl} D_i g^{jl} + \Gamma^i_{ji} D_l g^{jl}\\
    &= D_i \left(g^{jl} \Gamma^i_{jl} - g^{ji} \Gamma^l_{jl}\right) + \cQ(e, De)\\
    &= \frac{1}{2} D_i \left[g^{jl} g^{ik} \left(2 D_j e_{kl} - D_k e_{jl}\right) - g^{ji} g^{lk} D_j e_{lk}\right] + \cQ(e, De),\\
g^{jl} \Gamma^i_{ip} \Gamma^p_{jl} - g^{jl} \Gamma^i_{lp} \Gamma^p_{ji}
    &= \cQ(e, De).
\end{align*}
Summing up, we have proven the following formula:
\begin{equation}\label{eqScalarVar}
 \scal^g
 = \scal - e^{ij} \ric_{ij} + D_i \left[g^{jl} g^{ik} \left(D_j e_{kl} - D_k e_{jl}\right)\right] + \cQ(e, De).
\end{equation}

In particular, specifying that $b$ is the hyperbolic metric, we
obtain the following proposition:

\begin{proposition}\label{propScalar1}
The first order variation of the scalar curvature operator near the
hyperbolic metric $b$ is given by
\begin{equation}\label{eqScalarVar1}
 \scal^g = -n(n-1) + (n-1) \tr(e) + D_i \left[g^{jl} g^{ik} \left(D_j e_{kl} - D_k e_{jl}\right)\right] + \cQ(e, De).
\end{equation}
\end{proposition}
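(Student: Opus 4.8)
The plan is simply to specialize the general identity \eqref{eqScalarVar} to the case where $b$ is the hyperbolic metric. The first thing I would do is observe that the derivation leading to \eqref{eqScalarVar} used only the two-sided bound $C^{-1} b \le g \le C b$ together with the algebraic relations \eqref{eqEF}, \eqref{eqEF2} and the definition of the Christoffel difference tensor $\Gamma$; in particular it used no special property of $b$. Hence \eqref{eqScalarVar} is legitimately available with $b$ the hyperbolic metric.

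Next I would record the curvature of $(\bH^n, b)$. Since $\bH^n$ has constant sectional curvature $-1$, its Riemann tensor is $\riemdddd{i}{j}{k}{l} = -(b_{ik} b_{jl} - b_{il} b_{jk})$, so that $\ric_{ij} = -(n-1) b_{ij}$ and consequently $\scal = -n(n-1)$. Substituting $\scal = -n(n-1)$ into \eqref{eqScalarVar} handles the first term on the right-hand side. For the Ricci term, recalling the boxed convention that all bare indices and operators refer to $b$, we get
\[
 -e^{ij}\ric_{ij} = -e^{ij}\bigl(-(n-1)\,b_{ij}\bigr) = (n-1)\,b_{ij} e^{ij} = (n-1)\tr(e).
\]
The divergence term $D_i\bigl[g^{jl} g^{ik}\bigl(D_j e_{kl} - D_k e_{jl}\bigr)\bigr]$ and the quadratic remainder $\cQ(e, De)$ are carried over without change. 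Collecting the three contributions yields precisely \eqref{eqScalarVar1}.

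There is essentially no genuine obstacle here: the proof is a substitution. The only points requiring attention are bookkeeping ones, namely that the curvature normalization adopted for $b$ is the one for which $\scal^b = -n(n-1)$ — consistent with the conventions fixed in the introduction and with the positive mass statement quoted there — and that $\tr$ and $D$ in the final formula are understood with respect to $b$, so that the identification $-e^{ij}\ric_{ij} = (n-1)\tr(e)$ is exact and not merely valid up to terms absorbed into $\cQ(e, De)$.
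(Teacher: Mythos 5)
Your proof is correct and coincides with the paper's, which likewise just says the proposition "follows immediately from \eqref{eqScalarVar}" by substituting the hyperbolic values $\scal = -n(n-1)$ and $\ric_{ij} = -(n-1)b_{ij}$. The bookkeeping remarks you add (curvature normalization, indices raised with $b$) are accurate and make the substitution explicit.
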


This proposition follows immediately from \eqref{eqScalarVar}. It should
be noted that the third term in the right hand side of \eqref{eqScalarVar1} can be expanded as
\begin{equation}\label{eqScalarVar2}
 D_i \left[g^{jl} g^{ik} \left(D_j e_{kl} - D_k e_{jl}\right)\right] = \divg(\divg(e)) - \Delta \tr(e) + \hot
\end{equation}
so that we recover
the usual formula for the first order variation of the scalar curvature. Yet, the higher order terms that appear here contain second order
derivatives of $e$ that will require a special treatment in what follows
so, following \cite{ChruscielHerzlich}, we chose to keep the first order
variation in \eqref{eqScalarVar1} as such.

\subsection{The hyperboloid model} \label{secGeodesics}
In this section we collect some general facts about hyperbolic space that will be used in Section \ref{secRigidity} to prove the covariance of the mass functional. For these purposes it will be convenient to work with the hyperboloid model of $\mathbb{H}^n$. More specifically, let $\bR^{n,1}=(\mathbb{R}^{n+1},\eta)$ be Minowski space with its standard inner product $\eta=\diag\{-1,1,\ldots, 1\}$. We will denote its points by $X=(X^0,X^1,\ldots,X^n) $ or, as a shorthand, $(X^0, \vec{X})$ with $\vec{X}=(X^1, \ldots, X^n)$. Then $\mathbb{H}^n$ can be identified with the upper unit hyperboloid:
\begin{align*}
    \bH^n
    &=\{X: \eta(X,X)=-1,~ X^0 > 0\}\\
    &=\left\{(X^0,\vec{X}):X^0=\sqrt{1+|\vec{X}|^2}\right\}.
\end{align*}

The restriction of linear forms on $\bR^{n, 1}$ to $\bH^n$ plays an important role in what follows. We state here the main result:

\begin{proposition}\label{propLapse}
The set $\cN^b$ of functions $V$ that are solutions to the equation
\begin{equation}\label{eqLapse}
    \hess V = V b
\end{equation}
is $(n+1)$-dimensional. Its elements are the restrictions of linear forms on $\bR^{n, 1}$ to the hyperboloid. In particular, a basis for $\cN^b$ in the ball model of the hyperbolic space is given by
\begin{equation}\label{eqLapses}
    V^0 = \frac{1+|x|^2}{1-|x|^2} = \frac{1}{\rho}-1,\quad
    V^i = \frac{2 x^i}{1 - |x|^2} = \frac{x^i}{\rho}.
\end{equation}
\end{proposition}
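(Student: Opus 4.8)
The plan is to establish the two inclusions separately. First I would show that every restriction of a linear form on $\bR^{n,1}$ to $\bH^n$ solves \eqref{eqLapse}, which gives $\dim\cN^b\geq n+1$; then that a solution of \eqref{eqLapse} is uniquely determined by its value and differential at a single point, which gives $\dim\cN^b\leq n+1$; and finally I would identify the functions in \eqref{eqLapses} with ambient coordinate functions to conclude they form a basis.

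For the first inclusion I would work in the hyperboloid model and use that $\bH^n\subset\bR^{n,1}$ is totally umbilic. Writing $\nabbar$ for the flat connection of $\eta$, the position vector $X$ is a timelike unit normal, $\eta(X,X)=-1$, with $\nabbar_Y X=Y$ for every tangent field $Y$, and the normal projection of an ambient vector $W$ is $W^\perp=-\eta(W,X)\,X$. Given $A\in\bR^{n,1}$, put $V\definedas\eta(A,\cdot)|_{\bH^n}$. Then $Y(V)=\eta(A,Y)$, so for tangent fields $Y,Z$ one gets $\hess V(Y,Z)=Y(Z(V))-(\nabla_YZ)(V)=\eta\bigl(A,\nabbar_YZ-(\nabbar_YZ)^\top\bigr)=\eta\bigl(A,(\nabbar_YZ)^\perp\bigr)$, the tangential term dropping out because $\nabla_YZ$ is the tangential part of $\nabbar_YZ$. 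Differentiating $\eta(Z,X)=0$ gives $\eta(\nabbar_YZ,X)=-\eta(Y,Z)=-b(Y,Z)$, hence $(\nabbar_YZ)^\perp=b(Y,Z)\,X$ and therefore $\hess V(Y,Z)=b(Y,Z)\,\eta(A,X)=V\,b(Y,Z)$, i.e. $V\in\cN^b$. Since $\bH^n$ spans $\bR^{n,1}$ (it lies in no linear hyperplane), the assignment $A\mapsto\eta(A,\cdot)|_{\bH^n}$ is injective, so $\cN^b$ contains an $(n+1)$-dimensional subspace.

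For the second inclusion I would exploit that \eqref{eqLapse} is overdetermined (its trace is only the single scalar equation $\Delta V=nV$, far from enough on its own): along any unit-speed geodesic $\gamma$ the function $f(t)\definedas V(\gamma(t))$ satisfies $f''(t)=\hess V(\dot\gamma,\dot\gamma)=f(t)$, a linear second-order ODE. Hence $f$, and in particular $V(\gamma(1))$, is determined by $f(0)=V(\gamma(0))$ and $f'(0)=dV(\dot\gamma(0))$. As $\bH^n$ is a Cartan--Hadamard manifold, every point is joined to a fixed base point $p$ by such a geodesic, so $V$ is determined globally by the $n+1$ numbers $V(p)$ and $dV|_p$; thus $\dim\cN^b\leq n+1$. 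Combined with the previous step, $\dim\cN^b=n+1$ and $\cN^b$ is exactly the space of restrictions of linear forms.

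For the basis in the ball model I would recall the standard isometry $x\mapsto X$ with $X^0=\tfrac{1+|x|^2}{1-|x|^2}$ and $X^i=\tfrac{2x^i}{1-|x|^2}$, for which $\eta(X,X)=-1$ is immediate. Under it each ambient coordinate function $X^\mu$---a linear form on $\bR^{n,1}$, hence an element of $\cN^b$ by the first step---pulls back to $\tfrac{1}{\rho}-1=V^0$ (for $\mu=0$) and $\tfrac{x^i}{\rho}=V^i$ (for $\mu=i$). These $n+1$ functions are linearly independent because the position vectors of points of $\bH^n$ span $\bR^{n,1}$, so they form a basis of $\cN^b$. I expect the only delicate point to be the umbilicity computation in the second paragraph, namely keeping the sign conventions straight for the timelike normal $X$ and for the passage between the ambient and induced connections; everything else is routine once the ODE reduction and the Cartan--Hadamard property are in hand.
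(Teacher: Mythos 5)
Your proposal is correct, and while the forward inclusion matches the paper's calculation in substance, your reverse inclusion takes a genuinely different route, so a comparison is warranted.

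For the direct inclusion (restrictions of linear forms solve \eqref{eqLapse}), your computation via the second fundamental form $(\nabbar_Y Z)^\perp = b(Y,Z)X$ of the totally umbilic hyperboloid is the same in content as the paper's identity \eqref{eqDiffHessian}; you have simply packaged it in extrinsic language. Both reduce to $\hess^b V(Y,Z)=\eta(A,X)\,b(Y,Z)=Vb(Y,Z)$. Your sign conventions for the timelike unit normal and the normal projection $W^\perp=-\eta(W,X)X$ are correct.

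For the reverse inclusion you diverge from the paper. The paper extends a solution $V$ by degree-one homogeneity into the interior of the future light cone, then checks that $\hess^\eta V$ vanishes in all directions (tangential by \eqref{eqDiffHessian}, and in the $N$-direction by the Euler relation $dV(N)=V$), concluding that $V$ is literally a linear form. You instead bound the dimension: along any unit-speed geodesic $\gamma$ through a base point $p$, the restriction $f(t)=V(\gamma(t))$ satisfies $f''=\hess V(\dot\gamma,\dot\gamma)=f$, so $f$ is fixed by $(f(0),f'(0))$; since $\exp_p$ is a global diffeomorphism on the Cartan--Hadamard manifold $\bH^n$, $V$ is determined by its $1$-jet at $p$, giving $\dim\cN^b\le n+1$, and equality with the $(n+1)$-dimensional space of restricted linear forms closes the argument. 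Your version is more elementary and avoids the homogeneous extension and the normal-direction check, at the cost of being non-constructive (it does not directly exhibit the linear form extending a given solution). Both are valid and of comparable length, and the identification of the basis \eqref{eqLapses} with ambient coordinate functions in the ball model is the same in both.
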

The functions $V \in \cN^b$ will be called lapse functions in the sequel. This terminology comes from the fact that if $V \in \cN^b$ does not change sign, the manifold $\bR \times \bH^n$ endowed with the warped product metric $h = - V^2 dt^2 + b$ is the anti-de Sitter space (see e.g. \cite[Appendix A]{DahlGicquaudSakovichSmallMass}). As these functions will play an important role in the sequel, and for lack of reference, we provide a quick proof of this proposition.

\begin{proof}
Let $N = X^\mu \partial_\mu$ denote the normal to $\bH^n$ with respect to the Minkowski metric. $N$ is the vector field generating dilations in $\bR^{n, 1}$ so, by Euler's formula for homogeneous functions, we have, for any function $\theta$ on a cone of $\bR^{n, 1}$,
\[
d\theta(N) = \theta \Leftrightarrow \theta \text{ is homogeneous of degree } 1.
\]
In particular, if $V$ is a linear form on $\bR^{n, 1}$, we have $dV(N) = V$. 

The Hessian of a function $V$ on $\bH^n$ is defined as follows:
\[
\begin{aligned}
    \hess^b V(X, Y)
    &= X(Y V) - dV(D_X Y)\\
    &= \hess^\eta V(X, Y) + dV(\partial_X Y - D_X Y),
\end{aligned}
\]
where $X$ and $Y$ are vector fields defined in a neighborhood of $\bH^n$ and tangent to $\bH^n$, and $\partial_X Y$ is the usual derivative on $\bR^{n, 1}$. As the Levi-Civita connection $D_X Y$ is the projection of $\partial_X Y$ onto $T\bH^n$, we have
\[
    D_X Y = \partial_X Y + \eta(\partial_X Y, N) N = \partial_X Y - \eta(\partial_X N, Y) N,
\]
where we used the fact that $\eta(Y, N) = 0$ over $\bH^n$ to conclude that 
$\eta(\partial_X Y, N) = - \eta(Y, \partial_X N)$. All in all, we get
\begin{equation}\label{eqDiffHessian}
    \hess^b V(X, Y) = \hess^\eta V(X, Y) + \eta(X, Y) dV(N),
\end{equation}
where we used the definition of $N$ to conclude that $\partial_X N = X$.

Now if $V$ is a linear form on $\bR^{n, 1}$, it is homogeneous of degree $1$ and its second order derivative vanishes: $dV(N) = V$ and $\hess^\eta V = 0$. As a consequence, we have
\[
    \hess^b V(X, Y) = V \eta(X, Y) = V b(X, Y)
\]
since $X$ and $Y$ are tangent to $\bH^n$. 

Conversely, if $V$ satisfies $\hess^b V = V b$, we can extend it to a homogeneous function of degree $1$ defined in the inside of the future pointing light cone:
if $X \in \bR^{n, 1}$ is a vector in the future pointing light cone, then $X / \sqrt{-\eta(X, X)}$ is a point on the hyperboloid so we can set
\[
V(X) = \sqrt{-\eta(X, X)} V\left(\frac{X}{\sqrt{-\eta(X, X)}}\right).
\]
In this case, equation \eqref{eqLapse} coupled with \eqref{eqDiffHessian} shows that $\hess^\eta V(X, Y) = 0$ for vector fields tangent to $\bH^n$ and, by homogeneity, to all vector fields orthogonal to $N$. To end the proof of the proposition, we only need to check that, for any vector field $Z$, we have $\hess^\eta V (Z, N) = 0$. Indeed, since $dV(N)=V$ we have
\begin{align*}
\hess^\eta V (Z, N)
&= Z (dV(N)) - dV(\partial_Z N)\\
&= dV(Z) - dV(Z)\\
&= 0. 
\end{align*}
\end{proof}

Our next goal is to derive the equation for the geodesic joining two given points $X,Y\in \mathbb{H}^n$. 

\begin{proposition}\label{propExponentialXY-intrinsic}
Let $X,Y\in \bH^n$. Then we may write $Y = \exp_X \xi(X,Y)$ for $\xi(X,Y)\in T_X \mathbb{H}^n$ given by 
\begin{equation}\label{eqLogarithm}
\xi(X,Y)=\frac{\ln\left(-\eta (X,Y)+\sqrt{-1 + \eta(X,Y)^2}\right)}{\sqrt{-1 + \eta(X,Y)^2}} \left( \eta(X,Y)X +Y \right).
\end{equation}
\end{proposition}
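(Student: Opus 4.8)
The plan is to work in the hyperboloid model and exploit the fact that geodesics of $\bH^n \subset \bR^{n,1}$ are exactly the intersections of $\bH^n$ with timelike $2$-planes through the origin. First I would fix $X, Y \in \bH^n$ and consider the plane $\mathrm{span}\{X, Y\}$; the geodesic from $X$ to $Y$ is the branch of the hyperbola $\bH^n \cap \mathrm{span}\{X,Y\}$ containing both points, which can be parametrized as $\gamma(s) = \cosh(s)\, X + \sinh(s)\, W$ where $W \in T_X\bH^n$ is the unit vector (i.e.\ $\eta(W,W) = 1$, $\eta(X, W) = 0$) pointing from $X$ toward $Y$, and $s$ is arclength. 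Then $\xi(X,Y) = s_0 W$ where $s_0 = \dist(X,Y)$, so the whole computation reduces to (i) identifying $W$ as a normalized combination of $X$ and $Y$, and (ii) computing $s_0$ in terms of $\eta(X,Y)$.

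For step (ii), the standard identity $\cosh \dist(X,Y) = -\eta(X,Y)$ does the job: writing $c \definedas -\eta(X,Y) \geq 1$, we get $s_0 = \arcosh(c) = \ln(c + \sqrt{c^2-1})$, which matches the scalar prefactor in \eqref{eqLogarithm} once one notes $\sqrt{c^2 - 1} = \sqrt{-1 + \eta(X,Y)^2}$. I would derive $\cosh\dist(X,Y) = -\eta(X,Y)$ directly from the parametrization: evaluating $\eta(\gamma(s_0), X) = \eta(Y, X)$ against $\eta(\cosh(s_0)X + \sinh(s_0)W,\, X) = -\cosh(s_0)$ gives it immediately. For step (i), I need the vector $V \definedas \eta(X,Y) X + Y$ appearing in \eqref{eqLogarithm} to be the (unnormalized) unit tangent $W$. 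A quick check: $\eta(V, X) = \eta(X,Y)\eta(X,X) + \eta(Y,X) = -\eta(X,Y) + \eta(X,Y) = 0$, so $V \in T_X\bH^n$; and $\eta(V,V) = \eta(X,Y)^2\eta(X,X) + 2\eta(X,Y)\eta(X,Y) + \eta(Y,Y) = -\eta(X,Y)^2 + 2\eta(X,Y)^2 - 1 = \eta(X,Y)^2 - 1 = c^2 - 1$. Hence $\|V\|_\eta = \sqrt{c^2-1} = \sqrt{-1+\eta(X,Y)^2}$, so $W = V / \sqrt{-1+\eta(X,Y)^2}$, and $\xi(X,Y) = s_0 W = \dfrac{\arcosh(c)}{\sqrt{-1+\eta(X,Y)^2}} V$, which is exactly \eqref{eqLogarithm}.

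It remains to confirm that $\exp_X \xi(X,Y) = Y$ with this choice, i.e.\ that $\gamma(s_0) = Y$ and not some other point on the hyperbola. Since $\gamma(s_0) = \cosh(s_0) X + \sinh(s_0) W$ lies in $\mathrm{span}\{X, Y\}$, is on $\bH^n$, is on the same branch as $X$ (as $s_0 > 0$ and $\gamma$ stays in $X^0 > 0$), and satisfies $\eta(\gamma(s_0), X) = -\cosh(s_0) = \eta(Y, X)$, it is the unique point of $\bH^n \cap \mathrm{span}\{X,Y\}$ with that value of $\eta(\cdot, X)$ on that branch, hence equals $Y$. The degenerate case $X = Y$ (where $\eta(X,Y) = -1$ and the prefactor is $0/0$) is handled separately by noting $\xi(X,X) = 0$; one can either treat it as a limit or simply observe $V = -X + X = 0$ there as well, so the formula extends continuously.

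The main obstacle is essentially bookkeeping: getting the signs right in the $\eta$-computations (Minkowski signature $(-,+,\dots,+)$ means $\eta(X,X) = -1$, and $-\eta(X,Y) \geq 1$ with equality iff $X = Y$), and justifying that the branch/uniqueness argument really pins down $Y$ rather than its "antipode" $c X - \sqrt{c^2-1}\,W$ on the other side — this is where the condition $X^0 > 0$ defining the upper hyperboloid is used. There is no hard analysis here; the content is the classical correspondence between geodesics in the hyperboloid model and timelike $2$-planes, together with the hyperbolic law $\cosh\dist = -\eta(\cdot,\cdot)$.
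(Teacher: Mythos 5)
Your overall route is the same as the paper's: parametrize the geodesic in the hyperboloid model as $\gamma(t)=\cosh(s t)X+\sinh(s t)Z$ with $Z$ the normalized projection of $Y$ onto $X^\perp$, compute $\eta(V,X)=0$ and $\eta(V,V)=\eta(X,Y)^2-1$ for $V=\eta(X,Y)X+Y$, and use $\cosh\dist(X,Y)=-\eta(X,Y)$. That part is correct and matches the paper.

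The one place your argument has a real (if small) gap is the last verification that $\gamma(s_0)=Y$ rather than the ``antipodal'' point $\gamma(-s_0)=\cosh(s_0)X-\sinh(s_0)W$. You attribute the choice of sign to the condition $X^0>0$ defining the upper sheet, but this does not discriminate: the whole curve $t\mapsto\cosh(s t)X+\sinh(s t)Z$ lies on the upper sheet, so $\gamma(s_0)$ and $\gamma(-s_0)$ both have positive $0$-th coordinate. The thing that actually pins it down is that $V=\eta(X,Y)X+Y$ was built to have a positive $Y$-component, i.e.\ $W$ points toward $Y$; equivalently $\eta(V,Y)=\eta(X,Y)^2-1>0$ whenever $X\neq Y$. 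Simplest of all (and this is what the paper does) is to substitute directly: writing $c\coloneqq-\eta(X,Y)$ one has $\cosh(s_0)=c$, $\sinh(s_0)=\sqrt{c^2-1}=\|V\|_\eta$, hence
\[
\gamma(s_0)=cX+\sqrt{c^2-1}\,\frac{V}{\sqrt{c^2-1}}=cX+V=cX+(-c)X+Y=Y,
\]
with no uniqueness argument needed. Replace the branch-uniqueness discussion with this one-line computation and the proof is complete.
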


\begin{proof} 
As is well known, the geodesic joining the points $X,Y\in \mathbb{H}^n$ arises as the intersection of the hyperboloid and the plane $\Pi(X,Y)$ through $X$, $Y$ and the origin. To parametrize the geodesic, we introduce a vector $\widetilde{Z} = Y+\eta(Y,X)X$. This vector satisfies $\eta(X, \widetilde{Z})=0$ and belongs to $\Pi(X,Y)$ which implies that it is a spacelike vector tangent to our geodesic at the point $X$. Having defined 
\[
Z=\frac{\widetilde{Z}}{\sqrt{\eta(\widetilde{Z}, \widetilde{Z})}}=\frac{\eta(X,Y) X + Y}{\sqrt{-1 + \eta(X,Y)^2}}
\]
we may now parametrize our geodesic as 
\[
\begin{split}
\gamma(t) &= \cosh (st) X +  \sinh (st) \, Z \\& =\left(\cosh (st) + \frac{\eta(X,Y)}{\sqrt{-1 + \eta(X,Y)^2}}\sinh (st) \right) X +\frac{1}{\sqrt{-1 + \eta(X,Y)^2}} \sinh (st)  Y,
\end{split}
\]
so that $\|\dot{\gamma}\|_\eta=s$. If we now set
\[
s = d_{\bH^n}(X, Y) = \cosh^{-1} (-\eta (X,Y))=\ln\left(-\eta (X,Y)+\sqrt{-1 + \eta(X,Y)^2}\right),
\]
we get $\gamma(0)=X$ and $\gamma(1)=Y$. Moreover, we have 
\[
\dot{\gamma}(0)= \ln\left(-\eta (X,Y)+\sqrt{-1 + \eta(X,Y)^2}\right) Z .
\]
Consequently, we obtain the formula:
\[
Y=\exp_X \left(\frac{\ln\left(-\eta (X,Y)+\sqrt{-1 + \eta(X,Y)^2}\right)}{\sqrt{-1 + \eta(X,Y)^2}} \left( \eta(X,Y)X +Y \right) \right).
\]
\end{proof}

For later applications, it will be convenient to rewrite \eqref{eqLogarithm} in terms of the difference $U \definedas Y - X$.
As both $X, Y \in \bH^n$, we have $\eta(X, X) = \eta(Y, Y) = -1$. Thus,
\[
-1 = \eta(Y, Y) = \eta(X, X) + 2 \eta(X, U) + \eta(U, U),
\]
showing that
\[
\eta(X, U) = -\frac{1}{2} \eta(U, U),
\]
and, hence,
\[
\eta(X, Y) = \eta(X, X) + \eta(X, U) = -1 - \frac{1}{2} \eta(U, U).    
\]
Setting $u \definedas \frac{1}{2}\eta(U, U)$, we have $\eta(X, Y) = -1 - u$ so Equation \eqref{eqLogarithm} becomes
\[
\xi(X, Y)
= f(u) \left(U - u X\right),
\]
where
\[
f(u) \definedas \frac{\ln\left(1 + u + \sqrt{2 u + u^2}\right)}{\sqrt{2u + u^2}}.
\]

\begin{remark}
The function $f(u)$ defined above is actually analytic despite its appearance because the exponential map is analytic. This can be seen directly by noting that
\[
1 = \left(1 + u + \sqrt{2u+u^2}\right)\left(1 + u - \sqrt{2u+u^2}\right),
\]
so
\[
f(u) = \frac{1}{2\sqrt{2u+u^2}}
\left(\ln\left(1 + u + \sqrt{2 u + u^2}\right) - \ln\left(1 + u - \sqrt{2 u + u^2}\right)\right).
\]
Setting $u = w^2$, we get a new function $h(w) = f(w^2)$ defined by
\[
h(w) = \frac{1}{2w\sqrt{2+w^2}}
\left(\ln\left(1 + w^2 + w \sqrt{2 + w^2}\right) - \ln\left(1 + w^2 - w \sqrt{2 + w^2}\right)\right).
\]
Changing $w$ to $-w$ in the expression for $h$ permutes the two log terms in the parentheses and changes the denominator to its opposite showing that $h(w)$ is even in $w$. As $h$ obviously admits a Taylor series expansion in $w$ near $0$, this expansion involves only even powers of $w$. This concludes the proof of the fact that $f$ is analytic near $0$. Further, for small $u$, we see that $f(u) = 1 + O(u)$.  
\end{remark}

Conversely, we will need the formula for the exponential map of $\bH^n$. This can be found  in \cite[Proposition A.5.1]{BenedettiPetronio}: For any $X \in \bH^n$ and $\xi \in T_X \bH^n$, we have
\begin{equation}\label{eqExp}
 \exp_X(\xi) = c(|\xi|^2) X + s(|\xi|^2) \xi,
\end{equation}
where $c$ and $s$ are defined by 
\[
 c(t) = \cosh(\sqrt{t}) = \sum_{k=0}^\infty \frac{t^k}{(2k)!}, \quad s(t) = \frac{1}{\sqrt{t}} \sinh(\sqrt{t}) = \sum_{k=0}^\infty \frac{t^k}{(2k+1)!}.
\]

Given any vector field $\xi$ over $\bH^n$, we define a map
$\Phi_\xi: \bH^n \to \bH^n$ by $\Phi_\xi(x) = \exp_x(\xi(x))$.
It follows from Equation \eqref{eqExp} that, for any vector field $\xi \in \Gamma(T\bH^n)$,
\[
 d\Phi_\xi(X) = c(|\xi|^2) X + 2 c'(|\xi|^2) \eta(\xi, d\xi(X)) x + s(|\xi|^2) d\xi(X) + 2 s'(|\xi|^2) \eta(\xi, d\xi(X)) \xi.
\]
A lengthy but straightforward calculation then yields 
\begin{equation}\label{eqPullback}
\begin{aligned}
 \btil(\xi)(X, Y)
 &\definedas \Phi_\xi^*b(X, Y)\\
 &= b(d\Phi_\xi(X), d\Phi_\xi(Y))\\
 &= c^2 b(X, Y) + s^2 \left(b(D_X\xi, D_Y\xi) - b(X, \xi) b(Y, \xi)\right)\\
 &\qquad + c s \left(b(X, D_Y\xi) + b(Y, D_X \xi)\right)\\
 &\qquad + \frac{1}{4|\xi|^2} (1-s^2) (\partial_X |\xi|^2)(\partial_Y |\xi|^2)\\
 &\qquad + \frac{1-cs}{2|\xi|^2}\left[b(X, \xi) \partial_Y |\xi|^2 + b(Y, \xi) \partial_X |\xi|^2\right],
\end{aligned}
\end{equation}
where we set $c = c(|\xi|^2)$ and $s = s(|\xi|^2)$. For the reader interested in the details of this calculation, we note that, since $\xi(x) \in T_x \bH^n$ for all $x \in \bH^n$, we have $\eta(\xi, x) \equiv 0$. Differentiating this relation with respect to $X$ we obtain 
\[
 0 = \partial_X \eta(\xi, x) = \eta(\partial_X \xi, x) + \eta(\xi, X). 
\]
Further, we recall that the Levi-Civita connection of the hyperbolic metric $b$ is the orthogonal projection onto $T\bH^n$ of the usual differentiation on $\bR^{n, 1}$, that is
\[
 D_X \xi(x) = d\xi(X) - b(X, \xi) x.
\]

Equation \eqref{eqPullback} can equivalently be written as follows:
\begin{equation}\label{eqPullback2}
\begin{aligned}
\btil
& =
c^2 b + s^2 \left(b(D_{\cdot} \xi, D_{\cdot} \xi) + \xi^\flat \otimes \xi^\flat\right)
+ c s~\lie_{\xi} b + \frac{1-s^2}{4|\xi|^2} d |\xi|^2 \otimes d |\xi|^2\\
& \qquad + \frac{1-cs}{2|\xi|^2} \left[\xi^\flat \otimes d|\xi|^2 + d|\xi|^2 \otimes \xi^\flat\right].
\end{aligned}
\end{equation}

Last but not least, we will need a way to transfer vector fields $\xi$ defined on $\bH^n$ seen as the hyperboloid in $\bR^{n, 1}$ to vector fields that are intrinsic to $\bH^n$ keeping track of the regularity of $\xi$. This is the content of the following proposition: 

\begin{proposition}\label{propTransfert}
Assume that $\xi: \bH^n \to \bR^{n, 1}$ is a vector field, $\xi = (\xi^0, \xi^1, \ldots, \xi^n)$, tangent to the hyperboloid such that $\xi^\mu \in W^{l, q}_\epsilon(\bH^n, \bR)$ for all $\mu \in \{0, 1, \ldots, n\}$. Then $\xi \in W^{l, q}_{\epsilon}(\bH^n, T\bH^n)$.
\end{proposition}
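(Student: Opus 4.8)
The plan is to reduce everything to a single pointwise estimate and then integrate. Concretely, I would prove by induction on $k$ that there is a constant $C_k=C_k(n,k)$ with
\[
    |D^{(k)}\xi|_b \;\le\; C_k\sum_{j=0}^{k}\Bigl(\sum_{\mu=0}^{n}|D^{(j)}\xi^\mu|_b^2\Bigr)^{1/2}\qquad\text{on }\bH^n,
\]
where $D^{(j)}\xi^\mu$ is the ordinary $j$-th covariant derivative (with respect to $b$) of the scalar $\xi^\mu$ and $D^{(k)}\xi$ is the $k$-th covariant derivative of the section $\xi$ of $T\bH^n$, all norms taken with $b$. Granting this for $0\le k\le l$, raising to the power $q$, multiplying by $\rho^{-\epsilon q}$, integrating against $d\mu^b$, and using that the $\ell^p$-norms of a fixed finite family are comparable, one gets $\|\xi\|_{W^{l,q}_\epsilon(\bH^n,T\bH^n)}\lesssim\sum_{\mu}\|\xi^\mu\|_{W^{l,q}_\epsilon(\bH^n,\bR)}<\infty$, which is the assertion; membership in $W^{l,q}_{loc}$ is immediate from $\xi^\mu\in W^{l,q}_{loc}$.

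The geometric input is the Gauss--Weingarten calculus for $\bH^n\subset\bR^{n,1}$ already used in the proof of Proposition~\ref{propLapse}: writing $X$ for the position vector, which is the (timelike) unit normal, one has $\partial_Y\zeta=D_Y\zeta+b(Y,\zeta)\,X$ for every $\zeta$ tangent to $\bH^n$ together with $\partial_YX=Y$, while the coordinate functions $X^\mu$ --- being restrictions of linear forms --- satisfy $\hess X^\mu=X^\mu b$ by Proposition~\ref{propLapse}. I would introduce the $\bR^{n,1}$-valued covariant $j$-tensor $\Theta_j:=(D^{(j)}\xi^0,\dots,D^{(j)}\xi^n)$; this is exactly the flat ambient $j$-th derivative of the map $\xi$, and its norm taken with $b$ on the covariant slots and the standard Euclidean product on $\bR^{n,1}$ is precisely the right-hand side above. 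Iterating the Gauss formula writes $D^{(k)}\xi$ as the fibrewise $\eta$-orthogonal projection $[\Theta_k]^{T}$ of $\Theta_k$ onto $T\bH^n$, plus a universal combination $\sum_{j<k}\cB_{k,j}\star\Theta_j$ whose coefficients $\cB_{k,j}$ are built only from $b$ and the inclusion $T\bH^n\hookrightarrow\bR^{n,1}$; in particular no factor $X^\mu$ or $dX^\mu$ occurs in them.

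The crucial point --- and the reason no weight is lost --- is that the only unbounded quantities present, namely $X$ and $dX$ (both of size $\asymp\rho^{-1}$), enter exclusively through pairings $\eta(\,\cdot\,,X)$, and that these are controlled by the tangency constraint. Indeed, for $v=(v^\mu)\in\bR^{n,1}$ one has $[v]^{T}=v+\eta(v,X)\,X$, whence $|[v]^{T}|_b^2=\eta(v,v)+\eta(v,X)^2\le\sum_{\mu}(v^\mu)^2+\eta(v,X)^2$, so the problem reduces to estimating $\eta(\Theta_j(\,\cdot\,),X)$ on $b$-orthonormal inputs. Differentiating the identity $\eta(\xi,X)=\sum_\mu\xi^\mu X_\mu\equiv 0$ exactly $j$ times and substituting $\partial_YX=Y$ and $\hess X^\mu=X^\mu b$ rewrites $\eta(\Theta_j(\,\cdot\,),X)$ as a universal bounded combination of terms of the form $b(\,\cdot\,,D^{(i)}\xi)$ and $\eta(\Theta_i(\,\cdot\,),X)$ with $i<j$; since $\eta(\Theta_0,X)=\eta(\xi,X)=0$, a short induction gives $|\eta(\Theta_j(\,\cdot\,),X)|\lesssim\sum_{i\le j}\bigl(\sum_\mu|D^{(i)}\xi^\mu|_b^2\bigr)^{1/2}$ on $b$-unit inputs. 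Feeding this back into the estimate for $[\Theta_k]^{T}$ and into the estimate for the lower-order terms $\cB_{k,j}\star\Theta_j$ (which are controlled the same way, and via the inductive hypothesis on $|D^{(i)}\xi|_b$, $i<k$) closes the induction and proves the pointwise estimate.

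The main obstacle is the bookkeeping in this induction: one must track precisely which error terms are produced by iterating the Gauss formula and by differentiating the constraint, and verify that each dangerous factor $\rho^{-1}$ coming from $X$ is always contracted with $\xi$ and hence collapses, through $\eta(\xi,X)\equiv 0$, to a bounded multiple of lower-order covariant derivatives of $\xi$. This cancellation is exactly what prevents the loss of a unit of weight; a more naive argument through a bounded-geometry atlas --- in which the Jacobian of the embedding is only $O(\rho^{-1})$ --- would prove merely the weaker implication that $\xi^\mu\in W^{l,q}_{\epsilon+1}$ for all $\mu$ implies $\xi\in W^{l,q}_{\epsilon}$.
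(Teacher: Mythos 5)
Your proposal is correct in outline and takes a genuinely different route from the paper. The paper's proof passes through the Iwasawa decomposition $SO_0(n,1)=KAN$: it builds a global left-invariant $b$-orthonormal frame $(\cI^1,\ldots,\cI^n)$ on $\bH^n$, proves (Lemma~\ref{lmHigherDerivative}) that all covariant derivatives of these frame fields are uniformly bounded, reduces the statement to the coordinate functions $b(\xi,\cI^a)$ via Proposition~\ref{propInvariantVectors}, and writes those coordinates explicitly via \eqref{eqScalarProductI1}--\eqref{eqScalarProductIA}. The explicit formulas have a denominator $X^0-X^1$ that degenerates on a hyperplane, so the paper patches with a cutoff in $X^1$ together with the reflection $X^1\mapsto -X^1$. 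Your argument is coordinate-free: it uses only the Gauss--Weingarten calculus for the hyperboloid, the Euler identity $\partial_Y X=Y$, the lapse equation $\hess^b X^\mu=X^\mu b$ from Proposition~\ref{propLapse}, and above all the tangency constraint $\eta(\xi,X)\equiv0$, which is precisely what kills the dangerous $\eta(\cdot,X)$ pairings and prevents the loss of a unit of weight. Each approach isolates a different ``bad set'': the paper the hyperplane $X^0=X^1$ where its frame blows up in ambient components, you the unboundedness of $X$ and $dX$, and both handle it. Your pointwise estimate also transparently shows the weight is sharp, which the paper notes separately in the remark following the proposition.

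A few small points worth fixing. The parenthetical claim that ``$\Theta_j$ is exactly the flat ambient $j$-th derivative of the map $\xi$'' is only true for $j\le 1$; for $j\ge 2$, $\Theta_j$ is the intrinsic $b$-covariant $j$-th derivative of the components, and the flat ambient derivative is not even well-defined without choosing an extension of $\xi^\mu$ off the hyperboloid. You do not actually use the flat-derivative interpretation, so this costs nothing, but the sentence should be dropped. Second, the claim in the middle paragraph that the coefficients $\cB_{k,j}$ contain ``no factor $X^\mu$ or $dX^\mu$'' is not quite what a direct iteration of the Gauss formula produces: for $k=3$ one finds, in a geodesic normal frame at the basepoint,
\[
D^{(3)}_{W,Y,Z}\xi \;=\; [\Theta_3(W,Y,Z)]^{T} + \eta(\Theta_2(Y,Z),X)\,W - b\bigl(Z,D_W\xi\bigr)\,Y,
\]
and the middle term visibly carries a pairing against $X$. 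What is true --- and what your third paragraph correctly explains --- is that such pairings are exactly the quantities that the differentiated constraint resolves into intrinsic lower-order contractions (here $\eta(\Theta_2(Y,Z),X)=-\eta(Y,\Theta_1(Z))-\eta(Z,\Theta_1(Y))$, which for tangent $Y,Z$ equals $-b(Y,D_Z\xi)-b(Z,D_Y\xi)$), so no weight is lost. Stating this as ``the $\cB_{k,j}$ are bounded in $b$-norm after using the constraint'' rather than ``no $X$ occurs in them'' would make the induction cleaner. Finally, the displayed inequality $|\eta(\Theta_j(\cdot),X)|\lesssim\sum_{i\le j}(\sum_\mu|D^{(i)}\xi^\mu|_b^2)^{1/2}$ should have $i<j$ on the right (the constraint differentiated $j$ times expresses $\eta(\Theta_j,X)$ through strictly lower-order data); this is immaterial for the conclusion but matches what the induction actually gives.
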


We prove this proposition by constructing a well-behaved family of vector fields on $\bH^n$. To achieve this, we observe that $\bH^n$ naturally inherits a Lie group structure, where the hyperbolic metric is left-invariant. The desired vector fields are then the left-invariant ones\footnote{While this approach might seem indirect, it has the advantage of showing that the weights $\epsilon$ for $\xi$, when viewed as a Minkowski-valued function, and for $\xi$ as a vector field on hyperbolic space, must coincide. This clarifies some details that may have been overlooked in \cite{HerzlichMassFormulae}.}. This Lie group structure seems well-known to experts (see e.g. \cite{AllenIsenbergLeeStavrov, AllenLeeMaxwell}) and can be easily described on the upper half space model but we give some details as our intention is to work on the hyperboloid. We recall that the hyperbolic space can be identified with the homogeneous space $\raisebox{0.2ex}{\( SO_0(n, 1) \)}/\raisebox{-0.2ex}{\( SO(n) \)},$
where \(SO_0(n, 1)\) denotes the connected component of the identity in \(O(n, 1)\), defined as
\[
O(n, 1) \definedas \{M \in GL(n+1, \mathbb{R}) \mid M^T H M = H\},
\]
with
\[
H = \begin{pmatrix}
-1 & 0 & \cdots & 0\\
0 & 1 &  & \\
\vdots & & \ddots & \\
0 & & & 1
\end{pmatrix},
\]
where the indices of the rows and columns range from \(0\) to \(n\). The Lie algebra $\mathfrak{so}(n, 1)$ has a Cartan involution given by
\[
    \theta: M \mapsto \theta(M) \definedas - M^T.
\]
This involution is such that $\mathfrak{k} \definedas \{M \in \mathfrak{so}(n, 1) :\theta(M) = M\} \simeq \mathfrak{so}(n)$. Let $\mathfrak{p} \definedas \{M \in \mathfrak{so}(n, 1) : \theta(M) = - M\}$ denote the other eigenspace of $\theta$ so that $\mathfrak{so}(n, 1) = \mathfrak{k} \oplus \mathfrak{p}$ is the Cartan decomposition of $\mathfrak{so}(n, 1)$.
The Iwasawa decomposition of $SO_0(n, 1)$ reads
\[
    SO_0(n, 1) = K A N,
\]
where $K = SO(n)$ is a maximal compact subgroup of $SO_0(n, 1)$, $A = \exp(\mathfrak{a})$ is an abelian Lie subgroup of $SO_0(n, 1)$ whose Lie algebra $\mathfrak{a}$ is a maximal (abelian) subalgebra contained in $\mathfrak{p}$ and $N = \exp(\mathfrak{n}_0$) is a nilpotent Lie subgroup of $SO_0(n, 1)$ whose (nilpotent) Lie algebra is obtained from the restricted root decomposition of $\mathfrak{so}(n, 1)$. We refer the reader to \cite{KnappLieGroups} for a general introduction to the Iwasawa decomposition and to \cite{HudsonLiMarkMerinoReyes} for the particular case of the Lorentz group.

We choose for $\mathfrak{a}$ the 1-dimensional Lie subalgebra generated by
\[
a =
\left(
\begin{array}{cc|ccc}
0 & 1 & 0 & \cdots & 0\\
1 & 0 & 0 & \cdots & 0\\
\hline
0 & 0 & \multicolumn{3}{c}{\multirow{4}{*}{\huge 0}} \\
\vdots & \vdots & & &\\
0 & 0 & & &
\end{array}
\right).
\]
The Lie algebra $\mathfrak{so}(n, 1)$ then decomposes into eigenspaces for the adjoint action of $a$. The corresponding eigenvalues are easily found to be $-1, 0, +1$. It follows from Jacobi's identity that $\mathfrak{n}_0 \definedas E_{+1}(a)$ is an abelian (in particular nilpotent) Lie subalgebra as the bracket of two elements of $\mathfrak{n}_0$, if non-zero, should be an eigenvector for the eigenvalue $2$ of $\mathrm{ad}(a)$. Further, $\mathfrak{a} \oplus \mathfrak{n}_0$ is a solvable Lie subalgebra of $\mathfrak{so}(n, 1)$. The algebra $\mathfrak{n}_0$ is the span of the following matrices:
\[
N_i =
\left(
\begin{array}{cc|ccccc}
\multicolumn{2}{c|}{\multirow{2}{*}{\huge 0}} & 0 & \cdots & 1 & \cdots & 0 \\
\multicolumn{2}{c|}{} & 0 & \cdots & 1 & \cdots & 0 \\
\hline
0 & 0 & & & & & \\
\vdots & \vdots & & \multicolumn{4}{c}{\multirow{4}{*}{\huge 0}} \\
1 & -1 & & & & & \\
\vdots & \vdots & & & & & \\
0 & 0 & & & & & \\
\end{array}
\right)
\]
where the non-zero column and non-zero line are the $i$-th ones ($i= 2 \ldots, n$).

The exponential for a matrix $M$ in $\mathfrak{a} \oplus \mathfrak{n}_0$ is easily computed. Indeed, if
\[
M = x_1 a + \sum_{i \geq 2} x^i N_i,
\]
we have $M^3 = (x_1)^2 M$ so
\[
\exp(M) = I_{n+1} + \frac{\sinh(x_1)}{x_1} M + \frac{\cosh(x_1)-1}{(x_1)^2} M^2.
\]
Yet it appears easier to keep two exponentials and write $\exp(\mathfrak{a} \oplus \mathfrak{n}_0)$ as $AN = \exp(\mathfrak{a}) \exp(\mathfrak{n}_0)$.

As $K = SO(n)$ is the stabilizer of $(1, 0, \ldots, 0)$, the Iwasawa decomposition $SO_0(n,1) \simeq K A N \simeq A N K$ shows that the mapping $h \mapsto h \cdot (1, 0, \ldots, 0)$ is a diffeomorphism from $AN$ onto $\bH^n$. It is given by
\[
X(x_1, \vec{x}) = \exp(x_1 a) \exp(\vec{x} \cdot \vec{N})
\begin{pmatrix} 1\\ 0\\ \vec{0}\end{pmatrix}
=
\renewcommand{\arraystretch}{1.3} 
\begin{pmatrix}
\cosh(x_1) + \frac{e^{x_1}}{2} |\vec{x}|^2\\
\sinh(x_1) + \frac{e^{x_1}}{2} |\vec{x}|^2\\
\vec{x}
\end{pmatrix}.
\]
So we see that the coordinates $(x_1, \vec{x})$ representing a vector $X \in \bH^n$ are given by
\[
x_1 = - \log(X^0 - X^1),\quad \vec{x} = (X^2, \ldots, X^n).
\]

The associated left invariant vector fields are easily computed by remarking that, if $V \in \mathfrak{a} \oplus \mathfrak{n}_0$ is an element of the Lie algebra, we can associate to it the corresponding vector at $(1, 0, \ldots, 0)$, by $V (1, 0, \ldots, 0)^T$ and act by an element of $AN$ on the left to move to an arbitrary point of $\bH^n$.
We find the following vectors:
\[
\left\lbrace
\begin{aligned}
\cI^1 & \definedas \frac{1}{X^0 - X^1} \left[\partial_0 + \partial_1\right] - X^\mu \partial_\mu, \\
\cI^A & \definedas \frac{X^A}{X^0 - X^1} (\partial_0 + \partial_1) + \partial_A.  
\end{aligned}
\right.
\]
for $A=2,\ldots, n$ and $\mu=0,\ldots, n$. 

It is easily checked that $(\cI^1, \cI^2, \ldots, \cI^n)$ is an orthonormal basis at all points of $\bH^n$. By construction, the hyperbolic metric is left-invariant as $AN$ is a subgroup of $SO_0(n, 1)$. Indeed, for any $h \in AN$ and any vector field $v$, we have $dL_h(v) = h \cdot v$, where we view $v$ as an element of $\bR^{n, 1}$ and $h$ as the corresponding linear map on $\bR^{n, 1}$. Hence, for any pair $w, v$ of vector fields we have
\[
b(dL_h v, dL_h w) = \eta(h \cdot v, h \cdot w) = \eta(v, w) = b(v, w).
\]
The covariant derivative of the left-invariant vector fields are given by Koszul formula:
\begin{align*}
2 b\left(D_{\cI^j} \cI^i, \cI^k  \right)
&= \cI^i \left(b(\cI^j, \cI^k)\right) + \cI^j \left(b(\cI^i, \cI^k)\right)
- \cI^k \left(b(\cI^i, \cI^j)\right)\\
&\qquad - b\left([\cI^i, \cI^j], \cI^k\right) - b\left([\cI^j, \cI^k], \cI^i\right) + b\left([\cI^k, \cI^i], \cI^j\right).
\end{align*}
The first three terms in the left hand side are zero because $b(\cI^i, \cI^j) = \delta^{ij}$ is constant. Hence, we get
\begin{align*}
D_{\cI^j} \cI^i &= - \frac{1}{2} \sum_{k=1}^n \left(b\left([\cI^i, \cI^j], \cI^k\right) + b\left([\cI^j, \cI^k], \cI^i\right) - b\left([\cI^k, \cI^i], \cI^j\right)\right) \cI^k\\
&= - \frac{1}{2} [\cI^i, \cI^j] - \frac{1}{2} \sum_{k=1}^n \left(b\left([\cI^j, \cI^k], \cI^i\right) - b\left([\cI^k, \cI^i], \cI^j\right)\right) \cI^k.
\end{align*} 
Using the fact that the commutators $[\cI^i, \cI^j]$ are given by the Lie algebra constants,
\[
[\cI^i, \cI^j] =
\left\lbrace
\begin{aligned}
0 & \quad \text{if } i, j \geq 2,\\
\cI^j & \quad \text{if } i=1 \text{ and } j \geq 2,\\
- \cI^i & \quad \text{if } j=1 \text{ and } i \geq 2,
\end{aligned}
\right.
\]
we may now obtain the following result:
\[
D_{\cI^j} \cI^i =
\left\lbrace
\begin{aligned}
\delta_{ij} \cI^1 &\quad\text{if } i, j \geq 2,\\
- \cI^j &\quad \text{if } i=1,~j \geq 2,\\
0 &\quad \text{if } j = 1.
\end{aligned}
\right.
\]
In particular, the covariant derivative operator induces a (bounded) linear map
\[
D: \mathfrak{h} \otimes \mathfrak{h} \to \mathfrak{h},
\]
where $\mathfrak{h} = \mathfrak{a} + \mathfrak{n}_0$ is identified with the set of left-invariant vector fields on $\bH^n$.

We will be interested in these vector fields because of the following proposition:

\begin{proposition}\label{propInvariantVectors}
Assume that $\xi \in \Gamma(T\bH^n)$ is a vector field on the hyperbolic space (not necessarily smooth). We have the following equivalence for $q \in [1, \infty]$, $l \in \bN$, $\delta \in \bR$:
\[
\xi \in W^{l, q}_\delta(\bH^n, T\bH^n)
\Leftrightarrow
\forall i \in \ldbrack 1, n\rdbrack,~b(\xi, \cI^i) \in W^{l, q}_\delta(\bH^n, \bR).
\]
\end{proposition}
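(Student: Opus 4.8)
The plan is to reduce everything to the fact, already established above, that $(\cI^1, \ldots, \cI^n)$ is a \emph{global} $b$-orthonormal frame of $T\bH^n$, consisting of left-invariant vector fields for the Lie group $AN \cong \bH^n$, and that $D$ restricts to a map $\fh \otimes \fh \to \fh$ with \emph{constant} coefficients. First I would set $a^i \definedas b(\xi, \cI^i)$, so that, by orthonormality, $\xi = \sum_{i=1}^n a^i \cI^i$. Since each $\cI^i$ is smooth on $\bH^n$, contraction against $\cI^i$ and multiplication by $\cI^i$ preserve $W^{l,q}_{\mathrm{loc}}$, so the correspondences $\xi \mapsto (a^1,\ldots,a^n)$ and $(a^1,\ldots,a^n) \mapsto \xi$ preserve local regularity; the equivalence therefore reduces to the two-sided weighted estimate $\|\xi\|_{W^{l,q}_\delta(\bH^n, T\bH^n)} \simeq \sum_{i=1}^n \|a^i\|_{W^{l,q}_\delta(\bH^n, \bR)}$.

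The key step is the claim that for every $k \le l$ there is a constant $C_k$ with $|D^{(k)}\cI^i|_b \le C_k$ at every point of $\bH^n$. The cleanest argument is invariance: the group $AN$ acts simply transitively on $\bH^n$ by isometries of $b$ — this is exactly the content of the diffeomorphism $h \mapsto h\cdot(1,0,\ldots,0)$ together with the left-invariance $b(dL_h v, dL_h w) = b(v,w)$ — and the $\cI^i$ are invariant under this action, so each tensor $D^{(k)}\cI^i$ is $AN$-invariant because isometries intertwine covariant differentiation; hence its pointwise $b$-norm is an $AN$-invariant function on $\bH^n$, therefore constant. (Alternatively, one induces on $k$ directly from the explicit formulas for $D_{\cI^j}\cI^i$ above, together with the corresponding formula for $D\theta^j$ where $(\theta^j)$ is the coframe dual to $(\cI^j)$; again all coefficients that appear are constants.)

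Granting this, both directions are Leibniz-rule computations made harmless by the boundedness of the $D^{(k)}\cI^i$. For ``$\Leftarrow$'', differentiating $\xi = \sum_i a^i \cI^i$ expands $D^{(m)}\xi$ (for $m\le l$) into a finite sum of terms of the schematic form $D^{(j)}a^i \otimes D^{(m-j)}\cI^i$ with $0 \le j \le m$, so pointwise $|D^{(m)}\xi|_b \lesssim \sum_i \sum_{j=0}^m |D^{(j)}a^i|_b$; multiplying by $\rho^{-\delta}$, taking $L^q$ norms, and summing over $m \le l$ gives $\|\xi\|_{W^{l,q}_\delta} \lesssim \sum_i \|a^i\|_{W^{l,q}_\delta}$. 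For ``$\Rightarrow$'', since $b$ is parallel, differentiating $a^i = b(\xi,\cI^i)$ expresses $D^{(m)}a^i$ as a sum of $b$-contractions of $D^{(j)}\xi$ with $D^{(m-j)}\cI^i$, whence $|D^{(m)}a^i|_b \lesssim \sum_{j=0}^m |D^{(j)}\xi|_b$ and $\|a^i\|_{W^{l,q}_\delta} \lesssim \|\xi\|_{W^{l,q}_\delta}$. The case $q = \infty$ is identical with essential suprema replacing integrals.

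I do not expect a genuine obstacle here; the only point that needs attention is the uniform boundedness of all the iterated covariant derivatives $D^{(k)}\cI^i$, and this is precisely where the Lie-group description pays off — the structure constants of the frame $(\cI^i)$ carry no $\rho$-weight, so the weighted Sobolev norms pass back and forth between $\xi$ and its frame components $b(\xi,\cI^i)$ with no loss. Everything else is routine bookkeeping with the Leibniz rule and the definition of $W^{l,q}_\delta$.
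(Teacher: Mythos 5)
Your argument is correct and follows essentially the same route as the paper: the paper isolates your ``key step'' as Lemma~\ref{lmHigherDerivative}, proving $\cI^a \in W^{k,\infty}_0(\bH^n, T\bH^n)$ by the same left-invariance reasoning you give (the paper phrases it as an induction on $k$, but the engine is exactly the observation that $D^{(k)}\cI^a$ is a left-invariant tensor, hence has constant norm), and then concludes by the multiplication property of weighted spaces, citing a slight generalization of Lemma~3.6(a) of Lee's memoir in place of your explicit Leibniz bookkeeping. Your direct ``isometries intertwine $D$'' formulation of the key step is, if anything, a slightly cleaner way to say what the paper's induction establishes.
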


Before proving this proposition, we state and prove a lemma:
\begin{lemma}\label{lmHigherDerivative}
For all $k \geq 0$, there exists a constant $C_k$ such that, for any index $a$,
\[
\left| D^{(k)} \cI^a \right| \leq C_k,
\]
i.e. $\cI^a \in W^{k, \infty}_0(\bH^n, T\bH^n)$ for all $k \geq 0$.
\end{lemma}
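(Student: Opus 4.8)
The plan is to prove the bound by induction on $k$, using crucially that the frame $(\cI^1, \ldots, \cI^n)$ and the metric $b$ are both left-invariant under the $AN$-action constructed above. The base case $k = 0$ is immediate: we already observed that $(\cI^1, \ldots, \cI^n)$ is a $b$-orthonormal frame at every point, so $|\cI^a| \equiv 1$ and $C_0 = 1$ works.

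For the inductive step I would first record that $D$, being the Levi-Civita connection of the left-invariant metric $b$, is itself left-invariant, hence so is each iterated total covariant derivative $D^{(k)} \cI^a$, viewed as a section of $(T^*\bH^n)^{\otimes k} \otimes T\bH^n$. Since $AN$ acts transitively on $\bH^n$ by isometries, the pointwise norm $x \mapsto \big| D^{(k)} \cI^a \big|(x)$ is then constant on $\bH^n$, and one simply sets $C_k \definedas \max_{1 \le a \le n} \big| D^{(k)} \cI^a \big|$, evaluated at the base point $(1, 0, \ldots, 0)$. Equivalently — and this is the version I would spell out in detail, since it makes the constants explicit — I would expand $D^{(k)} \cI^a$ in the global frame $(\cI^i)$ and the dual coframe $(\theta^i)$ and show by induction that all the coefficients are \emph{constant functions}. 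The inductive step here uses the Leibniz rule together with the two structural inputs already in hand: the explicit formulas for $D_{\cI^j} \cI^i$ computed above, which exhibit $D$ as a constant-coefficient map $\mathfrak{h} \otimes \mathfrak{h} \to \mathfrak{h}$, and the dual identity $(D_{\cI^j}\theta^i)(\cI^\ell) = -\,\theta^i(D_{\cI^j}\cI^\ell)$, obtained by differentiating the constant $\theta^i(\cI^\ell) = \delta^i_\ell$, which shows that $D_{\cI^j}\theta^i$ is again a constant-coefficient combination of the $\theta^\ell$. Feeding these into the Leibniz rule shows $D^{(k+1)} \cI^a$ is a constant-coefficient combination of the tensors $\theta^{i_1} \otimes \cdots \otimes \theta^{i_{k+1}} \otimes \cI^j$; and since $(\cI^i)$ and $(\theta^i)$ are $b$-orthonormal, the norm of such a tensor equals the Euclidean norm of its coefficient array, hence is a constant $C_k$.

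I do not expect a genuine obstacle here: the statement reduces entirely to the structural facts already established (left-invariance of $b$, together with the explicit commutation and covariant-derivative relations for the $\cI^i$). The only point requiring care is the combinatorial bookkeeping in the Leibniz rule over the $k$ covariant slots and the $\cI^j$ factor, and keeping track of which coframe elements the derivatives $D_{\cI^j}\theta^i$ produce; this is exact algebra with no estimates beyond counting terms, so the constants $C_k$ could in principle be written down explicitly, their precise growth in $k$ being irrelevant for the intended application.
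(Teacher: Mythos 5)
Your proposal is correct and takes essentially the same approach as the paper: both arguments induct on $k$, with the key observation being that $D^{(k)}\cI^a$ is a left-invariant tensor, so that its components in the left-invariant (co)frame are constants and its norm is constant on $\bH^n$, with the inductive step driven by the Leibniz rule together with the constant-coefficient covariant derivatives $D_{\cI^j}\cI^i$. The only cosmetic difference is that the paper contracts $D^{(k)}\cI^a$ against the frame vectors and works directly with the resulting left-invariant vector fields, whereas you expand in the tensor basis $\theta^{i_1}\otimes\cdots\otimes\theta^{i_k}\otimes\cI^j$; these are the same computation presented in two equivalent coordinate systems.
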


\begin{proof}
The proof goes by induction on $k$. For $k = 0$, this follows from the statement that $\cI^a$ is a unit vector at each point of $\bH^n$.

Assume that the lemma is proven for a given $k \geq 0$. Since $D^{(k)} \cI^a$ is a left-invariant $(1, k)$-tensor so, for any given $k$-tuple $(i_1, \ldots, i_k)$, $(D^{(k)} \cI^a)(\cI^{i_1}, \ldots, \cI^{i_k})$ is a left-invariant vector field. As a consequence, for any index $i_0$, $D_{\cI^{i_0}} \left((D^{(k)} \cI^a)(\cI^{i_1}, \ldots, \cI^{i_k}\right)$ is a left-invariant vector field whose norm is bounded. Now
\begin{align*}
D^{(k+1)}\cI^a (\cI^{i_0}, \cI^{i_1}, \ldots, \cI^{i_k})
&=  D_{\cI^{i_0}} \left((D^{(k)} \cI^a)(\cI^{i_1}, \ldots, \cI^{i_k}\right)\\
&\qquad - (D^{(k)} \cI^a) (D_{\cI^{i_0}} \cI^{i_1}, \ldots, \cI^{i_k}) - \cdots\\
&\qquad - (D^{(k)} \cI^a) (\cI^{i_1}, \ldots, D_{\cI^{i_0}} \cI^{i_k})
\end{align*}
is a left-invariant vector field whose norm is bounded uniformly with respect to the $(k+1)$-tuple $(i_0, \ldots, i_k)$. Hence,
\[
\left|D^{(k+1)} \cI^a\right|^2
= \sum_{(i_0, \ldots, i_k)} \left|D^{(k+1)}\cI^a (\cI^{i_0}, \cI^{i_1}, \ldots, \cI^{i_k})\right|^2
\]
is bounded (actually constant) on $\bH^n$.
\end{proof}

\begin{proof}[Proof of Proposition \ref{propInvariantVectors}]
From Lemma \ref{lmHigherDerivative}, we know that the vector fields $\cI^a$ belong to $W^{l, \infty}_0(\bH^n, T\bH^n)$. Hence, if $\xi \in W^{l, q}_\delta(\bH^n, T\bH^n)$, we have
$b(\xi, \cI^a) \in W^{l, q}_\delta(\bH^n, T\bH^n)$ (this is a slight generalization of \cite[Lemma 3.6 (a)]{LeeFredholm}).  The converse implication is equally easy.
\end{proof}

We write down explicit formulas for the scalar product of a tangent vector $\xi$ at a point $X \in \bH^n$ where we continue to view the hyperbolic space as an upper unit hyperboloid in Minkowski spacetime. Set $\xi=\xi^0\partial_0 + \xi^i \partial_i \in \Gamma(T\bH^n)$. At every point $(X^0,X^1,\ldots, X^n)$ on the hyperboloid we have 
\[
-\xi^0 X^0 + \xi^i X^i=0.
\]
As a consequence, we have
\begin{align} 
b(\xi, \cI^1)&=\eta(\xi, \cI^1)\nonumber\\
&=\frac{1}{X^0-X^1}\left(-\xi^0+\xi^1\right) - X^\mu \xi^\mu \nonumber\\
& = \frac{\xi^1-\xi^0}{X^0-X^1}. \label{eqScalarProductI1}
\end{align}
Similarly,
\begin{equation}\label{eqScalarProductIA}
b(\xi, \cI^A)=\eta(\xi, \cI^A)=\frac{X^A}{X^0-X^1}(\xi^1-\xi^0)+\xi^A.
\end{equation}

\begin{proof}[Proof of Proposition \ref{propTransfert}]
We note that the denominator $X^0 - X^1$ is not bounded from below by any positive constant on the intersection of the hyperboloid with the $X^0X^1$-plane. Indeed, points of the hyperboloid in  this plane can be written as $(\ch(t), \sh(t), 0, \ldots, 0)$ so $X^0 - X^1 = e^{-t}$ tends to zero as $t$ tends to infinity. To remedy this, we remark that we can assume that $\xi$ has support contained in the half space $X^1 \leq 1$. We do so by considering $\chi(X^1) \xi$ instead of $\xi$, where $\chi$ is a smooth cutoff function such that $\chi(x) = 1$ for $x \leq 0$ and $\chi(x) = 0$ for $x \geq 1$.

As the derivative of $\chi$ has compact support, we have that $\chi(X^1) \xi^\mu \in W^{l, q}_{\delta}(\bH^n, \bR)$ and similarly $(1-\chi) \xi^\mu \in W^{l, q}_{\delta}(\bH^n, \bR)$.

If we prove the proposition for $\chi(X^1) \xi$, namely that
\[
\|\chi(X^1) \xi\|_{W^{l, q}_\delta} \lesssim \sum_{\mu= 0}^n \|\chi(X^1) \xi^\mu\|_{W^{l, q}_\delta},
\]
we can then perform a reflection with respect to the $(X^0 X^2 \cdots X^n)$-plane (changing $X^1$ to $-X^1$) and also  conclude that
\[
\|(1-\chi(X^1)) \xi\|_{W^{l, q}_\delta} \lesssim \sum_{\mu= 0}^n \|(1-\chi(X^1)) \xi^\mu\|_{W^{l, q}_\delta}.
\]
Hence,
\begin{align*}
\|\xi\|_{W^{l, q}_\delta}
&\leq \|\chi(X^1) \xi\|_{W^{l, q}_\delta} + \|(1-\chi(X^1)) \xi\|_{W^{l, q}_\delta}\\
& \lesssim \sum_{\mu= 0}^n \left(\|\chi(X^1) \xi^\mu\|_{W^{l, q}_\delta} + \|(1-\chi(X^1)) \xi^\mu\|_{W^{l, q}_\delta}\right)\\
&\lesssim \sum_{\mu= 0}^n \|\xi^\mu\|_{W^{l, q}_\delta},
\end{align*}
concluding the proof of the proposition.

So let us assume that $\xi$ has support in the subset $X^1 \leq 1$. As a consequence, by a straightforward calculation, we have that $\displaystyle \frac{1}{X^0 - X^1}$ and all its derivatives (w.r.t. the hyperbolic metric $b$) are uniformly bounded on the support of $\xi$. From Formula \eqref{eqScalarProductI1}, we conclude that $b(\xi, \cI^1) \in W^{l, q}_\delta(\bH^n, \bR)$.

In the same manner, the fraction $\displaystyle \frac{X^A}{X^0 - X^1}$ and all its derivatives are uniformly bounded on the support of $\xi$ so Formula \eqref{eqScalarProductIA} shows that $b(\xi, \cI^A) \in W^{l, q}_\delta(\bH^n, \bR)$ for all $A = 2, \ldots, n$.

Applying Proposition \ref{propInvariantVectors}, we conclude the proof of Proposition \ref{propTransfert}.
\end{proof}

\begin{remark}
Note that Formula \eqref{eqScalarProductIA} shows that Proposition \ref{propTransfert} is optimal at least for vector fields directed in the $\partial_A$ direction.
\end{remark}

\section{Estimates for eigenfunctions of the Laplacian}\label{secEstimates}
The aim of this section is to construct solutions to the equation $\Delta V = n V$ so that $\rho V$ extends by continuity to the boundary (in the ball model) $\bS_1(0)$ of $\bH^n$ and to get explicit asymptotics for the 2-tensor $\hess V - V b$ as needed in Section \ref{secMassAspect}. This section is rather technical and dedicated to the proof of Proposition \ref{propEstimateEigenfunction2}. The reader willing to accept this result without a proof can safely skip this section.

It should be noted that there is a natural approach to construct solutions of the above equation, see e.g. \cite[Chapter 8]{LeeFredholm} which addresses a more complicated problem. However, there is a fairly delicate issue which arises if we follow this approach in our setting. Namely, given a function $v \in C^2(\bS_1(0), \bR)$ one would first  construct an approximate solution $\Vtil(x)$, for example
\begin{equation}\label{eqApproximateEigenfunction}
\Vtil(x) = V^0(x) v(x/|x|) \chi(|x|),
\end{equation}
where $V^0$ is defined in \eqref{eqLapses}, $\chi$ is a cut-off function that (in the ball model) vanishes near $x = 0$ and equals $1$ in a neighborhood of $|x| = 1$. The function $v$ satisfies $|\hess v - v b| = O(\rho)$ which is exactly what is expected. Yet, if we want to promote $v$ to a genuine solution $V$ of the equation $\Delta V = n V$, we have to find a function $v'$ such that $-\Delta v' + n v' = \Delta v - n v$ so that $V = v + v'$ is an eigenfunction of $\Delta$. Alas, the right hand side belongs to $C^0_1(\bH^n, \bR)$ and elliptic regularity is known to fail in $C^0$ so this approach would require constructing a better approximate solution $\vtil$ by a smoothing argument. Since the smoothed function $\vtil$ should still satisfy $|\hess \vtil - \vtil b| = O(\rho)$, the smoothing procedure has to take into account the geometry of the problem.
As a consequence, we have chosen a different, computational approach that only applies to the hyperbolic space. The advantage is that we work with explicit formulas.

In what follows, calculations are easier to perform in the upper half space model as in this case the solution corresponds to the convolution with an explicitly given function. In the following calculations, $x$ will denote coordinates in the ball model and $y$ coordinates in the upper half space. As the first coordinate will play a special role, we will denote
\[
 x = (x^1, x^2, \ldots, x^n) = (x^1, \vec{x}),
\]
so that $\vec{x} = (x^2, \ldots, x^n)$, and similarly, we denote $y = (y^1, y^2, \ldots, y^n) = (y^1, \vec{y})$. The simplest way to pass from the conformal ball model to the upper half space is by the inversion $i_{(-1, \vec{0}), \sqrt{2}}$ with respect to the circle centered at $(-1, \vec{0})$ and of radius $\sqrt{2}$ (see e.g. \cite{BenedettiPetronio}):
\[
 y = i_{(-1, \vec{0}), \sqrt{2}}(x) \definedas 2 \frac{x - (-1, \vec{0})}{|x - 
(-1, \vec{0})|^2} + (-1, \vec{0}),
\]
i.e.
\[
 \left\lbrace
 \begin{aligned}
  y^1 &=  2 \frac{x^1 + 1}{(x^1 + 1)^2 + |\vec{x}|^2} - 1,\\
  \vec{y} &= 2 \frac{\vec{x}}{(x^1 + 1)^2 + |\vec{x}|^2}.
 \end{aligned}
 \right.
\]
As $i_{(-1, \vec{0}), \sqrt{2}}$ is an involution, we also get
\begin{equation}\label{eqUpperToBall}
 \left\lbrace
 \begin{aligned}
  x^1 &=  2 \frac{y^1 + 1}{(y^1 + 1)^2 + |\vec{y}|^2} - 1,\\
  \vec{x} &= 2 \frac{\vec{y}}{(y^1 + 1)^2 + |\vec{y}|^2}.
 \end{aligned}
 \right.
\end{equation}
A straightforward calculation yields the following formula for $\rho = 
\frac{1-|x|^2}{2}$:
\begin{equation}\label{eqRho2}
 \rho = \frac{2 y^1}{(y^1 + 1)^2 + |\vec{y}|^2}.
\end{equation}
This gives the expressions for the lapse functions in this new coordinate system:
\[
\left\lbrace
\begin{aligned}
 V_0 &= \frac{1}{\rho} - 1 = \frac{1 + |y|^2}{2 y^1},\\
 V_1 &= \frac{x^1}{\rho} = \frac{1 - |y|^2}{2 y^1},\\
 V_i &= \frac{x^i}{\rho} = \frac{y^i}{y^1}, \quad  i=2,\ldots, n.
\end{aligned}
\right.
\]
We also note that the hyperbolic metric $b$ reads
\[
 b = \frac{1}{(y^1)^2} ((dy^1)^2 + (dy^2)^2 + \cdots + (dy^n)^2).
\]

Our first task is to find the Poisson kernel for the asymptotic Dirichlet 
problem. Note that for $f = f(y^1)$ we have
\[
 \Delta f = (y^1)^2 f'' - (n-2) y^1 f'.
\]
This formula is a direct consequence of the conformal transformation 
law of the Laplacian (see e.g. \cite[Equation 1.1.15]{GicquaudThesis}).
In particular, the function $f$ solves $\Delta f = n f$ if and only if $f$ is a linear combination of $(y^1)^{-1}$ and $(y^1)^n$. We discard the first solution as it is nothing but $V_0 + V_1$.

After an inversion with respect to the circle of radius $1$ centered at $(0, \vec{z})$, which is an isometry of the upper half space model, we get a new eigenfunction $f_{\vec{z}}$ of the Laplacian:
\begin{equation}\label{eqPoissonKernel}
 f_{\vec{z}}(y^1, \vec{y}) = (y^1 \circ i_{(0, \vec{z}), 1})^n
 \definedas \frac{(y^1)^n}{\left((y^1)^2 + |\vec{y} - \vec{z}|^2\right)^n}
 = \frac{1}{(y^1)^n\left(1+\left|\frac{\vec{y}-\vec{z}}{y^1}\right|^2\right)^n}.
\end{equation}
Hence, given an arbitrary function $v \in C^0_{-4}(\bR^{n-1}, \bR)$ where
\[
C^0_{-4}(\bR^{n-1}, \bR) = \{ u \in C^0_{loc}(\bR^{n-1}, \bR),~ |u(\vec{y})|\lesssim (1 + |\vec{y}|^2)^2\},
\]
we get a solution
\begin{equation}\label{eqEigenfunction}
 V \definedas \int_{\bR^{n-1}} v(\vec{z}) f_{\vec{z}}(y^1, \vec{y}) d\vec{z}
\end{equation}
to $\Delta V = n V$. Note that, performing the change of variable $\vec{w} = 
(y^1)^{-1} (\vec{z} - \vec{y})$, we get

\begin{align}
 V
  &=
  (y^1)^{-n} \int_{\bR^{n-1}} v(\vec{z}) 
  \frac{1}{\left(1+\left|\frac{\vec{y}-\vec{z}}{y^1}\right|^2\right)^n} 
d\vec{z}\nonumber\\
  &= \frac{1}{y^1}
  \int_{\bR^{n-1}} v(\vec{y} + y^1 \vec{w}) 
  \frac{1}{\left(1+\left|\vec{w}\right|^2\right)^n} d\vec{w}.\label{eqApproxId}
\end{align}
This shows that, up to a factor $(y^1)^{-1}$ and some multiplicative constant that will be made
explicit, $V$ is the convolution of $v$ with an approximation of the identity.
In particular, using \eqref{eqRho2}, we have
\[
 \rho V = \frac{2}{(y^1 + 1)^2 + |\vec{y}|^2}
 \int_{\bR^{n-1}} v(\vec{y} + y^1 \vec{w}) 
  \frac{1}{\left(1+\left|\vec{w}\right|^2\right)^n} d\vec{w}.
\]

Letting $y^1$ tend to zero, we conclude that
\begin{equation}\label{eqAsymptotic}
 \rho V(y^1, \vec{y}) \to_{y^1 \to 0} \frac{2 v(\vec{y})}{1 + |\vec{y}|^2}
  \int_{\bR^{n-1}} \frac{d\vec{w}}{\left(1+\left|\vec{w}\right|^2\right)^n}.
\end{equation}
The explicit value of the integral that appears in the limit is provided by the next lemma:

\begin{lemma}\label{lmIntegral}
 The integral
 \[
  I_{n, \beta} \definedas \int_{\bR^{n-1}} \frac{1}{(1+|\vec{w}|^2)^\beta} 
d\vec{w}
 \]
 is convergent provided that $\beta > n/2$ and we have
 \[
  I_{n, \beta} = \pi^{(n-1)/2} \frac{\Gamma\left(\beta - 
\frac{n-1}{2}\right)}{\Gamma(\beta)},
 \]
where $\displaystyle \Gamma(x) = \int_0^\infty t^{x-1} e^{-t} dt$
is the Euler gamma function.
\end{lemma}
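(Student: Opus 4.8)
The plan is to exploit the rotational symmetry of the integrand to collapse $I_{n,\beta}$ to a one-dimensional integral, and then to identify that integral with a Beta function. First I would pass to polar coordinates in $\bR^{n-1}$: writing $\vec{w} = r\omega$ with $r \in [0, \infty)$ and $\omega \in \bS^{n-2}$, so that $d\vec{w} = r^{n-2}\,dr\,d\omega$, the integrand depends on $r$ alone and we get
\[
I_{n, \beta} = \left|\bS^{n-2}\right| \int_0^\infty \frac{r^{n-2}}{(1+r^2)^\beta}\,dr,
\qquad
\left|\bS^{n-2}\right| = \frac{2\pi^{(n-1)/2}}{\Gamma\!\left(\frac{n-1}{2}\right)}.
\]
The $r$-integral has no issue at the origin and converges at infinity precisely when $n-2-2\beta < -1$, i.e. when $\beta > \frac{n-1}{2}$, which is in particular implied by the stated hypothesis $\beta > n/2$.

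Next I would substitute $t = r^2$, $dt = 2r\,dr$, turning the radial integral into
\[
\int_0^\infty \frac{r^{n-2}}{(1+r^2)^\beta}\,dr
= \frac{1}{2}\int_0^\infty \frac{t^{\frac{n-1}{2}-1}}{(1+t)^\beta}\,dt
= \frac{1}{2}\,B\!\left(\tfrac{n-1}{2},\; \beta - \tfrac{n-1}{2}\right),
\]
where we used the classical integral representation $B(p, q) = \int_0^\infty \frac{t^{p-1}}{(1+t)^{p+q}}\,dt$, valid for $p, q > 0$ (here $p = \frac{n-1}{2} > 0$ and $q = \beta - \frac{n-1}{2} > 0$). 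Inserting the Beta–Gamma identity $B(p,q) = \frac{\Gamma(p)\Gamma(q)}{\Gamma(p+q)}$ and combining with the formula for $\left|\bS^{n-2}\right|$, the factors $\Gamma\!\left(\frac{n-1}{2}\right)$ cancel and one is left with $I_{n,\beta} = \pi^{(n-1)/2}\,\frac{\Gamma\!\left(\beta - \frac{n-1}{2}\right)}{\Gamma(\beta)}$, as claimed.

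There is no genuine obstacle here; the only points requiring (minimal) care are tracking the convergence threshold and recalling the two classical identities — the volume of the unit sphere and the Beta–Gamma relation. As an alternative to the substitution step one may write $(1+|\vec{w}|^2)^{-\beta} = \frac{1}{\Gamma(\beta)}\int_0^\infty s^{\beta-1} e^{-s(1+|\vec{w}|^2)}\,ds$, exchange the order of integration (justified by Tonelli, the integrand being nonnegative), evaluate the Gaussian integral $\int_{\bR^{n-1}} e^{-s|\vec{w}|^2}\,d\vec{w} = (\pi/s)^{(n-1)/2}$, and finish with the definition of $\Gamma$; this variant makes both the convergence and the final formula transparent in one stroke.
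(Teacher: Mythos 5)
Your main argument is correct and follows essentially the same route as the paper: reduce by spherical symmetry to a radial integral, then identify a Beta function and finish with the Beta--Gamma identity. The only cosmetic difference is the change of variable — you take $t = r^2$ and use the representation $B(p,q) = \int_0^\infty t^{p-1}(1+t)^{-p-q}\,dt$ on $[0,\infty)$, whereas the paper substitutes $u = (1+r^2)^{-1}$ to land on the standard Beta integral over $[0,1]$; these two representations are of course related by that same substitution, so the content is identical. Your aside that $\beta > \tfrac{n-1}{2}$ already suffices for convergence (slightly weaker than the stated $\beta > n/2$) is accurate. The variant you sketch at the end — writing $(1+|\vec w|^2)^{-\beta} = \frac{1}{\Gamma(\beta)}\int_0^\infty s^{\beta-1}e^{-s(1+|\vec w|^2)}\,ds$, applying Tonelli, and evaluating the resulting Gaussian — is a genuinely different argument; it bypasses both the surface-area constant and the Beta function entirely, which makes the convergence range and the final formula drop out at once, at the modest cost of invoking the Schwinger/Mellin representation and Tonelli.
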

We remind the reader that $\Gamma$ satisfies $\Gamma(x+1) = x \Gamma(x)$ for 
any $x > 0$.

\begin{proof}
 As the integrand is spherically symmetric, we have
 \[
  I_{n, \beta} = \omega_{n-2} \int_0^\infty \frac{r^{n-2}}{(1+r^2)^\beta} dr,
 \]
 where $\omega_{n-2} = \frac{2 
\pi^{(n-1)/2}}{\Gamma\left(\frac{n-1}{2}\right)}$ denotes the area of the 
unit sphere in $\bR^{n-1}$. We perform the change of variable $u = 
\frac{1}{1+r^2}$ in this integral so that
\[
 r = \sqrt{\frac{1}{u} - 1} \quad\text{and}\quad dr = - \frac{1}{2 u^2 
\sqrt{\frac{1}{u} - 1}} du.
\]
Thus, we have
 \begin{align*}
  I_{n, \beta}
  &= \frac{\omega_{n-2}}{2} \int_0^1 u^{\beta-2} 
\left(\frac{1}{u}-1\right)^{\frac{n-3}{2}} du\\
  &= \frac{\pi^{(n-1)/2}}{\Gamma\left(\frac{n-1}{2}\right)} \int_0^1 u^{\beta - 
\frac{n}{2} - \frac{1}{2}} \left(1-u\right)^{\frac{n-3}{2}} du\\
  &= \frac{\pi^{(n-1)/2}}{\Gamma\left(\frac{n-1}{2}\right)} B\left(\beta - 
\frac{n}{2} + \frac{1}{2}, \frac{n-1}{2}\right),
 \end{align*}
where $\displaystyle B(x, y) \definedas \int_0^1 t^{x-1} (1-t)^{y-1} dt$ is
the Euler beta function. In particular, we have
$\displaystyle B(x, y) = \frac{\Gamma(x)\Gamma(y)}{\Gamma(x+y)}$ which gives 
 \begin{align*}
  I_{n, \beta}
  &= \frac{\pi^{(n-1)/2}}{\Gamma\left(\frac{n-1}{2}\right)} 
\frac{\Gamma\left(\beta - \frac{n}{2} + 
\frac{1}{2}\right)\Gamma\left(\frac{n-1}{2}\right)}{\Gamma\left(\beta - 
\frac{n}{2} + \frac{1}{2} + \frac{n-1}{2}\right)}\\
  &= \pi^{(n-1)/2} \frac{\Gamma\left(\beta - \frac{n}{2} + 
\frac{1}{2}\right)}{\Gamma(\beta)}.
 \end{align*}
\end{proof}

As a consequence of Equation \eqref{eqAsymptotic}, given $v_0 \in C^0(\bS_1(0), \bR)$ we may now define the solution $V$ of the equation $\Delta V=n V$ such that $(\rho V)\vert_{\bS_1(0)} \equiv v_0$ as follows. We first define $v$ by
\begin{equation}\label{eqRelationvv0}
v(\vec{y}) = \frac{1 + |\vec{y}|^2}{2 I_{n, n}} v_0 \left(\frac{1-|\vec{y}|^2}{1+|\vec{y}|^2}, \frac{2 \vec{y}}{1+|\vec{y}|^2}\right).
\end{equation}
In other words, up to a multiplicative factor, $v$ is $v_0$ in the stereographic projection of $\bS^{n-1}$. Consequently, defining $V$ as above (see \eqref{eqEigenfunction} and \eqref{eqApproxId}) we observe that $\rho V$ extends by continuity within the ball model. This extension results in a continuous function, satisfying $(\rho V)\vert_{\bS_1(0)} \equiv v_0$. Note that the arguments of $v_0$ in this formula are nothing but the limits of $(x_1,\vec{x})$ when $y^1 \to 0$, see \eqref{eqUpperToBall}. As a consequence, we have proven the following result:
\begin{proposition}\label{propEstimateEigenfunction0}
For any function $v_0 \in C^0(\bS_1(0), \bR)$, there exists a unique function $V \in C^2_{-1}(\bH^n, \bR)$ such that
\[
\Delta V = n V
\]
and such that the function $\rho V$ extends by continuity to a function over $\overline{B}(0, 1)$ satisfying $\rho V\vert_{\bS_1(0)} = v_0$. We have
\[
\|V\|_{C^0_{-1}(\bH^n, \bR)} \leq C \|v_0\|_{L^\infty(\bH^n, \bR)},
\]
where $C = \|V^0\|_{C^0_{-1}(\bH^n, \bR)}$ and for any $k \geq 0$, $\alpha \in (0, 1)$, there exists a constant $C_{k, \alpha}$ independent of $v_0$ such that
\[
\|V\|_{C^{k, \alpha}_{-1}(\bH^n, \bR)} \leq C_{k, \alpha} \|v_0\|_{L^\infty(\bH^n, \bR)}.
\]
\end{proposition}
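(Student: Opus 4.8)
The existence of a function $V$ with the stated boundary behaviour has essentially already been carried out above: given $v_0$, one defines $v$ by \eqref{eqRelationvv0} and $V$ by \eqref{eqEigenfunction}, equivalently \eqref{eqApproxId}. Since $v_0$ is continuous on the compact sphere $\bS_1(0)$, the function $v$ lies in $C^0_{-4}(\bR^{n-1},\bR)$, so the integral \eqref{eqEigenfunction} converges, defines a function that is smooth in $(y^1,\vec y)$ for $y^1>0$, and satisfies $\Delta V=nV$ by differentiation under the integral sign (each kernel $f_{\vec z}$ solving the equation); and \eqref{eqAsymptotic} together with the choice \eqref{eqRelationvv0} shows that $\rho V$ extends continuously to $\overline{B}_1(0)$ with $\rho V|_{\bS_1(0)}=v_0$. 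So the plan is to add to this three things: (i) uniqueness; (ii) the pointwise domination $|V|\le\|v_0\|_{L^\infty}V^0$, from which the $C^0_{-1}$ bound follows at once; and (iii) the upgrade of regularity to $C^{k,\alpha}_{-1}$ for all $k$, which in particular gives $V\in C^2_{-1}$.

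For uniqueness I would argue by a maximum principle after dividing by $V^0$. If $V_1,V_2$ are two solutions with the same boundary value, set $W=V_1-V_2$ and $u\definedas W/V^0$; since $V^0=\rho^{-1}-1\ge 1>0$ on all of $\bH^n$ and $\Delta V^0=nV^0$, a direct computation with the product rule for the Laplacian shows that $u\in C^2(\bH^n)$ satisfies the homogeneous equation $\Delta u+2\,b(\nabla\log V^0,\nabla u)=0$, which has no zeroth order term. Moreover $u=(\rho W)/(\rho V^0)$, where $\rho W=\rho V_1-\rho V_2$ extends continuously to $\overline{B}_1(0)$ with boundary value $0$ and $\rho V^0=1-\rho$ extends continuously with boundary value $1$; hence $u$ extends continuously to the compact set $\overline{B}_1(0)$ and vanishes on $\bS_1(0)$. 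By the strong maximum principle a nonconstant solution of such an equation cannot attain an interior extremum, so the maximum and minimum of $u$ over $\overline{B}_1(0)$ are attained on $\bS_1(0)$ and equal $0$; therefore $u\equiv 0$, i.e. $V_1=V_2$.

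For the $C^0_{-1}$ estimate I would first note that the kernel $f_{\vec z}(y^1,\vec y)=(y^1)^n/((y^1)^2+|\vec y-\vec z|^2)^n$ is positive for $y^1>0$ and that the prefactor $\tfrac{1+|\vec z|^2}{2I_{n,n}}$ appearing in \eqref{eqRelationvv0} is positive. Applying the construction to $v_0\equiv 1$ produces a solution of $\Delta V=nV$ whose $\rho V$ extends continuously to $\overline{B}_1(0)$ with boundary value $\equiv 1$; by the uniqueness just established — or, alternatively, by the explicit computation which uses $I_{n,n-1}=2I_{n,n}$, itself a consequence of Lemma~\ref{lmIntegral} together with $\Gamma(x+1)=x\Gamma(x)$ — this solution must be $V^0$. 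Consequently, for general $v_0$,
\[
|V(y)|=\left|\int_{\bR^{n-1}}v(\vec z)f_{\vec z}(y)\,d\vec z\right|
\le \|v_0\|_{L^\infty}\int_{\bR^{n-1}}\frac{1+|\vec z|^2}{2I_{n,n}}f_{\vec z}(y)\,d\vec z
=\|v_0\|_{L^\infty}\,V^0(y),
\]
the last equality because the integral on the right is exactly the function built from $v_0\equiv 1$. Taking the supremum over $x$ of $\rho(x)\|\,\cdot\,\|_{C^0(B_r(x))}$ then gives $\|V\|_{C^0_{-1}(\bH^n,\bR)}\le \|V^0\|_{C^0_{-1}(\bH^n,\bR)}\,\|v_0\|_{L^\infty}$, and in particular $V\in C^0_{-1}(\bH^n,\bR)$.

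Finally, for the higher regularity I would invoke scale-invariant interior Schauder estimates: because $\bH^n$ has bounded geometry, $\Delta V=nV$ is uniformly elliptic and homogeneous, and $\rho$ is comparable on unit geodesic balls (all standard, see \cite{LeeFredholm}), one obtains $\rho(x)\|V\|_{C^{k,\alpha}(B_r(x))}\lesssim \|V\|_{C^0_{-1}(\bH^n,\bR)}$ uniformly in $x$, hence $\|V\|_{C^{k,\alpha}_{-1}(\bH^n,\bR)}\le C_{k,\alpha}\|v_0\|_{L^\infty}$ with $C_{k,\alpha}$ independent of $v_0$; the case $k=2$ yields $V\in C^2_{-1}(\bH^n,\bR)$. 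The main obstacle, I expect, is not any single step but making the continuous boundary extension of $\rho V$ genuinely uniform — in particular near the boundary point corresponding to $\vec y=\infty$ in the half-space model, where one must exploit the conformal symmetry of $\bH^n$ — since once the boundary trace is under control, uniqueness, the sharp constant in the $C^0_{-1}$ bound, and the Schauder bootstrap are routine.
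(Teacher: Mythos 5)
Your proof follows essentially the same route as the paper's. The uniqueness argument divides by $V^0$ exactly as the paper does and applies the strong maximum principle to the resulting first-order equation $\Delta u + 2b(\nabla \log V^0, \nabla u)=0$ (the paper cites \cite[Theorem 3.5]{GilbargTrudinger}, notes $V/V^0$ vanishes at infinity and hence attains an interior extremum, and concludes it is constant, hence zero); the $C^0_{-1}$ bound also uses positivity of the Poisson kernel together with the observation that $V^0$ corresponds to $v_0\equiv 1$, just as in the paper; and the higher-order estimate is the same appeal to elliptic regularity on bounded-geometry balls (the paper cites \cite[Lemma 4.8]{LeeFredholm}).
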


\begin{proof}
We start by proving the uniqueness of the function $V$. Since the mapping from $v_0$ to $V$ is linear, it suffices to show that the zero function is the unique $V \in C^2_{-1}(\bH^n, \bR)$ such that $\rho V$ extends continuously to $\overline{B}(0, 1)$ and vanishes on $\bS_1(0)$. Let $V^0$ be the lapse function defined in \eqref{eqLapses}. As $\Delta V = n V$, we have
\begin{align*}
n V
&= \Delta \left(V^0 \frac{V}{V^0}\right)\\
&= \frac{V}{V^0} \Delta V^0 + 2 b\left(d V^0, d\frac{V}{V^0}\right) + V^0 \Delta \frac{V}{V^0}\\
&= n V^0 \frac{V}{V^0} + 2 b\left(d V^0, d\frac{V}{V^0}\right) + V^0 \Delta \frac{V}{V^0}.
\end{align*}
Hence,
\[
\Delta \frac{V}{V^0} + 2 b\left(\frac{d V^0}{V^0}, d\frac{V}{V^0}\right) = 0.
\]
Since $V/V^0$ vanishes at infinity, it necessarily attains either an interior maximum or an interior minimum. By applying the strong maximum principle \cite[Theorem 3.5]{GilbargTrudinger}, we conclude that $V/V^0$ is constant. As it vanishes at infinity, it must therefore vanish identically.

As a consequence, the only solution $V \in C^2_{-1}(\bH^n, \bR)$ to $\Delta V = n V$ such that $\rho V \equiv v_0$ on $\bS_1(0)$ is the one provided by \eqref{eqEigenfunction}.

Next, note that the Poisson kernel \eqref{eqPoissonKernel} is a positive function. This has the following consequence: Given two eigenfunctions $V_1$ and $V_2$ such that $\rho V_1 \equiv v_1$ (resp. $\rho V_2 \equiv v_2$) on $\bS_1(0)$, if $v_2 \geq v_1$, we have $V_2 \geq V_1$. Indeed, from \eqref{eqEigenfunction}, we have
\[
V_2(y) - V_1(y) = \int_{\bR^{n-1}} (v_2(\vec{z}) - v_1(\vec{z}) f_{\vec{z}}(y) d\vec{z} \geq 0.
\]
In particular, for any function $v \in C^0(\bS_1(0), \bR)$, we have 
\[
-\|v\|_{L^\infty(\bS_1(0), \bR)} \leq v \leq \|v\|_{L^\infty(\bS_1(0), \bR)}. 
\]
As $\rho V^0 = 1 - \rho$ restricts to $1$ on $\bS_1(0)$, and $V^0$ is an eigenfunction of the Laplacian, we conclude from the previous discussion that
\[
-\|v\|_{L^\infty(\bS_1(0), \bR)} V^0 \leq V \leq \|v\|_{L^\infty(\bS_1(0), \bR)} V^0.
\]
This shows that
\[
\|V\|_{C^0_{-1}(\bH^n, \bR)} \leq \|V^0\|_{C^0_{-1}(\bH^n, \bR)} \|v\|_{L^\infty(\bS_1(0), \bR)}.
\]
The last part of the proposition follows from standard arguments in elliptic regularity, see e.g. \cite[Lemma 4.8]{LeeFredholm}.
\end{proof}

We now turn our attention to the estimate of $\hess V - V b$. The conformal
transformation law of the Hessian in the upper half space model yields 
\[
 \hess_{ij} V - V b_{ij}
 = \pdiffm{V}{y^i}{y^j}
   + \frac{1}{y^1}\left(\delta^1_i\pdiff{V}{y^j}+\delta^1_j\pdiff{V}{y^i}\right)
   - \left(\frac{1}{y^1}\pdiff{V}{y^1} + \frac{V}{(y^1)^2}\right) \delta_{ij}.
\]
The formula can be somewhat simplified by setting $V = (y^1)^{-1} \Vbar$. Namely, we get
\[
\hess_{ij} V - V b_{ij}
 = \frac{1}{y^1} \pdiffm{\Vbar}{y^i}{y^j}
   - \frac{1}{(y^1)^2} \pdiff{\Vbar}{y^1} \delta_{ij}.
\]
With this formula at hand, it is straightforward to compute 
\begin{align*}
 \Vbar
 &= \frac{1}{(y^1)^{n-1}}
  \int_{\bR^{n-1}}v(\vec{z})
  \frac{1}{\left(1+\left|\frac{\vec{y}-\vec{z}}{y^1}\right|^2\right)^n} d\vec{z},\\
 \pdiff{\Vbar}{y^1}
 &= \frac{1}{(y^1)^n}
  \int_{\bR^{n-1}}v(\vec{z})\left[
  \frac{ n+1}{\left(1+\left|\frac{\vec{y}-\vec{z}}{y^1}\right|^2\right)^n}
  - \frac{2n}{\left(1+\left|\frac{\vec{y}-\vec{z}}{y^1}\right|^2\right)^{n+1}}
  \right] d\vec{z},\\
 \pdiffs{\Vbar}{(y^1)}
 &= \frac{1}{(y^1)^{n+1}}
  \int_{\bR^{n-1}}v(\vec{z})\left[
  \frac{n(n+1)}{\left(1+\left|\frac{\vec{y}-\vec{z}}{y^1}\right|^2\right)^n}
  - \frac{2n(2n+3)}{\left(1+\left|\frac{\vec{y}-\vec{z}}{y^1}\right|^2\right)^{n+1}}
  + \frac{4n(n+1)}{\left(1+\left|\frac{\vec{y}-\vec{z}}{y^1}\right|^2\right)^{n+2}}
  \right] d\vec{z},\\
 \pdiffm{\Vbar}{y^1}{y^i}
 &= \frac{2n(n+1)}{(y^1)^{n+2}}
  \int_{\bR^{n-1}}v(\vec{z})\left[
  \frac{2}{\left(1+\left|\frac{\vec{y}-\vec{z}}{y^1}\right|^2\right)^{n+2}}
  - \frac{1}{\left(1+\left|\frac{\vec{y}-\vec{z}}{y^1}\right|^2\right)^{n+1}}
  \right](y^i - z^i) d\vec{z},\\
 \pdiffm{\Vbar}{y^i}{y^j}
 &= \frac{2n}{(y^1)^{n+1}}
  \int_{\bR^{n-1}}v(\vec{z})\left[
  \frac{2(n+1)(y^i-z^i)(y^j - z^j)}{(y^1)^2\left(1+\left|\frac{\vec{y}-\vec{z}}{y^1}\right|^2\right)^{n+2}}
  - \frac{\delta_{ij}}{\left(1+\left|\frac{\vec{y}-\vec{z}}{y^1}\right|^2\right)^{n+1}}
  \right] d\vec{z},
\end{align*}
for $i, j > 1$.
As a consequence, we also have
\begin{align}
&(\hess V - V b)_{11}\nonumber\\
  &\qquad = \frac{1}{(y^1)^{n+2}}
  \int_{\bR^{n-1}}v(\vec{z})\left[
  \frac{n^2-1}{\left(1+\left|\frac{\vec{y}-\vec{z}}{y^1}\right|^2\right)^n}
  - \frac{4n(n+1)}{\left(1+\left|\frac{\vec{y}-\vec{z}}{y^1}\right|^2\right)^{n+1}}
  + \frac{4n(n+1)}{\left(1+\left|\frac{\vec{y}-\vec{z}}{y^1}\right|^2\right)^{n+2}}
  \right] d\vec{z},\nonumber\\
  &\qquad = \frac{1}{(y^1)^3}
  \int_{\bR^{n-1}}v(\vec{y} + y^1 \vec{w})\left[
  \frac{n^2-1}{\left(1+\left|\vec{w}\right|^2\right)^n}
  - \frac{4n(n+1)}{\left(1+\left|\vec{w}\right|^2\right)^{n+1}}
  + \frac{4n(n+1)}{\left(1+\left|\vec{w}\right|^2\right)^{n+2}}
  \right] d\vec{w},\label{eqEigenFct11}
\end{align}
\begin{align}
&(\hess V - V b)_{1i}\nonumber\\
  &\qquad = \frac{2n(n+1)}{(y^1)^{n+3}}
  \int_{\bR^{n-1}}v(\vec{z})\left[
  \frac{2}{\left(1+\left|\frac{\vec{y}-\vec{z}}{y^1}\right|^2\right)^{n+2}}
  - \frac{1}{\left(1+\left|\frac{\vec{y}-\vec{z}}{y^1}\right|^2\right)^{n+1}}
  \right](y^i - z^i) d\vec{z},\nonumber\\
  &\qquad = \frac{2n(n+1)}{(y^1)^3}
  \int_{\bR^{n-1}}v(\vec{y} + y^1 \vec{w})\left[
  \frac{2}{\left(1+\left|\vec{w}\right|^2\right)^{n+2}}
  - \frac{1}{\left(1+\left|\vec{w}\right|^2\right)^{n+1}}
  \right]w^i d\vec{w},\label{eqEigenFct1i}
\end{align}
\begin{align}
&(\hess V - V b)_{ij}\nonumber\\
  &\qquad = \frac{n+1}{(y^1)^{n+2}}
  \int_{\bR^{n-1}}v(\vec{z})\left[
  \frac{4n(y^i - z^i)(y^j - z^j)}{(y^1)^2\left(1+\left|\frac{\vec{y}-\vec{z}}{y^1}\right|^2\right)^{n+2}}
  - \frac{\delta_{ij}}{\left(1+\left|\frac{\vec{y}-\vec{z}}{y^1}\right|^2\right)^n}
  \right] d\vec{z}\nonumber\\
  &\qquad = \frac{n+1}{(y^1)^3}
  \int_{\bR^{n-1}}v(\vec{y} + y^1 \vec{w})\left[
  \frac{4n w^i w^j}{\left(1+\left|\vec{w}\right|^2\right)^{n+2}}
  - \frac{\delta_{ij}}{\left(1+\left|\vec{w}\right|^2\right)^n}
  \right] d\vec{w}.\label{eqEigenFctij}
\end{align}

Note that, from Equation \eqref{eqEigenFct11} we have
\begin{align*}
& \lim_{y^1 \to 0} (y^1)^3 (\hess V - V b)_{11}\\
& \qquad = v(\vec{y}) \int_{\bR^{n-1}}\left[
  \frac{n^2-1}{\left(1+\left|\vec{w}\right|^2\right)^n}
  - \frac{4n(n+1)}{\left(1+\left|\vec{w}\right|^2\right)^{n+1}}
  + \frac{4n(n+1)}{\left(1+\left|\vec{w}\right|^2\right)^{n+2}}
  \right] d\vec{w}\\
& \qquad = v(\vec{y}) \left[(n^2-1) I_{n, n} - 4n(n+1) I_{n, n+1} + 4n(n+1) I_{n, n+2}\right]\\
& \qquad = 0
\end{align*}
where $I_{n, \beta}$ is defined in Lemma \ref{lmIntegral} and where we used basic properties of the gamma function. And, similarly,
\[
\lim_{y^1 \to 0} (y^1)^3 (\hess V - V b)_{1i} = 
\lim_{y^1 \to 0} (y^1)^3 (\hess V - V b)_{ij} = 0
\]
for any pair of indices $i, j \geq 2$. As a consequence, we have
\begin{equation}\label{eqEstimateEigenFctC0}
\left|\hess V - V b\right|_b = o((y^1)^{-1}) = o(\rho^{-1}).
\end{equation}

Our next step will be to deduce the asymptotics of $|\hess V - V b|_b$ assuming that $v$ is $C^2$ from these formulas.  We focus on \eqref{eqEigenFctij} as the 
formulas \eqref{eqEigenFct11} and \eqref{eqEigenFct1i} can be addressed in a similar way. We make a Taylor expansion of $v$ in
the vicinity of an arbitrary $\vec{y}$:
\begin{equation}\label{eqTaylorV}
 v(\vec{y} + y^1 \vec{w}) = v(\vec{y}) + y^1 w^k \pdiff{v}{y^k}(\vec{y})
  + (y^1)^2 \Theta_v(\vec{y}, \vec{w}, y^1).
\end{equation}
Note that, from Equation \eqref{eqRelationvv0} and for any $\epsilon > 0$, there 
exists a constant
\[
C = C_\epsilon(\vec{y}) \left(\|v_0\|_{L^\infty(\bS_1(0), \bR)} + 
\|v_0\|_{C^2(B_\epsilon(\vec{y}), \bR)}\right)
\]
such that $|\Theta_v(\vec{y}, \vec{w}, y^1)| \leq C$ with 
$C_{\epsilon}(\vec{y})$ locally bounded with respect to $\vec{y}$.
We now insert the Taylor 
expansion \eqref{eqTaylorV} into \eqref{eqEigenFctij} and obtain
\begin{align}
&(\hess V - V b)_{ij}\nonumber\\
&\qquad = \frac{n+1}{(y^1)^3}v(\vec{y})
  \int_{\bR^{n-1}}\left[
  \frac{4n w^i w^j}{\left(1+\left|\vec{w}\right|^2\right)^{n+2}}
  - \frac{\delta_{ij}}{\left(1+\left|\vec{w}\right|^2\right)^n}
  \right] d\vec{w} \label{eqDummy1}\tag{A}\\
&\qquad\qquad + \frac{n+1}{(y^1)^2} \pdiff{v}{y^k}(\vec{y})
  \int_{\bR^{n-1}} w^k \left[
  \frac{4n w^i w^j}{\left(1+\left|\vec{w}\right|^2\right)^{n+2}}
  - \frac{\delta_{ij}}{\left(1+\left|\vec{w}\right|^2\right)^n}
  \right] d\vec{w} \label{eqDummy2}\tag{B}\\
&\qquad\qquad + \frac{n+1}{y^1}
  \int_{\bR^{n-1}} \Theta_v(\vec{y}, \vec{w}, y^1) \left[
  \frac{4n w^i w^j}{\left(1+\left|\vec{w}\right|^2\right)^{n+2}}
  - \frac{\delta_{ij}}{\left(1+\left|\vec{w}\right|^2\right)^n}
  \right] d\vec{w} \label{eqDummy3}\tag{C}.
\end{align}
We compute each term separately. If $i \neq j$, the term \eqref{eqDummy1} 
vanishes as the integrand is odd with respect to $w^i$ (and $w^j$)
so integrating with respect to $w^i$ yields $0$. Note also that when $i = j$ the first term in the integrand of \eqref{eqDummy1}  gives rise to the integral 
$\displaystyle
\int_{\bR^{n-1}} \frac{4n(w^i)^2}{\left(1+\left|\vec{w}\right|^2\right)^{n+2}}
d\vec{w}
$
which does not depend on a particular value of $i$. More specifically, we have
\begin{align*}
\int_{\bR^{n-1}} \frac{4n(w^i)^2}{\left(1+\left|\vec{w}\right|^2\right)^{n+2}}
d\vec{w}
&= \frac{1}{n-1} \sum_{j=2}^n
\int_{\bR^{n-1}} \frac{4n(w^j)^2}{\left(1+\left|\vec{w}\right|^2\right)^{n+2}}
d\vec{w}\\
&= \frac{1}{n-1}
\int_{\bR^{n-1}} 
\frac{4n|\vec{w}|^2}{\left(1+\left|\vec{w}\right|^2\right)^{n+2}}
d\vec{w}\\
&= \frac{4n}{n-1} (I_{n, n+1} - I_{n, n+2}).
\end{align*}
By properties of the gamma function, we have
$I_{n, n+1} = \frac{n+1}{2n}I_{n,n}$ and $I_{n, n+2} = \frac{n+3}{4n}I_{n, n}$,
hence
\[
 \int_{\bR^{n-1}} \frac{4n(w^i)^2}{\left(1+\left|\vec{w}\right|^2\right)^{n+2}}
d\vec{w} = I_{n, n} =   
\int_{\bR^{n-1}} \frac{1}{\left(1+\left|\vec{w}\right|^2\right)^n} d\vec{w} 
\]
showing that the term \eqref{eqDummy1} vanishes. The reasoning for \eqref{eqDummy2} is simpler. Assuming $i \neq j$, the integrand is antisymmetric with respect to either $w^i$ or $w^j$ (depending on the value of $k$) so the integral vanishes in this case. If $i = j$, the integrand is antisymmetric with respect to $w^k$ no matter if $k = i$ or $k \neq i$.

\begin{remark}
It should be noted that the vanishing of \eqref{eqDummy1} and \eqref{eqDummy2}
is nothing but the fact that $\hess (V_0+V_1) - (V_0+V_1) b = 0$ (for 
\eqref{eqDummy1}) and $\hess V_i - V_i b = 0$ (for \eqref{eqDummy2}).
\end{remark}

As a consequence, we conclude that the only term remaining in $(\hess V - V b)_{ij}$ is \eqref{eqDummy3}:
\begin{align*}
&(\hess V - V b)_{ij}\\
&\qquad= \frac{n+1}{y^1}
  \int_{\bR^{n-1}} \Theta_v(\vec{y}, \vec{w}, y^1) \left[
  \frac{4n w^i w^j}{\left(1+\left|\vec{w}\right|^2\right)^{n+2}}
  - \frac{\delta_{ij}}{\left(1+\left|\vec{w}\right|^2\right)^n}
  \right] d\vec{w}\\
&\qquad= O((y^1)^{-1}).
\end{align*}

We want to show that this integral does not vanish in general. To this 
end, we note that, by the Taylor-Lagrange theorem, there exists a $\xi = 
\xi(\vec{y}, \vec{w}, y^1)$ in the interval $(0, y^1)$ such that
\[
 \Theta_v(\vec{y}, \vec{w}, y^1) = \frac{1}{2} w^k w^l 
\pdiffm{v}{y^k}{y^l}(\vec{y} + \xi \vec{w}).
\]
Letting $y^1$ tend to zero, we have $\displaystyle \Theta_v(\vec{y}, \vec{w}, 
y^1) \to \frac{1}{2} w^k w^l \pdiffm{v}{y^k}{y^l}(\vec{y})$, so
\begin{equation}\label{eqAsymptoticij}
(\hess V - V b)_{ij}
\sim \frac{n+1}{2 y^1} \pdiffm{v}{y^k}{y^l}(\vec{y})
  \int_{\bR^{n-1}} w^k w^l \left[
  \frac{4n w^i w^j}{\left(1+\left|\vec{w}\right|^2\right)^{n+2}}
  - \frac{\delta_{ij}}{\left(1+\left|\vec{w}\right|^2\right)^n}
  \right] d\vec{w}.
\end{equation}
To evaluate this last integral, we make the simplifying assumption that the Hessian of $v$ at the point $\vec{y}$ under consideration is diagonal in the coordinate basis so we need only consider the case $k = l$. Additionally, note that if $i \neq j$, the integral in \eqref{eqAsymptoticij} vanishes because the integrand is odd with respect to $w^i$. This holds regardless of whether $i=k$ or $i \neq k$. Consequently, it remains only to address the case $i = j$. By symmetry, all the integrals
\begin{equation}\label{eqStupidIntegral}
 J_{i, k} = \int_{\bR^{n-1}} (w^k)^2 \left[
  \frac{4n (w^i)^2}{\left(1+\left|\vec{w}\right|^2\right)^{n+2}}
  - \frac{1}{\left(1+\left|\vec{w}\right|^2\right)^n}
  \right] d\vec{w}
\end{equation}
are equal either to
$\displaystyle
 J_= = \int_{\bR^{n-1}} (w^1)^2 \left[
  \frac{4n (w^1)^2}{\left(1+\left|\vec{w}\right|^2\right)^{n+2}}
  - \frac{1}{\left(1+\left|\vec{w}\right|^2\right)^n}
  \right] d\vec{w}
$ if $i = k$ or to
$\displaystyle
 J_{\neq} = \int_{\bR^{n-1}} (w^1)^2 \left[
  \frac{4n (w^2)^2}{\left(1+\left|\vec{w}\right|^2\right)^{n+2}}
  - \frac{1}{\left(1+\left|\vec{w}\right|^2\right)^n}
  \right] d\vec{w}
$ if $i \neq k$. Summing \eqref{eqStupidIntegral} over $i = 2, \ldots, n$ for 
$k=1$, we get
\[
 J_= + (n-2) J_{\neq} = \int_{\bR^{n-1}} (w^1)^2 \left[
  \frac{4n |\vec{w}|^2}{\left(1+\left|\vec{w}\right|^2\right)^{n+2}}
  - \frac{n-1}{\left(1+\left|\vec{w}\right|^2\right)^n}
  \right] d\vec{w}.
\]

To evaluate all those integrals, we need a lemma:

\begin{lemma}
 The integral
 \begin{equation}
 J_{n, \alpha, \beta} \definedas \int_{\bR^{n-1}} \frac{|w^1|^\alpha}{\left(1 + 
|\vec{w}|^2\right)^\beta} d\vec{w}
 \end{equation}
 is convergent provided that $\beta > \frac{n-2}{2}$ and
 $\alpha \in (-1, 2\beta-n+1)$. In this case, we have
 \[
  J_{n, \alpha, \beta} = \pi^{\frac{n-2}{2}}
   \frac{\Gamma\left(\frac{\alpha+1}{2}\right)
   \Gamma\left(\beta-\frac{n+\alpha-1}{2}\right)}{\Gamma(\beta)}.
 \]
\end{lemma}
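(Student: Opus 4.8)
The plan is to follow the spirit of Lemma~\ref{lmIntegral}, but since the weight $|w^1|^\alpha$ breaks the spherical symmetry of the integrand one cannot pass to polar coordinates directly; instead I would use the Euler integral representation of the power function to reduce everything to elementary Gaussian integrals. Writing $\vec w = (w^1, w')$ with $w' \in \bR^{n-2}$ and inserting $\frac{1}{A^\beta} = \frac{1}{\Gamma(\beta)}\int_0^\infty t^{\beta-1} e^{-tA}\, dt$ with $A = 1 + |\vec w|^2$, one gets
\[
J_{n, \alpha, \beta} = \frac{1}{\Gamma(\beta)} \int_0^\infty t^{\beta-1} e^{-t} \left( \int_{\bR^{n-1}} |w^1|^\alpha e^{-t|\vec w|^2}\, d\vec w \right) dt,
\]
the interchange of the two integrations being justified by Tonelli's theorem, as the whole integrand is nonnegative.

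Next I would evaluate the inner integral. It factors over the coordinates as $\int_{\bR^{n-1}} |w^1|^\alpha e^{-t|\vec w|^2}\, d\vec w = \left(\int_{\bR} |w^1|^\alpha e^{-t (w^1)^2}\, dw^1\right) \left(\int_\bR e^{-ts^2}\, ds\right)^{n-2}$. The Gaussian factors contribute $\left(\int_\bR e^{-ts^2}\, ds\right)^{n-2} = \pi^{(n-2)/2} t^{-(n-2)/2}$, while the substitution $u = t (w^1)^2$ gives $\int_\bR |w^1|^\alpha e^{-t(w^1)^2}\, dw^1 = \Gamma\left(\tfrac{\alpha+1}{2}\right) t^{-(\alpha+1)/2}$ as soon as $\alpha > -1$. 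Substituting this back and using the definition of $\Gamma$ once more,
\[
J_{n, \alpha, \beta} = \frac{\pi^{(n-2)/2}\, \Gamma\left(\tfrac{\alpha+1}{2}\right)}{\Gamma(\beta)} \int_0^\infty t^{\, \beta - \frac{n+\alpha-1}{2} - 1} e^{-t}\, dt = \pi^{\frac{n-2}{2}}\, \frac{\Gamma\left(\tfrac{\alpha+1}{2}\right)\, \Gamma\left(\beta - \tfrac{n+\alpha-1}{2}\right)}{\Gamma(\beta)},
\]
the last step requiring $\beta - \frac{n+\alpha-1}{2} > 0$, that is, $\alpha < 2\beta - n + 1$.

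The proof is then completed by bookkeeping of the hypotheses: the factor $\Gamma\left(\tfrac{\alpha+1}{2}\right)$ is finite exactly when $\alpha > -1$, which is also the condition for $|w^1|^\alpha$ to be locally integrable on $\bR^{n-1}$, and the factor $\Gamma\left(\beta - \tfrac{n+\alpha-1}{2}\right)$ is finite exactly when $\alpha < 2\beta - n + 1$, which is the decay condition at infinity; these two constraints are simultaneously satisfiable precisely when $\beta > \frac{n-2}{2}$, giving the stated range. I do not anticipate any real obstacle; the only point deserving a little care is to check that the several one-dimensional integrals converge on the claimed ranges and that Tonelli genuinely applies, after which the standard gamma-function identities do all the work. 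An alternative, slightly longer route would iterate the polar-coordinate and Beta-function computation of Lemma~\ref{lmIntegral} — integrating out $w'$ first and then the single variable $w^1$ via the substitution $u = (1 + (w^1)^2)^{-1}$ — but the Gaussian approach keeps the computation shortest.
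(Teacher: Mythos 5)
Your proof is correct, and it takes a genuinely different route from the paper's. The paper iterates the polar-coordinate/Beta-function computation of the preceding lemma: it writes $\vec w = (w^1, w')$, factors out $(1+(w^1)^2)^{-\beta}$, passes to polar coordinates in $w'$ with the substitution $u = \bigl(1 + |w'|^2/(1+(w^1)^2)\bigr)^{-1}$ to produce a Beta integral, and then repeats the same type of substitution on the remaining one-dimensional $w^1$-integral to produce a second Beta factor. You instead insert the Schwinger/Gamma representation $(1+|\vec w|^2)^{-\beta} = \Gamma(\beta)^{-1}\int_0^\infty t^{\beta-1}e^{-t(1+|\vec w|^2)}\,dt$, swap the order of integration by Tonelli (nonnegative integrand), and observe that the $\vec w$-integral factors completely into one-dimensional Gaussian-type integrals — $n-2$ pure Gaussians giving $\pi^{(n-2)/2}t^{-(n-2)/2}$ and one moment integral giving $\Gamma\left(\tfrac{\alpha+1}{2}\right)t^{-(\alpha+1)/2}$ — leaving a single $t$-integral that is itself a Gamma function. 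Your route is shorter and makes the origin of the three Gamma factors and of both convergence conditions transparent at a glance: the Gaussian moment requires $\alpha>-1$, the $t$-integral requires $\alpha < 2\beta-n+1$, and $\beta > \tfrac{n-2}{2}$ is exactly what makes that interval nonempty. The paper's route avoids the Tonelli swap and reuses the polar/Beta machinery already set up in its earlier lemma, which is presumably why the authors chose it; both arguments are complete and rigorous.
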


\begin{proof}
 The proof is nothing but a refined version of that of Lemma \ref{lmIntegral}.
 We decompose $\vec{w} = (w^1, \wtil)$ and rewrite the integral as follows:
 \begin{align*}
  J_{n, \alpha, \beta}
  &= \int_{\bR^{n-1}} \frac{|w^1|^\alpha}{\left(1 + (w^1)^2 + |\wtil|^2\right)^\beta} d\wtil dw^1\\
  &= \int_{\bR^{n-1}} \frac{|w^1|^\alpha}{\left(1 + (w^1)^2\right)^\beta}
    \left(1 + \frac{|\wtil|^2}{{1 + (w^1)^2}}\right)^{-\beta}  d\vec{w}.
 \end{align*}
 As the integrand is spherically symmetric for $\wtil$, setting $r=|\wtil|$ we get
 \begin{align*}
  J_{n, \alpha, \beta}
  &= \omega_{n-3} \int_{-\infty}^{\infty} \int_0^\infty
   \frac{|w^1|^\alpha}{\left(1 + (w^1)^2\right)^\beta}
    \left(1 + \frac{r^2}{{1 + (w^1)^2}}\right)^{-\beta} r^{n-3} dr dw^1.
 \end{align*}
 Setting $\displaystyle u = \left(1+\frac{r^2}{1+(w^1)^2}\right)^{-1}$ so
 $\displaystyle 2 r dr = - \frac{1+(w^1)^2}{u^2} du$, we get
 \begin{align*}
  J_{n, \alpha, \beta}
  &= \frac{\omega_{n-3}}{2} \int_{-\infty}^{\infty} \int_0^1
   \frac{|w^1|^\alpha}{\left(1 + (w^1)^2\right)^{\beta-1}}
    u^{\beta-2} \left(\left(\frac{1}{u}-1\right)(1+(w^1)^2)\right)^{\frac{n}{2}-2} du dw^1\\
  &= \frac{\omega_{n-3}}{2} \int_{-\infty}^{\infty} \int_0^1
   \frac{|w^1|^\alpha}{\left(1 + (w^1)^2\right)^{\beta - \frac{n}{2} + 1}}
    u^{\beta - \frac{n}{2}} (1-u)^{\frac{n}{2}-2} du dw^1\\
  &= \frac{\omega_{n-3}}{2} B\left(\beta - \frac{n}{2} + 1, \frac{n}{2}-1\right)
   \int_{-\infty}^{\infty} \frac{|w^1|^\alpha}{\left(1 + (w^1)^2\right)^{\beta-\frac{n}{2} + 1}} dw^1\\
  &= \omega_{n-3} B\left(\beta - \frac{n}{2} + 1, \frac{n}{2}-1\right)
   \int_0^{\infty} \frac{(w^1)^\alpha}{\left(1 + (w^1)^2\right)^{\beta-\frac{n}{2} + 1}} dw^1
 \end{align*}
where, as before,  
 $\displaystyle B(x, y) = \int_0^1 t^{x-1} (1-t)^{y-1} dt$ is
the Euler beta function.

 As in the proof of Lemma \ref{lmIntegral}, we perform the change of variable
 $\displaystyle v = (1+(w^1)^2)^{-1}$ and get, by calculations that are similar to the
 previous ones
 \begin{align*}
  J_{n, \alpha, \beta}
 &= \frac{\omega_{n-3}}{2} B\left(\beta - \frac{n}{2} + 1, \frac{n}{2}-1\right)
   \int_0^1 (1-v)^{\frac{\alpha-1}{2}} v^{\beta-\frac{n+\alpha-1}{2} - 1} dv\\
 &= \frac{\omega_{n-3}}{2} B\left(\beta - \frac{n}{2} + 1, \frac{n}{2}-1\right)
   B\left(\frac{\alpha+1}{2}, \beta-\frac{n+\alpha-1}{2}\right)\\
 &= \frac{\pi^{\frac{n-2}{2}}}{\Gamma\left(\frac{n}{2}-1\right)}
   \frac{\Gamma\left(\beta - \frac{n}{2} + 1\right)\Gamma\left(\frac{n}{2}-1\right)}{\Gamma(\beta)}
   \frac{\Gamma\left(\frac{\alpha+1}{2}\right)\Gamma\left(\beta-\frac{n+\alpha-1}{2}\right)}{\Gamma\left(\beta+1-\frac{n}{2}\right)}\\
 &= \pi^{\frac{n-2}{2}}
   \frac{\Gamma(\frac{\alpha+1}{2})\Gamma\left(\beta-\frac{n+\alpha-1}{2}\right)}{\Gamma(\beta)}
 \end{align*} 
 \end{proof}
With this formula, we get after some calculations
\begin{align*}
 J_=
 & = 4n J_{n, 4, n+2} - J_{n, 2, n}\\
 & = \pi^{\frac{n-1}{2}}
  \frac{\Gamma\left(\frac{n-1}{2}\right)}{\Gamma(n)} \frac{n-2}{n+1},\\
 J_= + (n-2) J_{\neq}
 & = \int_{\bR^{n-1}} (w^1)^2 \left[ 4n \left(
  \frac{1}{\left(1+\left|\vec{w}\right|^2\right)^{n+1}} - \frac{1}{\left(1+\left|\vec{w}\right|^2\right)^{n+2}}\right)
  - \frac{n-1}{\left(1+\left|\vec{w}\right|^2\right)^n}
  \right] d\vec{w}.\\
 & = 4n \left(J_{n, 2, n+1} - J_{n, 2, n+2}\right) - (n-1) J_{n, 2, n}\\
 & = 0,
\end{align*}
so
\[
 J_{\neq} = -\frac{\pi^{\frac{n-1}{2}}}{n+1}
  \frac{\Gamma\left(\frac{n-1}{2}\right)}{\Gamma(n)}.
\]
The previous formulas for $J_=$ and $J_{\neq}$ show that they are both non-zero.
The argument here is sufficient to prove that the estimate
$\hess_{ij}V - V b_{ij} = O((y^1)^{-1})$ cannot be improved. With some extra
effort, we can get the exact asymptotics for the tensor $\hess V - V b$:
\begin{equation}\label{eqAsymptotic2ij}
\left\lbrace
\begin{aligned}
 (\hess V - V b)_{ij}
&\sim \frac{\pi^{\frac{n-1}{2}}}{2 y^1} \frac{\Gamma\left(\frac{n-1}{2}\right)}{\Gamma(n)}
 \pdiffm{v}{y^k}{y^l}(\vec{y})
 \left[\delta_{ij} \delta_{kl} - \frac{n-1}{2} (\delta_{ik}\delta_{jl} + \delta_{il}\delta_{jk}) \right],\\
(\hess V - V b)_{11}
&= o((y^1)^{-1}),\\
(\hess V - V b)_{1i}
&= o((y^1)^{-1}).
\end{aligned}
\right.
\end{equation}

This yields the following result:
\begin{lemma}\label{lmEstimateEigenfunction}
 For any $v_0 \in C^2(\bS_1(0), \bR)$, there is a unique function $V \in C^\infty_{-1}(\bH^n, \bR)$
 satisfying $-\Delta V + n V = 0$ and such that $\rho V$ extends by continuity to
 a function on $\overline{B}_1(0)$ with $\rho V \vert_{\bS_1(0)} = v_0$. The function $V$
 satisfies $|\hess V - Vb|_b = O(\rho)$.
\end{lemma}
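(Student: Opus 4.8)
Existence, uniqueness and smoothness of $V$ require essentially no new work: since $v_0 \in C^2(\bS_1(0),\bR) \subset C^0(\bS_1(0),\bR)$, Proposition \ref{propEstimateEigenfunction0} already produces a unique $V \in C^2_{-1}(\bH^n,\bR)$ solving $\Delta V = nV$ with $\rho V|_{\bS_1(0)} = v_0$, and its estimates $\|V\|_{C^{k,\alpha}_{-1}} \lesssim \|v_0\|_{L^\infty}$ upgrade this to $V \in C^\infty_{-1}(\bH^n,\bR)$ (interior smoothness is in any case immediate from differentiating the integral representation \eqref{eqEigenfunction} under the integral sign). So the plan is to concentrate entirely on the bound $|\hess V - V b|_b = O(\rho)$.

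For the latter I would simply collect the computations already carried out above in the upper half space model. Writing $V = (y^1)^{-1}\Vbar$, the components of $\hess V - Vb$ are given by the explicit integrals \eqref{eqEigenFct11}, \eqref{eqEigenFct1i} and \eqref{eqEigenFctij}; substituting the second order Taylor expansion \eqref{eqTaylorV} of $v$ at $\vec y$ splits each into the three pieces (A), (B), (C). Pieces (A) and (B) vanish --- either by parity of the integrands in the $w$-variables, using the gamma identities $I_{n,n+1} = \tfrac{n+1}{2n}I_{n,n}$ and $I_{n,n+2} = \tfrac{n+3}{4n}I_{n,n}$, or conceptually because they reproduce $\hess(V_0+V_1) - (V_0+V_1)b = 0$ and $\hess V_i - V_i b = 0$ for the lapse functions \eqref{eqLapses}. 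The surviving quadratic remainder (C) has integrable kernels and $\Theta_v$ bounded by a constant times $\|v_0\|_{L^\infty(\bS_1(0))} + \|v_0\|_{C^2}$, so each of $(\hess V - Vb)_{11}$, $(\hess V - Vb)_{1i}$, $(\hess V - Vb)_{ij}$ is $O((y^1)^{-1})$ --- indeed the sharp asymptotics \eqref{eqAsymptotic2ij} hold. Since for a $(0,2)$-tensor $T$ one has $|T|_b = (y^1)^2 |T|_\delta$ and, by \eqref{eqRho2}, $\rho$ is comparable to $y^1$ as $y^1 \to 0$ on bounded sets of $\vec y$, this gives $|\hess V - Vb|_b = O(\rho)$ on the portion of a collar of $\bS_1(0)$ seen in the upper half space chart.

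The one genuine point requiring care is that this chart, coming from the inversion $i_{(-1,\vec 0),\sqrt 2}$, misses the single boundary point $(-1,\vec 0)$, and moreover $v$ itself blows up like $|\vec y|^2$ as $\vec y \to \infty$, so the constant above is only controlled locally in $\vec y$. I would remove this defect by precomposing with a rotation $R \in SO(n)$ of the ball: since $R$ is an isometry fixing the origin, $R_*V$ is the eigenfunction with boundary data $v_0 \circ R^{-1} \in C^2(\bS_1(0),\bR)$, of $C^2$ norm comparable to that of $v_0$, so the previous step controls $\hess V - Vb$ near any prescribed boundary point with a constant depending only on $\|v_0\|_{C^2(\bS_1(0))}$ and on the locally bounded geometric quantity $C_\epsilon(\vec y)$. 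Covering the compact sphere $\bS_1(0)$ by finitely many such neighbourhoods yields a uniform constant there; on the complementary compact subset of $\overline{B}_1(0)$ the tensor $\hess V - Vb$ is bounded (as $V \in C^\infty_{-1}$) and $\rho$ is bounded below, so the estimate holds there too, and we are done.

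In short, the analytic heart of the lemma --- the vanishing of the leading Taylor terms and the $O((y^1)^{-1})$ bound on the remainder --- is already in hand; the only real obstacle is organising the passage from this componentwise bound in one chart to a uniform estimate $|\hess V - Vb|_b \le C\rho$ near the entire conformal boundary, which the rotation-and-compactness argument handles. I would also keep \eqref{eqAsymptotic2ij} on record, as it is needed later and shows that the exponent in $O(\rho)$ is sharp.
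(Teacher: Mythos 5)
Your argument is, in its analytic core, the paper's own: the existence/uniqueness/regularity claims are exactly Proposition \ref{propEstimateEigenfunction0}, and the $O(\rho)$ bound is read off from the explicit kernels \eqref{eqEigenFct11}--\eqref{eqEigenFctij}, the Taylor expansion \eqref{eqTaylorV}, the vanishing of the pieces \eqref{eqDummy1} and \eqref{eqDummy2}, and the control on \eqref{eqDummy3}, culminating in \eqref{eqAsymptotic2ij} --- precisely the chain of computations the paper carries out just before stating the lemma. Where you genuinely add something is in not waving away the clause ``$C_\epsilon(\vec{y})$ locally bounded'': the upper half-space chart $i_{(-1,\vec{0}),\sqrt 2}$ omits the boundary point $(-1,\vec{0})$, the constants in the remainder estimate are only locally bounded in $\vec{y}$ (indeed $v$ grows quadratically and its derivatives are compared to those of $v_0$ through a chart whose geometry degenerates at infinity), and the text before the lemma does not explain why this yields the \emph{uniform} operator bound $\|\hess V - Vb\|_{C^0_1(\bH^n, S_2\bH^n)} \lesssim \|v_0\|_{C^2(\bS_1(0))}$ that the interpolation argument in Proposition \ref{propEstimateEigenfunction2} requires. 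Your rotation-and-compactness patch closes this correctly: a rotation $R \in SO(n)$ is an isometry of the ball model fixing the origin, hence commutes with $\Delta$, preserves $\rho$, and carries $V$ to the eigenfunction with boundary data $v_0 \circ R^{-1}$ of the same $C^2$-norm, putting any prescribed boundary point into a bounded region of the $\vec{y}$-coordinates; a finite subcover of $\bS_1(0)$ then gives a uniform constant near the boundary, and on the complementary compact subset of $\bH^n$ both sides of the estimate are trivially comparable since $\rho$ is bounded below and $V \in C^\infty_{-1}(\bH^n, \bR)$. This makes explicit a uniformity the paper implicitly relies on, and is worth keeping.
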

As we saw earlier, if $v_0$ is merely continuous, we get the estimate $|\hess V - V b|_b = o(\rho^{-1})$ (see Equation \eqref{eqEstimateEigenFctC0}). By real interpolation, we get the following:
\begin{proposition}\label{propEstimateEigenfunction2}
 For any $\alpha \in [0, 2]$ and any $v_0 \in C^\alpha(\bS_1(0), \bR)$ there exists a unique
 solution $V \in C^\infty_{-1}(\bH^n, \bR)$ to the equation $-\Delta V + n V = 0$ such that $\rho V$
 extends by continuity to  a function on $\overline{B}_1(0)$ with $\rho V \vert_{\bS_1(0)} = v_0$.
 The function $V$ satisfies $|\hess V - Vb|_b = O(\rho^{\alpha-1})$.
\end{proposition}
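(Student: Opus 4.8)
The plan is to obtain the estimate by real interpolation between the two endpoint cases already established, exactly as announced just before the statement. First, observe that the existence and uniqueness of $V$, together with the regularity $V \in C^\infty_{-1}(\bH^n,\bR)$, require no new argument: since $C^\alpha(\bS_1(0),\bR) \subset C^0(\bS_1(0),\bR)$, all of this is provided verbatim by Proposition \ref{propEstimateEigenfunction0} (the bound $\|V\|_{C^{k,\alpha'}_{-1}} \lesssim \|v_0\|_{L^\infty}$ for every $k,\alpha'$ forces $V \in \bigcap_{k,\alpha'} C^{k,\alpha'}_{-1} = C^\infty_{-1}$). Hence the only content to be proved is the decay $|\hess V - V b|_b = O(\rho^{\alpha-1})$, and since the correspondence $v_0 \mapsto V$ is linear, it suffices to show that the linear operator
\[
T \colon v_0 \longmapsto \hess V - V b
\]
is bounded from $C^\alpha(\bS_1(0),\bR)$ to $C^0_{\alpha-1}(\bH^n, S^2 T^*\bH^n)$.

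I would first record the two endpoint mapping properties of $T$. At $\alpha = 0$: by Proposition \ref{propEstimateEigenfunction0} one has $\|V\|_{C^{2,\alpha'}_{-1}} \lesssim \|v_0\|_{L^\infty}$ for a fixed $\alpha' \in (0,1)$, and since $\hess V - V b$ is built from $V$ and its $b$-covariant derivatives up to order two, this yields $\|\hess V - V b\|_{C^0_{-1}} \lesssim \|v_0\|_{L^\infty}$, i.e.\ $T \colon C^0(\bS_1(0),\bR) \to C^0_{-1}(\bH^n, S^2 T^*\bH^n)$ is bounded (the sharper decay $o(\rho^{-1})$ of \eqref{eqEstimateEigenFctC0} plays no role here). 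At $\alpha = 2$: Lemma \ref{lmEstimateEigenfunction} gives $|\hess V - V b|_b = O(\rho)$, and inspecting its proof shows the implied constant is controlled by $\|v_0\|_{C^2}$ — the dependence on $v_0$ enters only through the bound on the Taylor remainder $\Theta_v$, which is $\lesssim \|v_0\|_{L^\infty} + \|v_0\|_{C^2}$, whereas the $\vec w$-integrals in \eqref{eqEigenFct11}, \eqref{eqEigenFct1i}, \eqref{eqEigenFctij} are finite and independent of $v_0$ (alternatively, boundedness follows from Lemma \ref{lmEstimateEigenfunction}, the linearity of $T$ and the closed graph theorem). Thus $T \colon C^2(\bS_1(0),\bR) \to C^0_{1}(\bH^n, S^2 T^*\bH^n)$ is bounded.

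The second ingredient is the identification of the interpolation spaces. On the boundary sphere, $C^\alpha(\bS_1(0),\bR)$ embeds continuously into $\bigl(C^0(\bS_1(0),\bR), C^2(\bS_1(0),\bR)\bigr)_{\alpha/2,\infty}$ for every $\alpha \in [0,2]$, with equality for non-integer $\alpha$ (this is the classical description of the H\"older--Zygmund scale; at $\alpha = 1$ the interpolation space is the Zygmund class, into which $C^{0,1}$ embeds continuously). On the target side, $C^0_\delta(\bH^n, S^2 T^*\bH^n)$ coincides, up to equivalence of norms, with the weighted space $L^\infty(\rho^{-\delta})$ — the local $C^0$-norm over balls of fixed hyperbolic radius is harmless because $\rho \asymp e^{-\dist(\cdot,x_0)}$ varies by a bounded factor on such balls — so the standard interpolation of weighted $L^\infty$ spaces gives
\[
\bigl( C^0_{-1}(\bH^n, S^2 T^*\bH^n),\ C^0_{1}(\bH^n, S^2 T^*\bH^n) \bigr)_{\theta,\infty} = C^0_{2\theta-1}(\bH^n, S^2 T^*\bH^n).
\]
Taking $\theta = \alpha/2$ and applying the (exact) real interpolation functor $(\,\cdot\,,\,\cdot\,)_{\alpha/2,\infty}$ to $T$ turns the two endpoint bounds into boundedness of $T \colon C^\alpha(\bS_1(0),\bR) \to C^0_{\alpha-1}(\bH^n, S^2 T^*\bH^n)$, which is precisely $|\hess V - V b|_b = O(\rho^{\alpha-1})$, uniformly in $v_0$; the cases $\alpha = 0$ and $\alpha = 2$ are of course already covered by \eqref{eqEstimateEigenFctC0} and Lemma \ref{lmEstimateEigenfunction}.

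I expect the only delicate point to be the weighted-$L^\infty$ interpolation identity above: one must justify that $C^0_\delta$ really is a weighted $L^\infty$ space (reducing the ball-sup to a pointwise weight via the slow variation of $\rho$) and then invoke the Stein--Weiss-type interpolation theorem for weighted $L^\infty$ spaces with the fixed weights $\rho^{\pm 1}$. Everything else is routine: the two endpoint estimates rest on Proposition \ref{propEstimateEigenfunction0} and on the computation leading to Lemma \ref{lmEstimateEigenfunction}, and the H\"older--Zygmund interpolation on $\bS_1(0)$ is classical. If one prefers to avoid abstract interpolation, the same conclusion can be reached directly from the integral formulas \eqref{eqEigenFct11}--\eqref{eqEigenFctij}: subtracting from $v(\vec y + y^1 \vec w)$ its Taylor polynomial of order $0$ (and, when $\alpha > 1$, also of order $1$) kills the leading contributions exactly as in the proof of Lemma \ref{lmEstimateEigenfunction}, while a H\"older estimate on the remainder of the form $\lesssim (y^1)^\alpha |\vec w|^\alpha \|v\|_{C^\alpha}$, integrated against the rapidly decaying kernels, produces the bound $O((y^1)^{\alpha-1}) = O(\rho^{\alpha-1})$ with no case distinction at $\alpha = 1$.
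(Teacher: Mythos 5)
Your proposal is correct and follows essentially the same route as the paper: set up the operator $T\colon v_0\mapsto\hess V-Vb$, establish the two endpoint bounds $T\colon C^0\to C^0_{-1}$ (via Proposition \ref{propEstimateEigenfunction0}) and $T\colon C^2\to C^0_1$ (via Lemma \ref{lmEstimateEigenfunction}), and interpolate, using the identification $C^{2\theta}(\bS_1(0))\hookrightarrow(C^0,C^2)_{\theta,\infty}$ on the source together with the weighted-$C^0_\delta$ interpolation identity, which is precisely the content of the paper's Lemma \ref{lmInterpolation}. The direct alternative you sketch at the end (Taylor-remainder estimate integrated against the kernels) is a genuine shortcut not taken in the paper, but your primary argument matches it step for step.
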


As a consequence, we can make the following definition:
\begin{definition}\label{defEigenspace}
For any $\alpha \in [0, 2]$, let $\cE^\alpha$ be the set of functions $V: \bH^n \to \bH^n$ satisfying
\begin{equation}\label{eqEigenfunction0}
-\Delta V + n V = 0
\end{equation}
and such that $\rho V$ restricts to a $C^\alpha$ function on $\bS_1(0)$. We let $\cD$ be the mapping
\[
\cD: C^{\alpha}(\bS_1(0)) \rightarrow \cE^\alpha
\]
associating to a given function $v \in C^\alpha(\bS_1(0), \bR)$ the unique function $V \in C^\infty_{-1}(\bH^n, \bR)$ solving \eqref{eqEigenfunction0} such that $\rho V$ restricts to $v$ on $\bS_1(0)$\footnote{$\cD$ stands here for Dirichlet as we are solving an asymptotic Dirichlet problem.}. It follows from the previous proposition that $\cD$ is well-defined and bijective.
\end{definition}

To prove this proposition, we first recall some basic facts from real interpolation theory.
We use the notations from \cite{LunardiSemigroups,LunardiInterpolation}.
Assume given two Banach spaces  $X$ and $Y$ such that there is a continuous injective map $Y \hookrightarrow X$
allowing us to view $Y$ as a subspace of $X$. For any $u \in X$, we set
\[
K(t, u) \definedas \inf (\|a\|_X + t \|b\|_Y),
\]
where the infimum is taken over all pairs $(a, b) \in X \times Y$ such that $u = a + b$. For any
$\theta \in (0, 1)$, let $(X, Y)_{\theta, \infty}$ be the set of elements $u \in X$ such that
\[
 \|u\|_{\theta, \infty} \definedas \sup_{t \in (0, \infty)} t^{-\theta} K(t, u) < \infty.
\]
Then the space $(X, Y)_{\theta, \infty} \definedas \{u \in X,~\|u\|_{\theta, \infty} < \infty\}$ is a
Banach space when endowed with the norm $\|u\|_{\theta, \infty}$.

\begin{lemma}\label{lmInterpolation}
Let $E$ be a geometric tensor bundle on $\bH^n$. Then for any $\delta_1 < \delta_2$, we have
\[
C^{0}_{\delta} (\bH^n, E) = (C^{0}_{\delta_1}(\bH^n, E), C^{0}_{\delta_2}(\bH^n, E))_{\theta, \infty},
\]
where $\delta \in (\delta_1, \delta_2)$ and $\theta \in (0, 1)$ is given by
\[
 \delta = (1-\theta) \delta_1 + \theta \delta_2.
\]
Further, the norms $\|\cdot\|_{C^{0}_\delta}$ and $\|\cdot\|_{\theta, \infty}$ are equivalent.
\end{lemma}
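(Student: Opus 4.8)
The plan is to run the textbook $K$-method computation for interpolation of weighted sup-norm spaces. I would first replace each norm $\|\cdot\|_{C^0_\gamma}$ by the pointwise-equivalent norm $\|u\|'_\gamma \definedas \sup_{x\in\bH^n}\rho(x)^{-\gamma}|u(x)|_b$; the equivalence $\|u\|'_\gamma\le\|u\|_{C^0_\gamma}\le C_\gamma\|u\|'_\gamma$ is immediate from the fact, recalled in \cite{LeeFredholm}, that $\rho$ has bounded oscillation on balls of the fixed radius $r$, i.e.\ there is $c\in(0,1)$ with $c\le\rho(y)/\rho(x)\le c^{-1}$ whenever $y\in B_r(x)$. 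Since $\rho$ is bounded on $\bH^n$ and $\delta_1<\delta_2$, one has continuous inclusions $C^0_{\delta_2}\hookrightarrow C^0_\delta\hookrightarrow C^0_{\delta_1}$, so the $K$-functional of the pair $(C^0_{\delta_1},C^0_{\delta_2})$ is well-defined on $C^0_{\delta_1}$ and the lemma reduces to the two-sided estimate $\|u\|'_\delta\lesssim\sup_{t>0}t^{-\theta}K(t,u)\lesssim\|u\|'_\delta$. I write $\mu\definedas\delta_2-\delta_1>0$, so that $\delta-\delta_1=\theta\mu$ and $\delta_2-\delta=(1-\theta)\mu$.

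For the estimate $\sup_{t>0}t^{-\theta}K(t,u)\lesssim\|u\|'_\delta$, I would fix a continuous function $\chi:\bR\to[0,1]$ with $\chi\equiv1$ on $(-\infty,1]$ and $\chi\equiv0$ on $[2,\infty)$, and for each $t>0$ decompose $u=a_s+b_s$ with $s\definedas t^{1/\mu}$, $a_s\definedas\chi(\rho/s)\,u$, $b_s\definedas(1-\chi(\rho/s))\,u$ (both continuous sections of $E$). On $\supp a_s$ one has $\rho\le2s$, so $\rho^{-\delta_1}|a_s|\le\rho^{-\delta_1}|u|=\rho^{\theta\mu}\,\rho^{-\delta}|u|\le(2s)^{\theta\mu}\|u\|'_\delta$; on $\supp b_s$ one has $\rho\ge s$, so $\rho^{-\delta_2}|b_s|\le\rho^{-\delta_2}|u|=\rho^{-(1-\theta)\mu}\,\rho^{-\delta}|u|\le s^{-(1-\theta)\mu}\|u\|'_\delta$. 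Hence $\|a_s\|'_{\delta_1}\le(2s)^{\theta\mu}\|u\|'_\delta$ and $\|b_s\|'_{\delta_2}\le s^{-(1-\theta)\mu}\|u\|'_\delta$, and therefore
\[
K(t,u)\le\|a_s\|_{C^0_{\delta_1}}+t\,\|b_s\|_{C^0_{\delta_2}}\lesssim\bigl((2s)^{\theta\mu}+t\,s^{-(1-\theta)\mu}\bigr)\|u\|'_\delta=\bigl(2^{\theta\mu}+1\bigr)\,t^\theta\,\|u\|'_\delta,
\]
the last equality being the point of the choice $s=t^{1/\mu}$; this holds for every $t>0$. For the reverse estimate I would fix $x\in\bH^n$ and apply the $K$-functional at $t=\rho(x)^\mu$: choosing $u=a+b$ with $\|a\|_{C^0_{\delta_1}}+\rho(x)^\mu\|b\|_{C^0_{\delta_2}}\le2K(\rho(x)^\mu,u)$, I get
\[
\rho(x)^{-\delta}|u(x)|\le\rho(x)^{-\theta\mu}\|a\|_{C^0_{\delta_1}}+\rho(x)^{(1-\theta)\mu}\|b\|_{C^0_{\delta_2}}\le4\,\rho(x)^{-\theta\mu}K(\rho(x)^\mu,u)\le4\sup_{t>0}t^{-\theta}K(t,u),
\]
using that each summand is at most $2\rho(x)^{-\theta\mu}K(\rho(x)^\mu,u)$ and that $\rho(x)^{-\theta\mu}(\rho(x)^\mu)^\theta=1$. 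Taking the supremum over $x$ gives $\|u\|'_\delta\le4\sup_{t>0}t^{-\theta}K(t,u)$, and together with the norm equivalences of the first paragraph this shows that $C^0_\delta(\bH^n,E)$ and $(C^0_{\delta_1}(\bH^n,E),C^0_{\delta_2}(\bH^n,E))_{\theta,\infty}$ coincide as sets with equivalent norms.

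I do not expect a genuine obstacle: this is the standard argument for interpolating weighted $L^\infty$-type spaces, and the only points that need a little attention are (i) checking that the truncated sections $a_s,b_s$ really belong to $C^0_{\delta_1}$ and $C^0_{\delta_2}$ with the stated bounds --- which is exactly what the support conditions give, thanks to the signs $\theta\mu>0$ and $-(1-\theta)\mu<0$ of the exponents --- and (ii) the reduction to the pointwise weighted norm in the first step, which is the only place a property specific to $\bH^n$ (bounded oscillation of $\rho$ on $r$-balls) is used.
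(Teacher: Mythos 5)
Your argument is correct and follows essentially the same route as the paper's: for the easy inclusion you estimate $\rho(x)^{-\delta}|u(x)|$ from an arbitrary decomposition $u = a+b$ and plug in $t = \rho(x)^{\delta_2-\delta_1}$, and for the converse you split $u$ by a radial cutoff $\chi(\rho/s)$ and balance the two pieces against $t$ (the paper keeps $\rho_0$ free and optimizes at the end, you fix $s = t^{1/\mu}$ directly — the same computation). Your preliminary reduction to the pointwise weighted norm $\sup_x \rho(x)^{-\gamma}|u(x)|$ is a welcome clarification, since the paper's norm is actually $\sup_x \rho(x)^{-\gamma}\|u\|_{C^0(B_r(x))}$ and the paper passes to the pointwise version silently.
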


\begin{proof}
Assume first that
$u \in (C^{0}_{\delta_1}(\bH^n, E), C^{0}_{\delta_2}(\bH^n, E))_{\theta, \infty}$. For any $x \in \bH^n$, select a
pair $(a, b)$ with $a \in C^{0}_{\delta_1}(M, E)$ and $b \in C^{0}_{\delta_2}(M, E)$ such that
$u = a + b$. Then
\[
\rho(x)^{-\delta} |u(x)| \leq \rho(x)^{-\delta}\left(|a(x)| + |b(x)|\right)
 \leq \rho(x)^{\delta_1-\delta} \|a\|_{C^{0}_{\delta_1}}
   + \rho(x)^{\delta_2-\delta} \|b\|_{C^{0}_{\delta_2}}.
\]
Set $t = \rho(x)^{\delta_2-\delta_1}$. We have $\delta_2 - \delta = (1-\theta)(\delta_2 - \delta_1)$ and
$\delta_1 - \delta = - \theta (\delta_2 - \delta_1)$. So, from the previous inequality, we infer
\[
\rho(x)^{-\delta} |u(x)| \leq t^{-\theta} \|a\|_{C^{0}_{\delta_1}}
   + t^{1-\theta} \|b\|_{C^{0}_{\delta_2}}.
\]
As the chosen pair $(a, b)$ was arbitrary, we can pass to the infimum and get that
\[
\rho(x)^{-\delta} |u(x)| \leq t^{-\theta} K(t, u) \leq \|u\|_{\theta, \infty}.
\]
Taking the supremum with respect to $x \in \bH^n$, this shows that
$\|u\|_{C^{0}_\delta} \leq \|u\|_{\theta, \infty} < \infty$. In particular,
$u \in C^{0}_\delta(M, E)$. We have proven that
\[
(C^{0}_{\delta_1}(\bH^n, E), C^{0}_{\delta_2}(\bH^n, E))_{\theta, \infty}
\subset C^{0}_{\delta} (\bH^n, E).
\]

To prove the reverse inclusion, we choose a (smooth) cutoff function $\chi: [0, \infty) \to [0, 1]$ such that
\[
\chi(s) = 
\left\lbrace
\begin{aligned}
1 & \qquad \text{on } [0, 1/2],\\
0 & \qquad \text{on } [1, \infty).
\end{aligned}
\right.
\]
Given an arbitrary $u \in C^{0}_\delta(\bH^n, E)$, we set
\[
a \definedas \chi\left(\frac{\rho}{\rho_0}\right) u, \quad
b \definedas u - a = \left[1 - \chi\left(\frac{\rho}{\rho_0}\right)\right] u,
\]
for some constant $\rho_0 > 0$ to be chosen later. We have
\[
\rho^{-\delta_1}(x) |a(x)|
  \leq \chi\left(\frac{\rho(x)}{\rho_0}\right) \rho(x)^{\delta-\delta_1} \|u\|_{C^{0}_\delta}.
\]
As $\chi(\rho/\rho_0) = 0$ if $\rho/\rho_0 \geq 1$, we conclude that
\[
\rho^{-\delta_1}(x) |a(x)|
  \leq \rho_0^{\delta-\delta_1} \|u\|_{C^{0}_{\delta}}
\]
where we have used the fact that $\delta_1 \leq \delta$.
Thus, $\|a\|_{C^{0}_{\delta_1}} \leq \rho_0^{\delta-\delta_1} \|u\|_{C^{0}_\delta}$.
Similarly, we have
\[
\|b\|_{C^{0}_{\delta_2}} \leq \left(\frac{\rho_0}{2}\right)^{\delta-\delta_2} \|u\|_{C^{0}_\delta}.
\]
This shows that, for all $t \in (0, \infty)$, we have
\begin{align*}
t^{-\theta} K(t, u)
 & \leq t^{-\theta} \left[\|a\|_{C^{0}_{\delta_1}} + t \|b\|_{C^{0}_{\delta_2}}\right]\\
 & \leq t^{-\theta} \left[\rho_0^{\delta-\delta_1} + t \left(\frac{\rho_0}{2}\right)^{\delta-\delta_2}\right] \|u\|_{C^{0}_\delta}.
\end{align*}
We now adjust $\rho_0$ to minimize the right-hand side of this inequality. The optimal value for $\rho_0$ is given by
\[
\rho_0^{\delta_2 - \delta_1} = \frac{t}{2^{\delta-\delta_2}} \frac{\delta_2 - \delta}{\delta - \delta_1}
 = \frac{t}{2^{\delta-\delta_2}} \frac{1 - \theta}{\theta}.
\]
This yields
\[
t^{-\theta} K(t, u) \leq C \|u\|_{C^{0}_\delta}
\]
for some explicit constant $C = C(\delta, \delta_1, \delta_2)$. Taking the supremum over $t$, we conclude
that $\|u\|_{\theta, \infty} \leq C \|u\|_{C^{0}_\delta}$. This concludes the proof of the lemma.
\end{proof}

\begin{proof}[Proof of Proposition \ref{propEstimateEigenfunction2}]
From Proposition \ref{propEstimateEigenfunction0}, there exists a map $T$ from $C^0(\bS_1(0), \bR)$ to $C^0_{-1}(\bH^n, S_2\bH^n)$ mapping a function $v_0$ to the unique function $V$ satisfying
\[
-\Delta V + n V = 0,\quad \rho V \vert_{\bS_1(0)} \equiv v_0
\]
and then to the symmetric tensor $\hess V - V b$. $T$ is a continuous mapping. By Lemma \ref{lmEstimateEigenfunction}, $T$ maps continuously $C^2(\bS_1(0), \bR)$ to $C^0_1(\bH^n, S_2\bH^n)$.

We now use interpolation theory. For any $\theta \in (0, 1)$, it follows from Lemma \ref{lmInterpolation} that $T$ induces a map from $[C^0(\bS_1(0), \bR), C^2(\bS_1(0), \bR)]_{\theta, \infty}$ to $C^0_{-1 + 2 \theta}(\bH^n, S_2\bH^n)$.
It is known that $[C^0(\bS_1(0), \bR), C^2(\bS_1(0), \bR)]_{\theta, \infty}$ corresponds to $C^{2\theta}(\bS_1(0), \bR)$ for all $\theta \neq 1/2$ and that $[C^0(\bS_1(0), \bR), C^2(\bS_1(0), \bR)]_{1/2, \infty}$ is the H\"older-Zygmund space $C^1_*(\bS_1(0), \bR)$ into which $C^1(\bS_1(0), \bR)$ embeds continuously (see e.g. \cite[Section 1.2.4]{LunardiSemigroups}). This ends the proof of Proposition \ref{propEstimateEigenfunction2}.
\end{proof}

\section{Weak definitions of the mass aspect function}\label{secMassAspect}

\subsection{An ADM style definition}\label{secADM}

In this section, we choose once and for all an asymptotically hyperbolic manifold $(M, g)$ and a chart at infinity $\Phi$. To keep the notation short, we will systematically write $g$ instead of $\Phi_* g$.

Our goal is to give a weak definition of the mass aspect function. Following \cite{LeeLeFloch}, we choose to work in a low regularity context. However, the strategy we will adopt is slightly different: we work with cutoff functions. This will allow us to reach even weaker regularity as all we will have to allow is that $g$ satisfies the assumptions of Definition \ref{defAH} together with the assumption that the scalar curvature of $g$ (defined in the sense of distributions) satisfies $\scal^g + n(n-1) \in L^1_{\tau'}(\bH^n\setminus K', \bR)$ for some $\tau, \tau' > 0$ to be made precise later (see also Remark \ref{rkRegScalar} for a discussion of allowed weaker regularity of the scalar curvature). The role of the cutoff function will be to replace the potentially ill-defined boundary terms that are encountered in the usual definition of the mass as $D\chi_k$ can be thought as a replacement of the inward pointing normal vector to the boundary of the domains. To maintain this correspondence, it will be important not to introduce second order derivatives of $\chi_k$.

Let $V$ be a smooth function on the hyperbolic space and $\chi_k$ a family of cutoff functions compactly supported in $\bH^n \setminus K'$,where $K'$ is as in Definition \ref{defAH}. Both $V$ and $\chi_k$ will
be described precisely later. Using the first order variation of the scalar curvature \eqref{eqScalarVar1}, we compute
\begin{align*}
 & \int_{\bH^n} \chi_k V \left(\scal^g + n(n-1)\right) d\mu^b                                                                                                              \\
 & = \int_{\bH^n} \chi_k V \left(D_i \left[g^{jl} g^{im} \left(D_j e_{ml} - D_m e_{jl}\right)\right] + (n-1) \tr e + \cQ_0(e, De)\right) d\mu^b                             \\
 & = \int_{\bH^n} \left(-(D_i (\chi_k V)) \left[g^{jl} g^{im} \left(D_j e_{ml} - D_m e_{jl}\right)\right] + (n-1) \chi_k V \tr e + \chi_k V \cQ_0(e, De)\right) d\mu^b      \\
 & = \int_{\bH^n} \left[(- V D_i \chi_k - \chi_k D_i V) \left(g^{jl} g^{im} - g^{ml} g^{ij}\right) D_j e_{ml} + (n-1) \chi_k V \tr e + \chi_k V \cQ_0(e, De)\right] d\mu^b.
\end{align*}

Next, we perform a second integration by parts noting that

\begin{align*}
& \chi_k (D_i V) \left(g^{jl} g^{im} - g^{ml} g^{ij}\right) D_j e_{ml}                                                                                                           \\
& \qquad = - D_j\left[\chi_k (D_i V) \left(g^{jl} g^{im} - g^{ml} g^{ij}\right)\right] e_{ml} + D_j\left[\chi_k (D_i V) \left(g^{jl} g^{im} - g^{ml} g^{ij}\right) e_{ml}\right] \\
& \qquad = - \chi_k (D_j D_i V) \left(g^{jl} g^{im} - g^{ml} g^{ij}\right) e_{ml} - \chi_k (D_i V) D_j\left(g^{jl} g^{im} - g^{ml} g^{ij}\right) e_{ml}                          \\
& \qquad\qquad - (D_j\chi_k) (D_i V) \left(g^{jl} g^{im} - g^{ml} g^{ij}\right) e_{ml} + D_j\left[\chi_k (D_i V) \left(g^{jl} g^{im} - g^{ml} g^{ij}\right) e_{ml}\right]        \\
& \qquad = - \chi_k (D_j D_i V) \left(g^{jl} g^{im} - g^{ml} g^{ij}\right) e_{ml} + D_j\left[\chi_k (D_i V) \left(g^{jl} g^{im} - g^{ml} g^{ij}\right) e_{ml}\right]             \\
& \qquad\qquad - (D_j\chi_k) (D_i V) \left(g^{jl} g^{im} - g^{ml} g^{ij}\right) e_{ml} + \chi_k \cQ_1(dV, e, De),
\end{align*}
where $\cQ_1(dV, e, De)$ denotes a function that satisfies $|\cQ_1(dV, e, De)| \lesssim |dV| |e| |De|$. Further, we have
\begin{align*}
 & -(D_j D_i V) \left(g^{jl} g^{im} - g^{ml} g^{ij}\right) e_{ml}                              \\
 & \qquad = -(D_j D_i V) \left(b^{jl} b^{im} - b^{ml} b^{ij}\right) e_{ml} + \cQ_2(\hess V, e) \\
 & \qquad = -b(\hess V, e) + (\Delta V) \tr(e) + \cQ_2(\hess V, e),
\end{align*}
where $\cQ_2(\hess V, e)$ denotes a function that satisfies $|\cQ_2(\hess V, e)| \lesssim |\hess V| |e|^2$. So,
\begin{align*}
& \chi_k (D_i V) \left(g^{jl} g^{im} - g^{ml} g^{ij}\right) D_j e_{ml}                                                                         \\
& \qquad = \chi_k\left(-b(\hess V, e) + (\Delta V) \tr(e)\right) + D_j\left[\chi_k (D_i V) \left(g^{jl} g^{ik} - g^{kl} g^{ij}\right) e_{kl}\right] \\
& \qquad\qquad - (D_j\chi_k) (D_i V) \left(g^{jl} g^{im} - g^{ml} g^{ij}\right) e_{ml} + \cQ_1(dV, e, De) + \cQ_2(\hess V, e).
\end{align*}
We have thus proven the following formula:
\begin{equation}\label{eqCharge}
\begin{aligned}
& \int_{\bH^n} \chi_k V \left(\scal^g + n(n-1)\right) d\mu^b                                                                                                                                \\
& \qquad = \int_{\bH^n} \chi_k \left( b(\hess V, e) - (\Delta V) \tr(e) + (n-1) V \tr e + \cQ(V, e, De)\right) d\mu^b                                                                      \\
& \qquad\qquad + \int_{\bH^n} \left[(D_j\chi_k) (D_i V) \left(g^{jl} g^{im} - g^{ml} g^{ij}\right) e_{ml} - V D_i \chi_k \left(g^{jl} g^{im} - g^{ml} g^{ij}\right) D_j e_{ml}\right] d\mu^b
\end{aligned}
\end{equation}
where $\displaystyle \cQ(V, e, De) = V \cQ_0(e, De) - \cQ_1(dV, e, De) - \cQ_2(\hess V, e)$ satisfies
\begin{equation}
\label{eqQTerm}
|\cQ(V, e, De)| \lesssim (|V| + |dV| + |\hess V|) \left(|e|^2 + |De|^2\right).
\end{equation}
The analysis performed in \cite{ChruscielNagy,ChruscielHerzlich} (see also \cite{MichelMass} for a broader point of view) then consists in choosing for $V$ a function that satisfies
\begin{equation}\label{eqDefLapse}
\hess V - \Delta V b + (n-1) V b = 0.
\end{equation}
A straightforward calculation shows that this equation is equivalent to \eqref{eqLapse}, i.e. $V \in \cN$ if and only if \eqref{eqLapse} holds. For such a $V$, \eqref{eqCharge} reduces to
\begin{equation}\label{eqCharge1}
\begin{aligned}
 & \int_{\bH^n} \chi_k V \left(\scal^g + n(n-1)\right) d\mu^b - \int_{\bH^n} \chi_k \cQ(V, e, De) d\mu^b                                                                               \\
 & \qquad = \int_{\bH^n} \left[(D_j\chi_k) (D_i V) \left(g^{jl} g^{im} - g^{ml} g^{ij}\right) e_{ml} - V D_i \chi_k \left(g^{jl} g^{im} - g^{ml} g^{ij}\right) D_j e_{ml}\right] d\mu^b.
\end{aligned}
\end{equation}

Before proceeding with the next proposition, we clarify our choice of notation. Up to this point, we have used the symbol $\bS_1(0)$ to represent the sphere at infinity of hyperbolic space, aligning with its interpretation as the unit sphere in Euclidean space. However, since we will frequently refer to spheres defined in terms of the hyperbolic metric, we introduce a distinction: $S_r(0)$ will denote the geodesic sphere of radius $r$ centered at the origin $0$ in $\bH^n$. For easier reference, we summarize this convention as follows:
\vspace{.2cm}

\noindent\fbox{ \parbox{\textwidth}{%
\begin{itemize}
\item\label{rkNotation} $\bS_1(0)$ denotes the sphere at infinity of hyperbolic space.
\item $S_r(0)$ denotes the hyperbolic geodesic sphere and $B_r(0)$ the hyperbolic open ball of radius $r$ centered at the origin.
\end{itemize}
} }

\begin{proposition}\label{propMassCH}
Assume that $g$ is asymptotically hyperbolic of order $\tau \geq 1/2$ in the sense of Definition \ref{defAH} and that $\scal^g+n(n-1) \in L^1_1(M, \bR)$. Let $(\chibar_k)_k$ be a sequence of compactly supported Lipschitz functions over $\bH^n$ with uniformly bounded $C^{0, 1}_0$-norm and such that the sets $\Omega_k = \chibar_k^{-1}(1)$ form an increasing sequence of compact sets such that $\bH^n = \bigcup_k \Omega_k$. Then, for any $V \in \cN$, the limit
\begin{equation}\label{eqChargeCH}
    p(e, V) = \lim_{k \to \infty} \int_{\bH^n} \left[V(\divg(e) - d\tr(e))(-D\chibar_k) + \tr(e) dV(-D\chibar_k) - e(DV, -D\chibar_k)\right] d\mu^b
\end{equation}
is well defined and independent of the choice 
of the sequence $(\chibar_k)_k$.

Assume further that $e \in C^1_{\tau}(\bH^n, \bR)$ for some $\tau > n/2$. Then we recover the classical formula
\begin{equation}\label{eqChargeCH2}
    p(e, V) = \lim_{r \to \infty} \int_{S_r(0)} \left[V(\divg(e) - d\tr(e))(\nu) + \tr(e) dV(\nu) - e(DV, \nu)\right] d\mu^b.
\end{equation}
\end{proposition}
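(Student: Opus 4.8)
The whole argument rests on identity \eqref{eqCharge1}, which I take to be valid for metrics of regularity $L^\infty\cap W^{1,2}_{loc}$ with scalar curvature in $L^1_1$ and for \emph{any} Lipschitz, compactly supported cutoff $\chi_k$: every integration by parts in its derivation moves at most one derivative onto the cutoff, so the manipulations are legitimate in this regularity, and all the $\cQ$–terms are genuinely $L^1$. I use repeatedly that $V\in\cN$ obeys $|V|_b+|dV|_b+|\hess V|_b\lesssim\rho^{-1}$ (by \eqref{eqLapses} and $\hess V=Vb$), so that $|\cQ(V,e,De)|\lesssim\rho^{-1}\bigl(|e|^2+|De|^2\bigr)$ by \eqref{eqQTerm}, and I write $\Theta(W):=V(\divg e-d\tr e)(W)+\tr(e)dV(W)-e(DV,W)$ for the $b$–boundary integrand, so that the integrand of \eqref{eqChargeCH} is $\Theta(-D\chibar_k)$ and the integrand of \eqref{eqChargeCH2} is $\Theta(\nu)$.

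\textbf{Well-posedness.} Replacing $g^{ij}g^{kl}$ by $b^{ij}b^{kl}$ in the right-hand side of \eqref{eqCharge1} yields exactly $\Theta(-D\chibar_k)$ up to an error $E_k$ with $|E_k|\lesssim|D\chibar_k|_b\,(|V|+|dV|)\,|e|\,(|e|+|De|)$, so
\[
\int_{\bH^n}\Theta(-D\chibar_k)\,d\mu^b=\int_{\bH^n}\chibar_k V\bigl(\scal^g+n(n-1)\bigr)\,d\mu^b-\int_{\bH^n}\chibar_k\,\cQ(V,e,De)\,d\mu^b-\int_{\bH^n}E_k\,d\mu^b.
\]
Since $|V|\lesssim\rho^{-1}$ and $\scal^g+n(n-1)\in L^1_1$, the integrand $V(\scal^g+n(n-1))$ is in $L^1$; since $\tau\ge1/2$ gives $\rho^{-1}\le\rho^{-2\tau}$ on the end, $\cQ(V,e,De)\in L^1$ by \eqref{eqEstimateE}; and as $|\chibar_k|\le C$ with $\chibar_k\to1$ pointwise (because $\bigcup_k\Omega_k=\bH^n$), dominated convergence controls the first two terms. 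For the last, $D\chibar_k=0$ a.e. on $\Omega_k$ (a Lipschitz function constant on a set has vanishing gradient there), so $|E_k|\lesssim\mathbf 1_{\bH^n\setminus\Omega_k}\,\rho^{-1}(|e|^2+|De|^2)$ and $\int|E_k|\,d\mu^b\to0$ by absolute continuity of the integral. Hence the limit \eqref{eqChargeCH} exists and equals $\int_{\bH^n}V(\scal^g+n(n-1))\,d\mu^b-\int_{\bH^n}\cQ(V,e,De)\,d\mu^b$, which is visibly independent of $(\chibar_k)_k$.

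\textbf{Sphere formula.} Now $e\in C^1_\tau$ with $\tau>n/2$, so $|e|_b+|De|_b=O(\rho^\tau)$, while $\rho\sim 2e^{-r}$ and $\vol_b(S_r(0))\lesssim e^{(n-1)r}$ on $S_r(0)$ as $r\to\infty$. Apply \eqref{eqCharge1} with $\chi_k=\zeta(\dist(0,\cdot))$, where $\zeta$ vanishes outside $[r_1,r_2]$, rises affinely from $0$ to $1$ on $[r_1,r_1+\delta]$ and falls affinely back to $0$ on $[r_2-\delta,r_2]$ (Lipschitz, and supported in $\bH^n\setminus K'$ once $r_1$ is large). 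As $|D\dist|_b\equiv1$, the coarea formula turns the right-hand side of \eqref{eqCharge1} into $\int_0^\infty\zeta'(s)\,\mathcal P(s)\,ds$, where $\mathcal P(s):=\int_{S_s(0)}\Psi(\nu)\,d\mu^{S_s}$ and $\Psi(\nu)$ is the $g$–version of the boundary integrand; since $e\in C^1$, $\Psi$ is continuous and $s\mapsto\mathcal P(s)$ is continuous, so letting $\delta\to0$ (with dominated convergence on the left) gives the exact identity
\[
\mathcal P(r_2)-\mathcal P(r_1)=-\int_{B_{r_2}(0)\setminus B_{r_1}(0)}\bigl(V(\scal^g+n(n-1))-\cQ(V,e,De)\bigr)\,d\mu^b .
\]
The right-hand side tends to $0$ as $r_1,r_2\to\infty$ (tail of an $L^1$ function), so $\mathcal P(r)$ converges to some $L$. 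Writing $\Psi(\nu)=\Theta(\nu)+R(\nu)$ with $|R(\nu)|\lesssim(|V|+|dV|)|e|(|e|+|De|)\lesssim\rho^{2\tau-1}$, we get $\bigl|\int_{S_r}R(\nu)\,d\mu^{S_r}\bigr|\lesssim e^{-(2\tau-1)r}e^{(n-1)r}=e^{(n-2\tau)r}\to0$ precisely because $\tau>n/2$, so $\int_{S_r(0)}\Theta(\nu)\,d\mu^{S_r}\to L$ as well. To identify $L$ with $p(e,V)$, apply the first part to the admissible sequence $\chibar_k=\eta_k(\dist(0,\cdot))$ with $\eta_k=1$ on $[0,k]$, $\eta_k=0$ on $[k+1,\infty)$, $|\eta_k'|\le C$: by coarea, $\int_{\bH^n}\Theta(-D\chibar_k)\,d\mu^b=\int_k^{k+1}|\eta_k'(s)|\bigl(\int_{S_s(0)}\Theta(\nu)\,d\mu^{S_s}\bigr)\,ds$ is a weighted average (weights of total mass $1$) of quantities converging to $L$; since the left side converges to $p(e,V)$, we conclude $p(e,V)=L$, i.e.\ \eqref{eqChargeCH2} holds.

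\textbf{Main obstacle.} The delicate part is the sphere formula: one must upgrade the smeared-cutoff identity \eqref{eqCharge1} to a genuine identity on geodesic spheres — the coarea formula together with the $\delta\to0$ limit, which is where the continuity of the sphere integrals, hence the hypothesis $e\in C^1$, is used — and then control the difference between the $g$– and $b$–versions of the boundary integrand; it is exactly this last estimate that forces the threshold $\tau>n/2$ through $\int_{S_r}R(\nu)\to0$. The low-regularity bookkeeping (validity of \eqref{eqCharge1} for Lipschitz cutoffs and $W^{1,2}\cap L^\infty$ metrics, and the fact that the cutoffs $\chibar_k$ are eventually $\equiv1$ near the core $K'$ so that $D\chibar_k$ is supported near infinity) is routine and handled as in the derivation preceding the proposition.
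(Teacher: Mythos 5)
The first half of your proof has a genuine gap. You apply identity \eqref{eqCharge1} with the raw cutoff $\chibar_k$ in place of the $\chi_k$ that appears in its derivation. But \eqref{eqCharge1} comes from integrating the Taylor expansion of $\scal^g$ from Proposition \ref{propScalar1} against $\chi_k V$ and integrating by parts, and that expansion is available only on $\bH^n\setminus K'$, where $e=\Phi_*g-b$ is defined. So \eqref{eqCharge1} is valid only for cutoffs that vanish near $K'$. For all large $k$, $\Omega_k=\chibar_k^{-1}(1)$ contains $K'$ and $\chibar_k\equiv 1$ there, so your right-hand side $\int_{\bH^n}\chibar_k V(\scal^g+n(n-1))\,d\mu^b-\int_{\bH^n}\chibar_k\cQ(V,e,De)\,d\mu^b$ is not even well-defined (the second integrand has no meaning on $K'$), and the integration by parts underlying \eqref{eqCharge1} would produce an uncontrolled contribution near $\partial K'$. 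The paper resolves this by fixing an auxiliary cutoff $\chi_0$ that vanishes near $K'$ and equals $1$ outside a compact set, applying \eqref{eqCharge1} to $\chi_k\definedas\chi_0\chibar_k$, and noting that for $k$ large $D\chi_k=D\chibar_k+D\chi_0$, so the two boundary integrands differ by a $k$-independent constant. This same bookkeeping shows your claimed closed form $p(e,V)=\int_{\bH^n}V(\scal^g+n(n-1))\,d\mu^b-\int_{\bH^n}\cQ(V,e,De)\,d\mu^b$ is not the correct expression; the right-hand side of \eqref{eqCharge2} carries $\chi_0$ inside both bulk integrals together with an additional $D\chi_0$ term.

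Your treatment of the sphere formula takes a genuinely different and legitimate route. You use thin radial annular cutoffs and the coarea formula, exploit the continuity of the sphere integrals (this is where $e\in C^1$ is used) to pass to an exact flux identity on geodesic spheres, deduce convergence of $r\mapsto\int_{S_r(0)}\Psi(\nu)\,d\mu^b$ from the $L^1$ tail of $V(\scal^g+n(n-1))-\cQ$, and then bound the discrepancy $R(\nu)$ between the $g$- and $b$-contractions using $|R(\nu)|\lesssim\rho^{2\tau-1}$ against the surface-area growth $e^{(n-1)r}$, which is exactly where $\tau>n/2$ enters. The paper instead starts from the pointwise divergence identity \eqref{eqVScalCH} due to Chru\'sciel--Herzlich, multiplies by $\chi_0$, integrates by parts once, and matches the result term-by-term against \eqref{eqCharge2}. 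Your argument is a bit more elementary in that it never invokes the pointwise form of \eqref{eqVScalCH}, and it makes the role of continuity of the sphere integrals explicit; the paper's is shorter. Both correctly locate the threshold $\tau>n/2$. But note that your identification $p(e,V)=L$ feeds through the first half (applied to the radial sequence $\chibar_k=\eta_k\circ\dist$), so the $\chi_0$ fix must be supplied there before this final step is rigorous.
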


Before delving into the proof of this proposition we would like to make a few comments. As we can see from Proposition \eqref{propMassCH}, in the case of a smooth metric $g$, the right hand side of Formula \eqref{eqCharge1} can be replaced by the corresponding surface integral \eqref{eqChargeCH2} as is done normally. However, when $g$ is not in $C^{1, 0}_{loc}$, we cannot make sense of the trace of $De$ on a hypersurface, so, for weakly regular metrics, \eqref{eqChargeCH2} does not make sense per se.

\begin{proof}
The proof is based on analysing \eqref{eqCharge1} more closely. Let $\chi_0$ be a smooth function on $\bH^n$ which vanishes near $K'$ (see Definition \ref{defAH}) and equals $1$ outside some compact set. We define  $\chi_k \definedas \chi_0 \chibar_k$.

Note that $V = O(\rho^{-1})$ so, by assumption, $V(\scal^g+n(n-1)) \in L^1(M, \bR)$. As a consequence, it follows from the dominated convergence theorem that
\[
    \int_{\bH^n} \chi_k V \left(\scal^g + n(n-1)\right) d\mu^b
    \to \int_{\bH^n} \chi_0 V \left(\scal^g + n(n-1)\right) d\mu^b < \infty.
\]
Further, a simple calculation shows that
\[
    V + |dV| + |\hess V| = O(\rho^{-1})
\]
(this is obvious for $V$ and $\hess V$ as it follows from \eqref{eqDefLapse} that $\hess V = Vb$ only the estimate for $|dV|$ requires some calculation). As a consequence, from the assumption that $e$ satisfies the estimate \eqref{eqEstimateE}, we conclude that $\cQ(V, e, De) \in L^1(M, \bR)$. Once again, by the dominated convergence theorem, we obtain that
\[
    \int_{\bH^n} \chi_k \cQ(V, e, De) d\mu^b
    \to \int_{\bH^n} \chi_0 \cQ(V, e, De) d\mu^b.
\]
We have proven that the left hand side in \eqref{eqCharge1}
admits a limit as $k$ goes to infinity. Consequently, all we need to
prove is that the difference between the right hand side of
\eqref{eqCharge1} and what appears in \eqref{eqChargeCH} tends
to a constant as $k$ tends to infinity.
Since
\[
    d\chi_k - d\chibar_k = (\chi_0-1) d\chibar_k + \chibar_k d\chi_0,
\]
we will assume that $k$ is large enough so that
$\supp (\chi_0-1) \subset \Omega_k$, in which case we have
\[
    d\chi_k - d\chibar_k = d\chi_0.
\]
As a consequence, we have
\[
\begin{aligned}
 & \int_{\bH^n} \left[V(\divg(e) - d\tr(e))(-D\chibar_k) + \tr(e) dV(-D\chibar_k) - e(DV, -D\chibar_k)\right] d\mu^b     \\
 & \qquad = \int_{\bH^n} \left[V(\divg(e) - d\tr(e))(-D\chi_k) + \tr(e) dV(-D\chi_k) - e(DV, -D\chi_k)\right] d\mu^b     \\
 & \qquad\qquad + \int_{\bH^n} \left[V(\divg(e) - d\tr(e))(D\chi_0) + \tr(e) dV(D\chi_0) - e(DV, D\chi_0)\right] d\mu^b.
\end{aligned}
\]
The second term in the right hand side of this formula is independent of $k$.
We also note that
\[
\begin{aligned}
 & \int_{\bH^n} \left[V(\divg(e) - d\tr(e))(-D\chibar_k) + \tr(e) dV(-D\chibar_k) - e(DV, -D\chibar_k)\right] d\mu^b\\
 & \qquad = \int_{\bH^n} \left[(D_j\chibar_k) (D_i V) \left(b^{jl} b^{im} - b^{ml} b^{ij}\right) e_{ml} - V D_i \chibar_k \left(b^{jl} b^{im} - b^{ml} b^{ij}\right) D_j e_{ml}\right] d\mu^b.
\end{aligned}
\]
To complete the proof of the first part of the proposition, we will show that
\begin{equation}\label{eqDiffCharge0}
\int_{\bH^n} \left((D_j\chibar_k) (D_i V) e_{ml} - V D_i \chibar_k D_j e_{ml}\right) \left[\left(b^{jl} b^{im} - b^{ml} b^{ij}\right) - \left(g^{jl} g^{im} - g^{ml} g^{ij}\right)\right] d\mu^b
\end{equation}
tends to zero. Developing \eqref{eqDiffCharge0}, we get four terms:
\begin{align*}
 \int_{\bH^n} (D_j\chibar_k) (D_i V) e_{ml} \left(b^{jl} b^{im} - g^{jl} g^{im}\right) d\mu^b
& - \int_{\bH^n} (D_j\chibar_k) (D_i V) e_{ml} \left(b^{ml} b^{ij} - g^{ml} g^{ij}\right) d\mu^b\\
\qquad  - \int_{\bH^n} V D_i \chibar_k D_j e_{ml} \left(b^{jl} b^{im} - g^{jl} g^{im}\right) d\mu^b
& + \int_{\bH^n} (D_j\chibar_k) (D_i V) e_{ml} \left(b^{jl} b^{im} - g^{jl} g^{im}\right) d\mu^b.
\end{align*}
Since each of these four terms can be handled in a similar way, we provide a complete argument only for the first one, showing that it tends to zero when $k\to\infty$. First, we note that
\begin{align*}
 & \int_{\bH^n} (D_j\chibar_k) (D_i V) e_{ml} \left(b^{jl} b^{im} - g^{jl} g^{im}\right) d\mu^b                                                      \\
 & \qquad = \int_{\bH^n} (D_j\chibar_k) (D_i V) e_{ml} \left[b^{jl} \left(b^{im} - g^{im}\right) + \left(b^{jl} - g^{jl}\right) g^{im}\right] d\mu^b \\
 & \qquad = -\int_{\bH^n} (D_j\chibar_k) (D_i V) e_{ml} \left[b^{jl} f^{im} + f^{jl} g^{im}\right] d\mu^b.
\end{align*}
We now apply the estimate \eqref{eqEFnorm} and get
\begin{align*}
 & \left|\int_{\bH^n} (D_j\chibar_k) (D_i V) e_{ml} \left(b^{jl} b^{im} - g^{jl} g^{im}\right) d\mu^b\right| \\
 & \qquad\lesssim \left\|D\chibar_k\right\|_{L^\infty} \int_{\supp (D\chibar_k)} |dV| |e|^2 d\mu^b.
\end{align*}
As $|dV| = O(\rho^{-1})$ and $e \in L^2_{1/2}(\bH^n, \bR)$, we have
$|dV| |e|^2 \in L^1(\bH^n, \bR)$. As $\bigcup_k \Omega_k = \bH^n$,
we have that
\[
    \int_{\Omega_k} |dV| |e|^2 d\mu^b \to \int_{\bH^n} |dV| |e|^2 d\mu^b
\]
(this is a consequence of the monotone convergence theorem).
In particular,
\[
    \int_{\supp (D\chibar_k)} |dV| |e|^2 d\mu^b \leq \int_{\bH^n \setminus \Omega_k} |dV| |e|^2 d\mu^b \to_{k \to \infty} 0.
\]
This ends the proof of the first part of the proposition. Note that, in summary, we have shown that
\begin{equation}\label{eqCharge2}
\begin{aligned}
p(e, V)= & \int_{\bH^n} \chi_0 V \left(\scal^g + n(n-1)\right) d\mu^b - \int_{\bH^n} \chi_0 \cQ(V, e, De) d\mu^b                               \\
& - \int_{\bH^n} \left((D_i\chi_0) (D_m V) e_{jl} - V D_i \chi_0  D_j e_{ml}\right) \left(g^{jl} g^{im} - g^{ml} g^{ij}\right) d\mu^b.
\end{aligned}
\end{equation}

We now turn to the second part of the proposition proving the equivalence between \eqref{eqChargeCH} and \eqref{eqChargeCH2}. For this we recall the basic identity giving rise to the mass integral in \cite[Section 2]{ChruscielHerzlich}: 
\begin{equation}\label{eqVScalCH}
V(\scal^g + n(n-1)) = \divg \mathbb{U}(e,V) +\cQ(V,e,De),
\end{equation}
where
\begin{equation*}
\mathbb{U}^i (e,V) = (g^{im}g^{jl} - g^{ij}g^{ml}) (V  D_j e_{ml} - D_ m V e_{jl}),
\end{equation*}
and $\mathcal{Q}(V,e, De)$ is in $L^1(\bH^n\setminus K', \bR)$. In fact, this formula can be derived along the lines of the computation preceding to \eqref{eqCharge} but without performing integration by parts against the compactly supported $\chi_k$. In particular, it is easy to  see that the quadratic terms $\cQ$ in \eqref{eqCharge1} and in \eqref{eqVScalCH} are exactly the same. Multiplying \eqref{eqVScalCH} by $\chi_0$ and integrating by parts over $\bH^n$ we obtain
\begin{equation}\label{eqIntVScalCH}
\begin{split}
& \int_{\bH^n} \chi_0 V(\scal^g + n(n-1)) d\mu^b \\
& \quad = \int_{\bH^n} \chi_0 \cQ(V,e,De) \, d\mu^b + \lim_{r\to \infty} \int_{S_r(0)} (g^{im}g^{jl} - g^{ij}g^{ml}) (V  D_j e_{ml} - D_ m V e_{jl}) \nu_i \, d\mu^b\\ & \qquad + \int_{\bH^n} \left((D_i\chi_0) (D_m V) e_{jl} - V D_i \chi_0  D_j e_{ml}\right) \left(g^{jl} g^{im} - g^{ml} g^{ij}\right) d\mu^b ,
\end{split}
\end{equation}
where we have used the fact that $\chi_0\equiv 1$ near infinity. In the view of our fall off conditions we may replace $g$ by $b$ in the second term in the right hand side of \eqref{eqIntVScalCH}, so this term equals  the right hand side of \eqref{eqChargeCH}. Recalling \eqref{eqCharge2}, we establish the asserted equivalence of the definitions  \eqref{eqChargeCH} and \eqref{eqChargeCH2}, under the given regularity assumptions.
\end{proof}

We will now analyse Formula \eqref{eqCharge} more carefully, in order to allow for more general functions $V$. To begin with, we note that it is not necessary to choose $V \in \cN$ to make sense of \eqref{eqCharge}. In fact, any $V$ such that the first integrand in the right hand side is in $L^1(\bH^n \setminus K', \bR)$ will give a well defined charge along the lines of Proposition \ref{propMassCH}. We explore this condition in more detail building upon the results of Section \ref{secEstimates}.

First, note that, setting $T \definedas \hess V - V b$, we have
\begin{align*}
     & \int_{\bH^n} \chi_k \left( b(\hess V, e) - (\Delta V) \tr(e) + (n-1) V \tr e\right) d\mu^b \\
     & \qquad = \int_{\bH^n} \chi_k b(T - \tr(T) b, e)  d\mu^b.
\end{align*}
As $\displaystyle |T - \tr(T) b|^2 = |T|^2 + (n-2) (\tr(T))^2$, by the inequality $|\tr(T)| \leq \sqrt{n} |T|$, we get $|T| \leq |T - \tr(T) b| \leq (n-1) |T|$. This shows that the asymptotic behavior of $|\hess V - \Delta V b + (n-1) V b|$ is dictated by that of $|\hess V - Vb|$.

In particular, if $|\hess V - Vb| = O(\rho^\mu)$, for $\mu \in [-1, 1]$, the integral
\[
    \int_{\bH^n} \chi_0 \left( b(\hess V, e) - (\Delta V) \tr(e) + (n-1) V \tr e\right) d\mu^b
\]
is well defined provided that $e \in L^1_{-\mu}(\bH^n, S_2\bH^n)$, or, if one insists on imposing $L^2$-estimates for $e$, $e \in L^2_{\tau}(\bH^n, S_2\bH^n)$ for $\tau > -\mu + \frac{n-1}{2}$ (see \cite[Lemma 3.6]{LeeFredholm}). 

In summary, we obtain the following result (the proof follows that of Proposition \ref{propMassCH} almost verbatim, and the only difference has been described above):

\begin{proposition}\label{propMassAspect}
Assume that $g$ is asymptotically hyperbolic of order $\tau \geq 1/2$ in the sense of Definition \ref{defAH}, that $e \in L^1_\delta(\bH^n, S_2\bH^n)$, with $\delta \in [-1, 1]$ and that $\scal^g+n(n-1) \in L^1_1(M, \bR)$.

Let $(\chibar_k)_k$ be a sequence of compactly supported Lipschitz functions defined on $\bH^n$ with uniformly bounded $C^{0, 1}_0$-norm and such that the sets $\Omega_k = \chibar_k^{-1}(1)$ form an increasing sequence of compact sets with $\bH^n = \bigcup_k \Omega_k$. For any arbitrary $V \in \cE^{1-\delta}$ (see Definition \ref{defEigenspace}), the limit
\begin{equation}\label{eqChargeAspect}
    p(e, V) = \lim_{k \to \infty} \int_{\bH^n} \left[V(\divg(e) - d\tr(e))(-D\chibar_k) + \tr(e) dV(-D\chibar_k) - e(DV, -D\chibar_k)\right] d\mu^b
\end{equation}
is well defined and independent of the choice of the sequence $(\chibar_k)_k$.

Assume further that $e \in C^1_{\delta}(\bH^n, \bR)$ for some $\delta > n/2$. Then we recover the classical formula
\[
p(e, V) = \lim_{r \to \infty} \int_{S_r(0)} \left[V(\divg(e) - d\tr(e))(\nu) + \tr(e) dV(\nu) - e(DV, \nu)\right] d\mu^b.
\]
\end{proposition}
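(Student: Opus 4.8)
The plan is to follow the proof of Proposition~\ref{propMassCH} almost verbatim; the only genuinely new point is the treatment of the ``bulk'' Hessian term in \eqref{eqCharge}, which vanished there because $V\in\cN$ but does not vanish here. I would fix a smooth $\chi_0$ that vanishes near $K'$ and equals $1$ outside a compact set, set $\chi_k\definedas\chi_0\chibar_k$, and start from the identity~\eqref{eqCharge}, which holds for \emph{any} smooth $V$. Since $V\in\cE^{1-\delta}$, Definition~\ref{defEigenspace} together with Proposition~\ref{propEstimateEigenfunction2} gives $V\in C^\infty_{-1}(\bH^n,\bR)$, so $|V|+|dV|+|\hess V|=O(\rho^{-1})$, and Proposition~\ref{propEstimateEigenfunction2} applied with $\alpha=1-\delta\in[0,2]$ gives $|T|_b=O(\rho^{-\delta})$ for $T\definedas\hess V-Vb$.

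The key observation is the algebraic identity $b(\hess V,e)-(\Delta V)\tr(e)+(n-1)V\tr e=b\bigl(T-\tr(T)b,e\bigr)$, immediate from $\hess V=T+Vb$ and $\Delta V=\tr(T)+nV$. Combined with $|T-\tr(T)b|\le(n-1)|T|=O(\rho^{-\delta})$ and the hypothesis $e\in L^1_\delta(\bH^n,S_2\bH^n)$, this shows $b(T-\tr(T)b,e)\in L^1(\bH^n\setminus K')$, so the dominated convergence theorem yields convergence of $\int_{\bH^n}\chi_k\,b(T-\tr(T)b,e)\,d\mu^b$ to the corresponding integral with $\chi_0$. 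The remaining pieces are handled exactly as in Proposition~\ref{propMassCH}: $V(\scal^g+n(n-1))\in L^1$ because $V=O(\rho^{-1})$ and $\scal^g+n(n-1)\in L^1_1(M,\bR)$; and $|\cQ(V,e,De)|\lesssim\rho^{-1}(|e|^2+|De|^2)=\rho^{2\tau-1}\cdot\rho^{-2\tau}(|e|^2+|De|^2)$ lies in $L^1$ by \eqref{eqEstimateE}, since $\tau\ge1/2$ makes $\rho^{2\tau-1}$ bounded. Hence the left side of \eqref{eqCharge} and the first integral on its right side both converge.

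For the second (``boundary'') integral in \eqref{eqCharge}, I would argue as for \eqref{eqDiffCharge0}: writing $d\chi_k=d\chibar_k+d\chi_0$ for $k$ large reduces it to the $\chibar_k$-integral up to a $k$-independent correction, and the error produced by replacing $g$ with $b$ in $g^{jl}g^{im}-g^{ml}g^{ij}$ is bounded, via \eqref{eqEF} and \eqref{eqEFnorm}, by $\|D\chibar_k\|_{L^\infty}\int_{\bH^n\setminus\Omega_k}\bigl(|dV||e|^2+|V||e||De|\bigr)\,d\mu^b$, which tends to $0$ by monotone convergence because the integrand lies in $L^1$ (using $|V|,|dV|=O(\rho^{-1})$, $\tau\ge1/2$ and \eqref{eqEstimateE}) and $\bigcup_k\Omega_k=\bH^n$. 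This establishes existence of the limit \eqref{eqChargeAspect}; rearranging \eqref{eqCharge} then expresses $p(e,V)$ through data involving only $\chi_0$, $V$, $e$ and $\scal^g$, so in particular the limit is independent of the sequence $(\chibar_k)_k$. The final surface-integral formula, under the additional decay hypothesis on $e$, follows exactly as the corresponding part of the proof of Proposition~\ref{propMassCH}: one writes $V(\scal^g+n(n-1))$ as a divergence plus the bulk and $\cQ$ terms, multiplies by $\chi_0$, integrates by parts over exhausting geodesic balls, and lets $r\to\infty$, the decay of $e$ making the integral over $S_r(0)$ converge and allowing $g$ to be replaced by $b$ there; comparison with the expression just obtained for $p(e,V)$ gives the claim.

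The main, and essentially only, obstacle compared with Proposition~\ref{propMassCH} is controlling this non-vanishing bulk term, which is precisely what the estimate $|\hess V-Vb|_b=O(\rho^{\alpha-1})$ of Section~\ref{secEstimates} was built for: matching the exponent $\alpha-1=-\delta$ against the $L^1_\delta$-weight of $e$ is exactly what dictates the hypotheses $e\in L^1_\delta$ and $V\in\cE^{1-\delta}$.
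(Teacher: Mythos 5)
Your proposal is correct and follows the paper's own route: the paper likewise reduces the extra bulk term to $b(T-\tr(T)b,e)$ with $T=\hess V-Vb$, bounds it using $|T-\tr(T)b|\le(n-1)|T|=O(\rho^{-\delta})$ (from Proposition \ref{propEstimateEigenfunction2} with $\alpha=1-\delta$) against $e\in L^1_\delta$, and otherwise notes that the proof follows that of Proposition \ref{propMassCH} almost verbatim, including the treatment of the $\cQ$-term, the $\chi_k=\chi_0\chibar_k$ decomposition, the metric-replacement error on $\supp D\chibar_k$, and the passage to the surface-integral form. No gaps.
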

Note that the condition $e \in L^1_\delta(\bH^n, S_2\bH^n)$ is fulfilled if e.g. $e \in L^\infty_\mu$ for $\mu > \delta + n-1$ (this is a consequence of \cite[Lemma 3.6]{LeeFredholm}) showing that the required decay of $e$ is weaker than in Wang's context \cite{WangMass} described in the introduction.
In the rest of this section, our focus will be on relating our ADM-style definition \eqref{eqChargeAspect} to the definition of Wang. 

We start with a proposition allowing us to view the charges we defined in \eqref{eqChargeAspect} as distributions on $\bS_1(0)$. To this end, keeping the notations as in Proposition \ref{propMassAspect}, let $P(e, \cdot)$ be the mapping defined as follows. For any $v \in C^{1-\delta}(\bS_1(0), \bR)$, $\delta \in [-1, 1]$,
let $V = \cD(v)$ denote the eigenfunction of the Laplacian such that $\rho V \equiv v$ on $\bS_1(0)$ (see Definition \ref{defEigenspace}). We set
\[
    P(e, v) \definedas p(e, V).
\]
We have the following result:
\begin{proposition}\label{propContinuity}
Under the assumptions of Proposition \ref{propMassAspect}, the mapping $P(e, \cdot): C^{1-\delta}(\bS_1(0), \bR) \to \bR$ is continuous.
\end{proposition}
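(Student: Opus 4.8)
The plan is to deduce the continuity of $P(e, \cdot)$ from an explicit bound obtained through the closed-form expression for $p(e, V)$ that appears in the proof of Proposition~\ref{propMassAspect}. Since $\cD$ is linear and the integrand of~\eqref{eqChargeAspect} depends linearly on $(V, dV, DV)$, the functional $P(e, \cdot)$ is linear, so it is enough to prove an estimate
\[
 |P(e, v)| \leq C\, \|v\|_{C^{1-\delta}(\bS_1(0), \bR)}
\]
with a constant $C$ depending on $e$, $g$, $\scal^g$ and the fixed auxiliary cutoff $\chi_0$ used in the proof of Proposition~\ref{propMassCH}, but not on $v$. Writing $V \definedas \cD(v)$ and $T \definedas \hess V - V b$, the computation leading to~\eqref{eqCharge2}, adapted to an arbitrary $V \in \cE^{1-\delta}$ exactly as in the proof of Proposition~\ref{propMassAspect} (that is, keeping the term $b(\hess V - \Delta V b + (n-1) V b, e) = b(T - \tr(T)\,b, e)$), yields
\begin{align*}
 p(e, V) = {} & \int_{\bH^n} \chi_0 V \left(\scal^g + n(n-1)\right) d\mu^b - \int_{\bH^n} \chi_0\, b\!\left(T - \tr(T)\,b, e\right) d\mu^b \\
  & - \int_{\bH^n} \chi_0\, \cQ(V, e, De)\, d\mu^b \\
  & - \int_{\bH^n} \left((D_i\chi_0)(D_m V) e_{jl} - V (D_i\chi_0) D_j e_{ml}\right)\!\left(g^{jl} g^{im} - g^{ml} g^{ij}\right) d\mu^b,
\end{align*}
each of the four integrals being absolutely convergent under the hypotheses of Proposition~\ref{propMassAspect}.

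Next I would bound the four integrals individually, feeding in the quantitative estimates of Section~\ref{secEstimates}. By Proposition~\ref{propEstimateEigenfunction0} one has $\|V\|_{C^{2,\alpha}_{-1}(\bH^n)} \lesssim \|v\|_{L^\infty(\bS_1(0))}$, hence $|V| + |dV| + |\hess V| \lesssim \rho^{-1}\|v\|_{L^\infty}$; and the proof of Proposition~\ref{propEstimateEigenfunction2} shows that $v \mapsto T = \hess V - V b$ is a bounded linear map $C^{1-\delta}(\bS_1(0)) \to C^0_{-\delta}(\bH^n, S_2\bH^n)$ (note $(1-\delta)-1 = -\delta$), so $|T| \lesssim \rho^{-\delta}\|v\|_{C^{1-\delta}}$. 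Then: the first integrand is $\lesssim \|v\|_{L^\infty}\, \rho^{-1}|\scal^g + n(n-1)|$, which lies in $L^1$ because $\scal^g + n(n-1) \in L^1_1$; the second is $\lesssim |T|\,|e| \lesssim \|v\|_{C^{1-\delta}}\, \rho^{-\delta}|e|$, which lies in $L^1$ because $e \in L^1_\delta$; the third satisfies, by~\eqref{eqQTerm}, $|\cQ(V, e, De)| \lesssim \|v\|_{L^\infty}\, \rho^{-1}(|e|^2 + |De|^2)$ on $\supp\chi_0$, and $\rho^{-1}(|e|^2 + |De|^2) \lesssim \rho^{-2\tau}(|e|^2 + |De|^2)$ there since $\tau \geq 1/2$ and $\rho$ is bounded, so it is integrable by~\eqref{eqEstimateE}; the fourth integral runs over the compact set $\supp(D\chi_0)$, on which $\rho$, $|e|$, $|De|$, $g$ and $b$ are all bounded while $|V| + |dV| \lesssim \|v\|_{L^\infty}$. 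Summing these four bounds gives $|P(e, v)| = |p(e, \cD(v))| \leq C\,\|v\|_{C^{1-\delta}(\bS_1(0))}$, which by linearity is exactly the asserted continuity.

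I do not expect a genuine obstacle: the proposition is essentially a quantitative reformulation of the a priori estimates already established, and the sole point requiring attention is the matching of weights in the second integral. The space $\cE^{1-\delta}$ is chosen precisely so that $\|\hess V - V b\|_{C^0_{-\delta}}$ is controlled by $\|v\|_{C^{1-\delta}}$, which is what allows it to be paired with $e \in L^1_\delta$; this is the reason for the coupling between $\delta$ and $1-\delta$ in the hypotheses of Proposition~\ref{propMassAspect}. It should also be recorded that the constant $C$ depends on $e$ and $g$ only through quantities such as $\|\scal^g + n(n-1)\|_{L^1_1}$, $\|e\|_{L^1_\delta}$, $\int_{\supp\chi_0}\rho^{-1}(|e|^2 + |De|^2)\,d\mu^b$ and the ellipticity constant in $\frac{1}{C}b \leq g \leq C b$, and in particular is independent of $v$.
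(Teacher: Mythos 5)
Your proof is correct and follows essentially the same route as the paper: reduce to a uniform bound by linearity, use the closed-form expression for $P(e,v)$ obtained from \eqref{eqCharge2} (equivalently \eqref{eqCharge} integrated against $\chi_0$), and feed in the decay of $V$ from Proposition~\ref{propEstimateEigenfunction0} for the terms that only need $|V|+|dV|+|\hess V| \lesssim \rho^{-1}\|v\|_{L^\infty}$ and the decay of $\hess V - Vb$ from Proposition~\ref{propEstimateEigenfunction2} for the one term paired with $e \in L^1_\delta$. The only cosmetic difference is that you split off the $\cQ$-integral from the ``second term'' and bound it by $\|v\|_{L^\infty}$ rather than by $\|v\|_{C^{1-\delta}}$; the paper lumps it together with the Hessian-difference term and cites the coarser $C^{1-\delta}$ control, which is harmless since $\|v\|_{L^\infty} \le \|v\|_{C^{1-\delta}}$.
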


\begin{proof}
As $P$ is linear in $V$ we have to prove that $|P(e, v)| \lesssim \|v\|_{C^{1-\delta}(\bS_1(0), \bR)}$. From \eqref{eqCharge} and the proof of Proposition \ref{propMassAspect}, we have
\begin{align*}
P(e, v)
& = \int_{\bH^n} \chi_0 V \left(\scal^g + n(n-1)\right) d\mu^b\\
& \quad - \int_{\bH^n} \chi_0 \left( b(\hess V - (\Delta V)b + (n-1) V b, e) + \cQ(V, e, De)\right) d\mu^b                                                                           \\
& \quad - \int_{\bH^n} \left[(D_j\chi_0) (D_i V) \left(g^{jl} g^{ik} - g^{kl} g^{ij}\right) e_{kl} - V D_i \chi_0 \left(g^{jl} g^{ik} - g^{kl} g^{ij}\right) D_j e_{kl}\right] d\mu^b.
\end{align*}
The first and the last terms are easily seen to be controlled by the $C^1_{-1}$-norm of $V$.
Hence, from Proposition \ref{propEstimateEigenfunction0}, by the $L^\infty$-norm of $v$. The second term is controlled by the $C^{1-\delta}$-norm of $v$ as it follows from Proposition \ref{propEstimateEigenfunction2}.
\end{proof}

Next, we show how our definition of the mapping $P$ in Proposition \ref{propContinuity} relates to Wang's definition of the mass aspect function:
\begin{proposition}\label{propMassAspect2}
Assume that $g$ is asymptotically hyperbolic in the sense of Wang: that is, there exists a chart at infinity $\Phi: M \setminus K \to \bH^n \setminus K'$ such that the tensor $\ebar \definedas \rho^{n-2} e$ with $e \definedas \Phi_* g - b$ extends by continuity to $\Bbar(0, 1) \setminus K'$ to a tensor satisfying $\ebar_{ij} x^i = 0$. Then the operator $P(e, \cdot): C^\epsilon(\bS_A(0), \bR) \to \bR$ introduced in Proposition \ref{propContinuity} is well-defined for any $\epsilon > 0$ and we have
\[
P(e, v) = n \int_{\bS_1(0)} m v d\mu^\sigma,
\]
where $m$ is $\tr_\delta(\ebar)$ restricted to $\bS_1(0)$.
\end{proposition}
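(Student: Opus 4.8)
The strategy is to start from Formula \eqref{eqCharge2} established in the proof of Proposition \ref{propMassCH}, specialized to the current setting via Proposition \ref{propMassAspect}, and to extract the boundary term at $\bS_1(0)$ explicitly using the Wang-type asymptotics $e = \rho^{n-2}\ebar + \hot$. Under Wang's hypotheses the metric is smooth up to the conformal boundary, so we do not need the low-regularity apparatus; instead we may compute directly with the surface integral formula
\[
P(e, V) = \lim_{r\to\infty}\int_{S_r(0)}\bigl[V(\divg e - d\tr e)(\nu) + \tr(e)\,dV(\nu) - e(DV, \nu)\bigr]\,d\mu^b.
\]
First I would observe that, since $\scal^g + n(n-1)$ is no longer assumed integrable against $\rho^{-1}$ in general, one must check that the surface integrals converge; this follows because the decay $|e|_b = O(\rho^n)$, $|De|_b = O(\rho^{n-1})$ coming from \eqref{eqWangAsymptotics}, combined with $|V| + |dV| + |\hess V - Vb| = O(\rho^{\epsilon-1})$ for $V = \cD(v)$ with $v\in C^\epsilon$ (Proposition \ref{propEstimateEigenfunction2}), makes all the relevant bulk quadratic terms $\cQ(V,e,De) = O(\rho^{\epsilon - 1 + 2n})$ which is integrable, and the $\chi_0$ bulk scalar-curvature term is handled by noting that Wang's asymptotics force $\scal^g + n(n-1) = O(\rho^{n})$ — actually one only needs it in $L^1_{-\epsilon}$, which holds. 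This justifies that $P(e, \cdot)$ is well-defined on $C^\epsilon(\bS_1(0),\bR)$ for every $\epsilon > 0$, with no need for $\epsilon \geq 1 - \delta$ as before.

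The core of the proof is the explicit evaluation of the limit of the surface integral. I would work in the ball model near $\bS_1(0)$, writing $\nu = -\rho\,\partial_\rho / |\partial_\rho|_b \cdot (\text{unit normalization})$ — more precisely, use geodesic polar-type coordinates near the boundary in which $b = \rho^{-2}(d\rho^2 + \sigma_\rho)$ with $\sigma_\rho$ a family of metrics on $\bS^{n-1}$ converging to the round metric $\sigma$. Substituting $e = \rho^{n-2}\ebar + \hot$ and $V = \cD(v) = \rho^{-1}v + O(1)$ into the three terms, and using the transversality condition $\ebar_{ij}x^j \equiv 0$ (equivalently $\ebar$ has no $\partial_\rho$ components at the boundary), one finds that the $e(DV,\nu)$ term and the tangential parts of $\divg e - d\tr e$ contribute only lower order, while the leading contribution comes from $-V\,d\tr(e)(\nu)$ together with $V (\divg e)(\nu)$; a careful bookkeeping of the powers of $\rho$ (the volume element on $S_r(0)$ contributes $\rho^{-(n-1)}d\mu^\sigma$, $V$ contributes $\rho^{-1}$, and the combination of $e \sim \rho^{n-2}$ with one derivative of the weight $\rho^{n-2}$ produces the surviving $\rho^n$ factor times $\rho^{-n}$) shows the limit equals $c\int_{\bS_1(0)} m\,v\,d\mu^\sigma$ with $m = \tr_\delta\ebar$. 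The constant $c$ is then pinned down to be $n$ by tracking the coefficient $n-2$ from $d(\rho^{n-2})$ against the factors coming from $\tr_b e$ versus $\tr_\delta\ebar$ and the relation $\Delta V = nV$; comparing with the already-established consistency between $P$ and the Chruściel--Herzlich mass functional (via $\cN \subset \cE^0$ and Proposition \ref{propMassCH}, for which the normalization is classical) fixes $c = n$ unambiguously.

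The main obstacle I expect is the bookkeeping in the second step: one must be careful that the ``$\hot$'' in \eqref{eqWangAsymptotics} is genuinely subleading after taking a derivative (it is, since $\hot$ means $o(\rho^{n-2})$ with one derivative gaining no worse than a $\rho^{-1}$, so $D(\hot) = o(\rho^{n-3})$ and against $V \sim \rho^{-1}$ and the volume form $\rho^{-(n-1)}$ this is $o(1)$ — but this needs the Wang hypothesis to be stated with enough uniformity, which it is since $\ebar$ is smooth up to $\Bbar(0,1)$), and that cross terms between $\rho^{n-2}\ebar$ and the $O(1)$ correction in $V - \rho^{-1}v$ do not contribute. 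A clean way to organize this, which I would adopt, is to first reduce to the model case where $V = V^0 v(x/|x|)\chi(|x|)$ is the approximate eigenfunction \eqref{eqApproximateEigenfunction} — since $P(e, \cdot)$ depends only on the boundary value $v$ and the difference $\cD(v) - \Vtil \in C^2_{0}$ or better, its contribution to the charge vanishes in the limit by the decay of $e$ — and then the integral becomes a completely explicit computation on round spheres of large radius, at which point Wang's original calculation in \cite{WangMass} can essentially be quoted with the sign and constant conventions matched.
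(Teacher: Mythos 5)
Your overall architecture is close to the paper's: both reduce to the approximate eigenfunction $\Vtil = \chi V^0\,v(x/|x|)$ using the invariance of $p(e,\cdot)$ under perturbations of $V$ in $C^1_1$ (Lemma \ref{lmChangeOfFunction}), exploit the transversality $e(Dr,\cdot)=0$ to kill most terms, and read off the constant from an explicit radial computation near $\bS_1(0)$. The paper works with the bulk formula \eqref{eqChargeAspect3} (no derivatives of $e$, cutoffs $\chibar_k = \chi(r-k)$) rather than the sphere integrals $\int_{S_r(0)}$, and it closes the regularity gap by first proving the result for $v\in C^{2,\alpha}(\bS_1(0),\bR)$ and then using the continuity of $P(e,\cdot)$ (Proposition \ref{propContinuity}) to descend to $v\in C^\epsilon$; you would need to insert that same density step, since for merely H\"older $v$ you cannot directly verify $\cD(v) - \Vtil \in C^1_1$.

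There is, however, one concrete gap that sinks the argument as written: your claim that Wang's asymptotics force only $\scal^g + n(n-1) = O(\rho^n)$ and that ``$L^1_{-\epsilon}$ is enough.'' Neither is correct. The well-posedness of $p(e,V)$ in Propositions \ref{propMassCH} and \ref{propMassAspect} hinges on $\scal^g + n(n-1) \in L^1_1(\bH^n\setminus K',\bR)$, because the bulk term $\int\chi_0 V(\scal^g+n(n-1))\,d\mu^b$ is paired against $V = O(\rho^{-1})$. Since $d\mu^b \sim \rho^{-(n-1)}\,dr$ and $dr \sim d\rho/\rho$, the condition $\scal^g+n(n-1)\in L^1_1$ amounts to $\int_0 \rho^{-(n+1)}|\scal^g+n(n-1)|\,d\rho < \infty$, which fails (log-divergently) when the integrand is merely $O(\rho^n)$. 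What actually saves the day is the cancellation, proved in Lemma \ref{lmStrongerDecay}, among the three leading terms $(n-1)\tr(e) + \divg\divg(e) - \Delta\tr(e)$: each is $\rho^n \tr_\delta(\ebar) + O(\rho^{n+1})$ up to coefficients $n-1$, $1$, and $-n$ which sum to zero, so in fact $\scal^g + n(n-1) = O(\rho^{n+1})$, which does lie in $L^1_1$. Without this cancellation the hypotheses of the propositions you rely on are not verified and the limit defining $p(e,V)$ is not known to exist. (The constraint $\epsilon > 0$ in the statement has an entirely different source, namely that $e$ itself lies in $L^1_{1-\epsilon}$ but not $L^1_1$.) Relatedly, pinning down the constant by ``consistency with the Chru\'sciel--Herzlich functional'' is circular here: the identity $p(e,V^0) = n\int_{\bS_1(0)} m\,d\mu^\sigma$ is itself a special case of the proposition; the paper instead obtains $n$ from the explicit radial integral $\int_k^{k+1}(-n\chi'_k \pm \chi''_k)\,dr = n$.
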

Note that we cannot go down to $\epsilon = 0$ in this proposition as we do not have $e \not\in L^1_1(\bH^n \setminus K', \bR$, but merely $e \in L^1_{1-\epsilon}(\bH^n\setminus K', \bR)$. Before proving Proposition \ref{propMassAspect2}, we state and prove two preliminary lemmas:
\begin{lemma}\label{lmStrongerDecay}
Under the assumptions of Proposition \ref{propMassAspect2}, we have $\scal^g + n(n-1) = O(\rho^{n+1})$. In particular, $\scal^g+n(n-1) \in L_1^1(M, \bR)$.
\end{lemma}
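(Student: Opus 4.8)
The plan is to use the first-order variation formula for the scalar curvature, Proposition~\ref{propScalar1}, together with the very strong decay hypothesis on $e$ coming from Wang's asymptotics, namely $e = \rho^{n-2}\ebar + \hot$ with $\ebar$ smooth and $\ebar_{ij}x^i = 0$. Under these hypotheses $|e|_b = O(\rho^n)$ and $|De|_b = O(\rho^{n-1})$, so the quadratic terms $\cQ(e, De)$ in \eqref{eqScalarVar1} satisfy $|\cQ(e,De)| \lesssim |e|^2 + |De|^2 = O(\rho^{2n-2})$, which is $O(\rho^{n+1})$ since $n \geq 3$. Hence the whole burden of the lemma rests on showing that the \emph{linear} part of $\scal^g + n(n-1)$, namely $(n-1)\tr(e) + D_i[g^{jl}g^{ik}(D_j e_{kl} - D_k e_{jl})]$, is $O(\rho^{n+1})$ as well.

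First I would record the precise consequences of Wang's transversality condition. Writing $e = \rho^{n-2}\ebar + \hot$ in the ball model, the condition $\ebar_{ij}x^j \equiv 0$ says exactly that the leading term of $e$ is ``radially transverse.'' Since $\rho = \frac{1-|x|^2}{2}$, one has $D\rho = -x^\flat$ up to conformal factors, so contracting $e$ with the radial direction kills the leading $\rho^{n-2}$ term and produces an extra factor of $\rho$. Concretely, I expect $e(\partial_r, \cdot) = O(\rho^{n-1})$ where $\partial_r$ is (a rescaling of) the radial vector field, and similarly the radial-radial component $e(\partial_r,\partial_r) = O(\rho^n)$ with still more decay. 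The point is that the transversality condition is precisely what is needed so that the expression $g^{jl}g^{ik}(D_j e_{kl} - D_k e_{jl})$, after one more derivative $D_i$ is applied, does not merely inherit the naive decay $O(\rho^{n-2})$ of $e$ — the derivatives hitting the conformal factor $\rho^{n-2}$ would generically lower the power by the ``boundary defect,'' but the antisymmetrization $D_j e_{kl} - D_k e_{jl}$ combined with transversality cancels the dangerous terms. This is, in fact, exactly the computation that underlies the whole consistency of Wang's definition: the divergence constraint that makes $m = \tr_\delta \ebar$ well-defined on $\bS_1(0)$.

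The cleanest route is to compute directly in the ball model, or equivalently to invoke the standard expansion: writing $g = \rho^{-2}\gbar$ with $\gbar$ a metric on $\overline{B}_1(0)$ such that $\gbar = \delta + \rho^{n}\hbar + \hot$ on the boundary (the factor $\rho^n$ rather than $\rho^{n-2}$ being exactly the reconciliation of $e = \Phi_*g - b = \rho^{-2}(\gbar - \delta) = \rho^{n-2}\ebar$), one computes $\scal^g$ via the conformal transformation law. The scalar curvature of $\rho^{-2}\gbar$ is $\scal^g = -n(n-1)|d\rho|_{\gbar}^2 + \text{(terms with more }\rho\text{)}$, and $|d\rho|_\gbar^2 = 1$ on the boundary to high order — the transversality condition $\ebar_{ij}x^i = 0$ is precisely the statement that $|d\rho|^2_{\gbar} = 1 + O(\rho^{n+1})$, whence $\scal^g + n(n-1) = O(\rho^{n+1})$. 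I would present this as the main computation, carefully tracking which order of vanishing of $\gbar - \delta$ and of its radial contraction is needed. Once the pointwise bound $\scal^g + n(n-1) = O(\rho^{n+1})$ is established, the $L^1_1$ membership is immediate: $\rho^{-1}(\scal^g + n(n-1)) = O(\rho^n)$ and $\int_{\bH^n} \rho^n \, d\mu^b < \infty$ since the volume element blows up only like $\rho^{-n}$, so the integrand is $O(1)$ times an integrable weight near the boundary — more carefully, $\rho^n \cdot \rho^{-n} = 1$ is not integrable, so one needs $\rho^{n+1-1} = \rho^n$ against $d\mu^b \sim \rho^{-n} d\mu^{\text{eucl}}$, giving $\int \rho^0 \, d\mu^{\text{eucl}}$, which converges on a bounded region; the compact set $K'$ handles the interior.

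\textbf{The main obstacle} will be bookkeeping the exact order of vanishing in the divergence term $D_i[g^{jl}g^{ik}(D_j e_{kl} - D_k e_{jl})]$: a priori a derivative $D_i$ applied to something that is $O(\rho^{n-2})$ could be only $O(\rho^{n-3})$, which would be hopeless, so one genuinely must see the cancellation. The resolution is that (i) the two interior derivatives $D_j$, $D_k$ do not see the conformal factor degradation because of the antisymmetrization — in hyperbolic geometry, $D\rho$ is essentially ``light-like'' at the boundary in the sense that it makes the relevant contractions vanish to one extra order — and (ii) the transversality condition ensures the leading term drops out entirely. An alternative and perhaps safer presentation is simply to reduce to a known result: this decay of the scalar curvature for Wang-type metrics is essentially in Wang's original paper \cite{WangMass} and in \cite{CortierDahlGicquaud}, so one may state Lemma~\ref{lmStrongerDecay} as following from a direct computation whose details are standard, and cite those references for the conformal-geometry computation of $|d\rho|^2_\gbar = 1 + O(\rho^{n+1})$, which is the precise content of the transversality hypothesis.
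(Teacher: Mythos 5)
Your general plan is correct and mirrors the paper's: start from Proposition~\ref{propScalar1}, bound the quadratic remainder $\cQ(e, De)$ (indeed $|e|_b = O(\rho^n)$ and $|De|_b = O(\rho^n)$ under Wang's asymptotics, so $\cQ = O(\rho^{2n})$, even better than your estimate), and show a cancellation of the leading $\rho^n$ contributions in the linear part $(n-1)\tr(e) + \divg\divg(e) - \Delta\tr(e)$. You also correctly identify the role of the transversality condition $\ebar_{ij}x^j = 0$ and the $L^1_1$ conclusion is handled correctly. The paper carries out exactly the cancellation you describe, but by a different bookkeeping: it computes $\tr(e) = \rho^n\tr_\delta\ebar$, $\Delta\tr(e) = n\rho^n\tr_\delta\ebar + O(\rho^{n+1})$ (via the conformal transformation law for the Laplacian), and $\divg\divg(e) = \rho^n\tr_\delta\ebar + O(\rho^{n+1})$ (via integration by parts and the conformal transformation law for the Hessian, using transversality twice), and then observes that $(n-1) + 1 - n = 0$.

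However, your proposed mechanism for the cancellation --- the claim that the transversality condition is ``precisely the statement that $|d\rho|^2_\gbar = 1 + O(\rho^{n+1})$'' --- is incorrect. With $\rho = \frac{1-|x|^2}{2}$ one has $|d\rho|^2_\delta = |x|^2 = 1 - 2\rho$, so even for $\gbar = \delta$ we have $|d\rho|^2_\gbar = 1 - 2\rho$, off from $1$ at order $\rho$. What transversality actually gives is $|d\rho|^2_\gbar = |x|^2 + O(\rho^{2n})$ (the $\rho^n\ebar^{ij}x^ix^j$ term vanishes). If you pursue the conformal-compactification route via $\scal^g = \rho^2\scal^\gbar + 2(n-1)\rho\Delta^\gbar\rho - n(n-1)|d\rho|^2_\gbar$, the term $n(n-1)(1 - |d\rho|^2_\gbar)$ contributes $2n(n-1)\rho + O(\rho^{2n})$, which is exactly cancelled at order $\rho$ by the $-2n(n-1)\rho$ in $2(n-1)\rho\Delta^\gbar\rho$, and the genuine work at order $\rho^n$ is a three-way cancellation between $\rho^2\scal^\gbar \approx -n(n-1)\rho^n\tr_\delta\ebar$, the $\rho^n$-correction in $2(n-1)\rho\Delta^\gbar\rho \approx +n(n-1)\rho^n\tr_\delta\ebar$, and the $O(\rho^{2n})$ in $|d\rho|^2_\gbar$, each of which requires transversality to be computed. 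So the route you sketch is salvageable, but the key step as you wrote it would not compile to a proof, and deferring it to ``standard computations'' in the references does not excuse the incorrect statement of the decisive fact.
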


\begin{proof}
Combining Equations \eqref{eqScalarVar1} and \eqref{eqScalarVar2}, we have
\begin{equation}\label{eqScalarVar3}
\scal^g + n(n-1) = (n-1) \tr(e) + \divg(\divg(e)) - \Delta \tr(e) + \hot
\end{equation}
We now analyze the asymptotic behavior of the leading terms in \eqref{eqScalarVar3}. In the course of the proof, all quantities appearing with an overline are defined with respect to the Euclidean metric $\delta = \rho^2 b$.
First note that $e = \rho^{n-2} \ebar$, hence 
\[
\tr(e) = \rho^n \tr_\delta(\ebar).
\]
Using the conformal transformation law for the Laplacian (see e.g. \cite[Equation 1.1.15] {GicquaudThesis}) and the fact that $|d\rho|_\delta = 1 + O(\rho)$, we thereby obtain
\begin{align*}
\Delta \tr(e)
&= \rho^2 \left[\Deltabar (\rho^n \tr_\delta(\ebar)) - (n-2) \left\<\frac{d\rho}{\rho}, d (\rho^n \tr_\delta(\ebar))\right\>_\delta \right]\\
&= \rho^2 (\Deltabar \rho^n) \tr_\delta(\ebar)- (n-2)\rho^2 \left\<\frac{d\rho}{\rho}, d \rho^n \right\>_\delta \tr_\delta(\ebar) + O(\rho^{n+1})\\
&= \rho^2 \left(n \rho^{n-1} \Deltabar \rho + n(n-1) \rho^{n-2} |d\rho|^2_\delta\right) \tr_\delta(\ebar)\\
&\qquad \qquad - n(n-2) \rho^n |d\rho|^2_\delta \tr_\delta(\ebar) + O(\rho^{n+1})\\
&= n(n-1) \rho^n \tr_\delta(\ebar) - n(n-2) \rho^n \tr_\delta(\ebar) + O(\rho^{n+1})\\
&= n \rho^n \tr_\delta(\ebar) + O(\rho^{n+1}).
\end{align*}
The asymptotics of $\divg(\divg(e))$ is slightly harder to compute as the conformal transformation law for the double divergence is more intricate. Instead, we choose an arbitrary compactly supported function $f$ and integrate by parts to compute 
\begin{align*}
&\int_{\bH^n} f \divg(\divg(e)) d\mu^b\\
&\qquad= \int_{\bH^n} \left\<\hess f, e\right\> d\mu^b\\
&\qquad = \int_{\bH^n} \left\<\hess f, e\right\>_\delta \rho^{4-n} d\mu^\delta\\
&\qquad = \int_{\bH^n} \left\<\hessbar f + \frac{d\rho}{\rho} \otimes df + df \otimes \frac{d\rho}{\rho} - \delta\left(\frac{d\rho}{\rho}, df\right) \delta, \rho^{n-2} \ebar\right\>_\delta \rho^{4-n}d\mu^\delta,
\end{align*}
where we used the conformal transformation law for the Hessian \cite[Equation 1.1.14] {GicquaudThesis}). Note that $\ebar$ satisfies the transversality condition, hence
\[
\left\<\frac{d\rho}{\rho} \otimes df, \ebar\right\>_\delta = 0.
\]
Thus,
\begin{align*}
\int_{\bH^n} f \divg(\divg(e)) d\mu^b
&= \int_{\bH^n} \left\<\hessbar f - \delta\left(\frac{d\rho}{\rho}, df\right) \delta, \ebar\right\>_\delta \rho^2 d\mu^\delta\\
&= \int_{\bH^n} \left[\left\<\hessbar f, \rho^2 \ebar\right\>_\delta - \left\<df, \tr_\delta(\ebar) \rho d\rho\right\> \right] d\mu^\delta\\
&= \int_{\bH^n} f \left[ \overline{\divg}(\overline{\divg} (\rho^2 \ebar)) + \overline{\divg}\left(\tr_\delta(\ebar) \rho d\rho\right) \right] \rho^n  d\mu^b.
\end{align*}
As $f$ is arbitrary, we conclude that
\begin{align*}
\divg(\divg(e))
&= \rho^n \left[\overline{\divg}(\overline{\divg} (\rho^2 \ebar)) + \overline{\divg}\left(\tr_\delta(\ebar) \rho d\rho\right)\right]\\
&= \rho^n \tr_\delta(\ebar) \overline{\divg}\left(\rho d\rho\right) + O(\rho^{n+1})\\
&= \rho^n \tr_\delta(\ebar) |d\rho|^2_\delta + O(\rho^{n+1})\\
&= \rho^n \tr_\delta(\ebar) + O(\rho^{n+1}),
\end{align*}
where we used once again the transversality of $\ebar$ to pass from the first line to the second one. All in all, we see that
\[
\scal^g + n(n-1) = O(\rho^{n+1}),
\]
where we used the fact that the higher order terms are $O(\rho^{2n})$ so they contribute only at a much higher order degree. The conclusion that $\scal^g + n(n-1) \in L^1_1(M, \bR)$ follows from \cite[Lemma 3.6(b)]{LeeFredholm}.
\end{proof}

\begin{lemma}\label{lmChangeOfFunction}
Under the assumptions of Proposition \ref{propMassCH}, we have $p(e, V) = 0$ for all  $V \in C^1_1(\bH^n, \bR)$.
\end{lemma}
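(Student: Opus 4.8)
The plan is to show directly that, once $V$ decays, the integrand in the defining formula \eqref{eqChargeCH} produces no boundary contribution, so that each integral there tends to zero. Since by Proposition \ref{propMassCH} the limit \eqref{eqChargeCH} is independent of the chosen cutoff sequence, I fix one sequence $(\chibar_k)_k$ as in that statement and write $I_k(V)$ for the integral inside the limit. The first observation is a pointwise bound: every term of the integrand of $I_k(V)$ is a product of one factor taken from $\{V,\ dV\}$, one factor taken from $\{\divg(e)-d\tr(e),\ \tr(e),\ e\}$, and the factor $D\chibar_k$, so that
\[
|I_k(V)| \ \lesssim\ \int_{\bH^n} |D\chibar_k|\,\bigl(|V|\,|De| + |dV|\,|e|\bigr)\, d\mu^b .
\]
Note that no second derivatives of $\chibar_k$ or of $V$ occur, so the argument only uses $C^1$-information on $V$, consistently with the hypothesis $V \in C^1_1$.

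Next I would insert the quantitative hypotheses. Because $V \in C^1_1(\bH^n,\bR)$ we have $|V| + |dV| \lesssim \rho$; because $(\chibar_k)_k$ has uniformly bounded $C^{0,1}_0$-norm we have $|D\chibar_k| \lesssim 1$; and because $D\chibar_k$ is supported away from $\Omega_k=\chibar_k^{-1}(1)$ while the $\Omega_k$ increase to $\bH^n$, the support of $D\chibar_k$ escapes every compact set. Hence
\[
|I_k(V)| \ \lesssim\ \int_{\bH^n\setminus\Omega_k} \rho\,\bigl(|De| + |e|\bigr)\, d\mu^b .
\]
It then suffices to show $\rho\,(|De| + |e|) \in L^1(\bH^n, d\mu^b)$: the sets $\bH^n\setminus\Omega_k$ decrease to a set of measure zero, so dominated convergence forces the right-hand side to $0$, whence $p(e,V) = \lim_k I_k(V) = 0$ (this also re-proves that the limit exists for $V \in C^1_1$). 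For the integrability I would split $\rho\,|De| = \rho^{1+\tau}\cdot \rho^{-\tau}|De|$ and apply Cauchy--Schwarz, using that $\rho^{-\tau}|De|$ and $\rho^{-\tau}|e|$ lie in $L^2(\bH^n,b)$ by Definition \ref{defAH}; to keep the passage to the limit (and the borderline weights) under control, the cleanest implementation decomposes $\bH^n\setminus\Omega_k$ into the geodesic annuli $A_j = \{\, j \le \dist(\cdot,0) < j+1\,\}$, on which $\rho \approx e^{-j}$ and $\vol_b(A_j) \approx e^{(n-1)j}$, estimates each $\int_{A_j}\rho(|De|+|e|)\, d\mu^b$ by Cauchy--Schwarz, and sums the resulting series against $\sum_j \bigl(\|\rho^{-\tau}De\|_{L^2(A_j)}^2 + \|\rho^{-\tau}e\|_{L^2(A_j)}^2\bigr) < \infty$.

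The step I expect to be the main obstacle is this last tail estimate: verifying that the $O(\rho)$-decay of $V$, combined with the weighted $L^2$-control on $e$ and $De$ coming from $\tau \ge 1/2$, is enough to make $\int_{\bH^n\setminus\Omega_k}\rho(|De|+|e|)\, d\mu^b \to 0$. It is the linear-in-$e$ counterpart of the quadratic-in-$e$ tail estimate used in the proof of Proposition \ref{propMassCH} to kill the term \eqref{eqDiffCharge0}, and it is where the fall-off order of $g$ must be used carefully; the remaining ingredients --- the pointwise bound on the integrand, the uniform bound on $|D\chibar_k|$, the localization of its support, and the final dominated-convergence argument --- are routine.
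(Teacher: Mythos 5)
There is a genuine gap, and it sits exactly where you flagged it. The pointwise bound
\[
|I_k(V)| \ \lesssim\ \int_{\bH^n\setminus\Omega_k} \rho\,\bigl(|De| + |e|\bigr)\, d\mu^b
\]
is correct, but the integrability claim $\rho\,(|De|+|e|) \in L^1(\bH^n,d\mu^b)$ does \emph{not} follow from $e, De \in L^2_\tau$ with $\tau \geq \tfrac12$. Cauchy--Schwarz forces you to pair $\rho^{-\tau}|De| \in L^2$ against $\rho^{1+\tau}$, and
\[
\int_{\bH^n} \rho^{2+2\tau}\, d\mu^b \ \sim\ \int_0^\infty e^{-(2+2\tau)r}\, e^{(n-1)r}\, dr
\]
converges only when $\tau > \tfrac{n-3}{2}$. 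For $\tau = \tfrac12$ this is $n < 4$, so the estimate fails in every dimension $n \geq 4$. The same obstruction hits the $\rho\,|e|$ term. Comparing to the quadratic tail estimate that disposes of \eqref{eqDiffCharge0}: there one controls $\rho^{-1}|e|^2$, which is integrable directly from the definition of $L^2_{1/2}$, whereas a linear term $\rho\,|e|$ (or $\rho\,|De|$) simply isn't controlled by the same norm. So the ``last tail estimate'' you singled out as the main obstacle is indeed where the argument breaks.

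The paper sidesteps this by performing two further integrations by parts on the charge integral (this is Equation \eqref{eqChargeAspect3}), which transfers \emph{all} derivatives from $e$ onto $\chibar_k$. The resulting integrand contains no $De$ at all, only $|e|$ multiplied by $V$, $DV$ and up to second derivatives of $\chibar_k$. One then picks a cutoff sequence bounded in $C^{2,0}_0$ (legitimate, since the limit is independent of the sequence) and needs only $\rho\,|e| \in L^1$, i.e.\ $e \in L^1_{-1}$. That integrability does not come from $\tau \geq \tfrac12$ either; it comes from the hypothesis $e \in L^1_\delta$, $\delta \in [-1,1]$, of Proposition~\ref{propMassAspect}. (The statement of Lemma~\ref{lmChangeOfFunction} cites Proposition~\ref{propMassCH}, but its proof visibly uses the $L^1_\delta$ hypothesis; the contexts in which the lemma is invoked, e.g.\ the proof of Proposition~\ref{propMassAspect2}, do supply it.) The point to take away: your plan keeps first derivatives of $e$ in the integrand, and no admissible amount of weighted $L^2$-control on $De$ will make $\rho\,|De|$ integrable in general. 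You must integrate by parts to eliminate $De$ entirely, and then invoke the $L^1$-control on $e$ rather than the $L^2$-control.
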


\begin{proof}
We note that, using two integrations by parts, we can rewrite the charge integral \eqref{eqChargeAspect} as follows:
\begin{align}
p(e, V)
& = \lim_{k \to \infty} \int_{\bH^n} \left[V(\divg(e) - d\tr(e))(-D\chibar_k) + \tr(e) dV(-D\chibar_k) - e(DV, -D\chibar_k)\right] d\mu^b\nonumber                                  \\
& = \lim_{k \to \infty} \int_{\bH^n} \left[V b(e, \hess \chibar_k) + e(DV, D\chibar_k) - \tr(e)(b(DV, D\chibar_k) + V \Delta \chibar_k)\right.\nonumber                           \\
& \qquad\qquad \left. + \tr(e) dV(-D\chibar_k) - e(DV, -D\chibar_k)\right] d\mu^b\nonumber\\
& = \lim_{k \to \infty} \int_{\bH^n} \left[V b(e, \hess \chibar_k) + 2 e(DV, D\chibar_k) - \tr(e)(2 b(DV, D\chibar_k) + V \Delta \chibar_k)\right] d\mu^b.\label{eqChargeAspect3}
\end{align}
This new formula for the charge integral is interesting as we have no derivatives of $e$ appearing so we can exploit the assumption $e\in L^1_\delta(\bH^n, S_2\bH^n) \subset L^1_{-1}(\bH^n, S_2\bH^n)$. In particular, if we choose the sequence $(\chibar_k)_k$ so that it is bounded in $C^{2, 0}_0(\bH^n, \bR)$, then, for some constant $A > 0$,  we have
\begin{align*}
\left|p(e, V)\right|
& \leq A \|\chibar_k\|_{C^2(\bH^n, \bR)} \lim_{k \to \infty} \int_{\supp(D\chibar_k)} |e| (V + |DV|) d\mu^b                     \\
& \leq A \|\chibar_k\|_{C^2(\bH^n, \bR)} \|V\|_{C^1_1(\bH^n, \bR)} \lim_{k \to \infty} \int_{\supp(D\chibar_k)} \rho |e| d\mu^b.
\end{align*}
Arguing as in the proof of Proposition \ref{propMassCH}, we get that $p(e, V) = 0$.
\end{proof}

This lemma shows that if $V_1$ and $V_2$ are such that $V_1-V_2 \in C^1_1(\bH^n, \bR)$, we have
\[
    p(e, V_1) - p(e, V_2) = p(e, V_1-V_2) = 0
\]
as long as $p(e, V_1)$ and $p(e, V_2)$ are well defined. This allows us to remove the restriction imposed by the fact that we chose to work with eigenfunctions of the Laplacian as ``test functions''.

\begin{proof}[Proof of Proposition \ref{propMassAspect2}]
Note that Lemma \ref{lmStrongerDecay} shows that metrics satisfying Wang's asymptotic conditions fall into the class of asymptotically hyperbolic manifolds described in Propositions \ref{propMassAspect} for any $\delta \in [-1, 1)$. In particular, the mapping $P(e, \cdot)$ introduced in Proposition \ref{propContinuity} is well-defined and independent of the choice of the family of cutoff functions $\chibar_k$. Consequently, we can work with a specific choice of the family $\chibar_k$ that will be covenient for us. Namely, we set $\chibar_k = \chi(r - k)$ for $k$ large enough where $\chi: \bR \to [0, 1]$ is a cutoff function such that $\chi(r) = 1$ for $r \leq 0$ and $\chi(r) = 0$ for $r \geq 1$. Here, as before, $r$ is the geodesic distance to the origin of the hyperbolic space.

Next we choose an arbitrary function $v \in C^{2, \alpha}(\bS_1(0), \bR)$ for some $\alpha \in (0, 1)$, and set $V = \cD(v)$. Note that Lemma \ref{lmChangeOfFunction} shows that
\[
p(e, V) = p(e, \Vtil)
\]
for any function $\Vtil$ such that $\Vtil - V \in C^1_1(\bH, n, \bR)$, so it suffices to evaluate the limit \eqref{eqChargeAspect3} on some ``good enough'' approximation $\Vtil$ of $V$. As suggested at the beginning of Section \ref{secEstimates} (see Equation \eqref{eqApproximateEigenfunction}), we define the function $\Vtil$ in the ball model of the hyperbolic space by 
\[
\Vtil(x) = \chi_0(x) V^0(x) v\left(\frac{x}{|x|}\right),
\]
where $\chi_0$ is as in the proof of Proposition \ref{propMassCH} and $V^0 = \cosh(r)$ is the function introduced in Proposition \ref{propLapse}. In what follows, we shall denote $v(x) = v\left(\frac{x}{|x|}\right)$ as a shorthand. Note that $\Vtil = V^0(x) v(x)$ outside some compact set. Consequently, near infinity we have
\begin{align*}
\Delta \Vtil
&= \Delta \left[V^0(x) v(x)\right]\\
&= v(x) \Delta V^0(x) + 2\<dV^0, dv\> + V^0(x) \Delta v(x)\\
&= n v(x) V^0(x) + O(\rho).
\end{align*}
In order to pass from the second line to the third line, we used the facts that $V^0$ solves $\Delta V^0 = n V^0$, that $V^0 = V^0(r)$ is a radial function while $v$ does not depend on $r$ so $\<dV^0, dv\> = 0$ and, finally, that by the conformal transformation law for the Laplacian we have $\Delta v(x) = \rho^2 \Deltabar v(x) = O(\rho^2)$ as $\<d\rho, dv\> = 0$.
Hence, with some more effort (see \cite[Lemma 3.1]{LeeFredholm}), we see that
\[
- \Delta (\Vtil - V) + n (\Vtil - V) \in C^{0, \alpha}_1(\bH^n, \bR).
\]
From \cite[Theorem (C)]{LeeFredholm}, we conclude that $\Vtil - V \in C^{2, \alpha}_1(\bH^n, \bR)$ showing that 
\[
P(e, v) = p(e, V) = p(e, \Vtil),
\]
as we have claimed. Thus, it only remains to compute $p(e, \Vtil)$.

Recall that in polar coordinates, the hyperbolic metric reads $b = dr^2 + \sinh(r)^2 \sigma$ where $\sigma$ is the round metric on $\bS_1(0)$. For computing $p(e,\Vtil)$, we use Formula \eqref{eqChargeAspect3}. First, we note that, outside some large compact subset,
\[
\begin{aligned}
\hess \chibar_k &= \chi_k'' dr \otimes dr + \coth(r) (b - dr \otimes dr) \chi_k',\\
\Delta \chibar_k &= \chi_k'' + (n-1) \coth(r)(r) \chi_k',
\end{aligned}
\]
where $\chi_k''$ (resp. $\chi_k'$) is a shorthand for $\chi''(r-k)$ (resp. $\chi'(r-k)$) and $\coth(r) (b - dr \otimes dr)$ is the second fundamental form of $S_r(0)$. Hence, recalling that $e$ satisfies the transversality condition $e(Dr, \cdot) = 0$, Equation \eqref{eqChargeAspect3} implies
\begin{align*}
p(e, \Vtil)
&= \lim_{k \to \infty} \int_{\bH^n} \left[\Vtil b(e, \hess \chibar_k) + 2 e(D\Vtil, D\chibar_k) - \tr(e)( 2 b(D\Vtil, D\chibar_k) + \Vtil \Delta \chibar_k)\right] d\mu^b\\
&= \lim_{k \to \infty} \int_{\bH^n} \left[\Vtil \coth(r) \chi_k' - ( 2 v \sinh(r) \chi'_k + \Vtil (\chi_k'' + (n-1) \coth(r) \chi_k'))\right] \tr(e) d\mu^b\\
&= \lim_{k \to \infty} \int_{\bH^n} \left[\frac{\cosh(r)^2}{\sinh(r)} \chi_r' - (2 \sinh(r) \chi'_k + \cosh(r) (\chi_k'' + (n-1) \coth(r) \chi_k'))\right] v \tr(e) d\mu^b\\
&= \lim_{k \to \infty} \int_{\bH^n} \left[\left(-(n-2) \frac{\cosh(r)^2}{\sinh(r)} - 2 \sinh(r)\right) \chi'_k + \cosh(r) \chi_k'' \right] v \tr(e) d\mu^b.
\end{align*}
Note that $\tr(e) = \rho^n \tr_\delta(\ebar) = \rho^n(m + O(\rho))$. In order to compute the previous limit, we now change to polar coordinates, as described above, performing the integration  with respect to $r$ first and then integrating with respect to the spherical coordinates. We note the expression for the volume form, $d\mu^b = \sinh(r)^{n-1} dr d\mu^\sigma$ and recall that $\rho = \frac{1}{\cosh(r)+1}$. This gives
\begin{align*}
& p(e, \Vtil)\\
&= \lim_{k \to \infty} \int_{\bS_1(0)} v \int_{r=k}^{r=k+1} \tr(e) \left[\left(-(n-2) \frac{\cosh(r)^2}{\sinh(r)} - 2 \sinh(r)\right) \chi'_k + \cosh(r) \chi_k'' \right] \sinh(r)^{n-1} dr d\mu^b\\
&= \lim_{k \to \infty} \int_{\bS_1(0)} v \int_{r=k}^{r=k+1} (m + O(\rho)) \left[\left(-(n-2) \frac{\cosh(r)^2}{\sinh(r)} - 2 \sinh(r)\right) \chi'_k + \cosh(r) \chi_k'' \right] \frac{\sinh(r)^{n-1}}{(\cosh(r) + 1)^n} dr d\mu^b\\
&= \int_{\bS_1(0)} m v\left[\lim_{k \to \infty} \int_{r=k}^{r=k+1} \left[\left(-(n-2) \frac{\cosh(r)^2}{\sinh(r)} - 2 \sinh(r)\right) \chi'_k + \cosh(r) \chi_k'' \right] \frac{\sinh(r)^{n-1}}{(\cosh(r) + 1)^n} dr\right] d\mu^b\\
&= \int_{\bS_1(0)} m v \int_{r=k}^{r=k+1} \left(-n\chi'_k + \chi_k'' \right) dr d\mu^b.
\end{align*}
The radial integral can be easily computed:
\begin{align*}
\int_{r=k}^{r=k+1} \left(-n\chi'_k + \chi_k'' \right) dr
= -n [\chi_k(r)]_{r=k}^{r=k+1} + [\chi'_k(r)]_{r=k}^{r=k+1}
= n.
\end{align*}
All in all, we have proven that
\[
P(e, v) = p(e, \Vtil) = n \int_{\bS_1(0)} m d\mu^\sigma.
\]
All that remains is to remove the assumption that $v \in C^{2, \alpha}(\bS_1(0), \bR)$ so that the result is valid for all $v \in C^{0, \epsilon}(\bS_1(0), \bR)$. But this follows at once from Proposition \ref{propContinuity} and the fact that $C^{2, \alpha}(\bS_1(0), \bR)$ is dense in $C^{0, \epsilon}(\bS_1(0), \bR)$.
\end{proof}

We finally make a few remarks on the regularity assumptions we used in this section regarding the scalar curvature of $g$:
\begin{remark}\label{rkRegScalar}
For simplicity, throughout this section we have assumed that $\scal + n(n-1) \in L^1_1(\bH^n, \bR)$. We would like to point out that it is possible to relax this assumption allowing for more general local behavior where the scalar curvature could be ``rough" locally while still behaving like an element of $L^1_1(\bH^n, \bR)$ globally. More precisely, inspecting the above proofs we find that all that is needed for the results of this section to hold are the following assumptions:
\begin{enumerate}
\item Integration against $\scal + n(n-1)$ should be a continuous mapping from $C^l_{-1}(\bH^n, \bR)$ to $\bR$ for some integer $l \geq 0$. (Note that the solutions to $\Delta V = n V$ with continuous boundary data belong to $C^l_{-1}(\bH^n, \bR)$ for any integer $l \geq 0$ by elliptic regularity).
\item For some well-chosen sequence of cutoff functions $\chibar_k$ with increasing support and tending to $1$ (we used such a sequence in Proposition \ref{propMassCH}), we have
\[
    \int_{\bH^n \setminus K'} \chi_0 \chibar_k V (\scal^g + n(n-1)) d\mu^b \to \int_{\bH^n \setminus K'} \chi_0 V (\scal^g + n(n-1)) d\mu^b
\]
for all functions $V \in C^l_{-1}(\bH^n, \bR)$, where $\chi_0: \bH^n \to \bR$ is a cutoff function that vanishes near $K'$ and equals $1$ outside of a larger compact subset (again, see the proof of Proposition \ref{propMassCH}).
\end{enumerate}

A general yet simple description of the natural class of distributions to which $\scal+n(n-1)$ should belong for these two assumptions to be satisfied is not easy to find. In what follows we provide  a non-technical description of some particular cases of interest. The reader is referred to \cite{Friedman} and \cite{KrantzParks} for relevant background concerning measure theory.

For instance, generalizing slightly the assumptions of Lee and LeFloch in \cite{LeeLeFloch} one could require that $\nu = \scal + n(n-1)$ be a signed measure whose weighted total variation $\displaystyle \int_{\bH^n} \rho^{-1} d|\nu|$ is finite (here $|\nu|$ denotes the total variation of $\nu$). In this case the first assumption is fulfilled since integration against $\scal+n(n-1)$ is continuous as a mapping from $C^0_{-1}(\bH^n, \bR)$ to $\bR$. The second assumption is fulfilled as a consequence of the dominated convergence theorem.

However, this is not the most general possible  assumption we can make. For example, we could more generally  assume that $\scal+n(n-1)$ can be written as the sum of a signed measure and the total divergence of a vector-valued Borel measure of finite weighted total variation, that is
\[
    \scal + n(n-1) = \nu^{(0)} + D^i \nu^{(1)}_i,
\]
with
\[
    \int_{\bH^n} \rho^{-1} d|\nu^{(0)}|< \infty \text{ and } \int_{\bH^n} \rho^{-1} d|\nu^{(1)}| < \infty.
\]
In this case for any $V \in C^l_{-1}(\bH^n, \bR)$ and $\chi_k=\chi_0 \chibar_k$, we have
\begin{align}
        & \int_{\bH^n} \chi_k V (\scal + n(n-1)) d\mu^g                                                                                                                                           \\
        & \qquad \definedas \int_{\bH^n} \chi_k V  d\nu^{(0)} - \int_{\bH^n} b\left(D (\chi_k V),  d\nu^{(1)}\right)\nonumber                                                                   \\
        & \qquad = \int_{\bH^n} \chi_k V  d\nu^{(0)} - \int_{\bH^n} b\left( \chi_k D V,  d\nu^{(1)}\right) - \int_{\bH^n} b\left( V D \chi_k,  d\nu^{(1)}\right),\label{eqDefinitionOfMeasure}
\end{align}
where $\displaystyle \int_{\bH^n} b\left(X,  d\nu^{(1)}\right)$ denotes the integration of the vector field $X$ against the measure $\nu^{(1)}$. To be more precise, we can write $\displaystyle \nu^{(1)} = \frac{d\nu^{(1)}}{d|\nu^{(1)}|} |\nu^{(1)}|$, where $\displaystyle \frac{d\nu^{(1)}}{d|\nu^{(1)}|}$ denotes the Radon-Nikodym derivative, and define
\[
    \int_{\bH^n} b\left(X,  d\nu^{(1)}\right)
    = \int_{\bH^n} b\left(X,  \frac{d\nu^{(1)}}{d|\nu^{(1)}|}\right) d|\nu^{(1)}|.
\]
We claim that the above assumptions 1 and 2 are fulfilled in this case. More precisely, the terms that do not involve derivatives of $\chi_k$ in \eqref{eqDefinitionOfMeasure} converge by the dominated convergence theorem:
\[
    \int_{\bH^n} \chi_k V d\nu^{(0)}
    + \int_{\bH^n} b\left(\chi_k D V, d\nu^{(1)}\right)
    \to_{k \to \infty}
    \int_{\bH^n} \chi_0 V  d\nu^{(0)} + \int_{\bH^n} b\left(\chi_0 D V, d\nu^{(1)}\right)
\]
as we have, for example, 
\[\displaystyle \left|b\left(\chi_k D V, \frac{d\nu^{(1)}}{d|\nu^{(1)}|}\right)\right| \leq |D^{(1)} V| \leq \rho^{-1} \|V\|_{C^{1,0}_{-1}}\] 
and
\[
    \int_{\bH^n} \rho^{-1} \|V\|_{C^{1,0}_{-1}} \,  d|\nu^{(1)}| < \infty
\]
by our assumption. By a similar reasoning, 
\[
\int_{\bH^n} b\left( V D \chi_k, d\nu^{(1)}\right) \to \int_{\bH^n} b\left( V D \chi_0, d\nu^{(1)}\right)
\]
as well, as $|D \chibar_k|$ converges to $0$ a.e.. This shows that the second assumption is satisfied. Using similar arguments it is straightforward to verify that the first one is satisfied as well.
\end{remark}

\subsection{A definition based on the Ricci tensor}\label{secRicci}
In this section we discuss how the approach of Section \ref{secADM} applies to a different definition of mass, based on the Ricci tensor and conformal Killing vector fields, yielding an alternative definition of mass aspect function whenever sufficient regularity is assumed. For the background on this definition and its relation to the definition of Chru\'sciel and Herzlich as in \eqref{eqChargeCH2}, see \cite{HerzlichRicciMass}. 

We will assume throughout that $(M,g)$ is asymptotically hyperbolic as in Definition \ref{defAH}. In addition to this, we will assume that $e \in W^{2,2}_{1/2}(\bH^n \setminus K',S_2\bH^n)$ (see Section \ref{secFunctSpaces}). The need for this extra regularity assumption will be clarified later, see in particular \eqref{eqRTerm} below. We also continue to use the conventions introduced in Section \ref{secScalarCurvature}. In particular, let $C>1$ be the constant such that $C^{-1} b \leq g \leq C b$. We remind the reader that the notation $A \lesssim B$ means that there exists a constant $\Lambda > 0$ independent of $A$ and $B$ but depending on the value of $C$ such that $A \leq \Lambda B$.

The modified Einstein tensor of an asymptotically hyperbolic manifold is defined by
\[
\Gtil^g=\ric^g - \tfrac{1}{2} \scal^g g - \tfrac{(n-1)(n-2)}{2}g 
\] 
so that we have $\Gtil^b = 0$. Temporarily ignoring issues related to low regularity of the metric $g$, the definition of the mass using the Ricci tensor is obtained by integrating the second Bianchi identity $\divg^g \Gtil^g = 0$ against conformal Killing vector fields $\fX$ that belong to the span of
\begin{equation}\label{eqCKVF}
\fX^0 = D V^0 = x^i \partial_i \quad \text{and} \quad \fX^i = D V^i = \rho \partial_i + x^i x^j \partial_j, \quad i=1, \ldots, n,
\end{equation}
where $V^\mu$,  $\mu=0,1, \ldots, n$, are as in \eqref{eqLapses}. A straightforward computation shows that $|\fX^\mu|, |D\fX^\mu|=O(\rho^{-1})$, and, as the functions $V^\mu$ are eigenfunctions of the Laplacian, it also follows that $\divg \fX^\mu= n V^\mu$ for $\mu=0,1, \ldots, n$. This fact can be used to relate the definition of mass emerging as the boundary term at infinity in the aformentioned integration by parts,  
\begin{equation}\label{eqChargeRicciSmooth}
 \ptil(e,\fX) = \lim_{r \to \infty} \int_{S_r(0)}  \Gtil^g(\fX, \nu) d\mu^b,
\end{equation}
to the definition \eqref{eqChargeCH2}, see \cite{HerzlichRicciMass}. 

Our first goal in this section will be to obtain the analogue of the formula \eqref{eqChargeRicciSmooth} using cutoff functions.  To begin with,  we let $\fX$ be a smooth vector field on $M \setminus K$ (identified with $\bH^n\setminus K'$ as described above) and we let $\chi_k$ be a family of cutoff functions supported in $\bH^n \setminus K'$, both to be specified later. In this case, we have 
\begin{equation*}
\begin{split}
0 = & \int_{\bH^n} \divg^g  \left( \chi_k\Gtil^g (\fX, \cdot)\right) d\mu^g
\\ = &  \int_{\bH^n} \chi_k \tr ^g \tr ^g (\Gtil^g \otimes \nabla \fX) \, d\mu^g +  \int_{\bH^n} \Gtil^g (\nabla \chi_k, \fX)  \, d\mu^g 
\end{split}
\end{equation*}
so that 
\begin{equation}\label{eqWeakBianchi}
\int_{\bH^n} \Gtil^g (\nabla \chi_k, \fX)  \, d\mu^g = -\int_{\bH^n} \chi_k \tr^g \tr^g(\Gtil^g \otimes \nabla \fX) \, d\mu^g.
\end{equation}

Equation \eqref{eqWeakBianchi} is the only consequence of the second Bianchi identity we shall use in the sequel. To establish it, we required the metric $g$ to be smooth but the reader can check that this formula continues to hold even if $g$ is merely in $W^{2, 2}_{loc}(\bH^n \setminus K',S_2\bH^n) \cap L^\infty(\bH^n \setminus K',S_2\bH^n)$ by considering a sequence of smooth metrics $(g_k)_k$ converging to $g$. See also \cite{BurtscherKiesslingTahvildarZadeh} for more on weak second Bianchi identity. 

We will now compute the Taylor expansions at $b$ of all quantities that are involved in \eqref{eqWeakBianchi}. In the course of the calculations, we will encounter cubic terms that are not, a priori, controlled by the fact that $g \in W^{2, 2}_{1/2}(\bH^n \setminus K', S_2\bH^n)$. The following lemma enables us to control these terms, in the view of our assumption that $g \in L^\infty(\bH^n \setminus K', S_2 \bH^n)$:

\begin{lemma}\label{lmGagliardo}
Under the assumptions $e \in W^{2, 2}_{1/2}(\bH^n \setminus K', S_2\bH^n)$ and $C^{-1} b \leq g \leq C b$, we have $e \in W^{1, 3}_{1/3}(\bH^n \setminus K', S_2\bH^n)$.
\end{lemma}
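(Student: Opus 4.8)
The plan is to obtain $e\in W^{1,3}_{1/3}$ by interpolating between the two pieces of information available: the regularity $e\in W^{2,2}_{1/2}(\bH^n\setminus K',S_2\bH^n)$ and the $L^\infty$-bound forced by the pinching $C^{-1}b\le g\le Cb$. Indeed, the eigenvalue computation carried out around \eqref{eqEFnorm} shows that $|e|_b$ is bounded by a constant depending only on $n$ and $C$, i.e. $e\in L^\infty_0(\bH^n\setminus K',S_2\bH^n)$. Since the $W^{1,3}_{1/3}$-norm is, up to equivalence, $\big(\int\rho^{-1}|e|^3\,d\mu^b\big)^{1/3}+\big(\int\rho^{-1}|De|^3\,d\mu^b\big)^{1/3}$, the first term is harmless: $\int\rho^{-1}|e|^3\,d\mu^b\le\|e\|_{L^\infty_0}\int\rho^{-1}|e|^2\,d\mu^b<\infty$ because $e\in W^{2,2}_{1/2}\subset L^2_{1/2}$. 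For the second term, I would split $\rho^{-1}|De|^3=(\rho^{-1/2}|De|^2)(\rho^{-1/2}|De|)$ and apply the Cauchy--Schwarz inequality, reducing everything to the single claim that $De\in L^4_{1/4}(\bH^n\setminus K',S_2\bH^n)$ — the remaining factor $\int\rho^{-1}|De|^2\,d\mu^b$ being finite already since $e\in W^{1,2}_{1/2}$.

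To prove $De\in L^4_{1/4}$ I would use a local Gagliardo--Nirenberg estimate on balls of unit radius, with constants uniform in the center: this is legitimate because $\bH^n$ is homogeneous, hence of bounded geometry, and because $\rho$ is comparable to $\rho(x)$ on $B_1(x)$ with uniform constants (as $|d\log\rho|_b$ is bounded). Concretely, multiplying $e$ by a fixed cutoff $\chi$ supported in $B_1(x)$ with $\chi\equiv1$ on $B_{1/2}(x)$, performing one integration by parts in $\int|D(\chi e)|^4\,d\mu^b$ and using Cauchy--Schwarz gives the dimension-free bound $\|D(\chi e)\|_{L^4}^2\lesssim\|\chi e\|_{L^\infty}\|D^2(\chi e)\|_{L^2}$, whence, by the Leibniz rule and $\|e\|_{L^\infty}\lesssim1$,
\[
\|De\|_{L^4(B_{1/2}(x))}^4\lesssim\|D^2e\|_{L^2(B_1(x))}^2+\|De\|_{L^2(B_1(x))}^2+\|e\|_{L^2(B_1(x))}^2 .
\]
Dividing by $\rho(x)$ and summing over a uniformly locally finite cover of $\bH^n\setminus K'$ by such balls (with each enlarged ball $B_1(x)$ contained in $\bH^n\setminus K'$) then yields $\int\rho^{-1}|De|^4\,d\mu^b\lesssim\int\rho^{-1}(|D^2e|^2+|De|^2+|e|^2)\,d\mu^b\le\|e\|_{W^{2,2}_{1/2}}^2<\infty$. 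Together with the reduction of the previous paragraph this proves the lemma; the compact collar near $\partial K'$, where likewise $e\in W^{2,2}\cap L^\infty$, is treated by the same inequality on a fixed bounded smooth domain.

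The step I expect to require the most care is precisely this passage from the local interpolation inequality to the weighted global estimate: one must pick the cover so that the doubled balls $B_1(x)$ both remain inside $\bH^n\setminus K'$ and have bounded overlap, and one must check that $\rho^{-1}$ can be replaced by $\rho(x)^{-1}$ and back on each ball at the cost of a uniform constant. This is routine for $\bH^n$ but deserves to be written out carefully; the dimension-free Gagliardo--Nirenberg inequality itself and the two uses of Hölder's inequality are standard.
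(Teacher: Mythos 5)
Your proposal is correct, but it takes a genuinely different route from the paper. The paper proves $De\in L^3_{1/3}$ directly: for $n\leq 6$ it uses the weighted Sobolev embedding $W^{2,2}_{1/2}\hookrightarrow W^{1,3}_{1/3}$, and for $n\geq 6$ it applies the local Gagliardo--Nirenberg inequality with exponents $p=3$, $j=1$, $m=2$, $q=\infty$, $r=2$, $a=\tfrac{2(n-3)}{3(n-4)}$ and then sums over unit balls, which requires checking $a\in[\tfrac12,1]$ (hence the case split in $n$) and an $\ell^2\hookrightarrow\ell^{3a}$ step because $3a\geq 2$. You instead insert the intermediate claim $De\in L^4_{1/4}$ and reach $L^3_{1/3}$ by the Cauchy--Schwarz interpolation $\int\rho^{-1}|De|^3\leq(\int\rho^{-1}|De|^4)^{1/2}(\int\rho^{-1}|De|^2)^{1/2}$. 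This is cleaner in two respects: the relevant Gagliardo--Nirenberg exponent is now exactly the endpoint $a=\tfrac12$ (i.e.\ $p=4$), which is admissible in \emph{every} dimension, so the case distinction in $n$ disappears; and that endpoint inequality $\|Df\|_{L^4}^2\lesssim\|f\|_{L^\infty}\|D^2f\|_{L^2}$ can be proved by a single integration by parts and Cauchy--Schwarz, avoiding any reference to the full Nirenberg theorem. Moreover, with $a=\tfrac12$ the local estimate already has the same homogeneity as the weight $\rho^{-1}$ (both at power $2$ on the right-hand side), so the summation over the uniformly locally finite cover is a direct bounded-overlap estimate without the auxiliary $\ell^p$ inequality the paper needs. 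The parts you both share — the $L^\infty$ control on $e$ from the pinching, the trivial bound for $\int\rho^{-1}|e|^3$, the comparability of $\rho$ to $\rho(x)$ on unit balls, the uniformly locally finite cover, and the separate treatment of the compact collar near $\partial K'$ — are handled as in the paper; you are right to flag the passage from local to weighted global estimates as the place where care with the cover and overlap constants is needed, but for $\bH^n$ (bounded geometry, $|d\log\rho|_b$ bounded) this is indeed routine.
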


\begin{proof}
By Sobolev embedding \cite[Lemma 3.6 (c)]{LeeFredholm} we know that
\[
W^{2, 2}_{1/2} (\bH^n \setminus K', S_2\bH^n)\hookrightarrow W^{1,3}_{1/2}(\bH^n \setminus K' \subset W^{1,3}_{1/3}(\bH^n \setminus K', S_2\bH^n)
\]
provided that $n\leq 6$. However, to deal with the case $n\geq 7$ we need a different argument. We remark that the assumption that $C^{-1} b \leq g \leq C b$ in Definition \ref{defAH} implies that $e \in L^\infty(\bH^n \setminus K', S_2\bH^n)$ so we can apply the following Gagliardo-Nirenberg inequality \cite[Equation (2.2)]{Nirenberg} on hyperbolic balls $B_1(x)$ of radius $1$ centered at any point $x \in \bH^n$ (see the fifth comment after the statement of the theorem for the case of bounded domains):
\[
\|D^{(j)}e\|_{L^p(B_1(x))} \leq \Lambda \left(\|D^{(m)}e\|_{L^r(B_1(x))}^a \|e\|_{L^q(B_1(x))}^{1-a} + \|e\|_{L^p(B_1(x))}\right),
\]
where $p, q, r \in [1, \infty]$, $\displaystyle a \in \left[\frac{j}{m}, 1\right]$ are such that
\[
\frac{1}{p} = \frac{j}{n} + a \left(\frac{1}{r} - \frac{m}{n}\right) + (1-a) \frac{1}{q}
\]
and the constant $\Lambda$ is independent of $x$.
Here we choose $p = 3$, $j = 1$, $r = 2$, $m=2$ and $q = \infty$ so that $\displaystyle a = \frac{2(n-3)}{3(n-4)}$ lies in the interval $\left[\frac{j}{m}, 1\right] = \left[\frac{1}{2}, 1\right]$ as long as $n \geq 6$. So we get
\[
\|De\|_{L^3(B_1(x))} \leq \Lambda \left(\|D^2 e\|^a_{L^2(B_1(x))} \|e\|_{L^\infty(B_1(x))}^{1-a} + \|e\|_{L^3(B_1(x))}\right).
\]

We now let $\{x_i\}$ be a countable collection of points such that the sets $\{B_1(x_i)\}$ are contained in $\bH^n \setminus K'$, cover the entire set $\bH^n \setminus K''$ for some compact subset $K'' \supset K'$, and are uniformly locally finite, that is, there exists a fixed integer $N$ such that each $B_1(x_i)$ intersects with at most $N-1$ other balls $B_1(x_j)$ for $j \neq i$. We have (see e.g \cite[Lemma 2.6]{GicquaudSakovich} for a similar argument with more details provided)
\begin{align*}
\int_{\bH^n \setminus K''} \left|\rho^{-1/3} De\right|^3 d\mu^b
& \lesssim \sum_i \rho^{-1}(x_i) \int_{B_1(x_i)} \left|De\right|^3 d\mu^b\\
& \lesssim \sum_i \rho^{-1}(x_i) \left(\|D^2 e\|^{3a}_{L^2(B_1(x_i))} \|e\|_{L^\infty(B_1(x_i))}^{3(1-a)} + \|e\|_{L^3(B_1(x_i))}^3\right)\\
& \lesssim \|e\|_{L^3_{1/3}(\bH^n \setminus K')}^3 + \|e\|_{L^\infty(\bH^n \setminus K')}^{3(1-a)} \sum_i \rho^{-1}(x_i) \|D^2 e\|_{L^2(B_1(x_i))}^{3a}.
\end{align*}
As $3a \geq 2$, we have
\begin{align*}
\sum_i \rho^{-1}(x_i) \|D^2 e\|_{L^2(B_1(x_i))}^{3a}
& \leq \left(\sum_i \rho^{-1}(x_i) \|D^2 e\|_{L^2(B_1(x_i))}^2\right)^{\frac{3a}{2}}\\
& = \left(\sum_i \rho^{-1}(x_i) \int_{B_1(x_i)} |D^2 e|^2 d\mu^b\right)^{\frac{3a}{2}}\\
& \lesssim \|D^2 e\|_{L^2_{1/2}(\bH^n \setminus K')}^{3a}.
\end{align*}
All in all, we have
\begin{align*}
\|De\|_{L^3_{1/3}(\bH^n \setminus K'')}
& = \left(\int_{\bH^n \setminus K''} \left|\rho^{-1/3} De\right|^3 d\mu^b\right)^{1/3}\\
& \lesssim \|e\|_{L^3_{1/3}(\bH^n \setminus K')} + \|e\|_{L^\infty(\bH^n\setminus K')}^{(1-a)} \|D^2 e\|_{L^2_{1/2}(\bH^n \setminus K')}^a.
\end{align*}
All that is left to prove is that $e \in L^3_{1/3}(\bH^n \setminus K', S_2\bH^n)$. This comes at once from the following calculation:
\begin{align*}
\|e\|^3_{L^3_{1/3}(\bH^n \setminus K', S_2 \bH^n)}
& = \int_{\bH^n\setminus K'} \rho^{-1} |e|^3 d\mu^g\\
& \leq \|e\|_{L^\infty(\bH^n \setminus K', S_2 \bH^n)} \int_{\bH^n\setminus K'} \rho^{-1} |e|^2 d\mu^g\\
& = \|e\|_{L^\infty(\bH^n \setminus K', S_2 \bH^n)} \|e\|_{L^2_{1/2}(\bH^n \setminus K', S_2 \bH^n)}^2.
\end{align*}
Hence, $\|e\|_{L^3_{1/3}(\bH^n \setminus K', S_2 \bH^n)} \leq \|e\|_{L^\infty(\bH^n \setminus K', S_2 \bH^n)}^{1/3} \|e\|_{L^2_{1/2}(\bH^n \setminus K', S_2 \bH^n)}^{2/3} < \infty$.

At this point, note that we have proven that $De$ belongs to $L^3_{1/3}$ over $\bH^n \setminus K''$ not over $\bH^n \setminus K'$. Regularity over the relatively compact subset $K''\setminus K'$ can be obtained by the Gagliardo-Nirenberg inequality for bounded open subsets with regular boundary (in agreement with the remark following Definition \ref{defAH}).
\end{proof}

In order to obtain the expansion of the modified Einstein tensor $\Gtil^g$ we note that, computing as in the derivation of \eqref{eqScalarVar} (see also \cite[Lemma 15]{DelayFougeirol}), we obtain
\begin{equation*}
\begin{split}
\ric^g_{ij}  = & \ric_{ij} + D_k \Gamma^k_{ij} - D_i \Gamma^k_{kj} + \Gamma^l_{ij} \Gamma^k_{kl} - \Gamma^l_{kj}\Gamma^k_{il} \\
 = & \ric_{ij} + \tfrac{1}{2} D_k [(g^{kl}(D_i e_{jl} + D_j e_{il} - D_l e_{ij})] -\tfrac{1}{2}D_i(g^{kl} D_j e_{kl})+ \Gamma^l_{ij} \Gamma^k_{kl} - \Gamma^l_{kj}\Gamma^k_{il}.
\end{split}
\end{equation*}
Combining this formula with  \eqref{eqScalarVar1} and the fact that $\Gtil^b = 0$ we obtain 
\begin{align}
\Gtil^g_{ij}  = &  \ric^g_{ij} - \tfrac{1}{2}\scal^g g_{ij}- \tfrac{(n-1)(n-2)}{2} g_{ij}\nonumber  \\
 = & \tfrac{1}{2} b^{kl} (D_k D_i e_{jl} +D_k D_j e_{il} - D_k D_l e_{ij} -D_i D_j e_{kl}) \label{eqLinearizationEinsteinTensor}\\
& - \tfrac{1}{2}  \left(\divg\divg e - \Delta \tr(e) + (n-1) \tr e \right) b_{ij} + (n-1) e_{ij} +\cR_{ij},\nonumber
\end{align}
where the remainder $\cR_{ij}$ is a 2-tensor satisfying
\begin{equation*}
|\cR| \lesssim |De|^2+|e||DDe| \lesssim |e|^2 + |De|^2 + |DDe|^2.
\end{equation*} 
We may now rewrite the left hand side of \eqref{eqWeakBianchi} as  
\begin{equation*}
\int_{\bH^n} \Gtil^g (\nabla \chi_k, \fX)  \, d\mu^g = \int_{\bH^n} \Gtil^g (D \chi_k, \fX)  \, d\mu^b + \int_{\bH^n}  \cR_1(d\chi_k, \fX) \, d\mu^b,
\end{equation*}
where $\cR_1(d\chi_k, \fX)$ denotes a term that is linear in both $d\chi_k$ and $\fX$ and satisfies
\begin{equation}\label{eqRTerm1}
|\cR_1(d\chi_k, \fX)|\lesssim |d\chi_k| |\fX|(|e|^2+|De|^2+|DDe|^2).
\end{equation}
Similarly, recalling that 
\begin{equation*}
\nabla_i \fX^k = D_i \fX^k +  \Gamma^k_{ij} \fX^j 
\end{equation*} 
where $\Gamma^k_{ij} = \frac{1}{2} g^{kl}\left(D_i e_{lj} + D_j e_{il} - D_l e_{ij}\right)$ we may rewrite the right hand side of \eqref{eqWeakBianchi} as 
\begin{equation*}
\begin{split}
-\int_{\bH^n} \chi_k \tr^g \tr^g(\Gtil^g \otimes \nabla \fX) \, d\mu^g = &-\int_{\bH^n} \chi_k \tr \tr(\Gtil^g \otimes D \fX) \, d\mu^b \\ & +\int_{\bH^n} \chi_k\cR_2(\fX, D\fX) \, d\mu^b
\end{split}
\end{equation*}
where $\mathcal{R}_2(\fX, D\fX)$ 
satisfies
\begin{equation}\label{eqRTerm}
|\mathcal{R}_2(\fX, D\fX)|\lesssim ( |\fX|  +  |D\fX|)(|e|^2+|De|^2 + |De|^3 +|DDe|^2) .
\end{equation}
Note also that by the symmetry of $b$ and $\Gtil^g$ we may replace $D\fX$ by $\delta^* \fX = \frac{1}{2}\lie_{\fX} b$, 
the symmetric part of the covariant derivative, in the first term in the right hand side. 
All in all, we find from \eqref{eqWeakBianchi} that
\begin{equation}\label{eqRicciBase}
\begin{split}
\int_{\bH^n} \Gtil^g (D \chi_k, \fX)  \, d\mu^b  = & -\int_{\bH^n} \chi_k \tr \tr(\Gtil^g \otimes \delta^* \fX) \, d\mu^b \\ & +\int_{\bH^n} \left(\cR_1(d\chi_k, \fX)+ \chi_k\cR_2(\fX, D\fX)\right) \, d\mu^b,
\end{split}
\end{equation}
for $\cR_1(d \chi_k, \fX)$ and $\cR_2(\fX, D\fX)$  as described above.

Assume now that $\fX$ is in the linear space spanned by  the conformal Killing vector fields as in \eqref{eqCKVF}. In this case $\fX$ is in the kernel of the conformal Killing operator $\bL$ defined by $\bL \fX = 2 \delta^* \fX - \tfrac{2}{n} (\divg \fX) b$. As noted above, we have $\divg \fX = nV$ with $V\in \mathcal{N}$, hence
\[
\delta^* \fX = \tfrac{1}{n} (\divg \fX) b + \frac{1}{2}\bL \fX = \tfrac{1}{n} (\divg \fX) b =  V b.
\]
This enables us to rewrite the integrand in the first term in the right hand side of \eqref{eqRicciBase} as
\[
\begin{split}
\chi_k \tr \tr(\Gtil^g \otimes \delta^* \fX)  = & \chi_k  V \tr (\Gtil^g) \\  = & -\tfrac{n-2}{2}( \scal^g + n(n-1))\chi_k V -  \chi_k V f^{ij} \Gtil^g_{ij} \\ = & -\tfrac{n-2}{2}(\scal^g + n(n-1))\chi_k V + \chi_k \mathcal{R}_3(D\fX)
\end{split}
\]
where $\mathcal{R}_3(D\fX)$ is linear in $D\fX$ and satisfies 
\begin{equation}\label{eqRTerm2}
|\mathcal{R}_3(D\fX)|\lesssim  |D\fX|(|e|^2+|De|^2+|DDe|^2).
\end{equation} 
Summing up, when $\fX$ is a conformal Killing vector field as described above, with $\divg \fX = nV$,  \eqref{eqRicciBase} becomes
\begin{equation}\label{eqRicciBase1}
\begin{split}
\int_{\bH^n} \Gtil^g (D \chi_k, \fX)  \, d\mu^b  & =   \tfrac{n-2}{2} \int_{\bH^n}( \scal^g + n(n-1))\chi_k V\, d\mu^b  \\ & \qquad + \int_{\bH^n} \left(\cR_1(d\chi_k, \fX)+ \chi_k\cR_2(\fX, D\fX) \right)\, d\mu^b,
\end{split}
\end{equation}
for $\cR_1(d\chi_k, \fX)$ as in \eqref{eqRTerm1} and $\cR_2(\fX, D\fX)$  as in \eqref{eqRTerm}, where we have absorbed the terms of the form \eqref{eqRTerm2} in $\cR_2(\fX, D\fX)$.

The identity \eqref{eqRicciBase1} allows us to prove the following result. 

\begin{proposition}\label{propRicciMass} 
Assume that $g$ is asymptotically hyperbolic  in the sense of Definition \ref{defAH} and such that 
\begin{equation}\label{eqMoreFallOff}
e= \Phi_*g -b \in  W^{2,2}_{1/2} (\bH^n \setminus K', S_2\bH^n).
\end{equation}
Suppose additionally that $\scal^g+n(n-1) \in L^1_1(M, \mathbb{R})$.
Let $(\chibar_k)_k$ be a sequence of compactly
supported functions over $\bH^n$ with uniformly bounded $C^1$-norms and such that the sets $\Omega_k = \chibar_k^{-1}(1)$ form an increasing sequence of compact sets such that $\bH^n = \bigcup_k \Omega_k$. Then, for any hyperbolic conformal Killing vector field $\fX$ that belongs to the $(n+1)$-dimensional linear space spanned by $\fX^0, \fX^1,\ldots, \fX^n$ as in \eqref{eqCKVF}, the limit 
\begin{equation}\label{eqChargeRicci}
 \ptil(e,\fX) \definedas \lim_{k \to \infty} \int_{\bH^n} \Gtil^g (\fX, - D \chibar_k) d\mu^b
\end{equation}
is well defined and independent of the choice 
of the sequence $(\chibar_k)_k$.

Assuming that $g\in C^2_{\delta}(\bH^n \setminus K')$ with $\delta > n/2$ and $\scal^g + n(n-1) \in L^1_1(M, \bR)$, the limit \eqref{eqChargeRicci} coincides with the classical definition of mass using the Ricci tensor, that is
\begin{equation}\label{eqChargeRicciToRicciSmooth}
 \ptil(e,\fX) = \lim_{r \to \infty} \int_{S_r(0)}  \Gtil^g(\fX, \nu) d\mu^b.
\end{equation}

We also have
\begin{equation}\label{eqChargeRicciToADM}
\ptil(e,\fX) = - \tfrac{n-2}{2} p(e, V)
\end{equation}
where $V=\tfrac{1}{n}\divg \fX$ and $p(e,V)$ is as in \eqref{eqChargeCH2}. 
\end{proposition}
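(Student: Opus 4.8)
The plan is to run the proof of Proposition \ref{propMassCH} almost verbatim, with the identity \eqref{eqRicciBase1} playing the role of \eqref{eqCharge1}, and then to derive \eqref{eqChargeRicciToADM} by combining \eqref{eqRicciBase1} with \eqref{eqCharge1}. First note that for $\fX$ in the span of $\fX^0,\ldots,\fX^n$ the function $V\definedas\tfrac{1}{n}\divg\fX$ lies in $\cN$ (since $\divg\fX^\mu=\Delta V^\mu=nV^\mu$), and that \eqref{eqMoreFallOff} gives $e\in L^2_{1/2}$, so $(M,g)$ is asymptotically hyperbolic of order $\tfrac12$ and $p(e,V)$ is defined by Proposition \ref{propMassCH}. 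For the first assertion I would put $\chi_k\definedas\chi_0\chibar_k$, with $\chi_0$ a fixed smooth function vanishing near $K'$ and equal to $1$ outside a compact set, so that $D\chi_k=D\chibar_k+D\chi_0$ for $k$ large, and insert this into \eqref{eqRicciBase1}. As $k\to\infty$ the term $\int\chi_k V(\scal^g+n(n-1))\,d\mu^b$ converges by dominated convergence ($|V|=O(\rho^{-1})$, $\scal^g+n(n-1)\in L^1_1$); the term $\int\chi_k\cR_2(\fX,D\fX)\,d\mu^b$ converges because $|\cR_2|\lesssim\rho^{-1}(|e|^2+|De|^2+|De|^3+|DDe|^2)\in L^1(\bH^n\setminus K')$, the integrability of $\rho^{-1}|De|^3$ being exactly Lemma \ref{lmGagliardo}; and $\int\cR_1(d\chibar_k,\fX)\,d\mu^b\to0$ since $|\cR_1(d\chibar_k,\fX)|\lesssim\rho^{-1}(|e|^2+|De|^2+|DDe|^2)$ is integrable and supported in $\bH^n\setminus\Omega_k$. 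This yields the closed formula
\[
\ptil(e,\fX)=-\tfrac{n-2}{2}\int_{\bH^n}\chi_0 V(\scal^g+n(n-1))\,d\mu^b-\int_{\bH^n}\big(\cR_1(d\chi_0,\fX)+\chi_0\cR_2(\fX,D\fX)\big)\,d\mu^b+\int_{\bH^n}\Gtil^g(D\chi_0,\fX)\,d\mu^b,
\]
whose right-hand side does not involve $\chibar_k$, giving both well-definedness and sequence-independence.

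For \eqref{eqChargeRicciToRicciSmooth} in the regularity $g\in C^2_\delta$, $\delta>n/2$, I would take $\chibar_k=\chi(r-k)$ as in the proof of Proposition \ref{propMassAspect2}; then $-D\chibar_k=(-\chi'(r-k))\partial_r$, and the coarea identity $d\mu^b=dr\,d\mu^{S_r(0)}$ turns $\int_{\bH^n}\Gtil^g(\fX,-D\chibar_k)\,d\mu^b$ into $\int_0^1(-\chi'(s))F(k+s)\,ds$, with $F(r)\definedas\int_{S_r(0)}\Gtil^g(\fX,\nu)\,d\mu^b$. In this regularity $F$ has a finite limit at infinity — either by citing \cite{HerzlichRicciMass}, or because the second Bianchi identity together with $\delta>n/2$ forces $\divg^g(\Gtil^g(\fX,\cdot))\in L^1$, after which Stokes' theorem on $B_r(0)\setminus B_{r_0}(0)$ gives convergence. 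Since $\int_0^1(-\chi'(s))\,ds=1$, dominated convergence yields $\ptil(e,\fX)=\lim_{r\to\infty}F(r)$.

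For \eqref{eqChargeRicciToADM}, substituting \eqref{eqVScalCH} (equivalently \eqref{eqCharge1}) into \eqref{eqRicciBase1} eliminates the scalar-curvature term and gives, for every compactly supported cutoff $\chi_k$, the ``key identity''
\[
\int_{\bH^n}\Gtil^g(\fX,-D\chi_k)\,d\mu^b+\tfrac{n-2}{2}\int_{\bH^n}\mathbb{U}(e,V)(-D\chi_k)\,d\mu^b=-\tfrac{n-2}{2}\int_{\bH^n}\chi_k\cQ(V,e,De)\,d\mu^b-\int_{\bH^n}\big(\cR_1(d\chi_k,\fX)+\chi_k\cR_2\big)\,d\mu^b,
\]
where $\mathbb{U}$ is the charge vector field of \eqref{eqVScalCH} (recall $V=\tfrac{1}{n}\divg\fX$). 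Taking $\chi_k=\chi_0\chibar_k$ and letting $k\to\infty$ produces $\ptil(e,\fX)+\tfrac{n-2}{2}p(e,V)$ on the left, together with the compactly supported correction $\int_{\bH^n}\Gtil^g(\fX,-D\chi_0)+\tfrac{n-2}{2}\int_{\bH^n}\mathbb{U}(e,V)(-D\chi_0)$, and the matching combination $-\tfrac{n-2}{2}\int_{\bH^n}\chi_0\cQ-\int_{\bH^n}(\cR_1(d\chi_0,\fX)+\chi_0\cR_2)$ on the right; hence \eqref{eqChargeRicciToADM} reduces to the key identity holding with the fixed $\chi_0$ in place of $\chi_k$. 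I would prove this by a density argument: when $e$ is smooth and compactly supported, $g=b$ near infinity, so for $k$ large enough that $\Omega_k$ contains a neighbourhood of $\supp e$ all integrands above vanish off $\overline{\supp e}$, where $\chi_0\chibar_k\equiv\chi_0$ and $D(\chi_0\chibar_k)=D\chi_0$, so the key identity for $\chi_0\chibar_k$ is literally the key identity for $\chi_0$. A general $e\in W^{2,2}_{1/2}(\bH^n\setminus K',S_2\bH^n)$ with $C^{-1}b\leq g\leq Cb$ is approximated in $W^{2,2}_{1/2}$, preserving $\|e\|_{L^\infty}$ and the two-sided bound, by such compactly supported smooth tensors; all integrals in the key identity (with $\chi_0$) depend continuously on $e$ in this topology — the quadratic contributions since $\int_{\bH^n}\rho^{-1}|a||b|\,d\mu^b\leq\|a\|_{L^2_{1/2}}\|b\|_{L^2_{1/2}}$, the compactly supported ones by local $W^{2,2}\to L^1$ continuity, and the cubic contribution $\int_{\bH^n}\rho^{-1}|De|^3$ by H\"older's inequality combined with Lemma \ref{lmGagliardo} applied to the differences $e-e_j$ — so the identity passes to the limit. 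Comparing the resulting closed formula for $\ptil(e,\fX)$ with formula \eqref{eqCharge2} for $p(e,V)$, whose last term equals $\int_{\bH^n}\mathbb{U}(e,V)(-D\chi_0)\,d\mu^b$, then gives $\ptil(e,\fX)=-\tfrac{n-2}{2}p(e,V)$.

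The main obstacle is the low-regularity bookkeeping of the cubic remainder terms: the expressions $\cR_2$ contain a contribution of size $|De|^3$ which is not controlled by $e\in W^{2,2}_{1/2}$ alone, and it is the Gagliardo--Nirenberg estimate of Lemma \ref{lmGagliardo} that makes both the convergence in the first part and the density argument for \eqref{eqChargeRicciToADM} work. A secondary point needing care is that the comparison with the ADM-type charge cannot be performed term by term: one must first eliminate the merely distributional scalar-curvature term to reach an identity whose remaining integrands are genuinely lower order, hence stable under approximation.
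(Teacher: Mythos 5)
Your treatment of the first assertion follows the paper's argument verbatim: take $\chi_k = \chi_0\chibar_k$, split $D\chi_k = D\chibar_k + D\chi_0$ for $k$ large, and use dominated convergence on the scalar-curvature term and the remainder terms of \eqref{eqRicciBase1}, with Lemma \ref{lmGagliardo} supplying integrability of the cubic contribution. Your closed formula for $\ptil(e,\fX)$ is exactly the paper's \eqref{eqRicciFinal}. For \eqref{eqChargeRicciToRicciSmooth}, your coarea computation with $\chibar_k=\chi(r-k)$ is an equivalent reformulation of the paper's Stokes argument; it is slightly sketchy in one place: knowing that $\divg^g(\chi_0\Gtil^g(\fX,\cdot))\in L^1(d\mu^g)$ only shows convergence of $\int_{S_r(0)}\Gtil^g(\fX,\nu^g)\,d\mu^g$, and one must still verify (as the paper does, using $\Gtil^g\in C^0_\delta$, $\nu^g-\nu\in C^0_\delta$, $d\mu^g/d\mu^b-1\in C^0_\delta$, $2\delta-1>n-1$) that the difference $\int_{S_r(0)}\left[\Gtil^g(\fX,\nu^g)\,\tfrac{d\mu^g}{d\mu^b}-\Gtil^g(\fX,\nu)\right]d\mu^b\to0$ before concluding that $F(r)$ converges to the same limit.

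For \eqref{eqChargeRicciToADM} you take a genuinely different route from the paper. The paper introduces the interpolating metrics $g_k=(1-\theta_k)b+\theta_k g$ where $\theta_k\equiv1$ near infinity and $\theta_k^{-1}(0)$ exhausts $\bH^n$, so that $\ptil(\theta_k e,\fX)=\ptil(e,\fX)$ while $g_k\equiv b$ on $\supp d\chi_0$, killing the boundary-like and $\cR_1$ integrals; the remaining remainders $\cR_2(e_k,\cdot)$ and $\cQ(V,e_k,De_k)$ then vanish as $k\to\infty$ by dominated convergence using the pointwise majorization $|\theta_k e|\leq|e|$ together with $\theta_k\to0$ a.e. Your approach instead first eliminates $\scal^g$ from the two base identities to obtain a ``key identity'' for all compactly supported cutoffs, reduces the claim to this identity with $\chi_0$, notes it is trivially true for compactly supported $e$ (where $\chi_0\chibar_k\equiv\chi_0$ on $\supp e$ for $k$ large), and passes to a general $e$ by density in $W^{2,2}_{1/2}$ with uniform $L^\infty$ control. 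Both routes are correct. What the paper's inside-out truncation buys is that $|\theta_k e|\leq|e|$ provides a fixed dominating $L^1$ function for each remainder term, so no norm-continuity needs to be verified; your outside-in truncation requires term-by-term continuity of the key identity, and in particular convergence of the cubic contribution $\int\rho^{-1}|De_j|^3$. Your observation that this follows from the triangle inequality $|a^3-b^3|\lesssim(|a|^2+|b|^2)|a-b|$, H\"older, and Lemma \ref{lmGagliardo} applied to $e-e_j$ (noting that uniform boundedness of $\|e-e_j\|_{L^\infty}$ together with $\|D^2(e-e_j)\|_{L^2_{1/2}}\to0$ already forces $\|D(e-e_j)\|_{L^3_{1/3}}\to0$ since the Gagliardo--Nirenberg exponent $a$ in Lemma \ref{lmGagliardo} is strictly positive) is exactly the point that needs care, and you identify it correctly. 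One small polish: for the truncated family $e_j=\theta_j e$ one does not need any smoothing, since the derivation of \eqref{eqWeakBianchi} and of the key identity already works for $W^{2,2}_{loc}\cap L^\infty_{loc}$; and the Leibniz expansion of $D(e-e_j)=(1-\theta_j)De-(D\theta_j)\otimes e$ should be kept in mind when verifying $\|e-e_j\|_{W^{2,2}_{1/2}}\to0$.
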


\begin{proof}
As in the proof of Proposition \ref{propMassCH}, we let $\chi_0$ be a smooth function on $\bH^n$ which vanishes near $K'$ (see Definition \ref{defAH}) and equals $1$ outside some compact set. 
Then we apply \eqref{eqRicciBase1} with $\chi_k \definedas \chi_0 \chibar_k $  where $\chibar_k$ are as in the formulation of the proposition. Applying the dominated convergence theorem as in the proof of Proposition \ref{propMassCH} we get 
\[
\int_{\bH^n}( \scal^g + n(n-1))\chi_k V\, d\mu^b \rightarrow \int_{\bH^n}( \scal^g + n(n-1))\chi_0 V\, d\mu^b. 
\]
Similarly, since our assumption \eqref{eqMoreFallOff} combined with Lemma \ref{lmGagliardo} guarantees that $\cR_2(\fX, D\fX) \in L^1(\bH^n,\bR)$  we obtain 
\[
\int_{\bH^n}\chi_k \cR_2(\fX, D\fX)\, d\mu^b \rightarrow \int_{\bH^n}\chi_0 \cR_2(\fX, D\fX)\, d\mu^b. 
\]
Next, we note that  $d\chi_k =d\chi_0+d\chibar_k $ for sufficiently large $k$ (again, see the proof of  Proposition \ref{propMassCH}). Consequently, by bilinearity of $\cR_1$ we have 
\[
\begin{split}
  \int_{\bH^n}\cR_1(d\chi_k, \fX)\, d\mu^b = &\int_{\bH^n}\cR_1(d\chi_0, \fX)\, d\mu^b + \int_{\bH^n}\cR_1(d\overline{\chi}_k, \fX)\, d\mu^b \\ & \rightarrow  \int_{\bH^n}\cR_1(d\chi_0, \fX)\, d\mu^b, 
\end{split}
\]
since
\[
\left|\int_{\bH^n}\cR_1(d\overline{\chi}_k, \fX)\, d\mu^b\right| \lesssim \|d\overline{\chi}_k\|_{L^\infty} \int_{\supp d\overline{\chi}_k} |\fX|(|e|^2+|De|^2+|DDe|^2)\, d\mu^b \rightarrow 0
\]
as we have, by assumption, $e \in W^{2, 2}_{1/2}(\bH^n \setminus K', S_2\bH^n)$ (again, a complete argument for this step is provided in the proof of Proposition \ref{propMassCH}). In summary, we find that
\begin{equation}\label{eqRicciFinal}
\begin{split}
\ptil(e, \fX) & = \int_{\bH^n} \Gtil^g (\fX, D \chi_0) d\mu^b - \tfrac{n-2}{2}\int_{\bH^n}( \scal^g + n(n-1))\chi_0 V\, d\mu^b \\ & \qquad +\int_{\bH^n} \left(\cR_1(d\chi_0, \fX) + \chi_0\cR_2(\fX, D\fX) \right)\, d\mu^b,
\end{split}
\end{equation}
where the right hand side is well defined and does not depend on $(\chibar_k)_k$ and the left hand side does not depend on $\chi_0$. We have thus proven the first claim of the proposition.

Next, we establish Formula \eqref{eqChargeRicciToRicciSmooth} in the case when $e \in C^2_\delta(\bH^n\setminus K', S_2\bH^n)$, $\delta > n/2$. The strategy is similar to the one used in the proof of Proposition \ref{propMassCH}. Assuming temporarily once again that $g$ is smooth, and repeating the argument that we used to derive Equations \eqref{eqWeakBianchi} and \eqref{eqRicciBase1}, for large enough $r>0$ we obtain
\[
\begin{split}
\int_{S_r(0)} \Gtil^g(\fX, \nu^g) d\mu^g
& = \int_{B_r(0)} \divg^g (\chi_0 \Gtil^g(\fX, \cdot)) d\mu^g\\
& = \int_{B_r(0)} \left(\Gtil^g(\fX, \nabla \chi_0) + \chi_0 \tr^g \tr^g (\Gtil^g \otimes \nabla \fX) \right)d\mu^g\\
& = \int_{B_r(0)} \left[\Gtil^g(\fX, D\chi_0) - \frac{n-2}{2}(\scal^g + n(n-1)) \chi_0 V\right] d\mu^b\\
& \qquad + \int_{B_r(0)} \left(\cR_1(d\chi_0, \fX) + \chi_0\cR_2(\fX, D\fX) \right)\, d\mu^b,
\end{split}
\]
where $\nu^g$ is the outward pointing normal vector to $S_r(0)$ with respect to the metric $g$. In the above formula, the error terms $\cR_1$ and $\cR_2$ are exactly the same as in \eqref{eqRicciFinal}. Letting $r$ tend to infinity, we conclude that 
\[
\begin{split}
\ptil(e, \fX)
&= \lim_{r \to \infty} \int_{B_r(0)} \left[\Gtil^g(\fX, D\chi_0) - \frac{n-2}{2}(\scal^g + n(n-1)) \chi_0 V\right] d\mu^b\\
& \qquad + \lim_{r \to \infty}\int_{B_r(0)} \left(\cR_1(d\chi_0, \fX) + \chi_0\cR_2(\fX, D\fX) \right)\, d\mu^b\\
&= \lim_{r \to \infty} \int_{S_r(0)} \Gtil^g(\fX, \nu^g) d\mu^g.
\end{split}
\]
All that remains to be done in order to verify  \eqref{eqChargeRicciToRicciSmooth}  is to show that we can replace $\nu^g$ and $d\mu^g$ by $\nu$ and $d\mu^b$ in the last line. As $e \in C^2_\delta(\bH^n\setminus K, S_2\bH^n)$, we have $\Gtil^g \in C^0_\delta(\bH^n\setminus K, S_2\bH^n)$. Furthermore, we have $\nu^g - \nu \in C^0_\delta(\bH^n\setminus K, T\bH^n)$ and $\displaystyle \frac{d\mu^g}{d\mu^b} - 1 \in  C^0_\delta(\bH^n\setminus K, \bR)$. As a consequence,
\[
\Gtil^g(\fX, \nu^g) \frac{d\mu^g}{d\mu^b} - \Gtil^g(\fX, \nu) \in C^0_{2\delta - 1}(\bH^n\setminus K, \bR).
\]
As $2\delta-1 > n-1$ and $\vol(S_r(0)) = O(e^{(n-1)r})$, we have
\[
\int_{S_r(0)} \left[\Gtil^g(\fX, \nu^g) \frac{d\mu^g}{d\mu^b} - \Gtil^g(\fX, \nu)\right] d\mu^b \to_{r \to \infty} 0.
\]
This concludes the proof of the fact that
\[
\ptil(e, \fX)
= \lim_{r \to \infty} \int_{S_r(0)} \Gtil^g(\fX, \nu) d\mu^b.
\]

We will now establish the equivalence of the two definitions, \eqref{eqChargeCH} and \eqref{eqChargeRicci}.
At this point, we choose a new sequence of cutoff functions $(\theta_k)_k$ that are $1$ outside a compact set containing $K'$ and $\supp d\chi_0$ and such that $\theta_k^{-1}(0)$ is increasing with respect to $k$ with $\bH^n = \bigcup_{k=0}^\infty \theta_k^{-1}(0)$. We assume further that the functions $\theta_k$ have uniformly bounded $C^2$-norm. We set
\[
g_k \definedas \theta_k g + (1-\theta_k) b = b + \theta_k e.
\]
As each $g_k$ coincides with the metric $g$ outside a compact set, we have, for any $k \geq 0$,
\[
\ptil(\theta_k e, \fX) = \ptil(e, \fX).
\]
At the same time, $g_k$ coincides with the hyperbolic metric $b$ on the support of $d\chi_0$, so Equation \eqref{eqRicciFinal} becomes
\begin{equation}\label{eqRicciFinal2}
\begin{split}
\ptil(e, \fX) & = \ptil(\theta_k e, \fX)\\
 & = - \tfrac{n-2}{2}\int_{\bH^n}( \scal^{g_k} + n(n-1))\chi_0 V\, d\mu^b + \int_{\bH^n} \chi_0\cR_2(e_k, \fX, D\fX)\, d\mu^b,
\end{split}
\end{equation}
where we have indicated that $\cR_2$ is computed with respect to the metric $g_k$, that is $e_k \definedas g_k-b$. Recalling the estimate for $\cR_2$ given in Equation \eqref{eqRTerm},
\[
|\mathcal{R}_2(e_k, \fX, D\fX)|\lesssim (|\fX| + |D\fX|)(|e_k|^2+|De_k|^2 + |De_k|^3 + |DDe_k|^2),
\]
we claim that
\[
\int_{\bH^n} \chi_0\cR_2(e_k, \fX, D\fX)\, d\mu^b \to_{k \to \infty} 0.
\]
Indeed, using the formula $e_k = \theta_k e$ in the previous estimate together with Leibniz' formula, we have
\begin{equation*}
|\mathcal{R}_2(e_k, \fX, D\fX)|\lesssim (|\fX| + |D\fX|) (|e|^2+|De|^2 + |De|^3 +|DDe|^2) \varphi_k,
\end{equation*}
where $\varphi_k$ is a sequence of uniformly bounded functions whose supports coincide with that of $\theta_k$. For instance, we have
\[
\begin{split}
|DDe_k|
& = |\theta_k DDe + D\theta_k \otimes De + r_{12} (D\theta_k \otimes De) + DD\theta_k \otimes e|\\
& \leq \theta_k |DDe| + 2|D\theta_k| |De| + |DD\theta_k| |e|\\
& \leq \max\{\theta_k, |D\theta_k|, |DD\theta_k|\} (|DDe| + 2 |De| + |e|),
\end{split}
\]
where $r_{12}$ is the operator permuting the first and second indices. Hence,
\[
|DDe_k|^2 \lesssim (\max\{\theta_k, |D\theta_k|, |DD\theta_k|\})^2 (|DDe|^2 + |De|^2 + |e|^2).
\]
Note that our assumptions on $e$ guarantee that $(|\fX| + |D\fX|) (|e|^2+|De|^2 + |De|^3 +|DDe|^2) \in L^1(\bH^n \setminus K')$. Since the functions $\varphi_k$ are uniformly bounded in $L^\infty(\bH^n \setminus K', \bR)$ and tend to zero a.e. as the supports of $\varphi_k$ coincide with those of $\theta_k$, we conclude by the dominated convergence theorem that
\[
\begin{split}
\left|\int_{\bH^n} \chi_0\cR_2(e_k, \fX, D\fX)\, d\mu^b\right|
& \lesssim \int_{\bH^n} (|\fX| + |D\fX|) (|e|^2+|De|^2 + |De|^3 +|DDe|^2) \varphi_k d\mu^b\\
& \to_{k \to \infty} 0.
\end{split}
\]
This concludes the proof of the claim. Returning to Equation \eqref{eqRicciFinal2}, we have shown that
\begin{equation}\label{eqRicciFinal3}
\ptil(e, \fX)
 = - \tfrac{n-2}{2} \lim_{k \to \infty} \int_{\bH^n}( \scal^{g_k} + n(n-1))\chi_0 V\, d\mu^b.
\end{equation}
To complete the proof of equivalence of the two definitions, we now repeat the argument that we used to derive Equation \eqref{eqCharge2} but replacing the metric $g$ by the sequence $g_k$. As before, we note that $g$ and $g_k$ coincide outside a compact set and that $g_k=b$ on the support of $D\chi_0$, which gives
\[
\begin{aligned}
p(e, V)
& = p(e_k, V)\\
& = \int_{\bH^n} \chi_0 V \left(\scal(g_k) + n(n-1)\right) d\mu^b - \int_{\bH^n} \chi_0 \cQ(V, e_k, De_k) d\mu^b,
\end{aligned}
\]
where the quadratic term $\cQ(V, e_k, De_k)$ satisfies the estimate given in Equation \eqref{eqQTerm}:
\[
|\cQ(V, e_k, De_k)| \lesssim (|V| + |dV| + |\hess V|) (|e_k|^2 + |De_k|^2).
\]
By the same argument as before, we conclude that
\[
\int_{\bH^n} \chi_0 \cQ(V, e_k, De_k) d\mu^b \to_{k \to \infty} 0.
\]
Hence,
\[
p(e, V) = \lim_{k \to \infty} \int_{\bH^n} \chi_0 V \left(\scal(g_k) + n(n-1)\right) d\mu^b.
\]
Combining with Equation \eqref{eqRicciFinal3}, we obtain the desired formula:
\[
\ptil(e, \fX) = - \frac{n-2}{2} p(e, V).
\]
\end{proof}

Similar to Proposition \ref{propMassAspect} we may allow for more general vector fields $\fX$ in \eqref{eqRicciFinal}, thereby obtaining a definition of the mass aspect function, at the cost of imposing an additional but rather mild fall-off assumption.

\begin{proposition}\label{propRicciDistribution}
Assume that $g$ is asymptotically hyperbolic in the sense of Definition \ref{defAH} and such that
\[
e= \Phi_*g -b \in (W^{2,2}_{1/2}\cap W^{2,1}_{\delta}) (\bH^n,S_2\bH^n).
\] 
for $\delta\in [-1,1]$. Suppose additionally that $\scal^g+n(n-1) \in L^1_1(M, \bR)$.
Let $(\chibar_k)_k$ be a sequence of compactly supported functions over $\bH^n$ with uniformly bounded $C^1$-norms and such that the sets $\Omega_k = \chibar_k^{-1}(1)$ form an increasing sequence of compact sets such that $\bH^n = \bigcup_k \Omega_k$.  For an arbitrary $C^{1-\delta}$ function $v$ on $\bS_1(0)$, let $V = \cD(v)$ be the unique function $V\in C^{\infty}_{-1} (\bH^n, \bR)$ satisfying $\Delta V = nV$ such that $\rho V \equiv v $ on ${\bS_1(0)}$. Then the limit 
\begin{equation}\label{eqRicciMassAspect}
 \widetilde{P}(e, v) \definedas \lim_{k \to \infty} \int_{\bH^n} \Gtil^g (DV, - D \chibar_k) d\mu^b
\end{equation}
is well defined and independent of the choice of the
sequence $(\chibar_k)_k$, and we have
\begin{equation*}
\widetilde{P}(e, v) = -\frac{n-2}{2} P(e,v).  
\end{equation*}
\end{proposition}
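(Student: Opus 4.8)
The plan is to run the argument of Proposition \ref{propRicciMass} with the conformal Killing vector field $\fX$ replaced by $\fX = DV$, where $V = \cD(v)$. The only genuinely new feature is that $DV$ is no longer conformal Killing: setting $T \definedas \hess V - Vb$, we have $\delta^*(DV) = \hess V = Vb + T$ with $T \neq 0$ in general; but by Proposition \ref{propEstimateEigenfunction2} applied with $\alpha = 1-\delta \in [0,2]$ we still have $|T|_b = O(\rho^{-\delta})$, and by Proposition \ref{propEstimateEigenfunction0} we have $|DV| + |DDV| = O(\rho^{-1})$, so $\fX = DV$ obeys the same pointwise bounds as the $\fX^\mu$ in \eqref{eqCKVF}. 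Substituting $\fX = DV$ into \eqref{eqRicciBase} and using $\delta^*(DV) = Vb + T$ together with the identity $V\tr(\Gtil^g) = -\tfrac{n-2}{2}(\scal^g+n(n-1))V - Vf^{ij}\Gtil^g_{ij}$ (the last term being of the type \eqref{eqRTerm2}, hence absorbable into $\cR_2$), one obtains the analogue of \eqref{eqRicciBase1},
\[
\int_{\bH^n}\Gtil^g(D\chi_k, DV)\, d\mu^b = \tfrac{n-2}{2}\int_{\bH^n}(\scal^g+n(n-1))\chi_k V\, d\mu^b - \int_{\bH^n}\chi_k \tr\tr(\Gtil^g \otimes T)\, d\mu^b + \int_{\bH^n}\bigl(\cR_1(d\chi_k, DV) + \chi_k \cR_2(DV, DDV)\bigr)\, d\mu^b,
\]
with $\cR_1$, $\cR_2$ obeying the estimates \eqref{eqRTerm1}, \eqref{eqRTerm}.

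Taking $\chi_k = \chi_0\chibar_k$ and letting $k \to \infty$: the $\scal^g$-term converges by dominated convergence since $\scal^g+n(n-1) \in L^1_1$ and $V = O(\rho^{-1})$; the $\cR_1$- and $\cR_2$-terms are treated exactly as in the proof of Proposition \ref{propRicciMass} (using $e \in W^{2,2}_{1/2}$ and Lemma \ref{lmGagliardo} for the cubic piece $|De|^3$). The one new term is $\int_{\bH^n}\chi_k\tr\tr(\Gtil^g \otimes T)$: by \eqref{eqLinearizationEinsteinTensor} one has $|\Gtil^g| \lesssim |DDe| + |e| + |e|^2 + |De|^2 + |DDe|^2$, so its integrand is bounded by an $L^1$ function built from $\rho^{-\delta}|DDe|$, $\rho^{-\delta}|e|$ (finite by $e \in W^{2,1}_\delta$) and $\rho^{-\delta}(|e|^2+|De|^2+|DDe|^2)$ (finite because $\delta \leq 1$ forces $\rho^{-\delta} \lesssim \rho^{-1}$ on $\bH^n$ and $e \in W^{2,2}_{1/2}$). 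Dominated convergence then applies, and collecting terms one reaches the analogue of \eqref{eqRicciFinal},
\[
\widetilde{P}(e,v) = \int_{\bH^n}\Gtil^g(D\chi_0, DV)\, d\mu^b - \tfrac{n-2}{2}\int_{\bH^n}(\scal^g+n(n-1))\chi_0 V\, d\mu^b + \int_{\bH^n}\chi_0\tr\tr(\Gtil^g \otimes T)\, d\mu^b - \int_{\bH^n}\bigl(\cR_1(d\chi_0, DV) + \chi_0\cR_2(DV, DDV)\bigr)\, d\mu^b,
\]
whose right-hand side depends only on $\chi_0$; since the left-hand side does not depend on $\chi_0$, this proves that the limit \eqref{eqRicciMassAspect} is well defined and independent of $(\chibar_k)_k$.

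For the identity $\widetilde{P}(e,v) = -\tfrac{n-2}{2}P(e,v)$ I would mimic the regularization step from the proof of \eqref{eqChargeRicciToADM}: let $g_k = b + \theta_k e$ with $\theta_k$ chosen as there, so that $g_k = g$ near infinity (whence $\widetilde{P}(e_k,v) = \widetilde{P}(e,v)$ and $p(e_k,V) = p(e,V)$ for $e_k \definedas \theta_k e$, by independence of $\chibar_k$) and $g_k = b$ on $\supp d\chi_0$. Applying the displayed formula above with $g$ replaced by $g_k$, the terms $\Gtil^{g_k}(D\chi_0, DV)$ and $\cR_1(d\chi_0, DV)$ vanish (as $\Gtil^{g_k}$ and its linearization vanish on $\supp d\chi_0$), leaving $\widetilde{P}(e,v) = -\tfrac{n-2}{2}\int_{\bH^n}(\scal^{g_k}+n(n-1))\chi_0 V\, d\mu^b + \int_{\bH^n}\chi_0\tr\tr(\Gtil^{g_k}\otimes T)\, d\mu^b - \int_{\bH^n}\chi_0\cR_2(e_k, DV, DDV)\, d\mu^b$; since $e_k = \theta_k e \to 0$ a.e.\ and the last two integrands are dominated (using Leibniz' rule on $e_k = \theta_k e$, now also $\rho^{-\delta}|De| \in L^1$ from $e \in W^{2,1}_\delta$, exactly as in Proposition \ref{propRicciMass}), dominated convergence gives $\widetilde{P}(e,v) = -\tfrac{n-2}{2}\lim_{k\to\infty}\int_{\bH^n}(\scal^{g_k}+n(n-1))\chi_0 V\, d\mu^b$. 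On the ADM side, the analogue of \eqref{eqCharge2} for an arbitrary eigenfunction $V$, as it emerges from the proofs of Propositions \ref{propMassAspect} and \ref{propContinuity}, reads
\[
p(e,V) = \int_{\bH^n}\chi_0 V(\scal^g+n(n-1))\, d\mu^b - \int_{\bH^n}\chi_0\cQ(V,e,De)\, d\mu^b - \int_{\bH^n}\chi_0\, b\bigl(\hess V - (\Delta V)b + (n-1)Vb, e\bigr)\, d\mu^b - \int_{\bH^n}\bigl[(D_i\chi_0)(D_m V)e_{jl} - V D_i\chi_0\, D_j e_{ml}\bigr]\bigl(g^{jl}g^{im} - g^{ml}g^{ij}\bigr)\, d\mu^b,
\]
where the third integral --- absent when $V \in \cN$ --- equals $\int_{\bH^n}\chi_0\, b(T - \tr(T)b, e)\, d\mu^b$. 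Replacing $g$ by $g_k$ annihilates the last integral ($e_k = 0$ on $\supp d\chi_0$), and as $k \to \infty$ the $\cQ$-integral tends to $0$ as in Proposition \ref{propRicciMass} while $\int_{\bH^n}\chi_0\, b(T - \tr(T)b, e_k)\, d\mu^b \to 0$ because $|b(T-\tr(T)b, e_k)| \lesssim \rho^{-\delta}|e_k| \leq \rho^{-\delta}|e| \in L^1$; hence $p(e,V) = \lim_{k\to\infty}\int_{\bH^n}\chi_0 V(\scal^{g_k}+n(n-1))\, d\mu^b$. Comparing the two limits yields $\widetilde{P}(e,v) = -\tfrac{n-2}{2}p(e,V) = -\tfrac{n-2}{2}P(e,v)$.

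The main obstacle --- and the reason for the extra hypothesis $e \in W^{2,1}_\delta$ --- is the control of the new term $\int\chi_k\tr\tr(\Gtil^g\otimes T)$: since $\Gtil^g$ contains $DDe$ linearly and $|T|$ only decays like $\rho^{-\delta}$, absolute integrability forces $\rho^{-\delta}|DDe| \in L^1$, i.e.\ $DDe \in L^1_\delta$, which is exactly what $W^{2,1}_\delta$ provides; all the remaining estimates are a verbatim transcription of those in Proposition \ref{propRicciMass}, with $W^{2,2}_{1/2}$ and Lemma \ref{lmGagliardo} handling the quadratic and cubic pieces. A secondary point requiring care is the bookkeeping of the two mirrored ``correction'' terms that appear because $V = \cD(v) \notin \cN$: the term $\int\chi_0\, b(\hess V - (\Delta V)b + (n-1)Vb, e)$ on the ADM side and $\int\chi_0\tr\tr(\Gtil^g\otimes T)$ on the Ricci side are of precisely the same analytic type --- an $O(\rho^{-\delta})$ symmetric $2$-tensor paired against $e \in L^1_\delta$ --- and both disappear in the $g_k$-limit, which is what makes the two formalisms agree.
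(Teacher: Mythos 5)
Your proposal is correct and follows essentially the same route as the paper: you substitute $\fX = DV$ into \eqref{eqRicciBase}, use $\delta^*(DV) = Vb + T$ with $T = \hess V - Vb = O(\rho^{-\delta})$ to generate the one new correction term $\int\chi_k\tr\tr(\Gtil^g\otimes T)$, control it via $e\in W^{2,1}_\delta$ paired with $T\in L^\infty_{-\delta}$, and for the ADM--Ricci equivalence repeat the $g_k = b + \theta_k e$ regularization, identifying the two mirrored correction terms (the paper writes the ADM-side one as $\int\langle\hess V - Vb,\theta_k e\rangle$, which coincides with your $\int\chi_0\, b(T - \tr(T)b,e_k)$ because $\tr(T)=\Delta V - nV=0$) and showing each tends to zero by dominated convergence.
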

\begin{proof}
The assumptions of the proposition imply that 
\[
\bL (DV) = 2 \delta^* (DV) - \tfrac{2}{n} (\divg (DV)) b = 2(\hess V -  V b) = O(\rho^{-\delta}) 
\]
see Proposition \ref{propEstimateEigenfunction2}, so that
\[
\delta^* (DV)= \tfrac{1}{n} (\divg (DV)) b + \tfrac{1}{2}\bL (DV)  = V b + (\hess V -  V b).
\]
Computing the integrand in the first term in the right hand side of \eqref{eqRicciBase} for $\fX = DV$ where $V$ is as above, we thereby obtain
\[
\begin{split}
\chi_k \tr \tr(\Gtil^g \otimes \delta^* (DV)) = & \chi_k \tr \tr(\Gtil^g \otimes (Vb + (\hess V -  V b))) \\ = &   \chi_k V \tr \Gtil^g +  \chi_k \tr \tr(\Gtil^g \otimes (\hess V -  V b)),
\end{split}
\]
where $\chi_k \definedas \chi_0 \chibar_k$ is as in the proof of Proposition \ref{propMassCH}.
We conclude that the counterpart of Equation \eqref{eqRicciBase1} in the case when $\fX= DV$ is 
\begin{equation*}
\begin{split}
\int_{\bH^n} \Gtil^g (D \chi_k, DV)  \, d\mu^b  & =   \tfrac{n-2}{2} \int_{\bH^n}( \scal^g + n(n-1))\chi_k V\, d\mu^b  \\ & \qquad + \int_{\bH^n} \left(\cR_1(d\chi_k, \fX)+ \chi_k\cR_2(\fX, D\fX) \right)\, d\mu^b \\ &  \qquad - \int_{\bH^n}\chi_k \tr \tr(\Gtil^g \otimes (\hess V -  V b))\, d\mu^b
\end{split}
\end{equation*}
for $\cR_1(d\chi_k, \fX)$ and $\cR_2(\fX, D\fX)$ as in \eqref{eqRTerm1} and \eqref{eqRTerm}. In the view of the assumptions made we have $\hess V -  V b \in L^\infty_{-\delta}\bH^n \setminus K'', S_2 \bH^n)$ and $\Gtil^g \in L^1_{\delta}(\bH^n \setminus K'', S_2\bH^n)$ so it follows that
\[
\tr \tr(\Gtil^g \otimes (\hess V -  V b))) \in L^1(\bH^n,\mathbb{R}).
\]
The proof of Proposition \ref{propRicciMass} can now be straightforwardly adjusted to show that the limit \eqref{eqRicciMassAspect} is well defined and independent of the choice of $\chi_0$ and of the
sequence $(\chibar_k)_k$. 

The only point that remains to be clarified is the equivalence of the definition of the mass aspect function using Ricci tensor \eqref{eqRicciMassAspect} and the ADM style definition as in Proposition \ref{propMassAspect2}.  
Repeating the proof of \eqref{eqChargeRicciToADM} from Proposition \ref{propRicciMass} but with $V$ that only satisfies $\Delta V=n V$ and $\hess V -  V b \in L^\infty_{-\delta}(\bH^n \setminus K'', S_2 \bH^n)$, one encounters two terms that were not present in the case when $\hess V -  V b=0$. The first one comes from the ADM style definition of the mass: 
\[
\int_M \left\<\hess V - Vb, \theta_k e\right\> d\mu^b.
\]
In the view of our assumptions, it will converge to zero by the dominated convergence theorem as $k\to \infty$. The second term is from the Ricci tensor definition:
\[
\int_{\bH^n}\chi_k \tr \tr(\Gtil^{g_k} \otimes (\hess V -  V b))\, d\mu^b.
\]
To show that it converges to zero when $k\to \infty$, we use the Taylor formula  \eqref{eqLinearizationEinsteinTensor} for  $\Gtil^{g_k}$. The contribution of the quadratic remainder term to the above integral will converge to zero as $k\to \infty$, by the same type of argument as in the proof of \eqref{eqChargeRicciToADM}. As for the linearization  part of $\Gtil^{g_k}$, its  contribution to the limit will also be zero which can be seen from our assumption $e\in W^{2,1}_{\delta} (\bH^n, S_2\bH^n)$ combined with $\hess V -  V b \in L^\infty_{-\delta}(\bH^n \setminus K'', S_2 \bH^n)$. For example, in the case when $e=e_k$, the terms in the second line of \eqref{eqLinearizationEinsteinTensor} are of the form $\Gtil^{g_k} = \theta_k D^{(2)} e + D\theta_k \star De + D^{(2)} \theta_k \star e$, where $D^{(2)}e$ denotes some contraction of the second order derivatives of $e$ with the metric $b$ and $D\theta_k \star De$ (resp. $D^{(2)} \theta_k \star e$) is some contraction of the tensor $D\theta_k \otimes De$ (resp. $D^{(2)} \theta_k \star e$) with respect to $b$.
When contracted with $\hess V - V b$ all these terms are integrable and bounded from above independently of $\theta_k$. As $\theta_k$, $D\theta_k$ and $D^{(2)} \theta_k$ all tend to zero a.e., we conclude that the respective integrals will converge to zero by dominated convergence theorem. 
\end{proof}

\begin{remarks}
\begin{enumerate}
\item The assumption 
$e \in W^{2,1}_\delta(\bH^n\setminus K'', S_2\bH^n)$ can be obtained by assuming that $e \in W^{2, 2}_\alpha(\bH^n\setminus K'', S_2\bH^n)$ with
\[
\alpha + \frac{n-1}{2} > \delta + (n-1)
\]
as it follows from \cite[Lemma 3.6 (b)]{LeeFredholm}. We have chosen to keep the assumptions on $e$ in the statement of the proposition as it is slightly more general and to highlight the link with the hypotheses made in Proposition \ref{propMassAspect}.
\item
Assume that $g$ has Wang's asymptotics, i.e. $e=\rho^{n-2}\overline{e}$, where $\overline{e}$ is a smooth 2-tensor defined in a neighborhood of infinity of $\bH^n$. For $v$ and $V = \cD(v)$ as in Proposition \ref{propRicciDistribution}, in the view of \eqref{eqChargeRicciToADM} and Proposition \ref{propMassAspect2} we obtain
\[
\widetilde{P}(e, v)  = -\tfrac{n-2}{2} P(e,v) = -\tfrac{n-2}{2} P(e,V) = - \tfrac{n(n-2)}{2} \int_{\bS_1(0)} m \,v \, d\mu^\sigma,
\]
where $m=\tr \overline{e}_{|_{\bS_1(0)}}$.
\end{enumerate}
\end{remarks}

\section{Covariance under coordinates change}\label{secCoordinates}
\subsection{A Fredholm theorem}\label{secFredholm}

The aim of this section is to prove the following technical result that will be needed in the sequel:

\begin{proposition}\label{propFredholm}
Assume that $e \in W^{1, p}_\tau(\bH^n \setminus K', S_2\bH^n)$ for some $p > n$ and $\tau > 1$ such that $\tau + \frac{n-1}{p} \geq 2$. Then for any constants $q \in (1, p]$ and $\tau'$ such that
\begin{equation}\label{eqFredholmRange}
    \left|\tau' + \frac{n-1}{q} - \frac{n-1}{2}\right| < \frac{n+1}{2},
\end{equation}
the operator
\[
    P: W^{2, q}_{\tau'}(M, \bR) \to L^q_{\tau'}(M, \bR)
\]
defined by $P: u \mapsto -\Delta^g u + nu$ is an isomorphism. 
\end{proposition}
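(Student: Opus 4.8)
The plan is to treat $P = -\Delta^g + n$ as a perturbation near infinity of the model operator $P_b \definedas -\Delta^b + n$ on hyperbolic space and to invoke the Fredholm theory for geometric elliptic operators on asymptotically hyperbolic manifolds from \cite{LeeFredholm}. First I would recall that the homogeneous equation $\Delta V = nV$ has, near the conformal boundary, solutions behaving like $\rho^{-1}$ and $\rho^{n}$ (see Section \ref{secEstimates}), so the indicial roots of $P_b$ are $-1$ and $n$; accordingly $P_b \colon W^{2,q}_{\tau'}(\bH^n,\bR) \to L^q_{\tau'}(\bH^n,\bR)$ is Fredholm whenever $\tau' + \frac{n-1}{q}$ is not an indicial root, with index locally constant in $(q,\tau')$, and the region \eqref{eqFredholmRange} is precisely the connected set $-1 < \tau' + \frac{n-1}{q} < n$. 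Since $-\Delta^b$ is nonnegative and self-adjoint on $L^2_0 = L^2(\bH^n,d\mu^b)$ with domain $W^{2,2}_0$, we have $P_b \ge n > 0$ there, so $P_b \colon W^{2,2}_0 \to L^2_0$ is an isomorphism; as $(q,\tau') = (2,0)$ lies in \eqref{eqFredholmRange}, the index of $P_b$ vanishes on the whole window, and since its kernel is independent of $(q,\tau')$ within the window (by elliptic regularity and improvement of the weight up to the root $\rho^n$) it is trivial everywhere there. An injective Fredholm operator of index zero being an isomorphism, $P_b$ is an isomorphism for every $(q,\tau')$ satisfying \eqref{eqFredholmRange}.

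Next I would identify $M \setminus K$ with $\bH^n \setminus K'$ via $\Phi$ and write $P = P_b + R$, where $R = -(\Delta^g - \Delta^b)$ is a second-order operator whose top-order coefficients are controlled by $e$ and whose lower-order coefficients are built from $De$ and the Christoffel differences \eqref{eqChristoffel}. As $p > n$, the weighted embeddings of \cite[Lemma 3.6]{LeeFredholm} give $e \in C^0_\tau(\bH^n \setminus K')$, so the top-order coefficients of $R$ are pointwise $O(\rho^\tau)$, while $De$ and $\Gamma$ lie in $L^p_\tau$. Balancing the weighted Hölder and Sobolev inequalities — where the hypotheses $q \le p$, $\tau > 1$ and $\tau + \frac{n-1}{p} \ge 2$ enter — I would show that $R \colon W^{2,q}_{\tau'}(M,\bR) \to L^q_{\tau'}(M,\bR)$ is bounded and in fact takes values in a space with strictly larger weight. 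Cutting the coefficients of $R$ off at $\{\rho = \rho_0\}$ then splits $R = R_{\rho_0} + R'_{\rho_0}$ with $R_{\rho_0}$ supported in the compact set $\{\rho \ge \rho_0\}$ — hence compact as a map $W^{2,q}_{\tau'} \to L^q_{\tau'}$ by Rellich--Kondrachov — and $R'_{\rho_0}$ supported in $\{\rho \le \rho_0\}$ with operator norm $\to 0$ as $\rho_0 \to 0$ (the top-order coefficients are then pointwise $O(\rho_0^\tau)$ and the first-order ones have small $L^p$-norm, by absolute continuity of the integral). For $\rho_0$ small, $P_b + R'_{\rho_0}$ is invertible by a Neumann series, so $P = (P_b + R'_{\rho_0}) + R_{\rho_0}$ is Fredholm; running the same argument along $P_b + tR$, $t \in [0,1]$, gives $\Ind P = \Ind P_b = 0$ throughout the window.

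It then remains to show $\ker P = 0$. If $u \in W^{2,q}_{\tau'}(M,\bR)$ satisfies $Pu = 0$, then outside $K$ we have $P_b u = Ru$, and since $R$ improves the weight, $Ru \in L^q_{\tau'+\varepsilon}$ for some $\varepsilon > 0$; the isomorphism property of $P_b$ together with interior elliptic regularity upgrade $u$ to $W^{2,q}_{\tau'+\varepsilon}$, and iterating (the only candidate obstruction being the indicial root $\rho^n$, which itself lies in $W^{2,2}_0$) yields $u \in W^{2,2}_0(M,\bR)$. A cutoff integration by parts, valid because $(M,g)$ is complete and $u$ decays, then gives
\[
\int_M \bigl(|\nabla u|^2 + n\,u^2\bigr)\, d\mu^g = \int_M u\,(-\Delta^g u + nu)\, d\mu^g = 0,
\]
so $u \equiv 0$. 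Being injective, Fredholm and of index $0$, $P$ is therefore an isomorphism for every $(q,\tau')$ obeying \eqref{eqFredholmRange}.

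I expect the genuine difficulty to lie in the perturbation step: one derivative of $e$ is only controlled in $L^p$, so proving that $R$ is bounded \emph{with a weight gain} between the relevant spaces, and that it decomposes as a compact operator plus one of arbitrarily small norm, requires a careful balancing of the Sobolev and Hölder exponents, and it is exactly for this that $p > n$ and $\tau + \frac{n-1}{p} \ge 2$ are imposed. The weight-improvement bootstrap across $\rho^n$ and the check that the cutoff integration by parts leaves no boundary term are comparatively standard.
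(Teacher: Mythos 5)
Your overall strategy --- treat $P$ as a perturbation of the model $\check P = -\Delta^b + n$ and deduce Fredholmness, then handle the kernel separately --- is reasonable in spirit, but it is genuinely different from the paper's argument and, as written, has a gap at its central step. The paper never invokes Fredholm perturbation theory at all. Instead it exploits the fact that the asymptotic model is globally the hyperbolic Laplacian, writes the exact identity \eqref{eqDiscrepancy} involving $\check P^{-1}$, obtains the weight-gaining estimate \eqref{eqDiscrepancy2}, and then proves the isomorphism directly in three claims: injectivity (bootstrap the weight until $u$ decays, then apply the maximum principle), closed range (the a priori estimate $\|u\|_{W^{2,q}_{\tau'}} \lesssim \|Pu\|_{L^q_{\tau'}} + \|u\|_{L^q_{\tau'-1}}$ together with compactness of the embedding $W^{2,q}_{\tau'} \hookrightarrow L^q_{\tau'-1}$ and a Peetre-type contradiction argument to drop the lower-order term), and surjectivity (duality with Claim~1).

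The gap in your proposal is the claim that $R_{\rho_0}$, the piece of $R = P - P_b$ with coefficients cut off to the compact set $\{\rho\ge\rho_0\}$, is compact from $W^{2,q}_{\tau'}$ to $L^q_{\tau'}$ ``by Rellich--Kondrachov.'' This is false in general. Rellich--Kondrachov gives compactness of $W^{2,q}(\Omega)\hookrightarrow W^{1,q}(\Omega)$ on bounded domains, so the \emph{lower-order} parts of $R_{\rho_0}$ (built from $De$ and $\Gamma$) are indeed compact; but $R$ has the second-order top symbol $(b^{ij}-g^{ij})\partial_i\partial_j = -f^{ij}\partial_i\partial_j$, and a second-order operator with bounded, compactly supported, nonzero coefficients is a bounded but not compact map from $W^{2,q}$ into $L^q$ (take $u_k = k^{-2}\cos(kx^1)\varphi$ on a coordinate ball: $u_k\to 0$ in $W^{1,q}$ but $f^{11}\partial_1^2 u_k$ stays bounded away from zero in $L^q$). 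Since $e$ is not assumed small in $L^\infty$, you cannot absorb this top-order term into the Neumann-series tail piece $R'_{\rho_0}$ either. Consequently the decomposition ``invertible plus compact'' fails and the proposed homotopy $P_b + tR$ is not known to remain Fredholm, so the conclusion $\mathrm{Ind}\,P = 0$ is not established. To rescue a perturbation-theoretic argument you would essentially have to re-prove the semi-Fredholm a~priori estimate for $P$ itself (the route of \cite{LeeFredholm,AllenLeeMaxwell}, and in effect what the paper's Claim~2 does via contradiction). Your injectivity step, via weight bootstrap followed by integration by parts in $W^{2,2}_0$, is a legitimate alternative to the paper's maximum-principle argument, though one should be a bit careful for $q<2$ where the passage to $W^{2,2}_0$ requires raising $q$ by interior elliptic regularity in addition to improving the weight; the maximum-principle route avoids this side issue.
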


This proposition is not a direct consequence of \cite[Theorem C]{LeeFredholm} because of the low regularity assumptions that we are working with. It could nevertheless be derived using the results of \cite{AllenLeeMaxwell}. But most of the technicality of this paper is not needed here so we provide a self-contained proof: due to the fact that the asymptotic geometry is that of the hyperbolic space, we only need to work with a single M\"obius chart. So the argument is less delicate than the ones in \cite{AllenLeeMaxwell,LeeFredholm}. As we will recurrently need to address asymptotic estimates, we choose once and for all a cutoff function $\chi_0: \bH^n \to \bR$ such that $\chi_0(x) = 0$ for $x$ in a neighborhood of $K'$ and $\chi_0(x) \equiv 1$ outside some compact set.

We let  $\check{P}: W^{2, p}_{\tau'}(\bH^n, \bR) \to L^p_{\tau'}(\bH^n, \bR)$ denote the analog of the operator $P$ for the hyperbolic space: $\check{P}(u) = -\Delta u + n u$. Then, provided that the assumptions of the proposition for $\tau'$ and $q$ are fulfilled, $\check{P}$ is an isomorphism (see e.g. \cite[Section 7]{LeeFredholm}). We first perform a formal calculation. For any function $u \in W^{2, q}_{\tau'}(M, \bR)$, we write
\[
    \chi_0 \check{P}^{-1} P (\chi_0 u) = \chi_0^2 u + \chi_0 \check{P}^{-1} (P - \check{P})  \chi_0 u,
\]
so that
\begin{equation}\label{eqDiscrepancy}
\begin{aligned}
u & = (1 - \chi_0^2) u + \chi_0^2 u\\
  & = (1 - \chi_0^2) u + \chi_0 \check{P}^{-1} P (\chi_0 u) - \chi_0 \check{P}^{-1} (P - \check{P})  \chi_0 u,
\end{aligned}
\end{equation}
where we purposefully do not mention the asymptotic diffeomorphism $\Phi$ to make the notation less cluttered. The role of the cutoff function $\chi_0$ is to restrict the support of the functions to $M \setminus K \simeq \bH^n \setminus K'$. The difference $(P - \check{P})(\chi_0 u)$ can be computed (cf. the proof of Lemma \ref{lmHarmonic}) so we obtain
\begin{equation}\label{eqDiffP}
    (P - \check{P}) (\chi_0 u)
    = (b^{ij} - g^{ij}) \hess_{ij}^b (\chi_0 u) + g^{ij} \Gamma_{ij}^k D_k (\chi_0 u).
\end{equation}
Since $g-b \in L^\infty_\tau(M \setminus K, S_2M) \subset L^\infty_1(M \setminus K, S_2M)$, we have
\[
    \left\|(g^{ij} - b^{ij}) \hess_{ij} (\chi_0 u)\right\|_{L^q_{\tau' + 1}} \lesssim \|u\|_{W^{2, q}_{\tau'}}.
\]
The second term in \eqref{eqDiffP} is slightly trickier to deal with. We write
\[
    \left\|g^{ij} \Gamma_{ij}^k D_k (\chi_0 u)\right\|_{L^q_{\tau'+1}}
    \lesssim \|De\|_{L^p_{\tau}} \|u\|_{W^{1, r}_{\tau'+1-\tau}},
\]
with $\frac{1}{r} = \frac{1}{q} - \frac{1}{p}$. From the Sobolev embedding theorem \cite[Lemma 3.6 (c)]{LeeFredholm}, we deduce
\[
    \left\|g^{ij} \Gamma_{ij}^k D_k (\chi_0 u)\right\|_{L^q_{\tau'+1}}
    \lesssim \|De\|_{L^p_{\tau}} \|u\|_{W^{2, s}_{\tau'+1-\tau}},
\]
where $s$ is such that $\frac{1}{r} = \frac{1}{s} - \frac{1}{n}$. Finally, note that the assumption
$\tau + \frac{n-1}{p} \geq 2$ combined with the above relations for $r,p,q$ and $s$ implies that
\[
    \tau' + 1 - \tau + \frac{n-1}{s} < \tau' + \frac{n-1}{q},
\]
so from \cite[Lemma 3.6b]{LeeFredholm}, we have
\[
    \|u\|_{W^{2, s}_{\tau'+1-\tau}} \lesssim \|u\|_{W^{2, q}_{\tau'}}.
\]
As a consequence, the difference $(P - \check{P}) (\chi_0 u)$ in \eqref{eqDiffP} can be estimated as follows
\begin{equation}\label{eqDiscrepancy2}
    \left\|(P - \check{P}) (\chi_0 u)\right\|_{L^q_{\tau'+1}} \lesssim \|u\|_{W^{2, q}_{\tau'}}.
\end{equation}

We will now explain how to deduce  Proposition \ref{propFredholm} from \eqref{eqDiscrepancy} and  \eqref{eqDiscrepancy2}. Note that $P$ is a continuous linear map, hence, by the open mapping theorem, it suffices to prove that $P$ is bijective.

\begin{claim2}\label{cl1p}
Under the assumptions of Proposition \ref{propFredholm}, the operator $P$ is injective.
\end{claim2}

\begin{proof}
Let $u \in W^{2, q}_{\tau'}(M, \bR)$ be such that $Pu \equiv 0$. It follows from local elliptic regularity that $u \in W^{2, q}_{loc}(M, \bR) \subset L^\infty_{loc}(M, \bR)$. Now note that the terms $(1 - \chi_0^2) u$ and $P (\chi_0 u)$ in $\chi_0 \check{P}^{-1} P (\chi_0 u)$ in the right hand side of \eqref{eqDiscrepancy} have compact support. Consequently, the first two terms in the right hand side of \eqref{eqDiscrepancy} belong to $W^{2, q}_{\tau''}(M, \bR)$ for any $\tau''$, in particular for any $\tau''$ such that
\begin{equation}\label{eqInterval}
    \left|\tau'' + \frac{n-1}{q} - \frac{n-1}{2}\right| < \frac{n+1}{2}.
\end{equation}
If we additionally  assume that $\tau'' \leq \tau'+1$, the last term in the right hand side of \eqref{eqDiscrepancy} also belongs to $W^{2, q}_{\tau''}(M, \bR)$. So we conclude that $u \in W^{2, q}_{\tau''}(M, \bR)$. Repeating the above argument a finite number of times, we get that $u \in W^{2, q}_{\tau''}(M, \bR)$ for any $\tau''$ satisfying \eqref{eqInterval}. In particular, regardless of the value of $q \in (1, p)$, we can select a positive $\tau''$ satisfying \eqref{eqInterval}. Elliptic regularity on geodesic balls for the hyperbolic metric then implies $u \in L^\infty_{\tau''}(M, \bR)$, i.e. $|u|$ decays towards zero at infinity. From the maximum principle \cite[Theorem 8.1]{GilbargTrudinger}, we conclude that $u \equiv 0$, i.e. $P$ is injective.
\end{proof}

\begin{claim2}\label{cl2p}
The range of $P$ is closed.
\end{claim2}

\begin{proof}
We remark that the estimate
\[
\left\|P u - \check{P} u\right\|_{L^q_{\tau'}} \lesssim \|u\|_{W^{2, q}_{\tau'-1}},
\]
can be derived along the lines of \eqref{eqDiscrepancy2}, even though the condition \eqref{eqFredholmRange} is not necessarily satisfied with $\tau'-1$ replacing $\tau'$, as the proof does not require us to use the invertibility of $\check{P}$. Hence,
Equation \eqref{eqDiscrepancy} implies
\begin{equation*}
    \|\chi_0^2 u \|_{W^{2, q}_{\tau'}} \lesssim \|P(\chi_0 u)\|_{L^q_{\tau'}} + \|u\|_{W^{2, q}_{\tau'-1}}.
\end{equation*}
Applying interior elliptic regularity, we can promote this inequality to the following one:
\begin{equation}\label{eqInitialEllipticRegularity}
    \|u \|_{W^{2, q}_{\tau'}} \lesssim \|P u\|_{L^q_{\tau'}} + \|u\|_{L^q_{\tau'-1}}.
\end{equation}
We now prove that this estimate can be improved to the following one:
\begin{equation}\label{eqImprovedRegularity}
    \|u\|_{W^{2, q}_{\tau'}} \lesssim \|P u\|_{L^q_{\tau'}}.
\end{equation}
Assume, by contradiction that, for any integer $k$, there exists a non-zero function $u_k \in W^{2, q}_{\tau'}(M, \bR)$ such that
\begin{equation}\label{eqUpperBound}
    \|u_k\|_{W^{2, q}_{\tau'}} \geq k \|P u_k\|_{L^q_{\tau'}}.
\end{equation}
Combining this with \eqref{eqInitialEllipticRegularity} for $u_k$ we get that for $k$ large enough we have
\[
    \|u_k\|_{W^{2, q}_{\tau'}} \lesssim \|u_k\|_{L^q_{\tau'-1}}.
\]
Without loss of generality, we can rescale the functions $u_k$ so that $\|u_k\|_{L^q_{\tau'-1}} = 1$ making the sequence of functions $(u_k)_k$ bounded in $W^{2, q}_{\tau'}(M, \bR)$. The embedding $W^{2, q}_{\tau'}(M, \bR) \hookrightarrow L^q_{\tau'-1}(M, \bR)$ being compact, we can assume further that the sequence $(u_k)_k$ converges weakly in $W^{2, q}_{\tau'}(M, \bR)$ and strongly in $L^q_{\tau'-1}(M, \bR)$ to a function $u_\infty \in W^{2, q}_{\tau'}(M, \bR)$. Due to the strong convergence in $L^q_{\tau'-1}(M, \bR)$, we have $\|u_\infty\|_{L^q_{\tau'-1}} = 1$. On the other hand, Equation \eqref{eqUpperBound} shows that, for any $v \in C^\infty_c(M, \bR)$, we have
\begin{align*}
\int_M v (P u_\infty) d\mu^g
& = \int_M (P v) u_\infty d\mu^g\\
& = \lim_{k \to \infty} \int_M (P v) u_k d\mu^g\\
& = \lim_{k \to \infty} \int_M v (P u_k) d\mu^g.
\end{align*}
As $\|P u_k\|_{L^q_{\tau'}} \lesssim 1/k$, we have that 
\[
    \lim_{k \to \infty} \int_M v (P u_k) d\mu^g = 0,
\]
from which we deduce that $P u_\infty \equiv 0$ a.e.. From the fact that $P$ is injective, we conclude that $u_\infty \equiv 0$. This yields a contradiction with the fact that $\|u_\infty\|_{L^q_{\tau'-1}} = 1$. Thus we have proven that Estimate \eqref{eqImprovedRegularity} holds for any $u \in W^{2, q}_{\tau'}(M, \bR)$.

Assume now that $f_i = P u_i$ is a sequence of elements in the range of $P$ converging to $f \in L^q_{\tau'}(M, \bR)$. The sequence $(f_i)_i$ is Cauchy in $L^q_{\tau'}(M, \bR)$ so the estimate \eqref{eqImprovedRegularity} applied to $u_i - u_j$ shows that $(u_i)_i$ is also Cauchy in $W^{2, q}_{\tau'}(M, \bR)$. As $W^{2, q}_{\tau'}(M, \bR)$ is a Banach space, this implies that the sequence $(u_i)_i$ converges to some $u \in W^{2, q}_{\tau'}(M, \bR)$. By continuity of $P$, we have $Pu = f$ showing that $f$ belongs to the range of $P$.
\end{proof}

We now establish the claim that concludes the proof of Proposition \ref{propFredholm}.

\begin{claim2}\label{cl3p}
The operator $P$ is surjective.
\end{claim2}

\begin{proof}
From the previous claim, we know that $P$ has a closed range. Assuming that the range of $P$ is not the whole of $L^q_{\tau'}(M, \bR)$, there would exist a non-zero element $v$ in the dual of $L^q_{\tau'}(M, \bR)$, i.e. $v \in L^r_{-\tau'}(M, \bR)$ where $r$ satisfies $\frac{1}{r} + \frac{1}{q} = 1$, such that, for all $u \in W^{2, q}_{\tau'}(M, \bR)$, we have
\[
0 = \int_M v \left(-\Delta^g u + n u\right) d\mu^g.
\]
In particular, for all smooth compactly supported functions $v$, we have
\[
    0 = \int_M v (-\Delta^g u + n u) d\mu^g = \int_M u (-\Delta^g v + n v) d\mu^g.
\]
This means that $v$ is a solution to $Pv = 0$ in the sense of distributions. From elliptic regularity, we then know that $v \in W^{2, r}_{-\tau'}(M, \bR)$. However, note that 
\[
    \left|-\tau' + \frac{n-1}{r} - \frac{n-1}{2}\right| = \left|\tau' + \frac{n-1}{q} - \frac{n-1}{2}\right| < \frac{n+1}{2}.
\]
So Claim \ref{cl1p} shows that $v \equiv 0$. This proves that $P$ is onto.
\end{proof}

\subsection{Asymptotic rigidity for weakly regular metrics}\label{secRigidity}

The aim of this section is to prove the following theorem:

\begin{theorem}\label{thmRigidity}
Assume that $\Phi_1: M \setminus K_1 \to \bH^n \setminus K'_1$ and $\Phi_2: M \setminus K_2 \to \bH^n \setminus K'_2$ are two charts at infinity such that
\[
    e_1 \coloneqq \Phi_{1*} g - b \quad\text{and}\quad  e_2 \coloneqq \Phi_{2*} g - b
\]
with $e_1 \in W^{1, p}_\tau(\bH^n \setminus K'_1, S_2 \bH^n)$ and  $e_2 \in W^{1, p}_\tau(\bH^n \setminus K'_2, S_2 \bH^n)$, where $p > n$, and $\tau > 3/2$ satisfies
\[
    2 \leq \tau + \frac{n-1}{p} < n.
\]
Then $\Phi_2 \circ \Phi_1^{-1}: \Phi_1(M \setminus (K_1 \cup \Phi_1^{-1}(K'_2))) \to \Phi_2(M \setminus (K_2 \cup \Phi_2^{-1}(K'_1)))$ can be written as $B \circ \Phi_0$, where $B \in O_\uparrow(n, 1)$ is an isometry of the hyperbolic space and $\Phi_0(x) = \exp_b(\zeta(x))$ is a diffeomorphism ``asymptotic to the identity'', where $\exp_b$ denotes the exponential map with respect to the hyperbolic metric and $\zeta \in W^{2, p}_\tau(\Phi_1(M \setminus (K_1 \cup \Phi_1^{-1}(K'_2))), T \bH^n)$.
\end{theorem}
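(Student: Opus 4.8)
The plan is to realize $\Phi_2\circ\Phi_1^{-1}$ as a bi-Lipschitz near-isometry of hyperbolic space, to peel off an honest isometry from its behaviour at conformal infinity, and then to bootstrap the remaining ``displacement'' vector field using the Fredholm theory of Proposition \ref{propFredholm}. Set $\Omega_1\definedas\Phi_1(M\setminus(K_1\cup\Phi_1^{-1}(K'_2)))$ and $F\definedas\Phi_2\circ\Phi_1^{-1}:\Omega_1\to\Phi_2(M\setminus(K_2\cup\Phi_2^{-1}(K'_1)))$. Since $C^{-1}b\leq\Phi_{i*}g\leq Cb$, the map $F$ is bi-Lipschitz with respect to $b$, and from $F^*(\Phi_{2*}g)=\Phi_{1*}g$ one gets the fundamental identity
\[
    F^*b-b=e_1-F^*e_2 .
\]
A first task is to show that $F\in W^{2,p}_{loc}$ and that the right-hand side above lies in $W^{1,p}_\tau$ on the overlap region; this is where one must be a little careful about pulling back a $W^{1,p}_\tau$ tensor by a merely bi-Lipschitz map, and it is handled by a short interior bootstrap off the identity above (or via the harmonic-map equation, as in the computation underlying Lemma \ref{lmHarmonic}). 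In particular $F^*b-b\to0$ at infinity.

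Next I would extract the isometry. Because $F$ is bi-Lipschitz and $F^*b-b\to0$, the map $F$ acts on the conformal boundary $\bS_1(0)$ of $\bH^n$ to leading order, and its boundary trace is a conformal diffeomorphism of $\bS^{n-1}$. As such maps are exactly the boundary traces of elements of $O_\uparrow(n,1)$, there is a unique $B\in O_\uparrow(n,1)$ inducing the same boundary conformal map as $F$. Replacing $F$ by $G\definedas B^{-1}\circ F$ does not change the defining tensor (since $B$ is an isometry, $G^*b-b=F^*b-b$), but now $G$ induces the identity on $\bS_1(0)$. Using the exponential map \eqref{eqExp} we write $G(x)=\exp_x(\zeta(x))$; the fact that $G$ fixes the conformal boundary forces $\zeta$ to be bounded and to tend to $0$ at infinity, and local elliptic regularity gives $\zeta\in W^{2,p}_{loc}$.

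Now comes the elliptic analysis. Viewing $\zeta=(\zeta^0,\ldots,\zeta^n)$ through its $\bR^{n,1}$-valued components — which is legitimate for the purpose of weighted estimates by Proposition \ref{propTransfert} — the expansion \eqref{eqPullback2} of $\Phi_\zeta^*b$ together with the computation behind Lemma \ref{lmHarmonic} shows that, after possibly composing $G$ with a further diffeomorphism asymptotic to the identity to place it in harmonic gauge (an adjustment controlled by Proposition \ref{propFredholm} itself), each component satisfies
\[
    -\Delta^g\zeta^\mu+n\zeta^\mu=h^\mu+\cQ(\zeta,D\zeta),\qquad \mu=0,\ldots,n,
\]
where $h^\mu\in L^p_\tau$ is built linearly from $e_1$, $F^*e_2$ and their first derivatives, and $\cQ(\zeta,D\zeta)$ collects terms at least quadratic in $(\zeta,D\zeta)$. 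The reason for using ambient components is structural: Killing fields of $\bH^n$ have $\bR^{n,1}$-linear components (the conformal Killing fields \eqref{eqCKVF} being among them), hence lie in the kernel of $-\Delta+n$, which is precisely the operator treated in Proposition \ref{propFredholm}. A priori $\zeta^\mu\to0$, so interior estimates give $\zeta^\mu\in W^{2,p}_{\sigma_0}$ for some small $\sigma_0>0$; if $\zeta^\mu\in W^{2,p}_\sigma$ then the multiplication and Sobolev embedding properties of weighted spaces put $\cQ(\zeta,D\zeta)\in L^p_{\sigma'}$ with $\sigma'=\min\{\tau,2\sigma-\varepsilon\}$ for a fixed $\varepsilon$ depending on the Sobolev exponent, and applying Proposition \ref{propFredholm} with $q=p$ and weight $\sigma'$ yields $\zeta^\mu\in W^{2,p}_{\sigma'}$. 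The hypothesis $\tau>3/2$ makes the iteration $\sigma\mapsto\min\{\tau,2\sigma-\varepsilon\}$ strictly increasing and reach $\tau$ in finitely many steps, while $2\leq\tau+\tfrac{n-1}{p}<n$ is exactly what is needed for the hypotheses and the admissible range $-1<\sigma'+\tfrac{n-1}{p}<n$ of Proposition \ref{propFredholm} to hold at each step. Proposition \ref{propTransfert} then promotes $\zeta^\mu\in W^{2,p}_\tau$ to $\zeta\in W^{2,p}_\tau(\Omega_1,T\bH^n)$, and setting $\Phi_0\definedas G=B^{-1}\circ F=\exp_b(\zeta)$ gives $\Phi_2\circ\Phi_1^{-1}=B\circ\Phi_0$.

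I expect the main obstacle to be twofold, and genuinely tied to the low regularity. First, rigorously extracting $B$: showing that a bi-Lipschitz near-isometry of $\bH^n$ whose defining tensor only lies in $W^{1,p}_\tau$ still possesses a well-defined conformal boundary trace which is conformal, and that $B^{-1}\circ F$ is \emph{asymptotic to the identity} rather than merely a near-isometry. Second, closing the nonlinear bootstrap cleanly: controlling the interplay between the weight loss in $\cQ(\zeta,D\zeta)$, the weighted Sobolev embeddings, the harmonic-gauge reduction, and the Fredholm range of Proposition \ref{propFredholm} — it is precisely this interplay that forces the hypotheses $p>n$, $\tau>3/2$ and $2\leq\tau+\tfrac{n-1}{p}<n$.
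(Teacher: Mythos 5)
Your high-level outline — peel off an isometry $B$ from the ``boundary behaviour'' of $F=\Phi_2\circ\Phi_1^{-1}$, write the remainder as $\exp_x\zeta(x)$, and bootstrap $\zeta$ via weighted Fredholm theory — is the right shape, and you correctly identify \eqref{eqPullback2}, Proposition \ref{propTransfert}, and Proposition \ref{propFredholm} as the tools in play, as well as the identity $F^*b-b=e_1-F^*e_2$ (which is essentially \eqref{eqDiffMetrics}). But there is a genuine gap in the step you flag as ``the main obstacle'': at the regularity $e_i\in W^{1,p}_\tau$ with $\tau$ possibly just above $3/2$, the claim that $F$ has a well-defined trace on $\bS_1(0)$ which is a conformal diffeomorphism is not a lemma you can invoke — it is essentially the entire content of the theorem. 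The paper does not take this route at all. Instead, it characterizes the $(n+1)$-dimensional space $\cN^g$ of eigenfunctions $\cV$ of $\Delta^g$ with $\hess^g\cV-\cV g\in L^p_{\tau-1}$ \emph{intrinsically} (Proposition \ref{propApproximateLapse}), via a lengthy averaged-integral argument (the functions $f_0,f_1,f_2$ on annuli, the boundary functional $\ell$, the group-averaging mollification to show the boundary data satisfies $\mathring\hess^\sigma v=0$). Both charts $\Phi_1,\Phi_2$ give bases of $\cN^g$ asymptotic to the $X^\mu$ resp.\ the $Y^\mu$, and the change of basis is shown to preserve the Minkowski form, yielding $B$ without ever touching the conformal boundary pointwise. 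Your approach would require precisely the kind of quantitative control that the averaged-integral machinery is built to provide; without it the ``boundary trace is conformal'' step is an assertion, not a proof, and it is exactly where the hypothesis $\tau+\frac{n-1}{p}\geq 2$ is consumed.

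The bootstrap step also differs in a way that matters. You propose to pass to a harmonic gauge (``controlled by Proposition \ref{propFredholm} itself'') and then apply the scalar operator $-\Delta^g+n$ componentwise to $\zeta^\mu$. This has a circularity problem: the gauge-fixing diffeomorphism must itself be shown to be asymptotic to the identity with the correct decay before it can be used, and this is the very property you are trying to establish. It is also not clear that the componentwise equation $-\Delta^g\zeta^\mu+n\zeta^\mu=h^\mu+\cQ$ actually holds — the natural equation coming from the pullback identity is vectorial, not scalar. The paper avoids both issues: it takes the divergence of $\cL_\zeta b=\btil-b+\upsilon$ and applies the Fredholm theory of \cite[Theorem C]{LeeFredholm} to the vector Laplacian $\zeta\mapsto\divg(\lie_\zeta b)$ directly, with Proposition \ref{propTransfert} used only once at the start to get the initial rough decay $\zeta\in W^{2,p}_{\mu_0}$ with $\mu_0=\min\{\tau-1,2\tau-3\}>0$ (this is where $\tau>3/2$ enters). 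The iteration $\mu_{k+1}=\min\{2\mu_k,\tau\}$ then closes without any gauge fixing. If you want to rescue your approach, the scalar route would need a genuine derivation of the componentwise equation and an independent treatment of the gauge map, at which point you would be reconstructing a more complicated version of Lemma \ref{lmDecay}.
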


The argument is similar to the one used by Bartnik to prove the asymptotic rigidity of asymptotically Euclidean manifolds, see \cite{BartnikMass}, and it is a strengthening of an earlier result by Herzlich \cite[Proposition 2.4]{HerzlichMassFormulae} for which we provide a detailed proof. The restriction $\displaystyle \tau + \frac{n-1}{p} \geq 2$ will be explained after Proposition~\ref{propApproximateLapse} as it has an intrinsic interpretation in this approach. The assumption $\tau > 3/2$ is a consequence of the fact that we are dealing with weighted Sobolev spaces that are not so well suited for non-linear elliptic PDEs over asymptotically hyperbolic manifolds. This was already noted in~\cite{GicquaudStability} and was the main motivation behind introducing weighted local Sobolev spaces in~\cite{GicquaudSakovich}. However, we have chosen to stay with the standard Sobolev spaces as they are the function spaces that are typically used in the definition of the mass.

The asymptotic rigidity can be proven in the context of a more general conformal infinity, see~\cite{ChruscielNagy, ChruscielHerzlich}, and also~\cite{CortierDahlGicquaud}, but at the cost of imposing stronger regularity conditions on the metric. In a future work, we intend to adapt the strategy developed below to this more general context.

We start the proof of Theorem~\ref{thmRigidity} with a couple of lemmas. In the first one we construct an analogue of Bartnik's harmonic coordinates on asymptotically Euclidean manifolds that is better suited to the asymptotically hyperbolic setting.

In what follows, we set $X^\mu \definedas V^\mu \circ \Phi_1$ and $Y^\mu \definedas V^\mu \circ \Phi_2$. The first thing we want to do is to transform these coordinate functions to ones that are defined on the whole of $M$ and satisfy the equation $\Delta^g \cV = n \cV$. We will state everything in terms of the diffeomorphism $\Phi_1$ keeping in mind that the same results will be true for $\Phi_2$.

\begin{lemma}\label{lmHarmonic}
Given $V \in \cN^b$, there exists a unique function $\cV$ on $M$ such that
\[
\Delta^g \cV = n \cV, \quad \cV - V\circ \Phi_1 \in W^{2, p}_{\tau-1}(M, \bR).
\]
Furthermore, we have $\hess^g \cV - \cV g \in L^p_{\tau-1}(M \setminus K_1, S_2 M)$.
\end{lemma}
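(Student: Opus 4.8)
The plan is to obtain $\cV$ by correcting the naive candidate $w \definedas \chi_0\,(V\circ\Phi_1)$, where $\chi_0$ is a cutoff function on $M$ that vanishes near $K_1$ and equals $1$ outside a compact set, so that $w$ (extended by zero across $K_1$) is defined and smooth on all of $M$. Writing $\cV = w+u$, the equation $\Delta^g\cV = n\cV$ becomes $Pu = -Pw$ for the operator $Pu = -\Delta^g u + nu$ of Proposition~\ref{propFredholm}, so the first thing I would do is check that $Pw \in L^p_{\tau-1}(M,\bR)$. Where $\chi_0\equiv 1$ one has $w = V\circ\Phi_1$; pushing forward by $\Phi_1$, identifying $\Phi_{1*}g = b+e_1$, and using that $V\in\cN^b$ satisfies $\hess^b V = Vb$ (hence $\Delta^b V = nV$), this gives $Pw = (\Delta^b-\Delta^g)V$ there. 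Expanding the two Hessians as in \eqref{eqChristoffel} yields the pointwise identity
\[
(P-\check{P})(\chi_0 u) = (b^{ij}-g^{ij})\,\hess^b_{ij}(\chi_0 u) + g^{ij}\Gamma^k_{ij}\,D_k(\chi_0 u),
\]
with $\check{P}u = -\Delta^b u + nu$ and $\Gamma^k_{ij} = \tfrac12 g^{kl}(D_i e_{lj}+D_j e_{il}-D_l e_{ij})$ — this is precisely the formula \eqref{eqDiffP} invoked in Section~\ref{secFredholm}. Since $\hess^b V = Vb$ with $V\in C^\infty_{-1}(\bH^n,\bR)$, while $b^{ij}-g^{ij}\in W^{1,p}_\tau$ and $g^{ij}\Gamma^k_{ij}\in L^p_\tau$ by the hypothesis $e_1\in W^{1,p}_\tau$ with $p>n$, the weighted product estimates give $Pw\in L^p_{\tau-1}$ near infinity; on the compact set where $\chi_0$ varies, $Pw$ is a compactly supported $L^p$ function (using $g\in W^{1,p}_{loc}\cap L^\infty$), hence in $L^p_{\tau-1}$; and $Pw$ vanishes near $K_1$. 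So $Pw\in L^p_{\tau-1}(M,\bR)$.

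Next I would apply Proposition~\ref{propFredholm} with $q=p$ and $\tau'=\tau-1$: the hypothesis $2\le\tau+\tfrac{n-1}{p}<n$ gives $1\le\tau'+\tfrac{n-1}{p}<n-1$, so $\tau'+\tfrac{n-1}{p}-\tfrac{n-1}{2}\in\bigl[\tfrac{3-n}{2},\tfrac{n-1}{2}\bigr)$, whose absolute value is strictly below $\tfrac{n+1}{2}$; hence $P\colon W^{2,p}_{\tau-1}(M,\bR)\to L^p_{\tau-1}(M,\bR)$ is an isomorphism. Taking $u$ to be the unique solution of $Pu=-Pw$ and setting $\cV\definedas w+u$, one gets $\Delta^g\cV=n\cV$ with $\cV - w = u\in W^{2,p}_{\tau-1}(M,\bR)$, and since $w=V\circ\Phi_1$ near infinity this is the asserted decay $\cV - V\circ\Phi_1\in W^{2,p}_{\tau-1}$. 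For uniqueness, if $\cV_1,\cV_2$ both satisfy the conclusions then $\cV_1-\cV_2\in W^{2,p}_{\tau-1}(M,\bR)$ solves $P(\cV_1-\cV_2)=0$, hence vanishes by injectivity of $P$ (Claim~\ref{cl1p}).

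Finally, for the Hessian estimate I would set $\psi\definedas\cV-V\circ\Phi_1$ and, on $M\setminus K_1\simeq\bH^n\setminus K'_1$, decompose
\[
\hess^g\cV-\cV g = \bigl(\hess^g\cV-\hess^b\cV\bigr) + \bigl(\hess^b\psi-\psi b\bigr) + \cV\,(b-g),
\]
using $\hess^b V - Vb = 0$. The first bracket is $-\Gamma^k_{ij}D_k\cV$ with $|\Gamma|\lesssim|De_1|$; since $D\cV = DV + D\psi$ with $DV\in C^\infty_{-1}$ and $D\psi\in W^{1,p}_{\tau-1}\subset L^\infty_{-1}$ near infinity (here $p>n$ gives the embedding and $\tau-1>-1$ the weight comparison), this term lies in $L^p_\tau\cdot L^\infty_{-1}\subset L^p_{\tau-1}$. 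The second bracket is in $L^p_{\tau-1}$ directly because $\psi\in W^{2,p}_{\tau-1}$ near infinity. The third equals $-\cV e_1\in L^\infty_{-1}\cdot L^p_\tau\subset L^p_{\tau-1}$ since $\cV\in L^\infty_{-1}$ near infinity. Hence $\hess^g\cV-\cV g\in L^p_{\tau-1}(M\setminus K_1, S_2 M)$. The main obstacle is the weighted bookkeeping: arranging the product and Sobolev embedding estimates so that every error term lands exactly in the weight $\tau-1$ — this is what forces $\tau+\tfrac{n-1}{p}\ge 2$, equivalently the condition placing $\tau'=\tau-1$ in the Fredholm range of $P$ — after which the construction is a routine application of Proposition~\ref{propFredholm}.
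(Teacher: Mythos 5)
Your proposal is correct and follows essentially the same route as the paper: extend $V\circ\Phi_1$ by a cutoff, verify the defect $Pw$ lands in $L^p_{\tau-1}$, apply Proposition~\ref{propFredholm} (with $q=p$, $\tau'=\tau-1$) to correct, and then estimate $\hess^g\cV-\cV g$ by a three/four-term decomposition. Your decomposition combines what the paper splits into $\Vtil$- and $w$-parts into a single $\psi=\cV-V\circ\Phi_1$ term, but the bookkeeping is the same; you are also slightly more careful than the paper's phrasing in claiming only $Pw\in L^p_{\tau-1}$ rather than $W^{1,p}_{\tau-1}$, which is all that Proposition~\ref{propFredholm} requires.
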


Following the notations of the lemma, a cursive capital letter (e.g. $\cX^\mu$)
will denote the eigenfunction of the Laplacian of $g$ asymptotic to the corresponding straight capital letter ($X^\mu$ in this example).

\begin{proof}
We choose a smooth cutoff function $\chibar$ supported on $M \setminus K_1$ such that $\chibar \equiv 1$ outside a compact subset of $M$. This allows us to ``extend'' $V \circ \Phi_1$ to a smooth function $\Vtil: M\to \mathbb{R}$ defined by
\[
\Vtil(x) = \left\{
\begin{aligned}
\chibar(x) V \circ\Phi_1(x) & \quad\text{if } x \in M \setminus K_1, \\
0                          & \quad\text{if } x \in K_1.
\end{aligned}
\right.
\]
We have
\[
-\Delta^g \Vtil + n\Vtil \in W^{1, p}_{\tau-1}(M, \bR).
\]
Further, due to our assumptions on $\tau$, we have
\[
\left|(\tau-1) + \frac{n-1}{p} - \frac{n-1}{2}\right| < \frac{n+1}{2}.
\]
Hence, Proposition \ref{propFredholm} applies to provide a unique function $w \in W^{2, p}_{\tau-1}(M, \bR)$ such that
\[
-\Delta^g w + n w = \Delta^g \Vtil - n\Vtil.
\]
Setting $\cV \definedas \Vtil + w$, we see that $\cV$ is an eigenfunction of the Laplacian. The estimate for the Hessian of $\cV$ follows by writing
\begin{align*}
\hess^g \cV - \cV g
& = \left(\hess^b \Vtil - \Vtil b\right) + \left(\hess^g \Vtil - \hess^b \Vtil\right) \\
& \qquad + \Vtil (b - g) + (\hess^g w - w g)
\end{align*}
and estimating each of the parentheses; the first term is zero outside a compact set since $\Vtil(x) = \chibar(x) (V \circ \Phi_1)(x)$. Uniqueness follows by noting that if $\cV$ and $\cV'$ are two eigenfunctions of the Laplacian such that $\cV - V \circ\Phi_1 \in W^{2, p}_{\tau-1}(M, \bR)$ and  $\cV' - V \circ\Phi_1 \in W^{2, p}_{\tau-1}(M, \bR)$, then 
$\cV - \cV' \in W^{2, p}_{\tau-1}(M, \bR)$ satisfies $-\Delta^g (\cV - \cV') + n(\cV - \cV') = 0$. By Proposition \ref{propFredholm}, we conclude that $\cV - \cV' \equiv 0$.
\end{proof}

We now prove that the span of the $\cX^\mu$ (or equivalently the $\cY^\mu$) is intrinsic to the manifold $(M, g)$: 

\begin{proposition}\label{propApproximateLapse} Given $g$ such that $(\Phi_1)_* g - b \in W^{1, p}_\tau(\bH^n \setminus K'_1, S_2 \bH^n)$ with $p > n$ and $\tau > 0$ satisfying
\[
    2 \leq \tau + \frac{n-1}{p} < n,
\]
the set $\cN^g$ defined as
\[
    \cN^g \definedas \left\{\cV \in W^{2, p}_{loc}(M, \bR)~\middle\vert~\Delta^g \cV = n \cV~\text{and}~ \hess^g \cV - \cV g \in L^p_{\tau-1}(M, S_2M) \right\}
\]
is $(n+1)$-dimensional and spanned by the functions $\cX^\mu$:
\[
    \cN^g = \mathrm{span}\{\cX^0, \cX^1, \ldots, \cX^n\}.
\]
\end{proposition}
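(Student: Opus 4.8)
The plan is to show two inclusions. The inclusion $\mathrm{span}\{\cX^0, \ldots, \cX^n\} \subseteq \cN^g$ is essentially built into Lemma \ref{lmHarmonic}: each $\cX^\mu$ is by construction an eigenfunction of $\Delta^g$ with $\cX^\mu - V^\mu \circ \Phi_1 \in W^{2, p}_{\tau-1}(M, \bR)$ and the lemma already provides the Hessian estimate $\hess^g \cX^\mu - \cX^\mu g \in L^p_{\tau-1}(M \setminus K_1, S_2 M)$; local elliptic regularity gives $\cX^\mu \in W^{2,p}_{loc}(M, \bR)$, so $\cX^\mu \in \cN^g$. Linear independence of the $\cX^\mu$ follows because their leading terms $V^\mu \circ \Phi_1$ are linearly independent and the differences are $o(\rho^{-1})$ (since $\tau - 1 > -1$... one should check the weights here, but the point is that a nontrivial linear combination $\sum c_\mu \cX^\mu \in W^{2,p}_{\tau-1}$ would force $\sum c_\mu V^\mu \in W^{2,p}_{\tau-1} + (\text{compactly supported})$, hence decays, which is impossible for a nonzero element of $\cN^b$ as these grow like $\rho^{-1}$). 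So the real content is the reverse inclusion $\cN^g \subseteq \mathrm{span}\{\cX^0, \ldots, \cX^n\}$, equivalently: \textbf{dimension count}.

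For the reverse inclusion, I would take an arbitrary $\cV \in \cN^g$ and show it is a linear combination of the $\cX^\mu$. The strategy is to produce, from $\cV$, an element of $\cN^b$ by reading off the asymptotics, then subtract the corresponding combination of $\cX^\mu$ and show the remainder vanishes. Concretely: transplant $\cV$ to $\bH^n \setminus K'_1$ via $\Phi_1$, so we have $\cV$ on $\bH^n \setminus K'_1$ satisfying $\Delta^g \cV = n \cV$ with $\hess^g \cV - \cV g \in L^p_{\tau-1}$. Writing $\Delta^g = \Delta + (\Delta^g - \Delta)$ and using $e_1 \in W^{1,p}_\tau$ with $\tau + \frac{n-1}{p} \geq 2$ (exactly the hypothesis used in Proposition \ref{propFredholm} and in the estimate \eqref{eqDiffP}--\eqref{eqDiscrepancy2}), one gets $-\Delta \cV + n\cV = (\Delta^g - \Delta)\cV \in L^p_{\tau'+1}$-type control once $\cV$ itself is controlled. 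The first task is therefore an \emph{a priori} growth bound on $\cV$: from the Hessian condition $\hess^g\cV - \cV g \in L^p_{\tau-1}$ and $g = b + e_1$ with $e_1$ decaying, one deduces $\hess^b \cV - \cV b \in L^p_{\tau-1} + (\text{lower order in } e_1 \cdot \cV)$, and a bootstrap/maximum-principle argument (of the type in the proof of Proposition \ref{propEstimateEigenfunction0}, comparing $\cV/V^0$) shows $\cV = O(\rho^{-1})$, i.e. $\cV$ grows at most like the lapse functions.

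Once $\cV = O(\rho^{-1})$ is established, the idea is to extract its "boundary data": since $\rho \cV$ is bounded and $-\Delta \cV + n\cV$ lies in a good weighted space, elliptic theory (or the explicit construction of Section \ref{secEstimates}, Proposition \ref{propEstimateEigenfunction0}) lets us solve for an eigenfunction $V_0 \in \cN^b$ with the same leading boundary trace as $\cV$ — more precisely, $\rho \cV$ restricts to some $v_0 \in C^0(\bS_1(0))$ after one checks the trace exists, and we set $V_0 = \cD(v_0) \in \cN^b \cap \cE^0$. Then $\cW \definedas \cV - \mathcal{X}$, where $\mathcal{X}$ is the element of $\mathrm{span}\{\cX^\mu\}$ whose leading term is $V_0$ (writing $V_0 = \sum c_\mu V^\mu$ and $\mathcal{X} = \sum c_\mu \cX^\mu$), satisfies $\Delta^g \cW = n\cW$, the Hessian condition $\hess^g\cW - \cW g \in L^p_{\tau-1}$, and additionally $\cW = o(\rho^{-1})$ since its boundary trace vanishes. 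The final step is to run the Fredholm machinery: $\cW$ solves $P\cW = -\Delta^g\cW + n\cW = 0$, and one shows $\cW \in W^{2,q}_{\tau'}(M, \bR)$ for a suitable $(q, \tau')$ in the admissible range \eqref{eqFredholmRange} — this uses the decay $\cW = o(\rho^{-1})$ together with the Hessian estimate to get an initial weighted bound, followed by the bootstrap in the spirit of Claim \ref{cl1p}. Since $P$ is injective on that space by Proposition \ref{propFredholm}, $\cW \equiv 0$, giving $\cV = \mathcal{X} \in \mathrm{span}\{\cX^\mu\}$.

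The main obstacle, I expect, is the \textbf{a priori decay estimate} $\cV = O(\rho^{-1})$ and then its upgrade to membership in an actual weighted Sobolev space where Proposition \ref{propFredholm}'s injectivity bites: the hypothesis in $\cN^g$ only controls $\hess^g\cV - \cV g$ in $L^p_{\tau-1}$, which is a condition on a specific combination of second derivatives, not on $\cV$ itself, so one must carefully convert it — via the contracted trace $\tr^g(\hess^g\cV - \cV g) = \Delta^g\cV - n\cV = 0$ being automatically satisfied, together with the traceless part and the equation $\Delta^g \cV = n\cV$ — into genuine decay. This is where the conditions $\tau > 3/2$ and $2 \leq \tau + \frac{n-1}{p} < n$ enter (the lower bound to make the perturbation $(\Delta^g - \Delta)\cV$ land in the right space and to have room for the bootstrap, the upper bound $< n$ to ensure $\mathcal{N}^b$-type growth $\rho^{-1}$ is not itself swallowed, i.e. that we do not accidentally kill the $\cX^\mu$). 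Managing the interplay of weights through the Sobolev embeddings \cite[Lemma 3.6]{LeeFredholm} in this conversion, exactly as in the proof of Proposition \ref{propFredholm}, is the delicate bookkeeping; everything else is comparatively routine.
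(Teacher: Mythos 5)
Your high-level architecture — take $\cV \in \cN^g$, read off its boundary behaviour, subtract the corresponding combination of the $\cX^\mu$, and kill the remainder by Fredholm injectivity — is the same skeleton as the paper's. But there is a genuine gap at the pivot of the argument: you write ``we set $V_0 = \cD(v_0) \in \cN^b \cap \cE^0$'', where $v_0$ is the boundary trace of $\rho\cV$. This is unjustified. For an arbitrary $v_0 \in C^0(\bS_1(0),\bR)$ the eigenfunction $\cD(v_0)$ satisfies $\Delta V = nV$ but does \emph{not} lie in $\cN^b$ (which requires the much stronger $\hess V = Vb$): $\cN^b$ is $(n+1)$-dimensional while $\cE^0$ is infinite-dimensional. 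Equivalently, you silently assume that the boundary trace $v_0$ is a linear combination of $1, x^1, \ldots, x^n$ — but that is precisely the substantive content of the proposition. The entire point of the hypothesis $\hess^g\cV - \cV g \in L^p_{\tau-1}$ is to force the trace into this $(n+1)$-dimensional subspace, and this is where the paper spends most of its effort: it shows (Claims \ref{cl3}--\ref{cl4}) that the distributional boundary trace $v$ satisfies $\mathring{\hess}^\sigma v = 0$ on $\bS_1(0)$, and then invokes a separate rigidity result (Proposition \ref{propTracelessHessian}) to conclude $v \in \mathrm{span}\{1, x^1,\ldots,x^n\}$. The mollification-by-$SO(n)$-convolution argument in Claim \ref{cl4}, needed because $v$ is a priori only an $L^p$ function, is not something one can skip. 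Without this step your subtraction produces a $\cW$ that need not even have a chance of being in $\mathrm{span}\{\cX^\mu\}$, and Fredholm injectivity on the remainder is then vacuous for the conclusion you want.

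A secondary issue: your route to the a priori bound $\cV = O(\rho^{-1})$ via a $\cV/V^0$ maximum-principle comparison, which works cleanly for the exact hyperbolic Laplacian in Proposition \ref{propEstimateEigenfunction0}, is delicate here because $g$ is only $W^{1,p}_\tau$-close to $b$ and $\cV$ is only in $W^{2,p}_{loc}$; the paper avoids the maximum principle entirely and instead works with annular averages $f_0, f_1, f_2$ of $|\cV|^p e^{-pr}$ and $|d\cV|^p e^{-pr}$ (Claims \ref{cl1}, \ref{cl1.5}, \ref{cl6}), controlling them via differential inequalities in the radial variable, precisely because this integral technique is robust at this regularity. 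The weighted-Fredholm endgame you describe is indeed how Claim \ref{cl8} finishes — your instinct there is right — but it needs to be fed the improved $L^p_{-\frac{n-1}{p}}$ decay derived from the $f_0,f_2$ estimates, and it only applies once the boundary trace has been identified as belonging to $\cH$.
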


\begin{remark}
We pause here and comment on the assumption $\displaystyle \tau + \frac{n-1}{p} \geq 2$. Lemma~\ref{lmHarmonic} can be generalized to show that any arbitrary $V$ satisfying $\Delta^b V = n V$ such that $\rho V$ extends to a $C^2$ function to $\bS_1(0)$ can be perturbed to a function $\cV$ such that $\Delta^g \cV = n \cV$ with the same asymptotic. The tensor
\begin{equation}\label{eqDefT}
    \cT \definedas \hess^g \cV - \cV g
\end{equation}
has contributions coming from the following two facts:
\begin{itemize}
    \item $V$ does not (in general) satisfy $\hess^b V = V b$,
    \item $g$ is not the hyperbolic metric.
\end{itemize}
To distinguish functions $\cV$ asymptotic to the lapse functions (see the discussion after Proposition \ref{propLapse}), our assumptions on the metric must be strong enough to ensure that the second type of correction terms does not dominate the first so we can still single out functions asymptotic to hyperbolic lapse functions. Recall that the optimal estimate of $\hess^b V - V b$ for an arbitrary eigenfunction $V$ is given by \eqref{eqAsymptotic2ij}:
\[
|\hess^b V - V b| = O(\rho).
\]
In particular, we do not have $\hess^b V - V b \in W^{1, p}_\tau(\bH^n, \bR)$ for any $\tau$ such that $\displaystyle \tau + \frac{n-1}{p} \geq 2$.
\end{remark}

Proposition \ref{propApproximateLapse} will allow us to identify the leading term (i.e. the hyperbolic isometry) in the composition $\Phi_2 \circ \Phi_1^{-1}$. It is by far the most challenging and lengthy intermediate result in proving Theorem \ref{thmRigidity}. Due to the low regularity we are working with, we must employ arguments based on integrals to achieve our goal. To make the proof of Proposition \ref{propApproximateLapse} easier to read, we decompose it in a series of claims.  Note that we will work with $\Phi_1$ as a fixed chart at infinity so we will temporarily write $g$ instead of $(\Phi_1)_* g$ to simplify the notation. 

From Lemma \ref{lmHarmonic}, we know that the set $\cN^g$ is at least $(n+1)$-dimensional as it contains the span of $\cX^0, \ldots, \cX^n$. All we have to do is to prove that any $\cV \in \cN^g$ belong to this span.

Assume that $\cV \in \cN^g$ is given. As before, we let $r$ denote the hyperbolic distance function from a given origin in $\bH^n$ and remind the reader that the hyperbolic metric in normal geodesic coordinates reads
\[
b = dr^2 + \sinh(r)^2 \sigma,
\]
where $\sigma$ is the round metric on $\bS_1(0)$. Denoting a point $x$ in $\bH^n$ as $(r, \theta)$, where $r$ is its distance from the origin and $\theta$ is a coordinate system on $\bS_1(0)$, we first introduce the following two functions:
\begin{align}
f_0(s) & \definedas \int_{s-1}^{s+1} \int_{\bS_1(0)} \cV(r, \theta)^p e^{-pr} d\theta dr,\label{eqDefF0}\\
f_1(s) & \definedas \int_{s-1}^{s+1} \int_{\bS_1(0)} |d\cV|^p_g(r, \theta) e^{-pr} d\theta dr,\nonumber
\end{align}
defined for $s \geq r_0+1$, where $r_0$ is such that $K'_1 \subset B_{r_0}(0)$, i.e. so that the image of $\Phi_1$ contains the image of $[r_0, \infty) \times \bS_1(0)$ under the exponential map with respect to the origin $0$ of $\bH^n$. Hence $f_0$ and $f_1$ are integrals over the annuli of increasing radius, $s-1 \leq r \leq s+1$, and the underlying idea is that they should capture the average behavior of $\cV$ on these annuli. In what follows the index $i = 1$ will denote the radial direction and upper case Latin indices will refer to directions tangent to $\bS_1(0)$.

Note that the functions $V^\mu$, $\mu=0,\ldots, n$, are given in hyperbolic normal geodesic coordinates by the following formulas:
\begin{equation}\label{eqLapseGeodesic}
V^0 = \cosh(r), \quad V^i = x^i \sinh(r) , \quad i=1,\ldots, n,
\end{equation}
where $x^1, \ldots, x^n$ denote the restrictions of the coordinate functions of $\bR^n$ to the unit sphere $\bS_1(0)$. Thus, we expect that $\cV$ and $|d\cV|$ are $O(e^r)$. The first claim shows that this is true in some sense on average:

\setcounter{claim}{0}
\begin{claim}\label{cl1}
Assume that $\cV \in \cN^g$, then the functions $f_0$ and $f_1$ are bounded functions of $s$.
\end{claim}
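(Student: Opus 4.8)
\textbf{Proof strategy for Claim \ref{cl1}.}
The plan is to exploit the eigenvalue equation $\Delta^g\cV=n\cV$ together with the control on $\hess^g\cV-\cV g$ in order to derive a second-order differential inequality for $f_0$ (and a first-order control for $f_1$), and then to rule out exponential growth of $f_0$ faster than $e^{pr}$ using the decay of the perturbation $e=g-b$. First I would fix the normal geodesic coordinates $(r,\theta)$ centered at the origin, record that outside a compact set $M\setminus K_1$ is identified (via $\Phi_1$) with $[r_0,\infty)\times\bS_1(0)$, and write $\cT\definedas\hess^g\cV-\cV g\in L^p_{\tau-1}$, so that $\|\cT\|_{L^p_{\tau-1}}<\infty$; since $\rho\sim e^{-r}$ and $d\mu^b\sim e^{(n-1)r}\,dr\,d\mu^\sigma$ near infinity, this says precisely that $\int_{r_0}^\infty\!\int_{\bS_1(0)}|\cT|_g^p\,e^{-p(\tau-1)r}e^{(n-1)r}\,d\theta\,dr<\infty$. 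Because $\tau+\frac{n-1}{p}\ge 2$, i.e. $-p(\tau-1)+(n-1)\le -p$, this in turn gives $\int_{r_0}^\infty\!\int_{\bS_1(0)}|\cT|_g^p\,e^{-pr}\,d\theta\,dr<\infty$; call this finite quantity the "energy" of $\cT$. The same computation applied to $e=g-b\in W^{1,p}_\tau$ gives $\int_{r_0}^\infty\!\int_{\bS_1(0)}(|e|^p+|De|^p)e^{-pr}\,d\theta\,dr<\infty$ (using $\tau>1$, in fact $\tau+\frac{n-1}{p}\ge 2$ again suffices). These are the quantitative inputs.

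Next I would set up a Grönwall-type argument on the annuli. Define $F_0(s)\definedas\int_{r_0}^{s}\!\int_{\bS_1(0)}\cV(r,\theta)^p\,e^{-pr}\,d\theta\,dr$, so $f_0(s)=F_0(s+1)-F_0(s-1)$ up to endpoint adjustments, and it suffices to show $F_0(s)\lesssim s$ (linear growth of the cumulative integral forces boundedness of the one-annulus increment, after a routine dyadic/mean-value argument). Differentiating twice in $s$ and using the eigenvalue equation $\Delta^g\cV=n\cV$ to express the Laplacian in geodesic polar coordinates — $\Delta^g\cV=g^{rr}\partial_r^2\cV+(\text{lower order in }\partial_r) +(\text{angular part})$ — together with $g^{rr}=1+O(|e|)$ and the control of the angular Hessian coming from $\cT$, I would obtain a differential inequality of the schematic form
\[
f_0''(s)\le\bigl(p^2+\varepsilon(s)\bigr)f_0(s)+C\bigl(f_0(s)f_1(s)\bigr)^{1/2}\cdot\text{(energy)}+C\,\text{(energy)},
\]
where $\varepsilon(s)\to 0$ as $s\to\infty$ is controlled by $\int_{s-1}^{s+1}\!\int|e|^p e^{-pr}\to 0$ and similarly for the cross terms. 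The precise mechanism is: the Hessian condition $\hess^g\cV=\cV g+\cT$ says $\partial_r^2\cV=\cV\,g_{rr}+\cT_{rr}$ along radial geodesics (the Hessian in the radial–radial direction is just $\partial_r^2$), so $\partial_r^2\cV$ is $\cV$ up to the $\cT$ error; raising/lowering with $g$ versus $b$ produces the $O(|e|)$ corrections. Integrating $\partial_r(\cV^{p-1}\partial_r\cV)$ over the annulus and over $\bS_1(0)$, one arrives at a closed inequality for $f_0$ and $f_1$.

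The companion estimate for $f_1$ comes from the Bochner/Reilly-type identity or, more elementarily, from differentiating $|d\cV|_g^2$: using $\Delta^g\cV=n\cV$ and $\hess^g\cV=\cV g+\cT$ one gets $\tfrac12\Delta^g|d\cV|_g^2=|\hess^g\cV|_g^2+\ric^g(d\cV,d\cV)+n|d\cV|_g^2=|\cV g+\cT|_g^2+\ric^g(d\cV,d\cV)+n|d\cV|_g^2$, and since $\ric^g=-(n-1)g+O(|e|+|De|)$ the dominant terms are comparable to $|d\cV|_g^2$ with coefficient again of size $p$-ish after taking $p/2$-th powers and integrating; the $O(|e|+|De|)$ terms integrate against $e^{-pr}$ to finite energy. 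This lets me bound $f_1(s)\lesssim f_0(s)+$(bounded), so the system decouples into a single scalar inequality $f_0''\le(p^2+\varepsilon(s))f_0+(\text{bounded forcing})$. A standard comparison argument (comparing with solutions of $y''=p^2 y$, which grow like $e^{ps}$, and using that the weight $e^{-pr}$ is tuned exactly to that rate) then shows that $f_0(s)$ cannot grow: if it did grow like $e^{ps}$ then $F_0$ would grow faster, contradicting... — wait, I need to be careful here; the correct conclusion from $f_0''\le(p^2+\varepsilon)f_0$ plus the a priori knowledge that $f_0\ge 0$ and $\int^\infty f_0(s)\,e^{-ps}\cdot(\text{something})\,ds$ — actually the clean statement is that the weight $e^{-pr}$ in the definitions is chosen precisely so that $\cV\sim e^r$ (as the lapse functions are, see \eqref{eqLapseGeodesic}) makes the integrands $O(1)$; thus the content of the claim is that $\cV$ does not grow faster than the lapse functions, which is forced by the eigenvalue equation having no faster-growing solutions in the relevant function space.

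\textbf{Main obstacle.} The delicate point — and the place where the hypothesis $\tau+\frac{n-1}{p}\ge 2$ and $\tau>3/2$ are really used — is controlling the error terms produced by the discrepancy between $g$ and $b$ (the $O(|e|)$, $O(|De|)$ contributions to $\Delta^g$, $\ric^g$, and to the musical isomorphisms relating $\hess^g$ and $\hess^b$) with only $W^{1,p}_\tau$ regularity on $e$ and only an $L^p_{\tau-1}$ (not pointwise) bound on $\cT$. One cannot argue pointwise; everything must be done at the level of annular integrals, using Hölder's inequality to pair $\cV^{p-1}$, $|d\cV|$, $|\cT|$, $|e|$, $|De|$ appropriately and absorbing. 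Making sure the resulting differential inequality for $f_0$ closes — i.e. that the forcing and variable-coefficient terms are genuinely $o(f_0)+O(1)$ and not comparable to $e^{p's}f_0$ for some $p'>p$ — is the crux, and it is exactly here that the weighted Sobolev bookkeeping (repeatedly invoking \cite[Lemma 3.6]{LeeFredholm}) has to be executed with care.
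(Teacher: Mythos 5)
Your proposal does not close, and the place where it breaks down is exactly where you yourself flag trouble (``wait, I need to be careful here''). Let me make the gap precise, and then explain what the paper does instead.

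The central difficulty is that a second-order inequality of the form $f_0''(s)\le(p^2+\varepsilon(s))f_0(s)+\text{(bounded)}$ simply does not imply that $f_0$ is bounded. The comparison equation $y''=p^2y$ has the growing solution $e^{ps}$, and nothing in your argument supplies the extra input needed to rule that branch out. You gesture towards ``the weight $e^{-pr}$ is tuned exactly to that rate'' and ``a routine dyadic/mean-value argument on $F_0$,'' but these do not furnish the missing boundary condition or integral constraint at infinity; as written, the ODE argument is open. The companion estimate $f_1(s)\lesssim f_0(s)+\text{(bounded)}$ that you need to decouple the system is also not justified: the Bochner identity tells you about $\Delta^g|d\cV|^2$, which after integration over an annulus produces yet another derivative in $s$ rather than a zeroth-order bound of $f_1$ by $f_0$. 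So the system you would actually obtain is coupled and second-order, and the proposed decoupling is not available.

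The paper avoids all of this by staying strictly first order and exploiting a cancellation. Differentiating under the integral sign and using the annulus change of variable, one gets $\frac{d}{ds}f_0(s)=\int_{s-1}^{s+1}\!\int_{\bS_1(0)}\partial_r[\cV^p e^{-pr}]\,d\theta\,dr$, and then H\"older gives (schematically) $f_0'\le p\,f_0^{(p-1)/p}f_1^{1/p}-p\,f_0+\text{(small)}$. The computation for $f_1'$ is the mirror image: $\partial_r|d\cV|_g^2=2g^{ij}(\partial_i\cV)\hess^g_{j1}\cV$ together with $\hess^g_{j1}\cV=g_{j1}\cV+\cT_{j1}$ produces $f_1'\le p\,f_0^{1/p}f_1^{(p-1)/p}-p\,f_1+C\,\|\cT\|_{L^p_{\tau-1}}e^{-\lambda s}f_1^{(p-1)/p}$. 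The key observation is that after Young's inequality the cross terms combine to give exactly $p(f_0+f_1)$, which is cancelled by the $-p(f_0+f_1)$ from the weight, leaving $\frac{d}{ds}(f_0+f_1)\le pce^{-\tau s}(f_0+f_1)+C\|\cT\|e^{-\lambda s}(f_0+f_1)^{(p-1)/p}$ with \emph{exponentially decaying} coefficients. Setting $h=(f_0+f_1)^{1/p}$ converts this into a linear first-order inequality whose forcing is integrable, whence boundedness follows directly. This ``rotation-like'' first-order cancellation, not a second-order comparison with $e^{ps}$, is the mechanism you are missing. Your quantitative bookkeeping (the observation that $\tau+\frac{n-1}{p}\ge 2$ converts the $L^p_{\tau-1}$ bound on $\cT$ into an integrable annular quantity) is correct and is indeed used in the paper, but it enters as the forcing term in the first-order inequality, not as a coefficient in a second-order one.
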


\begin{proof}
By the change of variable $r = t+s$, $t \in [-1, 1]$, we have
\[
f_0(s) = \int_{-1}^{1} \int_{\bS_1(0)} \cV(t+s, \theta)^p e^{-p(t+s)} d\theta dt.
\]
Hence,
\[
\frac{d}{ds} f_0(s)
= \int_{-1}^1 \int_{\bS_1(0)} \partial_1\left(\cV(t+s, \theta)^p e^{-p(t+s)}\right) d\theta dt
= \int_{s-1}^{s+1} \int_{\bS_1(0)} \partial_1\left(\cV(r, \theta)^p e^{-pr}\right) d\theta dr
\]
and, thus,
\begin{align*}
\frac{d}{ds} f_0(s)                                                                                                     
&= \int_{s-1}^{s+1} \int_{\bS_1(0)} \partial_1 \left[\cV(r, \theta)^p e^{-pr}\right] d\theta dr        \\
&= p \int_{s-1}^{s+1} \int_{\bS_1(0)} \left[\cV^{p-1} \partial_1\cV - \cV^p \right] e^{-pr} d\theta dr \\
&\leq p \int_{s-1}^{s+1} \int_{\bS_1(0)} \cV^{p-1} |d\cV|_b e^{-pr} d\theta dr - p f_0(s)              \\
&\leq p \left(\int_{s-1}^{s+1} \int_{\bS_1(0)} \cV^p  e^{-pr} d\theta dr\right)^{\frac{p-1}{p}}
\left(\int_{s-1}^{s+1} \int_{\bS_1(0)} |d\cV|_b^p  e^{-pr} d\theta dr\right)^{1/p} - p f_0(s).
\end{align*}
Note that we could estimate $\partial_1 \cV$ from above by $|d\cV|_b$ as $\partial_1$ is a unit vector field with respect to the metric $b$. The last line is almost
$p f_0(s)^{\frac{p-1}{p}} f_1(s)^{1/p} - p f_0(s)$, except that the norm of $d\cV$ in the second integral is with respect to $b$ and not to $g$ so we cannot immediately replace this second integral by $f_1(s)^{1/p}$. However, as $g - b \in W^{1, p}_\tau$ with $\tau > 1$, we have the (very rough) estimate $g - b = O(\rho^\tau) = O(e^{-\tau r})$. In particular, there exists a constant $c > 0$ such that, for any 1-form $\psi$, we have
\[
(1 - c e^{-\tau r}) |\psi|^2_g \leq |\psi|^2_b \leq (1 + c e^{-\tau r}) |\psi|^2_g.
\]
As a consequence, we get that, for some new constant $c$, we have
\begin{align*}
\int_{s-1}^{s+1} \int_{\bS_1(0)} |d\cV|_b^p  e^{-pr} d\theta dr
& \leq (1 + c e^{-\tau s}) \int_{s-1}^{s+1} \int_{\bS_1(0)} |d\cV|_g^p  e^{-pr} d\theta dr\\
& = (1 + c e^{-\tau s}) f_1(s).
\end{align*}
We have thereby proven the following differential inequality for $f_0$:
\begin{equation}\label{eqDiffF0}
\begin{aligned}
\frac{d}{ds} f_0(s)
& \leq p (1 + c e^{-\tau s}) f_0(s)^{\frac{p-1}{p}} f_1(s)^{1/p} - p f_0(s)     \\
& \leq (1 + c e^{-\tau s}) \left[(p-1) f_0(s) + f_1(s)\right] - p f_0(s),
\end{aligned}
\end{equation}
where we used Young's inequality to pass from the first line to the second one.

Similarly, recalling the notation $\mathcal{T}=\hess^g \mathcal{V} - \mathcal{V}g$ as in \eqref{eqDefT}, we can estimate the derivative of $f_1$ as follows:
\begin{align*}
\frac{d}{ds} f_1(s)
&= \int_{s-1}^{s+1} \int_{\bS_1(0)} \partial_1 \left[|d\cV|_g^p(r, \theta) e^{-pr}\right] d\theta dr                                                   \\
&= p \int_{s-1}^{s+1} \int_{\bS_1(0)} \left[ |d\cV|_g^{p-2} g^{ij} (\partial_i \cV) (\hess^g_{j1} \cV) - |d\cV|_g^p \right] e^{-pr} d\theta dr           \\
&= p \int_{s-1}^{s+1} \int_{\bS_1(0)} \left[|d\cV|_g^{p-2} g^{ij} (\partial_i \cV) (g_{j1} \cV + \cT_{j1})\right] e^{-pr} d\theta dr - p f_1(s)        \\
&= p \int_{s-1}^{s+1} \int_{\bS_1(0)} |d\cV|_g^{p-2} \left[\cV \partial_1 \cV + g^{ij} (\partial_i \cV) \cT_{j1}\right] e^{-pr} d\theta dr  - p f_1(s) \\
&\leq p (1 + c e^{-\tau s}) f_0(s)^{1/p} f_1(s)^{\frac{p-1}{p}} - p f_1(s)\\
&\qquad + p f_1(s)^{\frac{p-1}{p}}\left(\int_{s-1}^{s+1} \int_{\bS_1(0)} |\cT|^p_g e^{-pr} d\theta dr\right)^{1/p}.
\end{align*}
By the assumption that we have made, we have $\cT \in L^p_{\tau-1}(\bH^n \setminus K'_1, S_2\bH^n)$. Written in terms of the exponential coordinates, this means
\[
\int_{r_0}^\infty \int_{\bS_1(0)} |\cT|^p_g \rho^{-p(\tau-1)}\sinh(r)^{n-1} d\theta dr < \infty.
\]
Since $\rho = \rho(r)$ is given by $\displaystyle \rho = \frac{1}{\cosh(r)+1} \sim 2 e^{-r}$, we conclude that
\[
\int_{r_0}^\infty \int_{\bS_1(0)} |\cT|^p_g e^{(n-1+p(\tau-1)) r} d\theta dr < \infty.
\]
As a consequence, we have
\begin{align*}
& \int_{s-1}^{s+1} \int_{\bS_1(0)} |\cT|^p_g e^{-pr} d\theta dr\\
& = \int_{s-1}^{s+1} \int_{\bS_1(0)} |\cT|^p_g e^{(n-1+p(\tau-1)) r} e^{-(n-1+p\tau) r} d\theta dr\\
& \leq \int_{s-1}^{s+1} \int_{\bS_1(0)} |\cT|^p_g e^{(n-1+p(\tau-1)) r}  d\theta dr~\left(\sup_{r \in [s-1, s+1]} e^{-(n-1+p\tau) r}\right)\\
& = e^{-(n-1+p\tau) (s-1)} \int_{s-1}^{s+1} \int_{\bS_1(0)} |\cT|^p_g e^{(n-1+p(\tau-1)) r}  d\theta dr.
\end{align*}
All in all, we have proven that there exists a constant $C > 0$ such that
\begin{equation}\label{eqEstimateT}
\left(\int_{s-1}^{s+1} \int_{\bS_1(0)} |\cT|^p_g e^{-pr} d\theta dr\right)^{1/p} \leq C \|\cT\|_{L^p_{\tau-1}} e^{-\lambda s},
\end{equation}
where $\lambda \definedas \tau + \frac{n-1}{p} \geq 2$. Inserting this in the above formula for $\displaystyle \frac{d}{ds} f_1(s)$, and using Young's inequality once again, we conclude that
\begin{equation}\label{eqDiffF1}
\begin{aligned}
\frac{d}{ds} f_1(s)
& \leq p (1 + c e^{-\tau r}) f_0(s)^{1/p} f_1(s)^{\frac{p-1}{p}} - p f_1(s) + C \|\cT\|_{L^p_{\tau-1}} e^{-\lambda s} f_1(s)^{\frac{p-1}{p}}     \\
& \leq (1 + c e^{-\tau r}) \left(f_0(s) + (p-1) f_1(s)\right) - p f_1(s) + C \|\cT\|_{L^p_{\tau-1}} e^{-\lambda s} f_1(s)^{\frac{p-1}{p}}.
\end{aligned}
\end{equation}
Adding Equations \eqref{eqDiffF0} and \eqref{eqDiffF1}, we find
\begin{align*}
& \frac{d}{ds} (f_0(s) + f_1(s))\\
& \qquad = p (1 + c e^{-\tau s})(f_0(s)+ f_1(s)) - p (f_0(s) + f_1(s)) + C \|\cT\|_{L^p_{\tau-1}} e^{-\lambda s} f_1(s)^{\frac{p-1}{p}} \\
& \qquad \leq p c e^{-\tau s} (f_0(s) + f_1(s)) + C \|\cT\|_{L^p_{\tau-1}} e^{-\lambda s} (f_0(s) + f_1(s))^{\frac{p-1}{p}}.
\end{align*}
Using this inequality, we can finally prove that $f_0 + f_1$ is bounded as follows. First, we notice that, setting $w(s) \definedas \exp \left(\frac{pc}{\tau} e^{-\tau s}\right)$, the last formula can be rewritten as
\[
    \frac{d}{ds} \left[w(s) (f_0(s) + f_1(s))\right] \leq C \|\cT\|_{L^p_{\tau-1}} w(s)^{1/p} e^{-\lambda s} \left[w(s) (f_0(s) + f_1(s))\right]^{\frac{p-1}{p}},
\]
or, equivalently,
\[
    \frac{d}{ds} \left(w(s) (f_0(s) + f_1(s))\right)^{1/p} \leq \frac{C}{p} \|\cT\|_{L^p_{\tau-1}} w(s)^{1/p} e^{-\lambda s}.
\]
This concludes the proof of the claim by noticing that the right-hand side of this inequality is integrable over $[r_0, \infty)$.
\end{proof}

\begin{claim}\label{cl1.5}
We have 
\[
|\cV| = O(e^r)\quad\text{and}\quad |d(e^{-r} \cV)|_g = O(1).
\]
\end{claim}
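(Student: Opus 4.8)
The plan is to upgrade the ``on average'' control from Claim~\ref{cl1} (boundedness of $f_0$ and $f_1$) to pointwise control of $\cV$ and $d(e^{-r}\cV)$. The key tool is interior elliptic regularity on hyperbolic geodesic balls $B_1(x)$ of fixed radius $1$: since $\cV$ satisfies $-\Delta^g\cV+n\cV=0$ with $g-b\in W^{1,p}_\tau$, $p>n$, the coefficients of $\Delta^g$ are in $W^{1,p}_{loc}\hookrightarrow C^{0,\alpha}_{loc}$ by Morrey, so Schauder (or $L^p$) interior estimates give, for $x$ with $r(x)$ large,
\[
\|\cV\|_{C^{1,\alpha}(B_{1/2}(x))} \lesssim \|\cV\|_{L^p(B_1(x))},
\]
with a constant uniform in $x$ because all balls $B_1(x)$ are isometric in the hyperbolic metric and the coefficient perturbation $g-b$ decays. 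First I would observe that $f_0(s)$ bounded means precisely $\int_{B_1(x)}\cV^p e^{-pr}\,d\mu^b\lesssim 1$ uniformly (after comparing $e^{-pr}$ with $e^{-pr(x)}$ on $B_1(x)$, which only costs a fixed multiplicative constant), so $\|\cV\|_{L^p(B_1(x))}\lesssim e^{r(x)}$. Feeding this into the interior estimate yields $|\cV(x)|\lesssim e^{r(x)}$ and $|d\cV|_g(x)\lesssim e^{r(x)}$, which is the first assertion.

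For the second assertion, set $\phi\definedas e^{-r}\cV$. From $|d\cV|_g=O(e^r)$ and $|\cV|=O(e^r)$ together with $|dr|_g=1+O(e^{-\tau r})$, one gets immediately $|d\phi|_g=|e^{-r}d\cV-e^{-r}\cV\,dr|_g=O(1)$; but this crude bound loses the structure, so instead I would derive it the same way as the $C^1$ bound above but applied to $\phi$. Note $\phi$ satisfies a uniformly elliptic equation with bounded coefficients near infinity (conjugating $-\Delta^g+n$ by $e^{r}$ produces first-order terms involving $dr$ and $\Delta^g r$, both of which are bounded on the relevant annuli since $\Delta^b r=(n-1)\coth r$ is bounded and $\Delta^g r-\Delta^b r=O(e^{-\tau r})$; the zeroth-order coefficient is bounded because $|\Delta^g r|$ and $|dr|^2_g$ are). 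Then interior $C^{1,\alpha}$ estimates give
\[
\|\phi\|_{C^{1,\alpha}(B_{1/2}(x))} \lesssim \|\phi\|_{L^p(B_1(x))} + \|\,\text{RHS}\,\|_{L^p(B_1(x))},
\]
and $\|\phi\|_{L^p(B_1(x))}=\|e^{-r}\cV\|_{L^p(B_1(x))}\lesssim f_0(r(x))^{1/p}\lesssim 1$ by Claim~\ref{cl1}. This bounds $|d\phi|_g=|d(e^{-r}\cV)|_g$ uniformly, completing the proof.

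I expect the main obstacle to be the bookkeeping in the second part: one must make sure that conjugating the operator by $e^r$ produces coefficients that are genuinely bounded (not merely $O(e^{\text{something}})$) on annuli $s-1\le r\le s+1$ uniformly in $s$, which requires knowing $\Delta^g r$ and $|dr|^2_g$ are bounded — this uses $\Delta^b r=(n-1)\coth r\to n-1$ and the decay $g-b\in W^{1,p}_\tau$ with $\tau>1$ to control the difference, and also that $r$ itself is smooth away from the origin (true on $M\setminus K_1$ once $K_1$ contains the cut locus data). A secondary technical point is that the interior Schauder constant must be uniform in $x$; this is standard once one passes to normal coordinates on each $B_1(x)$ and notes the transition maps and the metric $g$ in these coordinates have uniformly bounded $C^{0,\alpha}$ norms (again by the decay of $e$). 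Everything else — Hölder inequality comparisons of weights on a fixed-radius ball, the embedding $W^{1,p}\hookrightarrow C^{0,\alpha}$ for $p>n$ — is routine.
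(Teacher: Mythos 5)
The central step of your argument is the claim that ``$f_0(s)$ bounded means precisely $\int_{B_1(x)}\cV^p e^{-pr}\,d\mu^b\lesssim 1$ uniformly'', leading to $\|\cV\|_{L^p(B_1(x))}\lesssim e^{r(x)}$. This is where the gap lies. The function $f_0$ is defined against the \emph{coordinate} measure $d\theta\,dr$, whereas the $L^p$-norm appearing in interior elliptic estimates on a hyperbolic geodesic ball is taken against the hyperbolic volume form $d\mu^b=\sinh^{n-1}(r)\,d\theta\,dr$. These measures differ by the factor $\sinh^{n-1}(r)\approx e^{(n-1)r}$, which is not bounded on the annulus $\{s-1\le r\le s+1\}$ as $s\to\infty$. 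Carrying out the comparison honestly, one gets from $f_0(s)\lesssim 1$ only
\[
\int_{B_1(x)}\cV^p e^{-pr}\,d\mu^b
\le \sup_{B_1(x)}\sinh^{n-1}(r)\cdot\int_{s-1}^{s+1}\int_{\bS_1(0)}\cV^p e^{-pr}\,d\theta\,dr
\lesssim e^{(n-1)r(x)},
\]
hence $\|\cV\|_{L^p(B_1(x),\,d\mu^b)}\lesssim e^{\left(1+\frac{n-1}{p}\right)r(x)}$ rather than $\lesssim e^{r(x)}$. Feeding this into the interior estimate gives $|\cV(x)|\lesssim e^{\left(1+\frac{n-1}{p}\right)r(x)}$, which is strictly weaker than the $O(e^r)$ one needs. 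The same loss affects the conjugated equation for $\phi=e^{-r}\cV$ in your second step, so the argument cannot be repaired merely by moving to the conjugated operator.

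Heuristically, the difficulty is that $f_0(s)\lesssim 1$ controls an average of $|e^{-r}\cV|^p$ over an annulus which, in the hyperbolic metric, has volume $\approx e^{(n-1)s}$; this does not rule out concentration of $e^{-r}\cV$ on a fixed geodesic ball, where it could be as large as $e^{(n-1)s/p}$ without violating the average bound. The paper avoids this by working on the \emph{abstract} annulus $\cA=[-1,1]\times\bS_1(0)$ with the product metric $h=dt^2+\sigma$: there the relevant measure is $d\theta\,dt$, which matches the measure in $f_0$ exactly, and all the annuli are genuinely isometric. The price is that the differential operator one inherits for $\cU_s$ on $(\cA,h)$ is not uniformly elliptic (the pulled-back hyperbolic metric has tangential components $\sinh^2(r)$, degenerating the symbol in the spherical directions), so one cannot simply invoke standard interior regularity. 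The paper instead computes $\hess^h\cU_s$ explicitly, using the structural identity $\hess^g\cV=\cV g+\cT$ with $\cT\in L^p_{\tau-1}$ to exhibit cancellations, bounds $\|\hess^h\cU_s\|_{L^p(\cA)}$, and then recovers the $L^\infty$ bounds for $\cU_s$ and $d\cU_s$ by the Gagliardo--Nirenberg interpolation inequality on the fixed compact manifold $\cA$. That computation and interpolation step is doing the genuine work of converting average to pointwise control, and it is exactly what is missing from your proposal.

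Two minor additional remarks. First, the coefficients of $\Delta^g$ are not $C^{0,\alpha}_{loc}$: the leading coefficients $g^{ij}$ are, but the first-order coefficients involve $De\in L^p_{loc}$ only, so one must use Calder\'on--Zygmund $L^p$ theory rather than Schauder; you partly anticipate this but the phrasing suggests Schauder would also apply. Second, once the first assertion $|\cV|=O(e^r)$ and $|d\cV|_g=O(e^r)$ is established, the bound $|d(e^{-r}\cV)|_g=O(1)$ does follow directly from the product rule as you note; your ``the crude bound loses the structure'' worry is unfounded, and the additional conjugation argument for $\phi$ is unnecessary (though it would face the same measure-mismatch obstruction as the first part).
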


\begin{proof}
The proof is based on applying the Sobolev embedding theorem to the family of functions
\[
\cU_s: (t, \theta) \mapsto e^{-(t+s)}\cV(t + s, \theta)
\]
on the annulus $\cA \definedas [-1, 1] \times \bS_1(0)$ where $\cA$ is endowed with the product metric $h =dt^2 + \sigma$. To distinguish these norms from the usual $L^p$-norms over $\bH^n \setminus K'$, we will indicate the domain we are considering in the notation. So $\|\cdot\|_{L^p(\bH^n \setminus K')}$ will be the $L^p$-norm on $\bH^n \setminus K'$ defined with respect to the metric $b$, while $\|\cdot\|_{L^p(\cA)}$ will be the $L^p$-norm on the annulus with respect to the metric $h$.

First, note that we have
\begin{align}
\|\cU_s\|_{L^p(\cA)}^p\nonumber
&= \int_{-1}^1 \int_{\bS_1(0)} |e^{-(t+s)}\cV(t + s, \theta)|^p d\theta dt\nonumber\\
&= \int_{s-1}^{s+1} \int_{\bS_1(0)} |e^{-r} \cV(r, \theta)|^p d\theta dr\nonumber\\
&= f_0(s)\nonumber,
\end{align}
so that
\[
\|\cU_s\|_{L^p(\cA)} \lesssim 1
\]
by Claim \ref{cl1}.
Next, we shall obtain an estimate for $\|\hess^h\cU_s\|_{L^p(\cA)}$. This is rather intricate as an $L^\infty$ estimate for $d\cV$ will come into play. As we will see, this estimate can be obtained from a bound on $\|\hess^h\cU_s\|_{L^p(\cA)}$.

Note that $h$ and $b$ are both warped product metrics so their Levi-Civita connections only differ by the second fundamental form of the level sets of $r$ (resp. of $t$), as it follows from a direct calculation in Fermi coordinates. More specifically, the geodesic sphere of radius $r$  in $\bH^n$ has a shape operator given by $(S_b)^A_{\phantom{A}B} = \coth(r) \delta^A_B$, while the spheres of constant $t$  in $\cA$ are totally geodesic, that is $(S_h)^A_{\phantom{A}B} = 0$. We will now use these facts to compute the components of $\hess^h\cU_s$. Here and in what follows, we have $r = t+s$ where $r$ corresponds to a coordinate on the hyperbolic space while $t$ is the radial coordinate on the annulus $[-1, 1] \times \cS_1(0)$.

We have 
\begin{align}
\hess^h_{11} \cU_s(t, \theta)
&= (\partial_1)^2 (e^{-r} \cV)\nonumber\\
&= e^{-r} (\partial_1)^2 \cV - e^{-r} \cV - 2 \partial_1 (e^{-r} \cV)\nonumber\\
&= e^{-r} (\hess_{11} \cV - b_{11} \cV) - 2 \partial_1 (e^{-r} \cV)\nonumber\\
&= e^{-r} \left[\cT_{11} + \Gamma_{11}^i \partial_i \cV + (g_{11} - b_{11}) \cV\right] - 2 \partial_1 (e^{-r} \cV)\nonumber\\
&= e^{-r} \cT_{11} + \Gamma_{11}^i \partial_i(e^{-r}\cV) + (e_{11} + \Gamma_{11}^1) e^{-r} \cV - 2 \partial_1 (e^{-r} \cV),\label{eqIdentity1}
\end{align}
\begin{align}
\hess^h_{1A} \cU_s(t, \theta)
&= \partial_1 \partial_A (e^{-r} \cV)\nonumber\\
&= e^{-r} (\partial_1 \partial_A \cV - \partial_A \cV)\nonumber\\
&= e^{-r} \left[\hess_{1A} \cV - \partial_A \cV + \coth(r) \partial_A \cV\right]\nonumber\\
&= e^{-r} \left[\cT_{1A} + \Gamma_{1A}^i \partial_i \cV + g_{1A} \cV + (\coth(r)-1) \partial_A \cV\right]\nonumber\\
&= e^{-r} \cT_{1A} + \Gamma_{1A}^i \partial_i (e^{-r}\cV) + e^{-r}(\Gamma_{1A}^1 + g_{1A}) \cV + (\coth(r)-1) \partial_A (e^{-r}\cV),\label{eqIdentity2}
\end{align}
and
\begin{align}
\hess^h_{AB} \cU_s(t, \theta)
&= e^{-t+s} \hess^h_{AB} \cV(t+s, \theta)\nonumber\\
&= e^{-r} \left(\hess_{AB} \cV - \coth(r) b_{AB} \partial_1 \cV\right)\nonumber\\
&= e^{-r} \hess_{AB} \cV - \coth(r) b_{AB} e^{-r}\cV - \coth(r) b_{AB} \partial_1 (e^{-r}\cV)\nonumber\\
&= e^{-r}\cT_{AB} + \Gamma^i_{AB}\partial_i (e^{-r} \cV)\label{eqIdentity3}\\
&\qquad  + (\Gamma^1_{AB} + g_{AB} - \coth(r) b_{AB}) e^{-r} \cV - \coth(r) b_{AB} \partial_1 (e^{-r}\cV)\nonumber.
\end{align}
Next, we remark that
\begin{align*}
\left|\hess^h \cU_s\right|_h^2
&= \left(\hess^h_{11} \cU_s\right)^2 + 2 \sigma^{AB} (\hess^h_{1A} \cU_s)(\hess^h_{1B} \cU_s)\\
&\qquad + \sigma^{AB} \sigma^{A'B'} (\hess^h_{AA'} \cU_s)(\hess^h_{BB'} \cU_s).
\end{align*}
Hence,
\begin{equation}\label{eqNormHessian}
\begin{aligned}
\left|\hess^h \cU_s\right|_h^p
&\leq 2^{p-2} \left[\left(\hess^h_{11} \cU_s\right)^p + 2 \left(\sigma^{AB} (\hess^h_{1A} \cU_s)(\hess^h_{1B} \cU_s)\right)^{p/2}\right.\\
&\qquad + \left.\left(\sigma^{AB} \sigma^{A'B'} (\hess^h_{AA'} \cU_s)(\hess^h_{BB'} \cU_s)\right)^{p/2}\right],
\end{aligned}
\end{equation}
where the (irrelevant) constant $2^{p-2}$ is obtained from the fact that the mapping $x\mapsto x^{p/2}$ is convex. 
We can now estimate all three terms in \eqref{eqNormHessian} using the formulas obtained in \eqref{eqIdentity1}, \eqref{eqIdentity2} and \eqref{eqIdentity3}:
\begin{align}
\left(\hess^h_{11} \cU_s\right)^p
&\leq 5^{p-1} \left[e^{-pr} |\cT|_g^p |\partial_1|^{2p}_g + e^{-pr} |\Gamma|^p_g |\partial_1|_g^{2p} |d\cV|_g^p\right.\nonumber\\
&\qquad\qquad \left.+ e^{-pr} (|e|_b^p + |\Gamma|_b^p) |\cV|^p + 2^p |\partial_1 (e^{-r} \cV)|_b^p\right]\nonumber\\
&\lesssim e^{-pr} |\cT|^p_g + |De|^p_b |d(e^{-r}\cV)|^p_b + e^{-pr} (|e|_b^p + |De|_b^p) |\cV|^p + |d(e^{-r} \cV)|_b^p,\label{eqIdentity1p}
\end{align}
where $|\Gamma|_g$ denotes the norm of the (pseudo) tensor field $\Gamma$:
\[
|\Gamma|_g^2 = g^{ii'} g^{jj'} g_{kk'} \Gamma_{ij}^k \Gamma_{i'j'}^{k'}.
\]
\begin{align}
\left(\sigma^{AB} (\hess^h_{1A} \cU_s)(\hess^h_{1B} \cU_s)\right)^{p/2}
& = \sinh(r)^p \left(b^{AB} (\hess^h_{1A} \cU_s)(\hess^h_{1B} \cU_s)\right)^{p/2}\nonumber\\
& \lesssim |\cT|^p_b + e^{pr} |De|_b^p |d(e^{-r}\cV)|_b^p + (|e|_b^p + |De|_b^p) |\cV|^p\nonumber\\
& \qquad + e^{pr} (\coth(r)-1)^p |d(e^{-r})\cV|_b^p\nonumber\\
& \lesssim |\cT|^p_b + (e^{pr} |De|_b^p + 1) |d(e^{-r}\cV)|_b^p\nonumber\\
&\qquad  + (|e|_b^p + |De|_b^p) |\cV|^p\label{eqIdentity2p}.
\end{align}
And, finally,
\begin{align}
&\left(\sigma^{AB} \sigma^{A'B'} (\hess^h_{AA'} \cU_s)(\hess^h_{BB'} \cU_s)\right)^{p/2}\nonumber\\
&\qquad = \sinh(r)^{2p} \left(b^{AB} b^{A'B'} (\hess^h_{AA'} \cU_s)(\hess^h_{BB'} \cU_s)\right)^{p/2}\nonumber\\
&\qquad \leq e^{2pr} \left(b^{AB} b^{A'B'} (\hess^h_{AA'} \cU_s)(\hess^h_{BB'} \cU_s)\right)^{p/2}\nonumber\\
&\qquad\lesssim e^{pr} |\cT|_g^p + e ^{2pr} |De|_b^p |d(e^{-r}\cV)|_b^p + (|e|_b^p + |De|_b^p + e^{-2pr}) e^{pr} |\cV|^p + \left|d(e^{-r}\cV)\right|_b^p\label{eqIdentity3p}.
\end{align}

Inserting the three estimates \eqref{eqIdentity1p}, \eqref{eqIdentity2p} and \eqref{eqIdentity3p} into \eqref{eqNormHessian}, we obtain
\begin{align*}
&\left|\hess^h \cU_s\right|_h^p\\
&\qquad \lesssim e^{pr} |\cT|_g^p + (e^{2pr} |De|_b^p + 1) |d(e^{-r}\cV)|^p_b + e^{pr}(|e|_b^p + |De|_b^p + e^{-2pr}) |\cV|^p.
\end{align*}
We now integrate this last estimate with respect to $s \in [-1, 1]$ and $\theta \in \bS_1(0)$:
\begin{equation}\label{eqFinalEstimate}
\begin{aligned}
&\int_{-1}^1 \int_{\bS_1(0)} \left|\hess^h \cU_s\right|_h^p d\theta dt\\
&\qquad \lesssim \int_{s-1}^{s+1} \int_{\bS_1(0)} \left(e^{pr} |\cT|_b^p + e^{-pr}|\cV|^p\right) d\theta dr
+ \int_{s-1}^{s+1} \int_{\bS_1(0)} e^{2pr} |De|_b^p |d(e^{-r}\cV)|^p_b d\theta dr\\
&\qquad\qquad + \int_{s-1}^{s+1} \int_{\bS_1(0)} e^{2pr}(|e|_b^p + |De|_b^p) |e^{-r}\cV|^p d\theta dr + \int_{s-1}^{s+1} \int_{\bS_1(0)} |d(e^{-r}\cV)|^p_b d\theta dr.
\end{aligned}
\end{equation}
Note that the first integral in the right hand side of \eqref{eqFinalEstimate} is finite, as it is the sum of $f_0(s)$ and the integral 
\[
\int_{s-1}^{s+1} \int_{\bS_1(0)} e^{pr} |\cT|_b^p d\theta dr = O(e^{(2-\lambda)p r}),
\]
where $\lambda \geq 2$ is as in \eqref{eqEstimateT}.
Note also that, for any $1$-form $\psi$, we have
\[
|\psi|_b^2 = (\psi(\partial_1))^2 + b^{AB} \psi_A \psi_B \leq (\psi(\partial_1))^2 + \sigma^{AB} \psi_A \psi_B = |\psi|_h^2,
\]
so we can replace $|d(e^{-r}\cV)|^p_b$ by $|d(e^{-r}\cV)|^p_h = |d\cU_s|_h^p$ in the second and fourth terms in \eqref{eqFinalEstimate}. As for the third term, we simply note that $e^{-r} \cV = \cU_s$. Taking supremum over $\cA$ we thereby obtain:
\[
\begin{aligned}
&\int_{-1}^1 \int_{\bS_1(0)} \left|\hess^h \cU_s\right|_h^p d\theta dt\\
&\qquad \lesssim \int_{s-1}^{s+1} \int_{\bS_1(0)} \left(e^{pr} |\cT|_b^p + e^{-pr}|\cV|^p\right) d\theta dr + \|\cU_s\|_{L^\infty(\cA)}^p \int_{s-1}^{s+1} \int_{\bS_1(0)} e^{2pr}(|e|_b^p + |De|_b^p) d\theta dr\\
&\qquad\qquad + \|d\cU_s\|_{L^\infty(\cA)}^p \int_{s-1}^{s+1} \int_{\bS_1(0)} \left(e^{2pr} |De|_b^p +1\right)d\theta dr.
\end{aligned}
\]
Arguing as when deriving \eqref{eqEstimateT}, one can check that the integrals
\[
\int_{s-1}^{s+1} \int_{\bS_1(0)} e^{2pr}(|e|_b^p + |De|_b^p) d\theta dr\quad\text{and}\quad \int_{s-1}^{s+1} \int_{\bS_1(0)} \left(e^{2pr} |De|_b^p +1\right)d\theta dr
\]
are bounded independently of $s \geq r_0+1$. Hence, we obtain the following inequality:
\[
\left\|\hess^h \cU_s\right\|_{L^p(\cA)}^p
\lesssim 1 + \|d\cU_s\|_{L^\infty(\cA)}^p + \|\cU_s\|_{L^\infty(\cA)}^p,
\]
where the constant hidden in the $\lesssim$ symbol depends on $\cV$ but not on $s$.
By the Gagliardo-Niremberg inequality~\cite[Theorem 3.1]{SOUDSKY2018160}, we have
\[
\|d\cU_s\|_{L^\infty(\cA)} \leq \Lambda \|\cU_s\|_{L^p(\cA)}^{1-\mu} \|\hess^h \cU_s\|_{L^p(\cA)}^\mu + \|\cU_s\|_{L^p(\cA)},
\]
where $\displaystyle \mu = \frac{1}{2} \left(1 + \frac{n}{p}\right)$ and $\Lambda$ some large constant, see the remark in \cite{Nirenberg} about the addition of $\|\cU_s\|_{L^p(\cA)}$. Using the $\epsilon$-Young inequality, this estimate can be transformed into the following estimate valid for any $\epsilon$:
\[
\|d\cU_s\|_{L^\infty(\cA)} \leq \epsilon \|\hess^h \cU_s\|_{L^p(\cA)} + C_\epsilon \|\cU_s\|_{L^p(\cA)},
\]
A similar reasoning can be applied to $\|\cU_s\|_{L^\infty(\cA)}$. In total, we get the following estimate for $\left\|\hess^h \cU_s\right\|_{L^p(\cA)}$, for some new constant $C_\epsilon$:
\[
\left\|\hess^h \cU_s\right\|_{L^p(\cA)}^p
\lesssim \epsilon \left\|\hess^h \cU_s\right\|_{L^p(\cA)}^p + 1 + C_\epsilon\|\cU_s\|_{L^p(\cA)}^p.
\]
Choosing $\epsilon$ small enough, and using the fact that $\|\cU_s\|_{L^p(\cA)}$ is bounded independently of $s$, we conclude that $\left\|\hess^h \cU_s\right\|_{L^p(\cA)}$ is bounded. From the Sobolev embedding theorem, we conclude that both $\|\cU_s\|_{L^\infty(\cA)}$ and $\|d\cU_s\|_{L^\infty(\cA)}$ are bounded independently of $s$.
This concludes the proof of the claim.
\end{proof}

Now that we have established the growth rate of $\cV$, we can prove that $\rho \cV$ induces, in some sense, a function on the sphere at infinity. For this, given a bounded function $u$ on $\bS_1(0)$, we introduce the function $h_u(s)$ defined as follows:
\begin{equation}\label{eqDefHU}
    h_u(s) \definedas \int_{s-1}^{s+1} \int_{\bS_1(0)}  \cV(r, \theta) u(\theta) e^{-r} d\theta dr.
\end{equation}

\begin{claim}\label{cl2}
The function $h_u$ admits a limit as $s$ tends to infinity. The mapping
\[\ell: u \mapsto \lim_{s\to \infty} h_u(s)\]
satisfies $|\ell(u)| \leq C \|u\|_{L^c(\bS_1(0), \bR)}$, where $c$ is such that $\displaystyle \frac{1}{c} + \frac{1}{p} = 1$.
\end{claim}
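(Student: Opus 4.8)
The plan is to show that, as a function of $s$, $h_u$ is $C^1$ with an absolutely integrable derivative on $[r_0+1,\infty)$; this yields at once the existence of $\ell(u)=\lim_{s\to\infty}h_u(s)$, and a bound on $\int_{r_0+1}^\infty|\tfrac{d}{ds}h_u(s)|\,ds$ linear in $\|u\|_{L^c(\bS_1(0),\bR)}$, together with the analogous bound on $|h_u(r_0+1)|$, gives $|\ell(u)|\le C\|u\|_{L^c(\bS_1(0),\bR)}$. Writing $w\definedas e^{-r}\cV$, and noting that $w$ is bounded on $\bH^n\setminus K'_1$ by Claim \ref{cl1.5}, a change of variables in \eqref{eqDefHU} shows that $h_u(s)=\int_{s-1}^{s+1}\big(\int_{\bS_1(0)}w(r,\theta)u(\theta)\,d\theta\big)\,dr$, hence $h_u\in C^1$ and
\[
\frac{d}{ds}h_u(s)=\int_{\bS_1(0)}\big(w(s+1,\theta)-w(s-1,\theta)\big)\,u(\theta)\,d\theta .
\]
Since $w(s+1,\theta)-w(s-1,\theta)=\int_{s-1}^{s+1}z(r,\theta)\,dr$ with $z\definedas\partial_1 w=e^{-r}(\partial_1\cV-\cV)$, everything reduces to proving that $\int_{r_0}^\infty|z(r,\cdot)|\,dr$ belongs to $L^p(\bS_1(0))$ with a norm depending only on $\cV$ and $g$.

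The key step is a first-order linear ODE for $z$ along radial geodesics. In geodesic normal coordinates $b=dr^2+\sinh(r)^2\sigma$ one has $D_{\partial_1}\partial_1=0$, so $\hess^b_{11}\cV=\partial_1^2\cV$; combining this with $\hess^b_{11}\cV=\hess^g_{11}\cV+\Gamma^k_{11}\partial_k\cV$ and $\hess^g\cV-\cV g=\cT$ (see \eqref{eqDefT}), the computation behind \eqref{eqIdentity1} gives
\[
\partial_1 z+2z=e^{-r}\cT_{11}+e^{-r}e_{11}\cV+e^{-r}\Gamma^k_{11}\partial_k\cV\definedas R .
\]
Solving this at fixed $\theta$ with integrating factor $e^{2r}$ and integrating twice over $[r_0,\infty)$ yields the pointwise estimate
\[
\int_{r_0}^\infty|z(r,\theta)|\,dr\le\tfrac12|z(r_0,\theta)|+\tfrac12\int_{r_0}^\infty|R(r',\theta)|\,dr' .
\]
The boundary term $|z(r_0,\cdot)|$ is bounded by Claim \ref{cl1.5}. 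For $R$, Claim \ref{cl1.5} gives $|\cV|=O(e^r)$ and $|d\cV|_g=O(e^r)$, so — working throughout with the invariant norms $|\cT|_g$, $|De|_g$ rather than with coordinate components, which carry spurious $\sinh(r)$ factors — one gets $|R|\lesssim|\cT|_g+|e|_b+|De|_b$ pointwise. Since $\cT\in L^p_{\tau-1}$ and $e\in W^{1,p}_\tau$ with $\tau-1,\tau>0$, writing out the weighted norms via $d\mu^b=\sinh(r)^{n-1}\,dr\,d\mu^\sigma$ and applying Hölder's inequality in the $r$-variable against an exponential weight (exactly as in the derivation of \eqref{eqEstimateT}) shows that $\int_{r_0}^\infty e^{-r'}\|\cT(r',\cdot)\|_{L^p}\,dr'$, $\int_{r_0}^\infty\|e(r',\cdot)\|_{L^p}\,dr'$ and $\int_{r_0}^\infty\|De(r',\cdot)\|_{L^p}\,dr'$ are all finite; Minkowski's integral inequality then bounds $\big\|\int_{r_0}^\infty|R(r',\cdot)|\,dr'\big\|_{L^p(\bS_1(0))}$.

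It follows that $M\definedas\big\|\int_{r_0}^\infty|z(r,\cdot)|\,dr\big\|_{L^p(\bS_1(0))}<\infty$, with $M$ independent of $u$. By Fubini and the interchange of the $r$- and $s$-integrations,
\[
\int_{r_0+1}^\infty\Big|\tfrac{d}{ds}h_u(s)\Big|\,ds\le 2\int_{\bS_1(0)}|u(\theta)|\int_{r_0}^\infty|z(r,\theta)|\,dr\,d\theta\le 2M\,\|u\|_{L^c}
\]
by Hölder with $\tfrac1c+\tfrac1p=1$, and likewise $|h_u(r_0+1)|\le 2\|w\|_{L^\infty}\,|\bS_1(0)|^{1/p}\,\|u\|_{L^c}$. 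Hence $h_u(s)$ has a limit $\ell(u)$ as $s\to\infty$ and $|\ell(u)|\le|h_u(r_0+1)|+\int_{r_0+1}^\infty|\tfrac{d}{ds}h_u(s)|\,ds\lesssim\|u\|_{L^c(\bS_1(0),\bR)}$. The main obstacle is the passage from the integral fall-off hypotheses $\cT\in L^p_{\tau-1}$, $e,De\in L^p_\tau$ and the pointwise growth estimates of Claim \ref{cl1.5} to the finiteness of $\int_{r_0}^\infty\|R(r',\cdot)\|_{L^p}\,dr'$, i.e. keeping careful track of how the annular $L^p(\bS_1(0))$-norms relate to the global weighted norms.
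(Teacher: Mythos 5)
Your proof is correct and is essentially the same argument as the paper's, just with the integrations reorganized. The paper differentiates $h_u$ twice to obtain the second-order ODE $h_u'' + 2h_u' = \theta(s)$ and bounds $|\theta(s)|\lesssim e^{-\lambda s}\|u\|_{L^c}$ by applying H\"older in $\theta$ inside the annular integral; you instead integrate the underlying first-order pointwise ODE $\partial_r z + 2z = R$, whose right-hand side $R=e^{-r}\bigl(\cT_{11}+e_{11}\cV+\Gamma^k_{11}\partial_k\cV\bigr)$ is exactly the integrand of $\theta(s)$, in $r$ at a.e. fixed $\theta$ (justified by the ACL property of $\cV\in W^{2,p}_{loc}$, or more simply by $\cV\in C^1$ since $p>n$), and only then pass to $L^p(\bS_1(0))$ via Minkowski and H\"older. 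Both routes are rigorous and use the same decay inputs ($\cT\in L^p_{\tau-1}$, $e\in W^{1,p}_\tau$, Claim~\ref{cl1.5}); yours separates the radial ODE argument from the angular duality argument a little more cleanly, and your explicit boundary estimate at $s=r_0+1$ plays the role of the paper's initial-data terms $h_u(r_0)$, $h_u'(r_0)$ in its integrated formula.
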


\begin{proof}
We compute the first and second order derivatives of $h_u$ as we did in the proof of Claim \ref{cl1}:
\begin{align*}
\frac{d}{ds} h_u(s)
& = \int_{s-1}^{s+1} \int_{\bS_1(0)}  \partial_1\left(\cV(r, \theta) u(\theta) e^{-r}\right) d\theta dr,                  \\
& = \int_{s-1}^{s+1} \int_{\bS_1(0)}  \left(\partial_1 \cV - \cV\right) u(\theta) e^{-r} d\theta dr  ,                     \\
\frac{d^2}{ds^2} h_u(s)
& = \int_{s-1}^{s+1} \int_{\bS_1(0)}  \left(\partial_1^2 \cV - 2 \partial_1\cV + \cV \right) u(\theta) e^{-r} d\theta dr.
\end{align*}
Note that $\partial_1^2 \cV = \hess^g_{11} \cV + \Gamma_{11}^k \partial_k \cV$ where the Christoffel symbols $\Gamma^i_{jk}$ are defined through \eqref{eqChristoffel} as in the rest of the paper\footnote{Note that there is no mistake here: a  calculation shows that $\Gamma_{11}^k$ defined by \eqref{eqChristoffel} actually do coincide with the ``usual'' Christoffel symbols.  This is due to the fact that the ``usual'' Christoffel symbols ${}^b \Gamma^k_{11}$ of the hyperbolic metric are zero.}. Hence,
\begin{align*}
    \partial_1^2 \cV
        & = g_{11} \cV + \Gamma_{11}^k \partial_k \cV + \cT_{11}        \\
        & = (1 + e_{11}) \cV + \Gamma_{11}^k \partial_k \cV + \cT_{11}.
\end{align*}
Inserting this equality in the expression for the second order derivative of $h_u$, we get
\begin{align}
        & \frac{d^2}{ds^2} h_u(s) \nonumber\\
        & \qquad = \int_{s-1}^{s+1} \int_{\bS_1(0)}  \left(2 \cV - 2 \partial_1\cV + e_{11} \cV + \Gamma_{11}^k \partial_k \cV + \cT_{11} \right) u(\theta) e^{-r} d\theta dr \nonumber                 \\
        & \qquad = -2 \frac{d}{ds} h_u(s) + \int_{s-1}^{s+1} \int_{\bS_1(0)}  \left(e_{11} \cV + \Gamma_{11}^k \partial_k \cV + \cT_{11} \right) u(\theta) e^{-r} d\theta dr.\label{eqEstimateHUSecond}
\end{align}
The first term in the remaining integral in \eqref{eqEstimateHUSecond} can be estimated using Claim 2 as follows:
\begin{align*}
& \left|\int_{s-1}^{s+1} \int_{\bS_1(0)}  e_{11} \cV~u(\theta) e^{-r} d\theta dr\right|                                                                                          \\
& \qquad \leq \left(\sup_{(r, \theta) \in [s-1, s+1]\times \bS_1(0) } \cV(r, \theta) e^{-r}\right) \left(\int_{s-1}^{s+1} \int_{\bS_1(0)}  |e|_b^p d\theta dr\right)^{1/p} \\
& \qquad\qquad \times \left(\int_{s-1}^{s+1} \int_{\bS_1(0)}  |u(\theta)|^c d\theta dr\right)^{1/c}                                                                              \\
& \qquad \leq \mu \|u\|_{L^c(\bS_1(0), \bR)} \left(\int_{s-1}^{s+1} \int_{\bS_1(0)}  |e|_b^p d\theta dr\right)^{1/p},
\end{align*}
for some constant $\mu > 0$ independent of $s$. In what follows $\mu$ might vary from line to line. Note that we can write
\begin{align}
& \left(\int_{s-1}^{s+1} \int_{\bS_1(0)}  |e|_b^p d\theta dr\right)^{1/p}\nonumber                                                                                     \\
& \qquad\leq \left(e^{-(p \tau + (n-1))(s-1)}\int_{r_0}^\infty \int_{\bS_1(0)} |e|_b^p e^{(p \tau + (n-1))r} d\theta dr\right)^{1/p}\nonumber \\
& \qquad \leq \mu~e^{-\lambda s} \|e\|_{L^p_\tau},\label{eqEstimateError}
\end{align}
where $\displaystyle \lambda = \tau + \frac{n-1}{p}$ as in the proof of Claim 1. Hence,
\[
\left|\int_{s-1}^{s+1} \int_{\bS_1(0)}  e_{11} \cV~u(\theta) e^{-r} d\theta dr\right| \leq \mu e^{-\lambda s} \|e\|_{L^p_\tau} \left(\sup_{(s, \theta) \in [r_0, \infty) \times \bS_1(0)} e^{-r}\cV(r, \theta)\right) \|u\|_{L^c(\bS_1(0), \bR)}.
\]
Similarly, for the second term in the integral in the last line of  \eqref{eqEstimateHUSecond}, we get
\[
\left|\int_{s-1}^{s+1} \int_{\bS_1(0)}  \Gamma_{11}^k (\partial_k \cV)~u(\theta) e^{-r} d\theta dr\right| \leq \mu e^{-\lambda s} \|De\|_{L^p_\tau} \left(\sup_{s \in [r_0, \infty)}e^{-r} |d\cV|\right) \|u\|_{L^c(\bS_1(0), \bR)}.
\]
As for the last term, we proceed as in the proof of Claim 1 to obtain
\[
    \left|\int_{s-1}^{s+1} \int_{\bS_1(0)}  \cT_{11} u(\theta) e^{-r} d\theta dr \right|
    \leq c \|\cT\|_{L^p_{\tau-1}} \|u\|_{L^c(\bS_1(0), \bR)} e^{-\lambda s}.
\]
As a consequence, if we set
\[
    \theta(s) \definedas \int_{s-1}^{s+1} \int_{\bS_1(0)}  \left(e_{11} \cV + \Gamma_{11}^k \partial_k \cV + \cT_{11} \right) u(\theta) e^{-r} d\theta dr,
\]
we have shown that there exists a large constant $\mu > 0$ such that
\begin{equation}\label{eqTheta}
    |\theta(s)| \leq \mu e^{-\lambda s} \|u\|_{L^c(\bS_1(0), \bR)}.
\end{equation}
All in all, we may rewrite \eqref{eqEstimateHUSecond} as the differential equation 
\[
    \frac{d^2 h_u}{ds^2} + 2 \frac{d h_u}{ds} = \theta(s),
\]
integrating which we get
\[
    h_u(s) = h_u(r_0) + \frac{1}{2} \left(1 - e^{-2(s-r_0)}\right) h'_u(r_0)
    + \int_{r_0}^s \left(e^{-2t} \int_{r_0}^t \theta(x) e^{2x} dx\right) dt.
\]
Using \eqref{eqTheta} it is straightforward to check that $h_u$ has a limit $\ell(u)$ as $s\to \infty$ and that $|\ell(u)| \lesssim \|u\|_{L^c(\bS_1(0), \bR)}$
as claimed.
\end{proof}

From Claim \ref{cl2}, we see that $\ell$ is a continuous linear form over $L^c(\bS_1(0), \bR)$. As as consequence, there exists a function $v \in L^p(\bS_1(0), \bR)$, such that, for any smooth function $u$ over $\bS_1(0)$,
\begin{equation}\label{eqDefL}
    \ell(u) = \int_{\bS_1(0)} u(x) v(x) dx.
\end{equation}
We now prove that the function $v$ is actually smooth and satisfies $\mathring{\hess}^\sigma v = 0$, where $\mathring{\hess}~v$ denotes the traceless part of the Hessian of $v$. This is the content of the next two claims:
\begin{claim}\label{cl3}
The function $v$ satisfies the equation
\begin{equation}\label{eqTracelessHessian}
    \mathring{\hess}^\sigma v = 0
\end{equation}
in the sense of distributions.
\end{claim}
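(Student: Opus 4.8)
The plan is to test the equation $\mathring{\hess}^\sigma v = 0$ against an arbitrary smooth function on $\bS_1(0)$ by exploiting the characterization of $v$ through the limit functional $\ell$ and the eigenfunction equation $\Delta^g \cV = n\cV$. The key idea is that a function $u$ on $\bS_1(0)$ with $\mathring{\hess}^\sigma u = 0$ produces, in the hyperbolic limit, exactly the lapse functions, and more generally, integrating $\cV$ against $(\Delta^\sigma + (n-1))u$ on the sphere should, after unravelling the definition \eqref{eqDefHU}–\eqref{eqDefL}, reduce to a boundary term that vanishes because of how $\Delta^g$ acts on $\cV$. Concretely, I would fix an arbitrary $u \in C^\infty(\bS_1(0), \bR)$ and consider a ``test'' version of the computation in the proof of Claim \ref{cl2}, but now integrating $\cV(r,\theta)$ against $u(\theta)e^{-r}$ with the spherical Laplacian of $u$ inserted; the goal is to show $\ell\big((\Delta^\sigma + (n-1)) u\big) = 0$, which by \eqref{eqDefL} says precisely $\int_{\bS_1(0)} \big((\Delta^\sigma + (n-1))u\big)\, v\, dx = 0$ for all such $u$, i.e. $v$ is a distributional solution of $(\Delta^\sigma + (n-1))v = 0$. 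Since on $\bS^{n-1}$ the equation $(\Delta^\sigma + (n-1))v = 0$ is equivalent to $\mathring{\hess}^\sigma v = 0$ (the first nonzero eigenvalue of $\Delta^\sigma$ is $n-1$, with eigenspace the restrictions of linear functions, which are exactly the solutions of the traceless Hessian equation — an Obata-type fact), this is the desired conclusion; I would actually run the argument directly with the operator $\mathring{\hess}^\sigma$ to stay in the distributional formulation the claim asks for.

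The main computation proceeds as follows. Write the hyperbolic Laplacian in geodesic polar coordinates as $\Delta^b = \partial_r^2 + (n-1)\coth(r)\partial_r + \sinh(r)^{-2}\Delta^\sigma$, and write $\Delta^g = \Delta^b + (\Delta^g - \Delta^b)$ where the difference is controlled by $e$ and $De$ exactly as in \eqref{eqDiffP}. Now consider, for $u \in C^\infty(\bS_1(0),\bR)$, the quantity
\[
H_u(s) \definedas \int_{s-1}^{s+1} \int_{\bS_1(0)} \cV(r,\theta)\, \big(\Delta^\sigma u\big)(\theta)\, e^{-r}\, d\theta\, dr.
\]
Integrating by parts on $\bS_1(0)$ moves $\Delta^\sigma$ onto $\cV$, and using $\sinh(r)^{-2}\Delta^\sigma \cV = \Delta^g \cV - \partial_r^2\cV - (n-1)\coth(r)\partial_r \cV - (\Delta^g-\Delta^b)\cV = n\cV - \partial_r^2\cV - (n-1)\coth(r)\partial_r\cV - (\Delta^g - \Delta^b)\cV$, one can re-express $H_u(s)$ (up to the $\sinh(r)^2 \sim \tfrac14 e^{2r}$ factor, which is handled by noting $e^{2r} \cdot \cV e^{-r} = \cV e^{r}$ and that $\cV e^{-r}$, $d(\cV e^{-r})$ are bounded by Claim \ref{cl1.5}) in terms of first and second order $s$-derivatives of $h_u$-type integrals plus error terms involving $e$, $De$ and $\cT = \hess^g\cV - \cV g$. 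The error terms decay like $e^{-\lambda s}$ with $\lambda \geq 2$ by the same estimates \eqref{eqEstimateT}, \eqref{eqEstimateError} used in Claim \ref{cl2}, so they contribute nothing in the limit $s \to \infty$. The remaining ``principal'' part assembles, after the integrations by parts in $r$ analogous to those in Claim \ref{cl2}, into combinations of $h'_u$, $h''_u$ and boundary values that tend to $0$; the upshot is $\lim_{s\to\infty} H_u(s) = -(n-1)\ell(u)$, equivalently $\ell(\Delta^\sigma u) + (n-1)\ell(u) = 0$.

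The hard part will be bookkeeping the $r$-integrations by parts carefully enough to see that all the ``good'' terms telescope while the ``bad'' ones are genuinely of order $e^{-\lambda s}$; in particular one must check that the growth of $\cV$ like $e^r$ (Claim \ref{cl1.5}) is exactly compensated by the $e^{-r}$ weight and the $\sinh(r)^{-2}$ coming from the spherical part of $\Delta^b$, with no borderline term surviving — this is where the condition $\lambda = \tau + \tfrac{n-1}{p} \geq 2$ is used once more, just as in Claims \ref{cl1} and \ref{cl2}. Once $\ell(\Delta^\sigma u) = -(n-1)\ell(u)$ is established for all $u \in C^\infty(\bS_1(0),\bR)$, \eqref{eqDefL} gives $\int_{\bS_1(0)} v\,(\Delta^\sigma + (n-1))u\, dx = 0$, so $(\Delta^\sigma + (n-1))v = 0$ in the sense of distributions; restating this via the standard identity $\mathring{\hess}^\sigma v = \hess^\sigma v - \tfrac{1}{n-1}(\Delta^\sigma v)\sigma$ together with the fact that on the round sphere solutions of $(\Delta^\sigma + (n-1))v=0$ are precisely the functions with $\mathring{\hess}^\sigma v = 0$, we obtain \eqref{eqTracelessHessian} as claimed. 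Smoothness of $v$ then follows from elliptic regularity applied to $(\Delta^\sigma + (n-1))v = 0$ and will be recorded in the next claim.
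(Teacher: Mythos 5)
There is a genuine gap, and it is at the very end rather than in the estimates. The ``Obata-type fact'' you invoke --- that on $\bS^{n-1}$ the equations $(\Delta^\sigma + (n-1))v = 0$ and $\mathring{\hess}^\sigma v = 0$ are equivalent --- is false. The solution space of $\mathring{\hess}^\sigma v = 0$ is $(n+1)$-dimensional and contains the constants as well as the restrictions of linear functions (this is precisely Proposition~\ref{propTracelessHessian}), whereas the eigenvalue equation $(\Delta^\sigma + (n-1))v = 0$ has the $n$-dimensional solution space $\mathrm{span}\{x^1,\ldots,x^n\}$ only; the constant function $v \equiv 1$ solves the first but not the second. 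Your target identity $\ell(\Delta^\sigma u) + (n-1)\ell(u) = 0$ is therefore too strong, and one can see directly that it fails: take $\cV = \cX^0$, whose boundary function is $v \equiv 1$ by Claim~\ref{cl5}. Then $\Delta^\sigma V^0 = 0$ (since $V^0 = \cosh r$ is radial), so $H_u(s) \to 0$, whereas $-(n-1)\ell(u) = -(n-1)\int_{\bS_1(0)} u\, d\mu^\sigma$ is generically nonzero. So the bookkeeping cannot ``assemble'' into $-(n-1)\ell(u)$; no amount of care with the $r$-integrations by parts will rescue an identity that is false for an explicit element of $\cN^g$.

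The distinction matters structurally, not just numerically. The correct thing to test against is not an arbitrary $u$ but a double divergence $u = {}^\sigma\!D_A\,{}^\sigma\!D_B\,T^{AB}$ of a smooth \emph{traceless} symmetric $2$-tensor $T$, since that is the weak formulation of $\mathring{\hess}^\sigma v = 0$. When this particular $u$ is fed into $h_u(s)$ and the double divergence is integrated by parts onto $\cV$, expanding $\hess^\sigma \cV$ via $\hess^g \cV = \cV g + \cT$ and the Christoffel correction, the tracelessness of $T$ kills exactly the two terms $\cV g_{AB}$ and $\coth(r)\,\partial_1\cV\,\sigma_{AB}$. These are precisely the terms whose residue you are trying to package as ``$-(n-1)\ell(u)$''. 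By replacing the traceless test tensor with a general function $u$ you forfeit this cancellation and accidentally encode the stronger (and wrong) eigenvalue equation. Your parenthetical remark that you ``would actually run the argument directly with $\mathring{\hess}^\sigma$'' is the right instinct --- doing so leads to the paper's computation --- but the detour through $\Delta^\sigma + (n-1)$ is not an equivalent reformulation and must be abandoned.
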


\begin{proof}
Let $T^{AB}$ be an arbitrary symmetric traceless 2-tensor on $\bS_1(0)$. We are to show that
\[
    \ell({}^\sigma\!\! {D}_A {}^\sigma\!\! {D}_B T^{AB}) = \int_{\bS_1(0)} v(x) {}^\sigma\!\! {D}_A {}^\sigma\!\! {D}_B T^{AB} dx = 0,
\]
where indices $A, B \in \{2, \ldots, n\}$ denote directions tangent to the geodesic sphere and ${}^\sigma\!\! {D}$ is the Levi-Civita connection of the round metric $\sigma$. To this end, we return to the definition of $\ell(u)$: $\ell(u) = \lim_{s \to \infty} h_{u}(s)$ where $h_u$ is as in \eqref{eqDefHU}. Setting $u \equiv {}^\sigma\!\! {D}_A {}^\sigma\!\! {D}_B T^{AB}$ and integrating by parts over $\bS_1(0)$ we obtain
\begin{align*}
    h_u(s)
        & = \int_{s-1}^{s+1} \int_{\bS_1(0)}  \cV(r, \theta) \left({}^\sigma\!\! {D}_A {}^\sigma\!\! {D}_B T^{AB}\right) e^{-r} d\theta dr \\
        & = \int_{s-1}^{s+1} \int_{\bS_1(0)}  \hess^\sigma_{AB} \cV(r, \theta) T^{AB} e^{-r} d\theta dr.
\end{align*}
Using \eqref{eqChristoffel} and \eqref{eqDefT} we can rewrite
\begin{align*}
    \hess^\sigma_{AB} \cV
        & = \hess^b_{AB} \cV + \coth(r) \partial_1 \cV \sigma_{AB}                                                 \\
        & = \hess^g_{AB} \cV + \Gamma^i_{AB} \partial_i \cV + \coth(r) \partial_1 \cV \sigma_{AB}                  \\
        & = \cV g_{AB} + \cT_{AB} + \Gamma^i_{AB} \partial_i \cV + \coth(r) \partial_1 \cV \sigma_{AB}             \\
        & = \cV (b_{AB} + e_{AB}) + \cT_{AB} + \Gamma^i_{AB} \partial_i \cV + \coth(r) \partial_1 \cV \sigma_{AB}.
\end{align*}
Contracting with $T^{AB}$ and using the fact that it is traceless, we get:
\[
    (\hess^\sigma_{AB} \cV) T^{AB} = \cV e_{AB} T^{AB} + T^{AB} \cT_{AB} + \Gamma^i_{AB} \partial_i \cV T^{AB}.
\]
Hence,
\begin{equation}\label{eqHUHessian}
    h_u(s)
    = \int_{s-1}^{s+1} \int_{\bS_1(0)}  \left(\cV e_{AB} T^{AB} + T^{AB} \cT_{AB} + \Gamma^i_{AB} \partial_i \cV T^{AB}\right) e^{-r} d\theta dr.
\end{equation}
We now prove that all terms in this expression vanish when $s$ tends to infinity so $\lim_{s \to \infty} h_u(s) = 0$. First note that
\[
|e_{AB} T^{AB}| \leq |e|_\sigma |T|_\sigma = \sinh(r)^2 |e|_b |T|_\sigma \leq e^{2r} |e|_b |T|_\sigma.
\]
So
\begin{align*}
& \left|\int_{s-1}^{s+1} \int_{\bS_1(0)}  \cV e_{AB} T^{AB} e^{-r} d\theta dr\right| \\
& \qquad \leq \int_{s-1}^{s+1} \int_{\bS_1(0)}  \cV |e|_b |T|_\sigma e^{r} d\theta dr \\
& \qquad \leq \left(\int_{s-1}^{s+1} \int_{\bS_1(0)}  \cV^p e^{-pr} d\theta dr\right)^{1/p} \left(\int_{s-1}^{s+1} \int_{\bS_1(0)}  |e|_b^p e^{2pr} d\theta dr\right)^{1/p} \\
& \qquad \qquad \times\left(\int_{s-1}^{s+1} \int_{\bS_1(0)}  |T|_\sigma^c d\theta dr\right)^{1/c}.
\end{align*}
From Claim \ref{cl1}, we know that the first term in the product is bounded. The third one is bounded as well. The second one is more complex. Since $p\tau + n-1 = p\lambda \geq 2p$ we have
\[
\int_{s-1}^{s+1} \int_{\bS_1(0)}  |e|_b^p e^{2pr} d\theta dr
\leq \int_{s-1}^{s+1} \int_{\bS_1(0)}  |e|_b^p e^{(n-1 + p \tau) r} d\theta dr.
\]
Assume that the right hand side does not tend to zero as $s$ tends to infinity. Then there exists an $\epsilon > 0$ and a sequence $(s_k)_k$ such that
\[
\int_{s_k-1}^{s_k+1} \int_{\bS_1(0)} |e|_b^p e^{(n-1 + p \tau) r} d\theta dr \geq \epsilon.
\]
Upon extracting a subsequence, we can assume that $s_{k+1} > s_k + 2$ so that the intervals $(s_k-1, s_k+1)$, $k \geq 0$, do not overlap.
By summing over $k$, we get
\[
\int_{r_0}^\infty \int_{\bS_1(0)} |e|_b^p e^{(n-1 + p \tau) r} d\theta dr \geq \sum_k \int_{s_k-1}^{s_k+1} \int_{\bS_1(0)} |e|_b^p e^{(n-1 + p \tau) r} d\theta dr = \infty,
\]
a contradiction. Hence,
\[
\left(\int_{s-1}^{s+1} \int_{\bS_1(0)}  |e|_b^p e^{2pr} d\theta dr\right)^{1/p} \to_{s \to \infty} 0.
\]
In summary, we have shown that the first term in \eqref{eqHUHessian} satisfies
\[
\int_{s-1}^{s+1} \int_{\bS_1(0)}  \cV e_{AB} T^{AB} e^{-r} d\theta dr \to_{s \to \infty} 0.
\]
A similar analysis applies to the last term in \eqref{eqHUHessian} as $|\Gamma|_b$ respectively $|\partial \cV|_b$ satisfy the same type of estimates as $|e|_b$ respectively $\cV$. The argument for the second term is a bit different. We have
\begin{align*}
& \left|
\int_{s-1}^{s+1} \int_{\bS_1(0)}  \cT_{AB} T^{AB} e^{-r} d\theta dr \right|\\
& \qquad \leq \int_{s-1}^{s+1} \int_{\bS_1(0)}  \left|\cT\right|_\sigma \left|T\right|_\sigma e^{-r} d\theta dr\\
& \qquad \leq \int_{s-1}^{s+1} \int_{\bS_1(0)}  \left|\cT\right|_b \left|T\right|_\sigma e^r d\theta dr\\
& \qquad \leq \left(\int_{s-1}^{s+1} \int_{\bS_1(0)}  \left|\cT\right|_b^p e^{p r} d\theta dr\right)^{1/p} \left(\int_{s-1}^{s+1} \int_{\bS_1(0)}  \left|T\right|_\sigma^{p/(p-1)} d\theta dr\right)^{\frac{p-1}{p}},
\end{align*}
where we used the fact that $|\cT|_\sigma \leq \sinh(r)^2 |\cT|_b \leq e^{2r} |\cT|_b$. However, as $\cT \in L^p_{\tau-1}(\bH^n \setminus K, S_2\bH^n)$, we have
\[
    \int_{r_0}^\infty \int_{\bS_1(0)} |\cT|_b^p e^{p(\tau-1)r + (n-1)r} d\theta dr < \infty
\]
which implies that
\[
    \int_{r_0}^\infty \int_{\bS_1(0)} |\cT|_b^p e^{p r} d\theta dr < \infty
\]
due to the condition $\displaystyle \tau + \frac{n-1}{p} \geq 2$. Arguing as above, we conclude that
\[
    \int_{s-1}^{s+1} \int_{\bS_1(0)}  \cT_{AB} T^{AB} e^{-r} d\theta dr \to_{s \to \infty} 0.
\]
\end{proof}

\begin{claim}\label{cl4}
The function $v$ is in $C^\infty(\bS_1(0), \bR)$ and satisfies
Equation \eqref{eqTracelessHessian} in the strong sense.
\end{claim}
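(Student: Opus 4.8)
The plan is to reinterpret the conclusion of Claim~\ref{cl3} as the statement that $v \in L^p(\bS_1(0), \bR)$ is a distributional solution of the overdetermined elliptic equation $\mathring{\hess}^\sigma v = 0$, and then to promote $v$ to a smooth function by elliptic regularity, after which \eqref{eqTracelessHessian} holds automatically in the strong sense.

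First I would make this identification precise. Claim~\ref{cl3} asserts that $\int_{\bS_1(0)} v\, {}^\sigma\!\! {D}_A {}^\sigma\!\! {D}_B T^{AB}\, dx = 0$ for every smooth symmetric trace-free $2$-tensor $T$ on $\bS_1(0)$, and since $T$ is trace-free the formal adjoint of the operator $T \mapsto {}^\sigma\!\! {D}_A {}^\sigma\!\! {D}_B T^{AB}$ (from trace-free symmetric $2$-tensors to functions) is precisely the operator $\mathcal{P}\colon w \mapsto \mathring{\hess}^\sigma w$; hence $\mathcal{P} v = 0$ in $\mathcal{D}'(\bS_1(0))$. Next I would note that $\mathcal{P}$ is overdetermined elliptic: its principal symbol at a non-zero covector $\xi$ sends a scalar $w$ to $-w\bigl(\xi\otimes\xi - \tfrac{|\xi|_\sigma^2}{n-1}\,\sigma\bigr)$, which is injective because $\xi\otimes\xi$ has rank one whereas $\sigma$ has rank $n-1\ge 2$, so the two tensors are never proportional. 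Consequently $\mathcal{P}^*\mathcal{P}$ is a fourth-order linear elliptic operator with smooth coefficients on the closed manifold $\bS_1(0)$, and $v$ solves $\mathcal{P}^*\mathcal{P} v = \mathcal{P}^*(\mathcal{P} v) = 0$ distributionally, so standard elliptic regularity on a compact manifold gives $v \in C^\infty(\bS_1(0), \bR)$. Once $v$ is smooth, $\mathring{\hess}^\sigma v$ is a continuous tensor field which, being the zero distribution by Claim~\ref{cl3}, must vanish identically; this is \eqref{eqTracelessHessian} in the classical sense.

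An equivalent, more elementary route to the regularity is to take one divergence of $\mathring{\hess}^\sigma v = 0$ and use the Ricci identity $\divg^\sigma(\hess^\sigma v) = d(\Delta^\sigma v) + \ric^\sigma(\nabla^\sigma v, \cdot)$ together with $\ric^\sigma = (n-2)\sigma$ to deduce distributionally that $d\bigl(\Delta^\sigma v + (n-1)v\bigr) = 0$; since $\bS_1(0)$ is connected this forces $\Delta^\sigma v + (n-1)v$ to be a constant, so that $v$ minus a constant is an $L^p$ distributional eigenfunction of the scalar Laplacian $\Delta^\sigma$ and hence smooth by ordinary interior elliptic estimates. I expect the whole argument to be routine once Claim~\ref{cl3} is available; the steps most deserving of care are the verification of the symbol injectivity (which is exactly where the hypothesis $\dim \bS_1(0) = n-1 \ge 2$, valid since $n \ge 3$, enters) and the harmless bookkeeping that composing the differential operators $\mathcal{P}^*$ and $\mathcal{P}$ is legitimate at the level of distributions.
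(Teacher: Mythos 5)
Your proof is correct, but it takes a genuinely different route from the paper's. The paper mollifies $v$ by averaging over $SO(n)$ (exploiting $\bS_1(0) \simeq SO(n)/SO(n-1)$), shows each smooth mollification $v_k$ satisfies the distributional equation by equivariance, invokes Proposition~\ref{propTracelessHessian} to place each $v_k$ in the finite-dimensional space $\cH$, and then uses closedness of $\cH$ in $L^c$ together with $v_k \to v$ to conclude. You instead recognise $\mathring{\hess}^\sigma$ as an overdetermined elliptic operator, verify injectivity of its symbol (correctly noting this uses $\dim\bS_1(0) = n-1 \geq 2$), and apply elliptic regularity to the fourth-order operator $\cP^*\cP$; your alternative route via $\divg^\sigma(\mathring{\hess}^\sigma v) = \tfrac{n-2}{n-1}\,d\bigl(\Delta^\sigma v + (n-1)v\bigr)$ is even more elementary, since it reduces to regularity for the scalar Laplacian (note the factor $\tfrac{n-2}{n-1}$, harmless since $n\geq 3$, which you implicitly absorbed). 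Both approaches are sound; yours is more direct and standard, while the paper's bypasses any appeal to regularity for overdetermined systems by trading it for the explicit finite-dimensional classification in Proposition~\ref{propTracelessHessian} -- which you still need afterwards anyway, to identify $\cH$ for Claim~\ref{cl5}. One small point of care in the $\cP^*\cP$ route: to justify $\cP^*\cP v = 0$ distributionally you should test against a scalar $\phi$ and set $T = \cP\phi$ in Claim~\ref{cl3}, using that $\cP\phi$ is indeed a smooth trace-free symmetric $2$-tensor; this is what makes the composition legitimate, as you flagged.
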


For proving this claim, we will need the following result:
\begin{proposition}\label{propTracelessHessian}
The vector space $\cH$ of functions $v \in C^\infty(\bS_1(0), \bR)$ satisfying Equation \eqref{eqTracelessHessian} is $(n+1)$-dimensional and is spanned by the functions $1$ and $x^i$, $i \in \ldbrack 1, n\rdbrack$.
\end{proposition}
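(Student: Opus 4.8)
The plan is to recognize Equation \eqref{eqTracelessHessian} as an instance of Obata's rigidity equation on the round sphere and to solve it by the same device that was used in the proof of Proposition \ref{propLapse}. Throughout, $\sigma$ denotes the round metric on the unit $(n-1)$-dimensional sphere $\bS_1(0)$, so that $\ric^\sigma = (n-2)\sigma$, and $\nabla^\sigma$, $\Delta^\sigma$, $\hess^\sigma$ are the associated connection, Laplacian and Hessian.

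First I would reduce $\mathring{\hess}^\sigma v = 0$ to the Obata equation. Writing $\hess^\sigma v = \phi\,\sigma$ with $\phi = \tfrac{1}{n-1}\Delta^\sigma v$, I apply the Bochner commutation identity $\Delta^\sigma(\nabla^\sigma v) = \nabla^\sigma(\Delta^\sigma v) + \ric^\sigma(\nabla^\sigma v,\cdot)$. Its left-hand side is $\nabla^\sigma_i\phi$, since $\hess^\sigma v = \phi\,\sigma$ gives $\nabla^\sigma_j\nabla^{\sigma\,j}\nabla^\sigma_i v = \nabla^\sigma_j(\phi\,\delta^j_i) = \nabla^\sigma_i\phi$, while on the right $\Delta^\sigma v = (n-1)\phi$ and $\ric^\sigma = (n-2)\sigma$. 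Hence $\nabla^\sigma_i\phi = (n-1)\nabla^\sigma_i\phi + (n-2)\nabla^\sigma_i v$, i.e. $(n-2)\,\nabla^\sigma_i(\phi + v) = 0$. Since $n \ge 3$ and $\bS_1(0)$ is connected, this forces $\phi = c - v$ for a constant $c$, so that $w \definedas v - c$ satisfies $\hess^\sigma w + w\,\sigma = 0$.

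Next I would solve this Obata equation. Imitating the proof of Proposition \ref{propLapse}, I extend $w$ to the degree-one homogeneous function $W(x) = |x|\,w(x/|x|)$ on $\bR^n\setminus\{0\}$ and compute the Euclidean Hessian of $W$ in polar coordinates, where $\delta = dr^2 + r^2\sigma$. Using the Christoffel symbols $\Gamma^r_{AB} = -r\sigma_{AB}$ and $\Gamma^A_{rB} = r^{-1}\delta^A_B$ one finds $\hess^\delta_{rr}W = 0$, $\hess^\delta_{rA}W = 0$ and $\hess^\delta_{AB}W = r(\hess^\sigma w + w\sigma)_{AB} = 0$, so $\hess^\delta W \equiv 0$; hence $W$ is the restriction of a linear form $x\mapsto\sum_i a_i x^i$ on $\bR^n$ and $w = \sum_i a_i\,x^i|_{\bS_1(0)}$. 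Therefore every solution of \eqref{eqTracelessHessian} lies in $\mathrm{span}\{1, x^1, \ldots, x^n\}$. Conversely, a constant trivially satisfies \eqref{eqTracelessHessian}, and so does each $x^i|_{\bS_1(0)}$: the same polar computation with $w = x^i$ (equivalently, Formula \eqref{eqDiffHessian}) gives $\hess^\sigma(x^i|_{\bS_1(0)}) = -x^i\sigma$, whose trace-free part vanishes. Since $1, x^1, \ldots, x^n$ are linearly independent on $\bS_1(0)$, the space $\cH$ is exactly their span and has dimension $n+1$.

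Every step is a short standard computation, so I do not expect a genuine obstacle; the only things to watch are the sign convention in the Bochner identity and the bookkeeping of the additive constant $c$ when passing between $v$ and $w$. If one prefers a fully self-contained Step 3, it can be replaced by the classical geodesic argument: along any unit-speed geodesic $\gamma$ the function $w\circ\gamma$ satisfies $(w\circ\gamma)'' + w\circ\gamma = 0$, and integrating this from a fixed basepoint $p_0$ and using $\cos d(p_0,q) = \langle p_0, q\rangle$ exhibits $w$ directly as the restriction to $\bS_1(0)$ of the linear form $q\mapsto\langle w(p_0)p_0 + \nabla^\sigma w(p_0),\,q\rangle$.
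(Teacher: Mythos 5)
Your proof is correct, and it takes a genuinely different route from the one in the paper. The paper's argument is entirely computational in geodesic polar (Fermi) coordinates: it places a critical point of $v$ at the North pole, uses the $\hess^\sigma_{0A}v = 0$ component to reduce $v$ to a function $h(\theta)$ of the polar angle alone (with a short but somewhat delicate step showing the angular part $f$ vanishes by smoothness at the critical point), solves the resulting ODE $\cos\theta\,h' = \sin\theta\,h''$, and then invokes rotational invariance to pass from $1,\cos\theta$ to the full span. Your argument instead proceeds in two clean, coordinate-light steps: a Bochner contraction $\nabla^\sigma_i\phi = (n-1)\nabla^\sigma_i\phi + (n-2)\nabla^\sigma_i v$ forces $\phi+v$ to be constant (this is where $n\ge 3$ is used), reducing the problem to the Obata equation $\hess^\sigma w + w\sigma=0$; then the degree-one homogeneous extension $W=rw$ to $\bR^n\setminus\{0\}$ has $\hess^\delta W\equiv 0$, so $W$ is linear and $w$ is the restriction of a linear form. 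The second step is a pleasant echo of the mechanism already used in the paper's Proposition \ref{propLapse} (lifting a lapse equation on $\bH^n$ to a linearity condition in $\bR^{n,1}$ via homogeneity), so your proof is more in the spirit of that earlier result. It also sidesteps the need to first reduce to a radially symmetric $v$ and the small argument that $f\equiv 0$. Your sign in the commutation identity is the correct one (one can sanity-check it against $\hess^\sigma v = -v\sigma$ giving $\Delta^\sigma v = -(n-1)v$), and your converse verification of $\hess^\sigma(x^i|_{\bS_1(0)}) = -x^i\sigma$ and linear independence of $1,x^1,\dots,x^n$ is adequate.
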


\begin{proof}
Since $v$ is smooth, $v$ admits a critical point $x_0 \in \bS_1(0)$. Due to rotational symmetry of the problem, we can assume that $x_0$ is the North pole, $(0, \ldots, 0, 1)$. Let $\tau$ denote the round metric on $\bS^{n-2}$. Then, in geodesic normal coordinates centered at $x_0$, the metric $\sigma$ reads
\[
    \sigma = d\theta^2 + (\sin^2 \theta) \tau.
\]
Let $(\theta, \phi^1, \ldots, \phi^{n-2})$ be a corresponding Fermi coordinate system, i.e. so that $(\phi^1, \ldots, \phi^{n-2})$ are coordinates on $\bS^{n-2}$ (for example the polar coordinate system). Within this proof we will denote $\phi^0 = \theta$ and our convention will be that lower case Latin indices run from $0$ to $n-2$ while upper case Latin indices run from $1$ to $n-2$.
Since the vectors $\partial_0 = \pdiff{~}{\theta}$ and $\partial_A = \pdiff{~}{\phi^A}$ are orthogonal, Equation \eqref{eqTracelessHessian} implies
\[
    0 = \hess^\sigma_{0 A} v = \partial_0 \partial_A v + (\cot \theta) \partial_A v
\]
which gives $\partial_A v = \sin(\theta) X_A$ for some vector field $X$ on $\bS^{n-2}$. We actually have, for any $\theta_0 \in (0, \pi)$, that
\[
\partial_A v(\theta_0, \phi^1, \ldots, \phi^{n-2}) = \sin(\theta_0) X_A,
\]
so $X_A = \partial_A f$ with $\displaystyle f \definedas \frac{1}{\sin(\theta_0)} v$. As a consequence, $v = h(\theta) + \sin(\theta) f(\phi^1, \ldots, \phi^{n-2})$ for some function $h$. As $v$ is smooth at $x_0$ and $x_0$ is a critical point for $v$, we have $h(\theta) = h(0) + O(\theta^2)$ and, since $\sin(\theta) = \theta + O(\theta^3)$, $f \equiv 0$.

We have proven that $f \equiv 0$ and $v = h(\theta)$. Next we note that Equation \eqref{eqTracelessHessian} can be rewritten as $\hess^\sigma h = \psi \sigma$ for some unknown function $\psi$. The radial component of this equation yields
\[
    \frac{\partial^2 h}{\partial \theta^2} = \hess^\sigma h = \psi \sigma_{00} = \psi.
\]
Similarly, the tangential components read
\[
    \frac{\partial^2 h}{\partial \phi^A \partial \phi^B} + (\cos(\theta) \sin (\theta)) \frac{\partial h}{\partial \theta} \tau_{AB} = \psi \sin(\theta)^2 \tau_{AB}.
\]
From these two equations and the fact that $\displaystyle \frac{\partial^2 h}{\partial \phi^A \partial \phi^B} \equiv 0$, we find that $h$ satisfies
\[
    \cos(\theta) h'(\theta) = \sin(\theta) h''(\theta).
\]
The general solution to this equation is $h = c_1 + c_2 \cos(\theta)$ for arbitrary constants $c_1$ and $c_2$. In terms of coordinates on $\bR^n$ restricted to $\bS_1(0)$ this means that $v = c_1 + c_2 x^n$. By rotational invariance of the problem, we conclude that the set of solutions to \eqref{eqTracelessHessian} is the span of $1$ and the $x^i$'s.
\end{proof}

We can now return to the proof of Claim \ref{cl4}:
\begin{proof}[Proof of Claim \ref{cl4}]
The idea of the proof is to take advantage of the fact that \[\bS_1(0) \simeq SO(n)/SO(n-1)\] is a homogeneous space to define mollification operators using some sort of convolution.
Let $(\psi_k)_k$ be a sequence of nonnegative smooth functions on $SO(n)$ such that
\[
\int_{SO(n)} \psi_k(g) d\mu(g) = 1 \quad\text{for all } k,
\]
where $d\mu(g)$ denotes the Haar measure on $SO(n)$\footnote{In this proof we use the letter $g$ to denote elements of the Lie group $SO(n)$ as the tradition dictates. This should not be confused with the metric $g$ used elsewhere as no reference to it will be made in here.},
and such that for any neighborhood $U$ of the identity element $\id$, there exists an integer $K$ such that for all $k \geq K$, $\supp \psi_k \subset U$. We set 
\[
v_k(x) \definedas \int_{SO(n)} \psi_k(g^{-1}) v(g\cdot x) d\mu(g).
\]
First of all, we note that $\|v_k\|_{L^c(\bS_1(0), \bR)} \leq \|v\|_{L^c(\bS_1(0), \bR)}$. Indeed, as $c \geq 1$, the function $t \mapsto |t|^c$ is convex so it follows from Jensen's inequality that
\[
    \left|\int_{SO(n)} \psi_k(g^{-1}) v(g\cdot x) d\mu(g)\right|^c
    \leq \int_{SO(n)} \psi_k(g^{-1}) |v(g\cdot x)|^c d\mu(g).
\]
Hence,
\begin{align*}
    \|v_k\|_{L^c(\bS_1(0), \bR)}
        & = \left[\int_{\bS_1(0)} \left|\int_{SO(n)} \psi_k(g^{-1}) v(g\cdot x) d\mu(g)\right|^c d\mu^\sigma(x)\right]^{1/c} \\
        & \leq \left[\int_{\bS_1(0)} \int_{SO(n)} \psi_k(g^{-1}) |v(g\cdot x)|^c d\mu(g) d\mu^\sigma(x)\right]^{1/c}         \\
        & = \left[\int_{SO(n)} \psi_k(g^{-1}) \int_{\bS_1(0)} |v(g\cdot x)|^c d\mu^\sigma(x) d\mu(g) \right]^{1/c}           \\
        & = \left[\int_{SO(n)} \psi_k(g^{-1}) \int_{\bS_1(0)} |v(x)|^c d\mu^\sigma(x) d\mu(g) \right]^{1/c}                  \\
        & = \|v\|_{L^c(\bS_1(0), \bR)},
\end{align*}
where we used the fact that the measure $d\mu^\sigma$ on $\bS_1(0)$ is invariant under the action of $SO(n)$.
Next, we claim that $v_k \in C^\infty(\bS_1(0), \bR)$ and that $v_k \to v$ in $L^c(\bS_1(0), \bR)$. For any $h \in SO(n)$, let $L_h$ denote the map from $\bS_1(0)$ onto itself defined by $x \mapsto h \cdot x$. Then we have
\begin{align*}
    (L_h)^* v_k(x)
        & = v_k(h \cdot x)                                  \\
        & = \int_{SO(n)} \psi_k(g^{-1}) v(g h\cdot x) d\mu(g)      \\
        & = \int_{SO(n)} \psi_k(g^{-1}h^{-1}) v(g\cdot x) d\mu(g),
\end{align*}
by the right invariance of the Haar measure. In particular, given $X \in \mathfrak{so(n)}$ and $t \in \bR$, we can choose $h = \exp(t X)$. If $\Phi^X_t$ denotes the flow associated to the left invariant vector field $X$ on $G$, we know that $\Phi^X_{-t}(g) = g \exp(-t X) = g h^{-1}$ (see e.g. \cite[Proof of Theorem 9.16]{LeeSmoothManifolds}. Hence,
\begin{align*}
    (L_{\exp(t X)})^* v_k(x)
        & = \int_{SO(n)} \psi_k(\Phi^X_{-t}(g^{-1})) v(g\cdot x) d\mu(g)     \\
        & = \int_{SO(n)} ((\Phi^X_{-t})^*\psi_k)(g^{-1}) v(g\cdot x) d\mu(g).
\end{align*}
As a consequence,
\begin{align*}
    X v_k(x)
        & = \lim_{t \to 0} \frac{(L_{\exp(t X)})^* v_k(x) - v_k(x)}{t}                                              \\
        & = \int_{SO(n)} \left[\lim_{t \to 0} \frac{((\Phi^X_{-t})^*\psi_k)(g^{-1}) - \psi_k(g^{-1})}{t}\right] v(g\cdot x) d\mu(g) \\
        & = \int_{SO(n)} (- X \cdot \psi_{k})(g^{-1}) v(g\cdot x) d\mu(g)
\end{align*}
is well-defined and continuous. Note that we can continue the argument adding more and more vector fields on the left as the only property of $\psi_k$ used here is that it is a smooth function. The convergence $v_k \to v$ in $L^c(\bS_1(0), \bR)$ follows now by standard arguments, using the fact that continuous functions are dense in $L^c(\bS_1(0), \bR)$ and are uniformly continuous as $\bS_1(0)$ is compact (see e.g. \cite[Theorem 4.22]{BrezisFunctionalAnalysis}).\\

We now claim that for any traceless symmetric tensor $T \in C^\infty(\bS_1(0), \mathring{\mathrm{Sym}}_2 \bS_1(0))$, we have
\[
    0 = \int_{\bS_1(0)} v_k(x) {}^\sigma\!\! D ^A {}^\sigma\!\! {D}^B T_{AB} d\mu^\sigma(x).
\]
Indeed, from the fact that $SO(n)$ acts by isometries on $\bS_1(0)$, for $u = {}^\sigma\!\! D ^A {}^\sigma\!\! {D}^B T_{AB}$, we have
\begin{align*}
\int_{\bS_1(0)} u(x) v_k(x) d\mu^\sigma(x)
&= \int_{\bS_1(0)} u(x) \left(\int_{SO(n)} \psi_k(g^{-1}) v(g\cdot x) d\mu(g)\right) d\mu^\sigma(x)\\
&= \int_{SO(n)} \psi_k(g^{-1}) \int_{\bS_1(0)} u(x) v(g\cdot x) d\mu^\sigma(x) d\mu(g)      \\
&= \int_{SO(n)} \psi_k(g^{-1}) \int_{\bS_1(0)} u(g^{-1}\cdot x) v(x) d\mu^\sigma(x) d\mu(g) \\
&= \int_{SO(n)} \psi_k(g^{-1}) \int_{\bS_1(0)} ((L_g)_* u)(x) v(x) d\mu^\sigma(x) d\mu(g).
\end{align*}
Again from the fact that $SO(n)$ acts by isometries on $\bS_1(0)$, we have  that
\[
(L_g)_* u = (L_g)_* ({}^\sigma\!\! D ^A {}^\sigma\!\! {D}^B T_{AB})
= {}^\sigma\!\! D ^A {}^\sigma\!\! {D}^B ((L_g)_* T)_{AB}
\]
is the double divergence of a symmetric trace free 1-tensor. Thus,
\[
\int_{\bS_1(0)} ((L_g)_* u)(x) v(x) d\mu^\sigma(x) = 0.
\]
This shows that for any $T \in C^\infty(\bS_1(0), \mathring{\mathrm{Sym}}_2 \bS_1(0))$, we have
\[
\int_{\bS_1(0)} v_k(x) \left({}^\sigma\!\! D ^A {}^\sigma\!\! {D}^B T_{AB}\right) d\mu^\sigma(x) = 0.
\]
Integrating by parts twice, this yields
\[
\int_{\bS_1(0)} \hess^\sigma_{AB}(v_k) T^{AB} d\mu^\sigma(x) = 0.
\]
As $T$ is arbitrary, we conclude that $\mathring{\hess}^\sigma v_k \equiv 0$.
From Proposition \ref{propTracelessHessian}, we know that all $v_k$ live in the finite dimensional subspace $\cH \subset L^c(\bS_1(0), \bR)$. As $\cH$ is closed, we can pass to the limit and conclude that $v \in \cH$, showing that $v$ is smooth and satisfies \eqref{eqTracelessHessian} in the strong sense.
\end{proof}

Note that Claim \ref{cl4} implies that given $\cV \in \cN^g$, we can associate to it a unique function $v \in \cH$. We denote this linear map by $\cB: \cN^g \to \cH$.  Recall (see a comment after Lemma \ref{lmHarmonic}) that $\cX^\mu$ denotes the eignfunction of the Laplacian $\Delta^g$ that is asymptotic to the coordinate function $X^\mu$, $\mu=0,1,\ldots n$.

\begin{claim}\label{cl5}
We have $\cB(\cX^0) = 1$ and, for all $i \in \ldbrack 1, n\rdbrack$, $\cB(\cX^i) = x^i$.
In particular, the map $\cB$ is surjective.
\end{claim}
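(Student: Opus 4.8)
The plan is to compute the linear form $\ell$ attached to $\cV = \cX^\mu$ and identify the resulting $v \in \cH$ directly. Recall from Claim \ref{cl2} that for a bounded function $u$ on $\bS_1(0)$,
\[
\ell(u) = \lim_{s \to \infty} h_u(s) = \lim_{s \to \infty} \int_{s-1}^{s+1} \int_{\bS_1(0)} \cV(r, \theta) u(\theta) e^{-r} d\theta dr,
\]
and from \eqref{eqDefL} that $\ell(u) = \int_{\bS_1(0)} u(x) v(x) dx$, so $v$ is determined by testing $\ell$ against smooth functions $u$. First I would use the fact that $\cX^\mu - X^\mu \circ \Phi_1 \in W^{2, p}_{\tau-1}(M, \bR)$ (Lemma \ref{lmHarmonic}), which gives $|\cX^\mu - X^\mu| = O(\rho^{\tau-1}) = O(e^{-(\tau-1)r})$ by Sobolev embedding \cite[Lemma 3.6 (c)]{LeeFredholm}; since $\tau > 3/2$, this is $o(e^r)$, hence
\[
\ell(u) = \lim_{s\to\infty} \int_{s-1}^{s+1} \int_{\bS_1(0)} X^\mu(r, \theta) u(\theta) e^{-r} d\theta dr,
\]
the error term contributing $O(e^{-(\tau-1)s}) \cdot \|u\|_{L^1} \to 0$.

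Next I would plug in the explicit expressions for $X^\mu = V^\mu \circ \Phi_1$ in geodesic normal coordinates from \eqref{eqLapseGeodesic}: $X^0 = \cosh(r)$ and $X^i = x^i \sinh(r)$, where $x^1, \ldots, x^n$ are the restrictions of the Euclidean coordinate functions to $\bS_1(0)$. For $\cV = \cX^0$ we get
\[
\ell(u) = \lim_{s\to\infty} \int_{s-1}^{s+1} \int_{\bS_1(0)} \cosh(r)\, u(\theta) e^{-r} d\theta dr = \lim_{s\to\infty} \int_{\bS_1(0)} u(\theta) d\theta \int_{s-1}^{s+1} \frac{e^r + e^{-r}}{2} e^{-r} dr.
\]
The radial integral $\int_{s-1}^{s+1} \tfrac{1}{2}(1 + e^{-2r})dr \to 1$ as $s \to \infty$, so $\ell(u) = \int_{\bS_1(0)} u(\theta) d\theta$, which by \eqref{eqDefL} means $v \equiv 1$, i.e. $\cB(\cX^0) = 1$. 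Similarly, for $\cV = \cX^i$,
\[
\ell(u) = \lim_{s\to\infty} \int_{\bS_1(0)} x^i(\theta) u(\theta)\, d\theta \int_{s-1}^{s+1} \sinh(r) e^{-r} dr = \int_{\bS_1(0)} x^i(\theta) u(\theta) d\theta,
\]
since $\int_{s-1}^{s+1} \tfrac{1}{2}(1 - e^{-2r}) dr \to 1$. Hence $v = x^i$ and $\cB(\cX^i) = x^i$.

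Since $\cH$ is spanned by $1$ and the $x^i$ (Proposition \ref{propTracelessHessian}), this shows that the image of $\cB$ contains a spanning set of $\cH$, so $\cB$ is surjective; this completes Claim \ref{cl5}. I do not anticipate a genuine obstacle here — the only points requiring care are (i) justifying that the $W^{2,p}_{\tau-1}$ remainder does not contribute to the limit, which uses $\tau > 3/2$ together with Claim \ref{cl1.5}'s bound $|\cV| = O(e^r)$ applied to the remainder to control the relevant annular integrals, and (ii) checking that one may interchange the radial and spherical integrations and pass to the limit, which is immediate since on each annulus $[s-1,s+1]$ the radial factor $\sinh(r)e^{-r}$ or $\cosh(r)e^{-r}$ is uniformly bounded. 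Everything else is a direct substitution of \eqref{eqLapseGeodesic} into \eqref{eqDefHU}.
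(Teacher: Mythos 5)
Your proof is correct and follows essentially the same route as the paper's: decompose $\cX^\mu = X^\mu + (\cX^\mu - X^\mu)$, show the remainder's contribution to $h_u(s)$ vanishes (the paper appeals to the integral argument from Claim \ref{cl3}, whereas you invoke the pointwise Sobolev bound $|\cX^\mu - X^\mu| = O(\rho^{\tau-1})$ — both work, and you don't actually need Claim \ref{cl1.5} once you have that bound), and then evaluate the radial integrals $\int_{s-1}^{s+1}\cosh(r)e^{-r}dr \to 1$ and $\int_{s-1}^{s+1}\sinh(r)e^{-r}dr \to 1$. One small slip: the remainder contributes $O(e^{-\tau s})$ rather than $O(e^{-(\tau-1)s})$ since you should keep the $e^{-r}$ factor, but since $\tau - 1 > 0$ your weaker bound already suffices.
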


\begin{proof}
We write $\cX^i = X^i + (\cX^i - X^i)$ and compute $h_u(r)$ with $\cV = \cX^i$. As $\cX^i - X^i \in L^p_{\tau-1}(\bH^n \setminus K, \bR)$ and $X^i = \sinh(r) x^i$ (see \eqref{eqLapseGeodesic}), we conclude that
\begin{align*}
\lim_{s \to \infty} h_u(s)
& = \lim_{s \to \infty} \int_{s-1}^{s+1} \int_{\bS_1(0)} X^i(r, \theta) u(\theta) e^{-r} d\theta dr           \\
& \qquad\qquad+ \lim_{s \to \infty}\int_{s-1}^{s+1} \int_{\bS_1(0)} (\cX^i - X^i) u(\theta) e^{-r} d\theta dr \\
& = \lim_{s \to \infty} \int_{s-1}^{s+1} \int_{\bS_1(0)} x^i(\theta) \sinh(r) u(\theta) e^{-r} d\theta dr     \\
& = \int_{\bS_1(0)} x^i(\theta) u(\theta) d\theta \lim_{s \to \infty}\int_{s-1}^{s+1} \sinh(r)  e^{-r} dr     \\
& = \int_{\bS_1(0)} x^i(\theta) u(\theta) d\theta,
\end{align*}
observing that the limit in the second line is zero (see the proof of Claim \ref{cl3} for a similar argument). The proof for $\cX^0$ is identical.
\end{proof}

Before proving injectivity of $\cB$ (which will be Claim \ref{cl8}), we need to obtain  better estimates for the asymptotic behavior of functions in $\cN^g$. We do this in the following two claims.

\begin{claim}\label{cl6}
Let \( f_2(s) \) be the function defined by
\[
f_2(s) \coloneqq \int_{s-1}^{s+1} \int_{\mathbb{S}^{n-1}} \left( \partial_1 (\mathcal{V} e^{-r}) \right)^p \, d\theta \, dr.
\]
Then, $f_2(s)=O(s^p e^{-2ps})$.
\end{claim}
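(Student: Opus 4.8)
The plan is to turn the estimate into a statement about a first order \emph{radial} ODE satisfied by $\psi(r,\theta) \definedas \partial_1(\cV e^{-r})$, and then to analyse that ODE using the $L^p$-over-annuli decay of $e$, $De$ and $\cT \definedas \hess^g\cV - \cV g$ that was already exploited in the proofs of Claims~\ref{cl1} and~\ref{cl3}, together with the uniform bounds of Claim~\ref{cl1.5}. Since $f_2(s) = \int_{s-1}^{s+1}F(r)^p\,dr$ with $F(r)\definedas\|\psi(r,\cdot)\|_{L^p(\bS_1(0))}$, it suffices to show $F(r)\lesssim (1+r)e^{-2r}$. For the ODE, recall that the radial foliation of the annulus $\cA=[-1,1]\times\bS_1(0)$ with the product metric $h$ is totally geodesic, so $\hess^h_{11}\cU_s = \partial_t^2\cU_s = \partial_1\psi$ (at $r=t+s$); thus Identity \eqref{eqIdentity1}, which holds in $L^p_{loc}$ because $\cV\in W^{2,p}_{loc}$ with $p>n$, reads after decomposing $\Gamma^i_{11}\partial_i(\cV e^{-r}) = \Gamma^1_{11}\psi + \Gamma^A_{11}\partial_A(\cV e^{-r})$,
\[
\partial_1\psi + 2\psi = e^{-r}\cT_{11} + (e_{11}+\Gamma^1_{11})\,\cV e^{-r} + \Gamma^A_{11}\,\partial_A(\cV e^{-r}) + \Gamma^1_{11}\,\psi \eqqcolon R',
\]
with $A$ running over the directions tangent to $\bS_1(0)$. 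Integrating $\partial_1(e^{2r}\psi) = e^{2r}R'$ from $r_0$ gives the pointwise inequality $e^{2r}|\psi(r,\theta)| \le e^{2r_0}|\psi(r_0,\theta)| + \int_{r_0}^r e^{2x}|R'(x,\theta)|\,dx$.

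Next I would estimate $\|R'(x,\cdot)\|_{L^p(\bS_1(0))}$. By Claim~\ref{cl1.5} the quantities $\cV e^{-r}$, $d(\cV e^{-r})$ and $\psi$ are all bounded on $\bH^n\setminus K'_1$, and $|\Gamma|\lesssim|De|_b$; since $\partial_A(\cV e^{-r}) = e^{-r}\partial_A\cV$ carries no factor $e^{r}$, it follows that $|R'(x,\theta)| \lesssim e^{-x}|\cT(x,\theta)|_g + |e(x,\theta)|_b + |De(x,\theta)|_b$ — in particular the potentially dangerous term $\Gamma^1_{11}\psi$ is absorbed into a term of size $|De|_b$ by the $L^\infty$-bound on $\psi$. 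Setting $\gamma(x) \definedas e^{-x}\|\cT(x,\cdot)\|_{L^p(\bS_1(0),g)} + \|e(x,\cdot)\|_{L^p(\bS_1(0),b)} + \|De(x,\cdot)\|_{L^p(\bS_1(0),b)}$, we get $\|R'(x,\cdot)\|_{L^p(\bS_1(0))}\lesssim\gamma(x)$, and the computations behind \eqref{eqEstimateT} and \eqref{eqEstimateError} show that $\int_{x-1}^{x+1}\gamma(x')^p\,dx' \lesssim e^{-p\lambda x}$, where $\lambda \definedas \tau + \frac{n-1}{p}\ge 2$.

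To conclude, take the $L^p$ norm in $\theta$ of the integrated inequality and apply Minkowski's integral inequality to obtain $e^{2r}F(r) \lesssim 1 + \int_{r_0}^r e^{2x}\gamma(x)\,dx$ (the constant coming from the boundary term $e^{2r_0}\|\psi(r_0,\cdot)\|_{L^p}$, finite since $\psi$ is bounded). Splitting over unit intervals and using Hölder gives $\int_k^{k+1}e^{2x}\gamma(x)\,dx \lesssim e^{2k}e^{-\lambda k}$, hence $\int_{r_0}^r e^{2x}\gamma(x)\,dx \lesssim \sum_{k\le r}e^{(2-\lambda)k} \lesssim 1+r$; the linear growth occurs precisely in the borderline case $\lambda = 2$, which is exactly what produces the factor $s^p$ in the statement. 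Therefore $F(r) \lesssim (1+r)e^{-2r}$, and integrating its $p$-th power over $r\in[s-1,s+1]$ yields $f_2(s) \lesssim s^p e^{-2ps}$.

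The main obstacle, and the reason for phrasing everything through $L^p$ averages over the spheres rather than pointwise, is the term $\Gamma^1_{11}\psi$ in $R'$: because $e$ is only assumed to lie in $W^{1,p}_\tau$, the tensor $De$ (hence $\Gamma$) is not controlled pointwise, and $\int_{r_0}^r|De(x,\theta)|_b\,dx$ cannot be bounded uniformly in $\theta$, so a pointwise Grönwall argument fails. The remedy is to invoke the uniform bound $\|\psi\|_{L^\infty}\lesssim 1$ from Claim~\ref{cl1.5} to treat $\Gamma^1_{11}\psi$ as an error term of the same size as $|De|_b$, after which the estimate closes linearly rather than through an exponential Grönwall factor.
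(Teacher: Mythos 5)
Your proof is correct, and its core inputs are the same as the paper's: the identity behind Equation \eqref{eqIdentity1} giving $\partial_1^2(\cV e^{-r}) = -2\,\partial_1(\cV e^{-r}) + e^{-r}\bigl(\cT_{11} + \Gamma^i_{11}\partial_i\cV + \cV e_{11}\bigr)$, the uniform bounds from Claim~\ref{cl1.5}, and the $L^p$-on-annuli estimates \eqref{eqEstimateT}, \eqref{eqEstimateError} with exponent $\lambda = \tau + \frac{n-1}{p}\geq 2$. The organization, however, differs: the paper differentiates $f_2(s)$ directly and arrives at the concave differential inequality $\frac{d}{ds}f_2 \leq -2p\,f_2 + \Lambda e^{-\lambda s}f_2^{(p-1)/p}$, which it then linearizes via the substitution $f_2 \mapsto f_2^{1/p}$; you instead integrate the pointwise linear ODE $\partial_1(e^{2r}\psi) = e^{2r}R'$ first and then pass to $L^p_\theta$ averages via Minkowski's integral inequality. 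The two routes are morally equivalent, both reducing to a first-order linear inequality with source $\lesssim e^{-\lambda s}$ and with $\lambda = 2$ as the borderline case producing the factor $s^p$. Your version has the advantage of making the linear structure explicit without the non-obvious $f_2^{1/p}$ substitution, at the small cost of having to verify that the pointwise fundamental theorem of calculus in $r$ holds for a.e.\ $\theta$ (which it does: $\cV \in W^{2,p}_{loc}$ with $p>n$ gives $r\mapsto\psi(r,\theta)\in W^{1,p}_{loc}$ for a.e.\ $\theta$, hence absolute continuity, and the trace $\psi(r_0,\cdot)$ lies in $L^p_\theta$ by the $L^\infty$ bound from Claim~\ref{cl1.5}). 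Your observation that $\Gamma^1_{11}\psi$ must be absorbed via the $L^\infty$ bound on $\psi$ rather than via a pointwise Gr\"onwall factor is the right mechanism and matches the paper's implicit use of the bound $|e^{-r}d\cV|_g \lesssim 1$ before taking $L^p$ norms.
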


\begin{proof}
We compute the derivative of $f_2$ with respect to $s$:
\[
\frac{d}{ds} f_2(s)
= p \int_{s-1}^{s+1} \int_{\bS_1(0)} \left(\partial_1 (\cV e^{-r})\right)^{p-1}  \left(\partial_1^2 (\cV e^{-r})\right) d\theta dr.
\]
Now note that
\begin{align*}
\partial_1^2 (\cV e^{-r})
& = (\partial_1^2 \cV) e^{-r} - 2 (\partial_1 \cV) e^{-r} + \cV e^{-r}                                             \\
& = e^{-r} \left( \hess_{11} \cV + \Gamma_{11}^i \partial_i \cV + \cV \right) - 2 (\partial_1 \cV) e^{-r}        \\
& = e^{-r} \left( \cV g_{11} + \cT_{11} + \Gamma_{11}^i \partial_i \cV + \cV \right) - 2 (\partial_1 \cV) e^{-r}   \\
& = e^{-r} \left( 2 \cV + \cT_{11} + \Gamma_{11}^i \partial_i \cV + \cV e_{11} \right) - 2 (\partial_1 \cV) e^{-r} \\
& = - 2 \partial_1 (\cV e^{-r}) + e^{-r} \left(\cT_{11} + \Gamma_{11}^i \partial_i \cV + \cV e_{11}\right).
\end{align*}
Inserting this formula in that of the derivative of $f_2$, we get 
\begin{align}
\frac{d}{ds} f_2(s)
& = - 2 pf_2(s)\nonumber\\
& \qquad + p \int_{s-1}^{s+1} \int_{\bS_1(0)} \left(\partial_1 (\cV e^{-r})\right)^{p-1} e^{-r}\left(\cT_{11} + \Gamma_{11}^i \partial_i \cV + \cV e_{11}\right) d\theta dr\nonumber\\
& \leq - 2 p f_2(s)
+ p f_2(s)^{\frac{p-1}{p}} \left[\left(\int_{s-1}^{s+1} \int_{\bS_1(0)} \left|e^{-r} \cT_{11}\right|^p d\theta dr\right)^{\frac{1}{p}} \right.\nonumber\\
& \qquad + \left(\int_{s-1}^{s+1} \int_{\bS_1(0)} \left|\Gamma_{11}^{\cdot}\right|^p_g |e^{-r} d\cV|_g^p d\theta dr\right)^{\frac{1}{p}} \nonumber\\
& \qquad \left. + \left(\int_{s-1}^{s+1} \int_{\bS_1(0)} e^{-pr} \cV^p |e|_b^p d\theta dr\right)^{\frac{1}{p}}\right]. \label{eqNotYetTheWorst}
\end{align}
From the proof of Claim \ref{cl1} (see Equation \eqref{eqEstimateT}), we have
\[
\left(\int_{s-1}^{s+1} \int_{\bS_1(0)} \left|e^{-r} \cT_{11}\right|^p d\theta dr\right)^{\frac{1}{p}} \leq \Lambda \| \cT\|_{L^p_{\tau-1}} e^{-\lambda s}.
\]
Also, from Claim \ref{cl1.5}, we have
\begin{align*}
\left(\int_{s-1}^{s+1} \int_{\bS_1(0)} e^{-pr} \cV^p |e|_b^p d\theta dr\right)^{\frac{1}{p}}
& \lesssim \left(\int_{s-1}^{s+1} \int_{\bS_1(0)}  |e|_b^p d\theta dr\right)^{\frac{1}{p}}\\
& \lesssim e^{-\lambda s}.
\end{align*}
So the first and the third integrals in the right hand side of \eqref{eqNotYetTheWorst} are $O(e^{-\lambda s})$. Finally, for the second integral in \eqref{eqNotYetTheWorst}, we remark that, from Claim \ref{cl1.5}, we have
\[
|e^{-r} d\cV|_g \leq |d(e^{-r} \cV)|_g + |\cV|~|de^{-r}|_g
\]
and the right-hand side is bounded independently of $(r, \theta)$. So,
\begin{align*}
\left(\int_{s-1}^{s+1} \int_{\bS_1(0)} \left|\Gamma_{11}^{\cdot}\right|^p_g |e^{-r} d\cV|_g^p d\theta dr\right)^{\frac{1}{p}}
&\lesssim \left(\int_{s-1}^{s+1} \int_{\bS_1(0)} \left|\Gamma_{11}^{\cdot}\right|^p_g d\theta dr\right)^{\frac{1}{p}}\\
&\lesssim e^{-\lambda s}.
\end{align*}
From \eqref{eqNotYetTheWorst}, we conclude that, for some large constant $\Lambda$, we have
\[
\frac{d}{ds} f_2(s)
\leq - 2 p f_2(s) + \Lambda e^{-\lambda s} f_2(s)^{\frac{p-1}{p}}.
\]
Equivalently, we have
\[
\frac{d}{ds} \left(f_2(s)\right)^{1/p}
\leq - 2 \left(f_2(s)\right)^{1/p} + \frac{\Lambda}{p} e^{-\lambda s}.
\]
As $\lambda \geq 2$, the estimate follows.
\end{proof}

We now conclude the proof of Proposition \ref{propApproximateLapse} with the following claim:

\begin{claim}\label{cl8}
The map $\cB$ is injective.
\end{claim}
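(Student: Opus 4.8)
The plan is to show that the kernel of $\cB$ is trivial. Suppose $\cV \in \cN^g$ satisfies $\cB(\cV) = 0$, that is, the associated boundary function $v$ vanishes identically. By Claim~\ref{cl5}, the map $\cB$ restricted to $\mathrm{span}\{\cX^0, \cX^1, \ldots, \cX^n\}$ is a bijection onto $\cH = \mathrm{span}\{1, x^1, \ldots, x^n\}$, so by subtracting the appropriate linear combination of the $\cX^\mu$ we may assume without loss of generality that $\cV \in \cN^g$ is such that $v = 0$ and we must prove $\cV \equiv 0$. The point is that if $v = 0$ then the linear form $\ell$ of Claim~\ref{cl2} vanishes, which forces the ``leading order'' of $\cV$ to be zero, and one should then be able to bootstrap to a genuinely faster decay rate and eventually invoke the maximum principle (via Proposition~\ref{propFredholm} or directly) to kill $\cV$ altogether.

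More concretely, the strategy is to revisit the functions $f_0$ and $h_u$. The vanishing of $\ell$ means $\lim_{s\to\infty} h_u(s) = 0$ for all $u$, and combined with Claim~\ref{cl6} (which already shows $f_2(s) = O(s^p e^{-2ps})$, i.e. $\partial_1(e^{-r}\cV)$ decays like $e^{-2r}$ on annuli) one expects to upgrade Claim~\ref{cl1} to show that $f_0(s) \to 0$ and in fact $f_0(s)$ decays. Indeed, writing $\cU_s(t,\theta) = e^{-(t+s)}\cV(t+s,\theta)$ as in Claim~\ref{cl1.5}, Claim~\ref{cl6} gives an $L^p$ bound on $\partial_t \cU_s$ that decays like $e^{-2s}$; together with the vanishing of the average (the vanishing of $\ell$ says the $\theta$-average of $\cU_s$ against any fixed $u$ tends to $0$), a Poincaré-type / Sobolev argument on the annulus $\cA = [-1,1]\times\bS_1(0)$ should yield $\|\cU_s\|_{L^p(\cA)} = f_0(s)^{1/p} \to 0$, and in fact a quantitative decay $f_0(s) = O(e^{-\kappa s})$ for some $\kappa > 0$. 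This says $\cV = o(e^r)$ in an averaged sense, i.e. $e^{-r}\cV$ lies in a weighted space with negative weight.

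Once $\cV$ is known to decay strictly faster than $e^r$ on average, the next step is to promote this to a pointwise weighted decay estimate: one re-runs the elliptic estimate of Claim~\ref{cl1.5} (Sobolev embedding on the annuli $\cA$, using the bound on $\hess^h\cU_s$ in terms of $\|\cU_s\|_{L^p}$, $\|d\cU_s\|_{L^p}$ and the error terms involving $e$, $De$, $\cT$) but now with the improved input that $\|\cU_s\|_{L^p(\cA)} \to 0$, concluding $\|\cU_s\|_{L^\infty(\cA)} \to 0$, i.e. $\cV = o(e^r)$ pointwise. Then $\cV$ is a solution of $\Delta^g\cV = n\cV$ with $\cV \in L^\infty_{1-\epsilon}$ for some $\epsilon>0$ (after a careful tracking of the rate). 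At this stage $\cV$ decays, so it belongs to a weighted Sobolev space $W^{2,q}_{\tau''}$ with $\tau'' > 0$ in the range \eqref{eqFredholmRange}, whence Proposition~\ref{propFredholm} (applied to the operator $P = -\Delta^g + n$, noting $\hess^g\cV - \cV g \in L^p_{\tau-1}$ gives the needed regularity/weight) forces $\cV \equiv 0$ since $P$ is injective. Alternatively one can finish directly with the maximum principle once $\cV$ is known to vanish at infinity, comparing $\cV/V^0$ as in the proof of Proposition~\ref{propEstimateEigenfunction0}.

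The main obstacle I expect is the first bootstrap step: turning the \emph{qualitative} information ``$\ell = 0$'', which is an integrated/averaged statement about annuli, into a \emph{quantitative} decay rate for $f_0$, and making sure the rate is strictly better than the borderline weight $1$ so that Proposition~\ref{propFredholm} can be invoked. This requires carefully combining the differential inequality for $f_0$ (as in \eqref{eqDiffF0}), the decay of $f_2$ from Claim~\ref{cl6}, and a Poincaré inequality on $\cA$ that exploits the vanishing of the relevant averages — and one has to be careful that the error terms proportional to $\|e\|$, $\|De\|$, $\|\cT\|$ (all of which decay like $e^{-\lambda s}$ with $\lambda \geq 2$) do not obstruct closing the estimate. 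The condition $\tau + \frac{n-1}{p} \geq 2$ and $\tau > 3/2$ should be exactly what is needed to make all these competing rates line up.
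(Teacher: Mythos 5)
Your strategy is essentially the paper's, and you have correctly identified all of the right ingredients: use $\ell=0$ (the boundary function $v$ vanishes), combine with the decay of $f_2$ from Claim~\ref{cl6}, convert the averaged information into a quantitative decay of $f_0$, and finish with the injectivity part of Proposition~\ref{propFredholm}. The one step you leave genuinely open is the passage from ``$\ell = 0$'' to ``$f_0(s)\to 0$'', and a ``Poincar\'e-type'' inequality is not the right tool for it. A Poincar\'e inequality on $\cA$ controls a function by its full gradient, but you only control the \emph{radial} derivative $\partial_t\cU_s$ (Claim~\ref{cl6}); the angular derivative $D_\theta\cU_s$ is merely $O(1)$, and indeed $\cU_s$ can converge to a nontrivial function of $\theta$ (this is precisely what happens for $\cX^i$). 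The vanishing of $\ell$ is a \emph{weak} statement: $\langle \cU_s, u\rangle \to 0$ for every fixed test function $u$ on $\bS_1(0)$. Weak convergence to $0$ plus radial-derivative decay does not by itself give $\|\cU_s\|_{L^p(\cA)}\to 0$.

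What closes the gap is compactness, not Poincar\'e. The uniform $W^{1,p}(\cA)$-bound from Claim~\ref{cl1.5} gives, via Rellich, a subsequence $\cU_{\phi(k)}$ converging weakly in $W^{1,p}(\cA)$ and strongly in $L^p(\cA)$ to some limit $\cU_\infty$; Claim~\ref{cl6} forces $\partial_t\cU_\infty = 0$, so $\cU_\infty$ depends only on $\theta$; then $\ell=0$ forces $\cU_\infty\equiv 0$, whence $f_0(\phi(k))\to 0$. This is only along a subsequence; you then need the differential inequality $\bigl|\tfrac{d}{ds}f_0^{1/p}\bigr| \le f_2^{1/p} \lesssim s\,e^{-2s}$ (which is the sharper, $f_2$-based version of \eqref{eqDiffF0}) to upgrade to convergence of the full sequence and the quantitative rate $f_0(s)=O(s^p e^{-2ps})$. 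One further simplification compared to your plan: there is no need to aim for a positive weight $\tau''>0$, nor to promote the $L^p$ decay to a pointwise $L^\infty$ bound by rerunning the elliptic estimates of Claim~\ref{cl1.5}. The estimate $\int e^{ps} f_0(s)\,ds<\infty$ already gives $\cV\in L^p_{-(n-1)/p}(M,\bR)$, and the pair $(\tau',q)=(-(n-1)/p,\,p)$ sits inside the Fredholm window \eqref{eqFredholmRange} (since $\tau'+\frac{n-1}{q}=0$), so Proposition~\ref{propFredholm} directly yields $\cV\equiv 0$; the proposition's own internal bootstrap handles the remaining regularity/decay gain that you were trying to obtain by hand.
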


Indeed, this claim combined with Claim \ref{cl5} shows that $\dim (\cN^g) = \dim (\cH) = n+1$. Hence $\cN^g = \mathrm{span}\{\cX^0, \ldots, \cX^n\}$.

\begin{proof}[Proof of Claim \ref{cl8}]
As $\cB$ is linear, it suffices to study its kernel and show that it reduces to $\{0\}$. If $\cV \in \ker \cB$, we have that the function $v$ introduced in \eqref{eqDefL} is identically equal to zero.

From Claim \ref{cl1.5}, we have that the sequence of functions $\cU_k(t, \theta) \definedas \cV(k+t, \theta) e^{-(k+t)}$ is uniformly bounded in $W^{1, p}(\cA, \bR)$.

Passing to a subsequence $(\cU_{\phi(k)})_k$, where $\phi$ is an increasing function, we can assume that $(\cU_{\phi(k)})_k$ converges strongly in $L^p(\cA, \bR)$ and weakly in $W^{1, p}(\cA, \bR)$ to some function $\cU_\infty$. But note that, from Claim \ref{cl6}, we have $\left\|\partial_1 \cU_{\phi(k)}\right\|_{L^p(\cA)} \to 0$. So $\partial_1 \cU_\infty = 0$ in the sense of distributions, meaning that $\cU_\infty$ does not depend on $t$. Returning to the definition of $h_u(s)$ (Equation \eqref{eqDefHU}), we have that, for any function $u \in L^c(\bS_1(0), \bR)$,
\[
0 = \lim_{k \to \infty} h_{u}(\phi(k)) = \int_{[-1, 1]} \int_{\bS_1(0)} \cU_\infty(t, \theta) u(\theta) d\theta dt.
\]
So we conclude that $\cU_\infty \equiv 0$, showing that
\[
\lim_{k \to \infty} f_0(\phi(k)) = \lim_{k \to \infty} \int_{-1}^1 \int_{\bS_1(0)} \left|\cU_{\phi(k)}(t, \theta)\right|^p d\theta dt = 0,
\]
where the function $f_0$ was defined in Equation \eqref{eqDefF0}. We now show that $\cV$ enjoys better decay than what we obtained in Claim \ref{cl1}. Note that the estimate we obtained in the proof of Claim \ref{cl1} was fairly crude. Now we can improve it to the following:
\begin{align*}
\left|\frac{d}{ds} f_0(s)\right|
& = \left|\int_{s-1}^{s+1} \int_{\bS_1(0)}  \partial_1 \left[\cV(r, \theta)^p e^{-pr}\right] d\theta dr\right|\\
& = p \left|\int_{s-1}^{s+1} \int_{\bS_1(0)}  (e^{-r} \cV)^{p-1} \partial_1 (\cV e^{-r}) d\theta dr\right|\\
& \leq p f_0(s)^{\frac{p-1}{p}} f_2(s)^{\frac{1}{p}}.
\end{align*}
This differential inequality can be rewritten as follows:
\[
    \left|\frac{d}{ds} f_0(s)^{1/p}\right| \leq f_2(s)^{\frac{1}{p}} \lesssim s e^{-2 s}.
\]
From the fact that $f_0(\phi(k)) \to 0$, we conclude that $f_0(s)$ tends to zero as $s$ goes to infinity and further that there exists a constant $\Lambda > 0$ such that
\[
f_0(s) \leq \Lambda s^p e^{- 2p s}.
\]
In particular,
\begin{equation}\label{eqFiniteness}
\int_{r_0+1}^\infty e^{ps} f_0(s) ds < \infty.
\end{equation}
We rewrite this as follows:
\begin{align*}
\int_{r_0+1}^\infty e^{ps} f_0(s) ds
&= \int_{s = r_0+1}^\infty e^{ps} \int_{r = s-1}^{s+1} \int_{\bS_1(0)} |\cV(r, \theta)|^p e^{-pr} d\theta dr ds\\
&= \int_{r = r_0}^\infty \int_{s = \max\{r_0+1, r-1\}}^{r+1} e^{ps} \int_{\bS_1(0)} |\cV(r, \theta)|^p e^{-pr} d\theta ds dr\\
&\geq \int_{r = r_0+2}^\infty \int_{s = r-1}^{r+1} e^{ps} \int_{\bS_1(0)} |\cV(r, \theta)|^p e^{-pr} d\theta ds dr
\end{align*}
where we have exchanged the order of integration with respect to the variables $s$ and $r$. Now the integral with respect to $s$ can be computed:
\[
\int_{s = r-1}^{r+1} e^{ps} ds = \frac{e^p-e^{-p}}{p} e^{pr},
\]
so
\begin{align*}
\int_{r_0+1}^\infty e^{ps} f_0(s) ds
&\geq \frac{e^p-e^{-p}}{p} \int_{r = r_0+2}^\infty \int_{\bS_1(0)} |\cV(r, \theta)|^p d\theta dr\\
&\geq \frac{e^p-e^{-p}}{p} \int_{\bH^n \setminus B_{r_0+2}(0)} |\cV|^p \sinh(r)^{1-n} d\mu^b.
\end{align*}
As a consequence, the estimate \eqref{eqFiniteness} implies that
\[
\int_{\bH^n \setminus B_{r_0+2}(0)} |\rho^{\frac{n-1}{p}}\cV|^p d\mu^b < \infty,
\]
i.e. that $\cV \in L^p_{-\frac{n-1}{p}}(M, \bR)$. In particular, if we choose $\tau' = -\frac{n-1}{p}$ and $q = p$ in Proposition \ref{propFredholm}, we have
\[
\left|\tau' + \frac{n-1}{p} - \frac{n-1}{2}\right| = \frac{n-1}{2} < \frac{n+1}{2}.
\]
So, the operator $P: u \mapsto -\Delta u + n u$ is an isomorphism from $W^{2, p}_{\tau'}(M, \bR)$ onto $L^p_{\tau'}(M, \bR)$. In particular, since, by elliptic regularity we have $\cV \in W^{2, p}_{\tau'}(M, \bR)$, we deduce that $\cV \in \ker(P) = \{0\}$. This ends the proof of Claim \ref{cl8} and of Proposition \ref{propApproximateLapse}.
\end{proof}

Our next task is to show how Proposition \ref{propApproximateLapse} allows us to identify the hyperbolic isometry in Theorem \ref{thmRigidity}.

\begin{proposition}\label{propAsymptoticIsometry}
There is a linear map $A_{\Phi_2 \circ \Phi_1^{-1}}: \cN^b \to \cN^b$ defined, for any $V \in \cN^b$, by
\[
    A_{\Phi_2 \circ \Phi_1^{-1}} (V) = V \circ \Phi_2 \circ \Phi_1^{-1} + \epsilon(V),
\]
where $\epsilon(V) \in W^{2, p}_{\tau-1}(\bH^n \setminus K'', \bR)$. Further $A_{\Phi_2 \circ \Phi_1^{-1}}$ corresponds to a hyperbolic isometry, i.e. to an element $B \in O_\uparrow(n, 1)$ such that for all $ V \in \cN^b$ we have $A_{\Phi_2 \circ \Phi_1^{-1}}(V) = V \circ B + \epsilon(V)$.
\end{proposition}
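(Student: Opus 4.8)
The plan is to build $A_{\Phi_2\circ\Phi_1^{-1}}$ out of the two ``asymptotic Dirichlet'' isomorphisms supplied by Lemma \ref{lmHarmonic} and Proposition \ref{propApproximateLapse}. Denote by $\cD_1\colon\cN^b\to\cN^g$ the linear map sending $V$ to the unique solution $\cV$ of $\Delta^g\cV=n\cV$ with $\cV-V\circ\Phi_1\in W^{2,p}_{\tau-1}(M,\bR)$, and by $\cD_2$ its analogue built from $\Phi_2$ (the remark following Lemma \ref{lmHarmonic}); by Proposition \ref{propApproximateLapse} both are bijections onto the same intrinsic space $\cN^g$, since $\cD_1(V^\mu)=\cX^\mu$ and the $\cX^\mu$ span $\cN^g$. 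Set $A_{\Phi_2\circ\Phi_1^{-1}}\definedas\cD_1^{-1}\circ\cD_2$, which is linear by the uniqueness clause of Lemma \ref{lmHarmonic}. Writing $W\definedas A_{\Phi_2\circ\Phi_1^{-1}}(V)$, the two descriptions of $\cD_2(V)=\cD_1(W)$ give $\cD_2(V)-V\circ\Phi_2\in W^{2,p}_{\tau-1}$ and $\cD_2(V)-W\circ\Phi_1\in W^{2,p}_{\tau-1}$ near infinity of $M$, hence $W\circ\Phi_1-V\circ\Phi_2\in W^{2,p}_{\tau-1}$ there; composing with $\Phi_1^{-1}$ yields $\epsilon(V)\definedas W-V\circ\Phi_2\circ\Phi_1^{-1}$ and the first displayed identity. (The only subtlety is that the weighted spaces attached to $\Phi_1$ and to $\Phi_2$ must agree near infinity; this becomes automatic once the leading term $B$ of $\Phi_2\circ\Phi_1^{-1}$ is found, since then $\rho\circ(\Phi_2\circ\Phi_1^{-1})$ and $\rho$ are comparable and, $p>n$ giving $W^{2,p}_{\tau-1}\hookrightarrow C^1$, the change of chart preserves $W^{2,p}_{\tau-1}$. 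One therefore first runs the argument with a crude decay bound for $W\circ\Phi_1-V\circ\Phi_2$ and upgrades to $W^{2,p}_{\tau-1}$ afterwards.)

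The core of the proof is to show that $A\definedas A_{\Phi_2\circ\Phi_1^{-1}}$ is induced by a Lorentz transformation. Every $V\in\cN^b$ is the restriction to $\bH^n$ of a linear form $\ell$ on $\bR^{n,1}$ and, thanks to $\hess^b V=Vb$, the function $Q(V)\definedas|DV|^2_b-V^2$ is constant on $\bH^n$ and equals $\eta(\ell,\ell)$; in the basis $V^0,\dots,V^n$ one finds $Q=\diag(-1,1,\dots,1)$, a nondegenerate Lorentzian form whose isometry group is $O(n,1)$. For $\cV\in\cN^g$ set $q(\cV)\definedas|\nabla^g\cV|^2_g-\cV^2$; a one-line computation gives $dq(\cV)=2\,\cT(\nabla^g\cV,\cdot)$ with $\cT=\hess^g\cV-\cV g\in L^p_{\tau-1}$. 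The claim is that $q(\cV)$ has a well-defined asymptotic value at infinity of $M$, and that, read through $\Phi_1$ (where $\cV$ is asymptotic to $W\circ\Phi_1$ and $g$ to $b$) this value is $Q(W)$, while read through $\Phi_2$ it is $Q(V)$; being a single number, this forces $Q(A(V))=Q(V)$ for all $V$, so $A\in O(\cN^b,Q)\cong O(n,1)$ and $A(V)=V\circ B$ for a unique $B\in O(n,1)$. Finally, $\cD_2(V^0)$ is asymptotic to $V^0\circ\Phi_2=\cosh(r\circ\Phi_2)\to+\infty$, hence positive near infinity of $M$, so $W=A(V^0)$ is positive near infinity of $\bH^n$; as $Q(W)=Q(V^0)=-1$, $W$ is timelike, has no zero on $\bH^n$, and is therefore positive everywhere, i.e. in the same time cone as $V^0$. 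Thus $B$ preserves the time orientation, $B\in O_\uparrow(n,1)$, and recalling the first identity, $V\circ\Phi_2\circ\Phi_1^{-1}=V\circ B-\epsilon(V)$.

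The main obstacle is precisely the convergence statement for $q(\cV)$ and the identification of its limit from the two charts, in the low-regularity, borderline-weight regime $\lambda\definedas\tau+\tfrac{n-1}{p}\ge 2$. Integrating $dq(\cV)=2\,\cT(\nabla^g\cV,\cdot)$ directly along geodesics only works when $\lambda>2$: using $|\nabla^g\cV|_g=O(\rho^{-1})$ (Claim \ref{cl1.5}) together with $\cT\in L^p_{\tau-1}$, H\"older along a ray controls $\int|\cT(\nabla^g\cV,\dot\gamma)|$ only for $\lambda>2$ and is log-divergent at $\lambda=2$. To reach the equality case one instead works with averaged integrals over annuli $\{s-1\le r\le s+1\}$, exactly as in Claims \ref{cl1}--\ref{cl6}: one shows that the pairings of $q(\cV)$ against test functions on $\bS_1(0)$ (weighted as in \eqref{eqDefHU}) converge, the decay of $e_1$ and of $\cT$ entering through $\lambda\ge 2$, and that the limit computed in the $\Phi_1$-annuli reproduces the polarized Minkowski form applied to $W$ up to terms that average to zero, and likewise for $\Phi_2$; matching the two limits gives $Q(W)=Q(V)$. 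The remaining steps are routine: the identity $dq(\cV)=2\,\cT(\nabla^g\cV,\cdot)$, the linear algebra identifying $O(\cN^b,Q)$ with $O(n,1)$ and realizing each of its elements as precomposition with an element of $O(n,1)$, and the bootstrap upgrading the crude decay of $\epsilon(V)$ to membership in $W^{2,p}_{\tau-1}(\bH^n\setminus K'',\bR)$ once $B$ is known.
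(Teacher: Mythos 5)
Your setup (the two asymptotic-Dirichlet bijections, the definition $A=\cD_1^{-1}\circ\cD_2$, the identity $A(V)=V\circ\Phi_2\circ\Phi_1^{-1}+\epsilon(V)$, and the time-orientation argument at the end) matches the paper's. The core step — showing $A$ preserves the Lorentzian form on $\cN^b$ — is where you take a genuinely different route. The paper's argument is purely algebraic: since $\Phi_{12}$ maps the hyperboloid to itself, the identity $(V^0\circ\Phi_{12})^2-\sum_i(V^i\circ\Phi_{12})^2\equiv 1$ holds identically; substituting $V^\mu\circ\Phi_{12}=A(V^\mu)-\epsilon(V^\mu)$ shows $(A(V^0))^2-\sum_i(A(V^i))^2-1\in W^{2,p}_{\tau-2}$, and because the restriction of a nonzero polynomial to $\bH^n$ can never lie in such a weighted space, the polynomial part must equal $1$ exactly. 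No PDE analysis beyond Proposition~\ref{propApproximateLapse} is needed, and in particular no convergence question arises at the borderline weight $\lambda=\tau+\tfrac{n-1}{p}=2$.

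Your route instead introduces the invariant $q(\cV)=|\nabla^g\cV|_g^2-\cV^2$ and attempts to extract its asymptotic value from the two charts. This is an attractive, geometric alternative, but it contains a genuine gap. You correctly observe that $dq(\cV)=2\,\cT(\nabla^g\cV,\cdot)$ and that integrating this along rays is log-divergent at $\lambda=2$, and you propose to repair it by passing to annular averages in the style of Claims~\ref{cl1}--\ref{cl6}. But you do not carry out this repair, and it is not a routine adaptation. Unlike the quantities $f_0,f_1,h_u$ studied in the paper — which are linear or $p$-power in $\cV$ and admit a closed second-order ODE with a forcing term controlled by \eqref{eqEstimateT} — the density $\cT(\nabla^g\cV,\cdot)$ is a product of an $L^p_{\tau-1}$ tensor with a quantity that is only known to be bounded on average, and the borderline weight is exactly where the balance is delicate. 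Moreover, to identify the averaged limit as $Q(W)$ in chart $\Phi_1$ and as $Q(V)$ in chart $\Phi_2$ you would need to show that the error terms $\cV-W\circ\Phi_1$ (resp. $\cV-V\circ\Phi_2$) contribute nothing in the limit, which is again a borderline-weight estimate. None of this is present. In short, the algebraic route the paper takes eliminates the hardest analysis, while your route reintroduces it and then only sketches its resolution; as written, the claim that $q(\cV)$ has a well-defined asymptotic value equal to both $Q(W)$ and $Q(V)$ is unsupported in the borderline regime that the proposition explicitly allows.
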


\begin{proof}
Lemma \ref{lmHarmonic} defines a linear map $A_{\Phi_1}: \cN^b \to \cN^g$, where $V \in \cN^b$ is mapped to the unique function $\cV \in \cN^g$ such that
$\cV = V \circ \Phi_1 + \lot$, with correction terms belonging to $W^{2, p}_{\tau-1}(M, \bR)$. Note that $A_{\Phi_1}$ is an injective map between two spaces of dimension $n+1$ so it is bijective. Similarly, we can define a map $A_{\Phi_2}: \cN^b \to \cN^g$. As a consequence, we get an invertible map $A_{\Phi_2 \circ \Phi_1^{-1}}: \cN^b \to \cN^b$ defined by
\[
A_{\Phi_2 \circ \Phi_1^{-1}} = A_{\Phi_1}^{-1} \circ A_{\Phi_2}.
\]
From the definition, we see that, given $V \in \cN^b$, $A_{\Phi_2 \circ \Phi_1^{-1}}(V)$ is the unique function $V' \in \cN^b$ such that $V' = V \circ \Phi_2 \circ \Phi_1^{-1} + \epsilon(V)$ with $\epsilon(V) \in W^{2, p}_{\tau-1}(\bH^n \setminus K'', \bR)$, where $K'' \definedas \bH^n \setminus \Phi_1(M \setminus (K_1 \cup K_2))$.

We claim that $A \definedas A_{\Phi_2\circ \Phi_1^{-1}}$ is an isometry of $\cN^b \simeq (\bR^{n, 1})^*$. Note that the lapse functions $V^\mu$, $\mu=0,\ldots, n$, satisfy
\[
    (V^0)^2 - \sum_{i=1}^n (V^i)^2 = 1
\]
as they are nothing but the restrictions of the coordinate functions of $\bR^{n, 1}$ to the hyperboloid, whereas for any $x \in \bH^n$, we have
\[
    1= -\eta(x, x)= (V^0(x))^2 - \sum_{i=1}^n (V^i(x))^2.
\]
Composing with $\Phi_2 \circ \Phi_1^{-1}$, we get
\[
(V^0(\Phi_2 \circ \Phi_1^{-1}(x)))^2 - \sum_{i=1}^n (V^i(\Phi_2 \circ \Phi_1^{-1}(x)))^2 = 1.
\]
Since for each $\mu$ we have $V^\mu \circ \Phi_2 \circ \Phi_1^{-1} = A(V^\mu) - \epsilon(V^\mu)$ with $\epsilon(V^\mu) \in W^{2, p}_{\tau-1}(\bH^n \setminus K'', \bR)$, we conclude that
\[
(A(V^0)(x))^2 - \sum_{i=1}^n (A(V^i)(x))^2 - 1 \in W^{2, p}_{\tau-2}(\bH^n \setminus K'', \bR).
\]
Now remark that $(A(V^0))^2 - \sum_{i=1}^n (A(V^i))^2$ is the restriction of a quadratic form over $\bR^{n, 1}$ to the hyperboloid. As the restriction of a polynomial function over $\bR^{n, 1}$ to $\bH^n$ cannot belong to $W^{2, p}_{\tau-2}(\bH^n \setminus K'', \bR)$ unless the  function is identically zero, we conclude that
\begin{equation}\label{eqQuadratic}
(A(V^0)(x))^2 - \sum_{i=1}^n (A(V^i)(x))^2 \equiv 1
\end{equation}
for any $x \in \bH^n \setminus K''$. By analyticity, it follows that \eqref{eqQuadratic} holds for all $x \in \bH^n$. Further, by homogeneity, we conclude that, identifying $\cN^b$ with $(\bR^{n, 1})^*$ (see Proposition \ref{propLapse}), we have
\begin{equation}\label{eqQuadratic2}
(A(X^0))^2 - \sum_{i=1}^n (A(X^i))^2 = \eta(X, X),
\end{equation}
where $(X^0, X^1, \ldots, X^n)$ denotes the standard basis of $(\bR^{n, 1})^*$.
Let $B: \bR^{n, 1} \to \bR^{n, 1}$ be the transpose of $A$, i.e. the linear map such that
\[
\forall \mu = 0, \ldots n,~A(X^\mu) = X^\mu \circ B.
\]
Equation \eqref{eqQuadratic2} transforms into
\[
\eta(BX, BX) = (X^0 \circ B)^2 - \sum_{i=1}^n (X^i \circ B)^2 = \eta(X, X),
\]
showing that $B$ is an isometry of $\bR^{n, 1}$ as claimed. The only thing left to be shown is that $B$ preserves time orientation. To prove this fact, note that $\eta$ induces a symmetric bilinear form $\eta^*$ on $\bR^{n, 1}$ defined by
\[
\eta^*(X^\mu, X^\nu) = \eta^{\mu\nu}.
\]
The signature of $\eta^*$ is the same as that of $\eta$. In particular $\eta^*(X^0, X^0) < 0$ so $X^0$ belongs to one of the two connected components of time-like linear forms (i.e. forms $\phi \in (\bR^{n, 1})^*$ such that $\eta(\phi, \phi) < 0$). As $X^0$ restricts to $V^0$ on $\bH^n$ and all elements of this connected component can be reached by multiplying $X^0$ by a positive factor and composing it with an element of $O_\uparrow(n, 1)$, all elements in that connected component induce positive functions on $\bH^n$, while elements in the other connected component of time-like linear forms induce negative functions on $\bH^n$ and space-like linear forms induce linear forms changing sign over some totally geodesic hypersurface.

As $A(X^0)$ restricts to $V^0 \circ \Phi_2 \circ \Phi_1^{-1} + \hot$ over $\bH^n \setminus K''$ and $V^0 \circ \Phi_2 \circ \Phi_1^{-1}$ is obviously positive on $\bH^n \setminus K''$, $A(X^0)$ restricts to a positive function outside some compact set of $\bH^n$, hence, by the previous classification of the lapse functions, $A(X^0)$ must belong to the same connected component of time-like forms as $X^0$. In particular,
\[
0 < A(X^0)\left(\frac{\partial}{\partial X^0}\right) = X^0 \left(B\left(\frac{\partial}{\partial X^0}\right)\right),
\]
showing that $B(\frac{\partial}{\partial X^0})$ remains a future-pointing vector (which must be time-like as $B$ is a Lorentzian isometry).
\end{proof}

We now finish the proof of Theorem \ref{thmRigidity} with the following lemma:

\begin{lemma}\label{lmDecay}
Assuming that $A_{\Phi_2 \circ \Phi_1^{-1}} = \id_{\bH^n}$, we can write $\Phi_2 \circ \Phi_1^{-1}(x) = \exp_x \zeta(x)$ for some vector field $\zeta \in W^{2, p}_\tau(\bH^n \setminus K'', T\bH^n)$.
\end{lemma}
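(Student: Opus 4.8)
The plan is to turn the identity $F^*(b+e_2) = b + e_1$, where $F \definedas \Phi_2 \circ \Phi_1^{-1}$, into an elliptic equation for the displacement field $\zeta$, and then to bootstrap the decay of $\zeta$ using the weighted mapping properties of the symmetric gradient on $\bH^n$; this is the strategy of Bartnik \cite{BartnikMass} and Herzlich \cite[Proposition 2.4]{HerzlichMassFormulae}, adapted to the present low regularity.

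First I would record a crude estimate. Since $A_{\Phi_2 \circ \Phi_1^{-1}} = \id_{\bH^n}$ we have $A_{\Phi_1} = A_{\Phi_2}$ as maps $\cN^b \to \cN^g$, hence $\cX^\mu = \cY^\mu$ for $\mu = 0, \ldots, n$; combined with Lemma \ref{lmHarmonic} this gives
\[
    U^\mu \definedas V^\mu \circ F - V^\mu \in W^{2, p}_{\tau-1}(\bH^n \setminus K'', \bR), \qquad \mu = 0, \ldots, n,
\]
so in particular $|U^\mu| = O(\rho^{\tau-1}) \to 0$ by Sobolev embedding ($p > n$). Viewing $\bH^n$ as the hyperboloid in $\bR^{n,1}$ and using that $\exp_X \colon T_X \bH^n \to \bH^n$ is a global diffeomorphism (Cartan--Hadamard), we write $F(X) = \exp_X \zeta(X)$ with $\zeta(X) = \xi(X, F(X))$ as in \eqref{eqLogarithm}; with $U = F(X) - X$ and $u = \tfrac12 \eta(U, U)$ this is $\zeta = f(u)(U - uX)$, where $f$ is analytic near $0$ with $f(0) = 1$ (Remark following Proposition \ref{propExponentialXY-intrinsic}). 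As $W^{2, p}_0(\bH^n)$ is a Banach algebra for $p > n$, we get $u \in W^{2, p}_{2(\tau-1)}$, and since $V^\mu = O(\rho^{-1})$ the components $\zeta^\mu$ lie in $W^{2, p}_{\sigma_0}$ with $\sigma_0 \definedas \min\{\tau - 1, 2\tau - 3\} > 0$; as $\zeta$ is tangent to the hyperboloid, Proposition \ref{propTransfert} (applied after a cutoff) yields $\zeta \in W^{2, p}_{\sigma_0}(\bH^n \setminus K'', T\bH^n)$.

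Next I would set up the equation. From $g = \Phi_1^*(b + e_1) = \Phi_2^*(b + e_2)$ we obtain $F^*(b + e_2) = b + e_1$ on the common domain, i.e. $F^*b - b = e_1 - F^*e_2$. Since $F = \exp_\cdot \zeta$, formula \eqref{eqPullback2} gives $F^*b = \btil(\zeta)$ and hence
\[
    \lie_\zeta b = e_1 - F^*e_2 - \cQ(\zeta, D\zeta),
\]
where $\cQ$ collects the terms of \eqref{eqPullback2} that are at least quadratic in $(\zeta, D\zeta)$, so that $\| \cQ(\zeta, D\zeta) \|_{W^{1, p}_{2w}} \lesssim \| \zeta \|_{W^{2, p}_w}^2$ when the right-hand side is small. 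Now the bootstrap: assume $\zeta \in W^{2, p}_w(\bH^n \setminus K'', T\bH^n)$ with $0 < w \le \tau$. Since $F - \id$ decays and $p > n$, composition with $F$ preserves $W^{1, p}_\tau$, so $F^*e_2 \in W^{1, p}_\tau$ and the right-hand side above lies in $W^{1, p}_{w'}$ with $w' \definedas \min\{\tau, 2w\}$. Localising near infinity by a fixed cutoff (as in the proof of Proposition \ref{propFredholm}) and applying $\divg^b$, one reduces to an equation $\divg^b\big(\tfrac12 \lie_\zeta b\big) \in L^p_{w'}$ for the second-order elliptic operator $\zeta \mapsto \divg^b\big(\tfrac12 \lie_\zeta b\big)$ on $\bH^n$; by the weighted elliptic theory for geometric operators on $\bH^n$ (cf. \cite[Section 7]{LeeFredholm}) this operator is an isomorphism $W^{2, p}_s(\bH^n, T\bH^n) \to L^p_s(\bH^n, T\bH^n)$ for $s$ in a range that, by the hypothesis $2 \le \tau + \tfrac{n-1}{p} < n$, contains $[\sigma_0, \tau]$ — its kernel is trivial there because a nonzero Killing field of $\bH^n$ neither decays nor is $p$-integrable. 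Hence $\zeta \in W^{2, p}_{w'}(\bH^n \setminus K'', T\bH^n)$. Iterating the step $w \mapsto \min\{\tau, 2w\}$ from $w = \sigma_0 > 0$, the weight doubles until it reaches $\tau$, so after finitely many steps $\zeta \in W^{2, p}_\tau(\bH^n \setminus K'', T\bH^n)$; the relatively compact part of $\bH^n \setminus K''$ not reached by the cutoff is handled by interior elliptic estimates. This proves the lemma.

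The hard part will be this last step — pinning down the precise weighted mapping properties of $\zeta \mapsto \divg^b\big(\tfrac12 \lie_\zeta b\big)$ on $\bH^n$ in the low-regularity spaces $W^{2, p}_s$ (checking that $2 \le \tau + \tfrac{n-1}{p} < n$ is exactly what keeps every weight used in the bootstrap inside the Fredholm range, while triviality of the kernel for $s \ge 0$ disposes of the Killing fields), together with the composition estimate $F^*e_2 \in W^{1, p}_\tau$ for a merely $C^{1, \beta}$-regular diffeomorphism $F$.
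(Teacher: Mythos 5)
Your proposal follows the same path as the paper's proof: the reduction to $\zeta = f(u)(U - uX)$ via Proposition \ref{propExponentialXY-intrinsic}, the initial weight $\mu_0 = \min\{\tau-1, 2\tau-3\}$ obtained from Proposition \ref{propTransfert}, and the bootstrap via the weighted Fredholm theory of $\zeta \mapsto \divg(\lie_\zeta b)$ (indicial radius $(n+1)/2$) are all exactly what the paper does, the only difference being that the paper cites \cite[Proposition 6.5]{LeeFredholm} for the regularity-improvement step rather than the full isomorphism, which sidesteps the kernel discussion. The composition estimate $F^*e_2 \in W^{1,p}_\tau$, which you correctly flag as the delicate point, is established in the paper by the preliminary Lemma \ref{lmCompSobolev} and the derivation of \eqref{eqDiffMetrics}; your kernel remark is right in conclusion, but note that the kernel of $\divg(\lie_\cdot b)$ is not a priori the space of Killing fields --- one should justify triviality on decaying weights by the integration-by-parts identity $\int \langle\zeta, \divg(\lie_\zeta b)\rangle\,d\mu^b = -\tfrac12\int |\lie_\zeta b|^2\,d\mu^b$, which forces a decaying solution to be Killing and hence zero.
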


In the course of the proof of this lemma, we will need a result that we state here:

\begin{lemma}\label{lmCompSobolev}
For any symmetric $2$-tensor $T$ defined on $M \setminus (K_1 \cup K_2)$, we have
\begin{align*}
&\int_{M \setminus (K_1 \cup K_2)} (\rho \circ \Phi_1)^{-p\tau} \left[|\nabla T|_g^p + |T|_g^p \right] d\mu^g < \infty\\
& \qquad \Leftrightarrow 
\int_{\bH^n \setminus K''} \rho^{-p\tau} \left[|D ((\Phi_1)_*T)|_b^p + |((\Phi_1)_*T)|_b^p \right] d\mu^b < \infty.
\end{align*}
\end{lemma}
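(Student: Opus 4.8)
The plan is to transport the entire statement to $\bH^n$ by means of the fixed chart $\Phi_1$, and then to reduce the claimed equivalence to a single scalar estimate. Set $\ghat\definedas(\Phi_1)_*g$ and $\Ttil\definedas(\Phi_1)_*T$ on $\bH^n\setminus K''$ (with $K''$ as in the proof of Proposition~\ref{propAsymptoticIsometry}), let $\nabla$ denote the Levi-Civita connection of $\ghat$, and recall that $e_1=\ghat-b\in W^{1,p}_\tau(\bH^n\setminus K'_1,S_2\bH^n)$. Since $|\nabla T|_g$, $|T|_g$ and $d\mu^g$ are diffeomorphism invariant and $\rho\circ\Phi_1$ corresponds to $\rho$, the left-hand side of the stated equivalence is finite if and only if $\int_{\bH^n\setminus K''}\rho^{-p\tau}\,(|\nabla\Ttil|^p_{\ghat}+|\Ttil|^p_{\ghat})\,d\mu^{\ghat}<\infty$, that is, if and only if $\Ttil$ lies in $W^{1,p}_\tau$ computed with respect to $\ghat$; the right-hand side says the same for the norm computed with respect to $b$. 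So everything comes down to showing that these two weighted norms are finite simultaneously.

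Two elementary facts structure the comparison. First, the hypothesis $C^{-1}b\le\ghat\le Cb$ of Definition~\ref{defAH} yields, by the argument behind \eqref{eqEFnorm}, uniform equivalences $|\,\cdot\,|_{\ghat}\asymp|\,\cdot\,|_b$ on tensors and $d\mu^{\ghat}\asymp d\mu^b$; in particular the zeroth-order parts of the two norms are comparable. Second, $\nabla$ and $D$ differ by the Christoffel tensor $\Gamma$ of \eqref{eqChristoffel}, and since the components $\ghat^{ij}$ are bounded we have $|\Gamma|_b\lesssim|De_1|_b$, so that for the $2$-tensor $\Ttil$
\[
\bigl|\nabla\Ttil-D\Ttil\bigr|_b\;\lesssim\;|De_1|_b\,|\Ttil|_b .
\]
Combining this with the equivalence of pointwise norms, the whole lemma reduces to the assertion that the cross term $\displaystyle\int_{\bH^n\setminus K''}\rho^{-p\tau}\,|De_1|^p_b\,|\Ttil|^p_b\,d\mu^b$ is finite whenever $\Ttil$ lies in either of the two weighted spaces.

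The key estimate is this finiteness, and the point is the pointwise decay $|\Ttil|_b\lesssim\rho^\tau$. Since $p>n$, the weighted Sobolev embedding $W^{1,p}_\tau\hookrightarrow C^0_\tau$ (cf. \cite[Lemma~3.6~(c)]{LeeFredholm}) gives this directly when $\Ttil$ is controlled in the $b$-norm. When instead $\Ttil$ is controlled in the $\ghat$-norm, I would recover the same decay by working on a uniformly locally finite cover of $\bH^n\setminus K''$ by hyperbolic unit balls $B_1(x_i)$ — the M\"obius-chart setup already used in the proof of Lemma~\ref{lmGagliardo} (see also \cite[Lemma~2.6]{GicquaudSakovich}): on each ball one has $\|D\Ttil\|_{L^p(B_1(x_i))}\lesssim\|\nabla\Ttil\|_{L^p(B_1(x_i))}+\|De_1\|_{L^p(B_1(x_i))}\,\|\Ttil\|_{L^\infty(B_1(x_i))}$ together with the interior embedding $\|\Ttil\|_{L^\infty(B_1(x_i))}\lesssim\|\Ttil\|_{L^p(B_1(x_i))}+\|D\Ttil\|_{L^p(B_1(x_i))}$, and since $e_1\in L^p_\tau$ forces $\|De_1\|_{L^p(B_1(x_i))}\to0$ as $x_i\to\infty$, the Christoffel correction is absorbed on all but finitely many balls, giving $\rho(x_i)^{-\tau}\|\Ttil\|_{L^\infty(B_1(x_i))}\lesssim\|\Ttil\|_{W^{1,p}_\tau(\ghat)}$ uniformly, while the finitely many remaining balls contribute a bounded amount by interior estimates. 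Granted $|\Ttil|_b\lesssim\rho^\tau$, one then has $\rho^{-p\tau}|De_1|^p_b|\Ttil|^p_b\lesssim|De_1|^p_b\le\rho^{-p\tau}|De_1|^p_b$ (the last step because $\rho\le1$ and $\tau>0$), and $\int_{\bH^n\setminus K''}\rho^{-p\tau}|De_1|^p_b\,d\mu^b<\infty$ since $e_1\in W^{1,p}_\tau$.

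Putting the pieces together, the pointwise inequalities $|D\Ttil|_b\lesssim|\nabla\Ttil|_{\ghat}+|De_1|_b|\Ttil|_b$ and, symmetrically, $|\nabla\Ttil|_{\ghat}\lesssim|D\Ttil|_b+|De_1|_b|\Ttil|_b$, together with the equivalence of the zeroth-order terms and the finiteness of the cross term, show that the $\ghat$-weighted and the $b$-weighted $W^{1,p}_\tau$-norms of $\Ttil$ are finite at the same time; undoing the change of variables then gives the lemma. I expect the only genuinely delicate point to be the decay $|\Ttil|\lesssim\rho^\tau$ for the low-regularity metric $\ghat$ — that is, the persistence of the weighted Sobolev embedding from $b$ to $\ghat$ — which is exactly why the uniformly locally finite M\"obius cover and the smallness of $\|De_1\|_{L^p(B_1(x))}$ near infinity are needed.
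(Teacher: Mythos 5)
Your proof is correct and takes the same route as the paper: compare the zeroth-order terms via the uniform equivalence $C^{-1}b\le\ghat\le Cb$, write $\nabla T - DT = \Gamma\star T$ with $|\Gamma|_b\lesssim|De_1|_b$, and close the argument by showing the cross term $\rho^{-p\tau}|De_1|^p_b|T|^p_b$ is integrable once $T$ is bounded pointwise by a Sobolev embedding. The paper's treatment of the converse implication is terse (it simply says it is ``proven similarly''), and you correctly flag the real subtlety hiding there: the weighted embedding $W^{1,p}_\tau\hookrightarrow C^0_\tau$ of \cite[Lemma 3.6(c)]{LeeFredholm} is stated for the $b$-covariant derivative $D$, not for $\nabla$, so one cannot apply it directly when the hypothesis is the $\ghat$-weighted norm of $T$. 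Your bootstrap on a uniformly locally finite cover of M\"obius balls --- absorbing the Christoffel correction by using $\|De_1\|_{L^p(B_1(x_i))}\to0$ near infinity and treating the finitely many remaining balls by interior estimates --- is exactly the right way to recover the embedding for $\nabla$ and make that ``similarly'' precise. One cosmetic remark: you derive the full decay $|T|_b\lesssim\rho^\tau$ and then run a chain of inequalities to reduce to $\int\rho^{-p\tau}|De_1|^p_b\,d\mu^b<\infty$; the unweighted bound $T\in L^\infty$ (which your argument also gives, and which the paper uses) already suffices, since then the cross term is bounded by $\|T\|^p_{L^\infty}\int\rho^{-p\tau}|De_1|^p_b\,d\mu^b$. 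This is a matter of taste and does not affect correctness.
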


\begin{proof}
Note that the metric $g$ (or more exactly $(\Phi_1)_* g$) is uniformly equivalent to the metric $b$, so 
\begin{align*}
\int_{M \setminus (K_1 \cup K_2)} (\rho \circ \Phi_1)^{-p\tau} |T|_g^p d\mu^g
& \lesssim \int_{\bH^n \setminus K''} \rho^{-p\tau} |T|_b^p d\mu^b\\
\text{and}\quad 
\int_{\bH^n \setminus K''} \rho^{-p\tau} |T|_b^p d\mu^b
& \lesssim \int_{M \setminus (K_1 \cup K_2)} (\rho \circ \Phi_1)^{-p\tau} |T|_g^p d\mu^g,
\end{align*}
where we have chosen, here and in what follows, to write $T$ instead of $((\Phi_1)_*T)$ to avoid overloading the notation.
Further, note that $\nabla T = D T + \Gamma \star T$, where $\Gamma \star T$ denotes a contraction of the Christoffel symbols as defined in \eqref{eqChristoffel} and the tensor $T$. Hence,
\begin{align*}
\int_{M \setminus (K_1 \cup K_2)} (\rho \circ \Phi_1)^{-p\tau} |\nabla T|_g^p d\mu^g
& \lesssim \int_{\bH^n \setminus K''} \rho^{-p\tau} |\nabla T|_b^p d\mu^b\\
& \lesssim \int_{\bH^n \setminus K''} \rho^{-p\tau} \left[|D T|_b^p + |\Gamma \star T|_b^p\right] d\mu^b\\
& \lesssim \int_{\bH^n \setminus K''} \rho^{-p\tau} \left[|D T|_b^p + |De|^p_b |T|_b^p\right] d\mu^b.
\end{align*}
Assuming that the $W^{1, p}_\tau$-norm of $T$ defined with respect to the metric $b$ is finite:
\[
\int_{\bH^n \setminus K''} \rho^{-p\tau} \left[|D T|_b^p + |T|_b^p \right] d\mu^b < \infty,
\]
we have, by the Sobolev injection, that $T \in L^\infty(\bH^n\setminus K'', S_2\bH^n)$. Hence,
\begin{align*}
\int_{M \setminus (K_1 \cup K_2)} (\rho \circ \Phi_1)^{-p\tau} |\nabla T|_g^p d\mu^g
& \lesssim \int_{\bH^n \setminus K''} \rho^{-p\tau} |D T|_b^p d\mu^b\\
&\qquad + 
\|T\|_{L^\infty(\bH^n\setminus K'', S_2 M)}\int_{\bH^n \setminus K''} \rho^{-p\tau} |De|^p_b d\mu^b\\
& < \infty.
\end{align*}
Hence we have shown the following:
\begin{align*}
&\int_{\bH^n \setminus K''} (\rho \circ \Phi_1)^{-p\tau} \left[|D T|_b^p + |T|_b^p \right] d\mu^b < \infty\\
&\qquad \Rightarrow
\int_{M \setminus (K_1 \cup K_2)} \rho^{-p\tau} \left[|\nabla T|_g^p + |T|_g^p \right] d\mu^g < \infty.
\end{align*}
The converse implication is proven similarly.
\end{proof}

This lemma has the following important consequence. Applied to the chart $\Phi_2$ instead of $\Phi_1$, we have, due to the assumptions of Theorem \ref{thmRigidity},
\[
\int_{M \setminus (K_1 \cup K_2)} (\rho \circ \Phi_2)^{-p\tau} \left[|\nabla (\Phi_2)_*e_2|_g^p + |(\Phi_2)_* e_2|_g^p \right] d\mu^g < \infty.
\]
Note that we can safely replace $\rho \circ \Phi_2$ by $\rho \circ \Phi_1$ in this formula. Indeed, as explained in \cite{GicquaudCompactification}, both functions are uniformly comparable to $e^{-r}$, where $r$ is the distance function (with respect to the metric $g$) to any compact subset $K \neq \emptyset$ of $M$: in other words, there exists a constant $\Lambda > 0$ such that
\[
\Lambda^{-1} e^{-r(x)} \leq (\rho \circ \Phi_1)(x) \leq \Lambda e^{-r(x)}
~
\text{and}~
\Lambda^{-1} e^{-r(x)} \leq (\rho \circ \Phi_2)(x) \leq \Lambda e^{-r(x)}
.
\]
Hence,
\[
\int_{M \setminus (K_1 \cup K_2)} (\rho \circ \Phi_1)^{-p\tau} \left[|\nabla (\Phi_2)_*e_2|_g^p + |(\Phi_2)_* e_2|_g^p \right] d\mu^g < \infty.
\]
Applying once again Lemma \ref{lmCompSobolev}, we conclude that
\[
\int_{\bH^n \setminus K''} \rho^{-p\tau} \left[|D (\Phi_2 \circ \Phi_1^{-1})_*e_2|_b^p + |(\Phi_2 \circ \Phi_1^{-1})_* e_2|_b^p \right] d\mu^b < \infty.
\]
So, $(\Phi_2 \circ \Phi_1^{-1})_*e_2 \in W^{1, p}_\tau(\bH^n \setminus K'', S_2\bH^n)$.

As a consequence, setting $\btil \definedas (\Phi_2 \circ \Phi_1^{-1})_* b$, we have
\begin{align*}
\btil - b
& = (\Phi_2 \circ \Phi_1^{-1})_* b - b\\
& = (\Phi_2)_* ((\Phi_1)^* b - g) + (\Phi_2)_* g - b\\
& = (\Phi_2)_* (\Phi_1)^* (b - (\Phi_1)_*g) + e_2\\
& = -(\Phi_2 \circ \Phi_1^{-1})_* e_1 + e_2.
\end{align*}
As both terms in the last line belong to $W^{1, p}_\tau(\bH^n \setminus K'', S_2 \bH^n)$, we have shown
\begin{equation}\label{eqDiffMetrics}
\btil - b \in W^{1, p}_\tau(\bH^n \setminus K'', S_2 \bH^n).
\end{equation}

\begin{proof}[Proof of Lemma \ref{lmDecay}]
Assuming that $A_{\Phi_2 \circ \Phi_1^{-1}} = \id_{\bH^n}$, by Proposition \ref{propAsymptoticIsometry} we can write 
\[
    V^{\mu}\circ \Phi_2\circ \Phi_1^{-1} - V^{\mu} \in W^{2, p}_{\tau-1}(\bH^n \setminus K'', \bR).
\]
Now recall that the functions $V^\mu$, $\mu=0,\ldots,n$, provide a full set of coordinates for the standard embedding of the hyperbolic space into Minkowski spacetime. For any $x \in \bH^n \setminus K''$, we denote $X^\mu(x) = V^\mu(x)$ and $Y^\mu = V^{\mu}((\Phi_2\circ \Phi_1^{-1})(x))$.

Applying Proposition \ref{propExponentialXY-intrinsic} (see the comment after the Proposition) we find that $\Phi_2\circ \Phi_1^{-1}(x) = \exp_x \zeta(x) $ where $\zeta(x)$ is given by
\[
\zeta(x) = f(u) \left(U - u X\right),
\]
with $U = Y - X$, $u= \eta(U, U)/2$ and $\displaystyle f(u) = \frac{\ln(1+u + \sqrt{2 u + u^2})}{\sqrt{2 u + u^2}}$. We have $U \in W^{2, p}_{\tau-1}(\bH^n \setminus K'', \bR)$. As a consequence, $u \in W^{2, p}_{2(\tau-1)}(\bH^n \setminus K'', \bR)$. As $p > n$ and $\tau + \frac{n-1}{p} \geq 2$, we have $\tau-1 > 0$, showing that $u \in L^\infty(\bH^n\setminus K'', \bR)$. As $f$ is analytic, we have $f(u) - 1 \in W^{2, p}_{2(\tau-1)}(\bH^n \setminus K'', \bR)$. All in all, we see that $\zeta \in W^{2, p}_{\mu_0}(\bH^n \setminus K'', \bR^{n, 1})$ with ${\mu_0} = \min\{\tau-1, 2 \tau-3\}$. Since we assumed $\tau > 3/2$, we have ${\mu_0} > 0$. Applying Proposition \ref{propTransfert}, we conclude that $\zeta \in W^{2, p}_{\mu_0}(\bH^n \setminus K'', \bH^n)$.

We will now show how to improve this rough estimate for $\zeta$. First, from Proposition \ref{propTransfert}, we get $\zeta \in W^{2, p}_{\mu_0}(\bH^n \setminus K'', T\bH^n)$. Further, from \eqref{eqDiffMetrics}, we have $\btil - b \in W^{1, p}_\tau(\bH^n\setminus K'', S_2\bH^n)$ . We now use Equation \eqref{eqPullback2}. As we have ${\mu_0} > 0$, we have
\[
    c \definedas c(|\zeta|^2) = \cosh(|\zeta|) = 1 + O(|\zeta|^2), \quad
    s \definedas s(|\zeta|^2) = \frac{\sinh(|\zeta|)}{|\zeta|} = 1 + O(|\zeta|^2).
\]
So $c-1, s-1 \in W^{2, p}_{2\mu_0}(\bH^n\setminus K'', \bR)$. It then follows from the embedding
\[
W^{1, p}_{\mu_0}(\bH^n \setminus K'', T\bH^n) \hookrightarrow L^\infty_{\mu_0}(\bH^n \setminus K'', T\bH^n)
\]
and Equation \eqref{eqPullback2} which we rewrite as 
\begin{align*}
\lie_{\zeta} b
&= \frac{1}{c s} (\btil - c^2 b) - \frac{s}{c} \left(b(D_{\cdot} \zeta, D_{\cdot} \zeta) + \zeta^\flat \otimes \zeta^\flat\right)\\
& \qquad - \frac{1-s^2}{4 c s|\zeta|^2} d |\zeta|^2 \otimes d |\zeta|^2 - \frac{1-cs}{2 cs |\zeta|^2}\left[\zeta^\flat \otimes d|\zeta|^2 + d|\zeta|^2 \otimes \zeta^\flat \right],
\end{align*}
that we have
\[
    \lie_\zeta b = \btil - b + \upsilon,
\]
where $\upsilon \in W^{1, p}_{2\mu_0}(\bH^n\setminus K'', S_2\bH^n)$. We now take the divergence with respect to $b$ of the previous equation:
\[
    \divg (\lie_\zeta b) = \divg(\btil - b) + \divg(\upsilon) \in L^p_{\min\{\tau, 2\mu_0\}}(\bH^n \setminus K'').
\]
Now note that the operator $\zeta \mapsto \divg (\lie_\zeta b)$ is elliptic and satisfies the assumptions of \cite[Theorem C]{LeeFredholm} with indicial radius $R = \frac{n+1}{2}$ \footnote{The calculation of the indicial radius of the operator $\zeta \mapsto \divg (\rlie_\zeta b)$, i.e. the vector Laplacian, can be found in \cite[Proposition G]{LeeFredholm} or \cite[Lemme 6.1]{Gicquaud}, it is easy to check that the replacement of $\lie_\zeta b$ by  $\rlie_\zeta b$ does not change the indicial roots.}. In particular, since $\zeta \in W^{2, p}_{\mu_0}(\bH^n\setminus K'', T\bH^n)$, with ${\mu_0} > 0$, we have
\[
    {\mu_0} + \frac{n-1}{p} - \frac{n-1}{2} > -\frac{n+1}{2}.
\]
As a consequence, \cite[Proposition 6.5]{LeeFredholm} implies that $\zeta \in W^{2, p}_{\mu_1}(\bH^n\setminus K'', T\bH^n)$ with $\mu_1 = \min\{2\mu_0, \tau\} > \mu_0$. We can now repeat the argument using the improved estimate we have for $\zeta$ and get that $\upsilon \in W^{1, p}_{2\mu_1}(\bH^n\setminus K'', S_2\bH^n)$ showing that $\zeta \in W^{2, p}_{\mu_2}(\bH^n\setminus K'', T\bH^n)$, with $\mu_2 = \min\{2\mu_1, \tau\}$. After a finite number of steps, we conclude that $\zeta \in W^{2, p}_\tau (\bH^n\setminus K'', T\bH^n)$ as claimed.
\end{proof}

\begin{proof}[End of the proof of Theorem \ref{thmRigidity}]
From Proposition \ref{propAsymptoticIsometry}, we know that there exists an element $B \in O_\uparrow(n, 1)$ such that, identifying $B$ with an isometry of the hyperbolic space, we have, for any $V \in \cN_b$, 
\[
V \circ B = V\circ \Phi_2 \circ \Phi_1^{-1} + \epsilon(V),
\]
where $\epsilon(V) \in W^{2, p}_{\tau-1}(\bH^n\setminus K'', \bR)$. But $V \mapsto V \circ B^{-1}$ is an automorphism of $\cN^b$, so we can replace $V$ by $V \circ B^{-1}$ in the previous estimate:
\[
V = V\circ (B^{-1} \circ \Phi_2) \circ \Phi_1^{-1} + \epsilon(V \circ B^{-1}),
\]
where $\epsilon(V \circ B^{-1}) \in W^{2, p}_{\tau-1}(\bH^n \setminus K'')$. As a consequence, we can use Lemma \ref{lmDecay} with $\Phi_2$ replaced by $B^{-1}\circ \Phi_2$:
\[
(B^{-1}\circ \Phi_2) \circ \Phi_1^{-1}(x) = \exp_x \zeta(x)
\]
for some vector field $\zeta \in W^{2, p}_\tau(\bH^n \setminus K'', T\bH^n)$. Thus
\[
\Phi_2 \circ \Phi_1^{-1}(x) = B(\exp_x \zeta(x))
\]
with $\zeta \in W^{2, p}_\tau(\bH^n\setminus K'', T\bH^n)$.
\end{proof}

\subsection{Covariance of the mass and the mass aspect function}\label{secCovariance}
We are now equipped to prove the covariance of the mass aspect function. The strategy here is basically the same as the one outlined in \cite{MichelMass}, yet we are going to be more cautious about how we handle diffeomorphisms.

Assume given two charts at infinity $\Phi_1: M \setminus K_1 \to \bH^n \setminus K_1'$ and $\Phi_2: M \setminus K_2 \to \bH^n \setminus K_2'$ and set $e_1 \definedas (\Phi_1)_* g - b$ (resp. $e_2 \definedas (\Phi_2)_* g - b$). Assume further that $e_1, e_2 \in W^{1, p}_\tau(\bH^n \setminus K'', S_2\bH^n)$, where $p > n$ and $\tau > 3/2$ satisfies
\[
\tau + \frac{n-1}{p} \in
\begin{cases}
    [2, n)                      & \quad \text{if } n = 3, \\
    \left(\frac{n}{2}, n\right) & \quad\text{if } n > 3,
\end{cases}
\]
so that the assumptions of Theorem \ref{thmRigidity} are fulfilled (these assumptions will be made more restrictive in what follows). Then we can write $\Phi_{12} \definedas \Phi_2 \circ \Phi_1^{-1} = A \circ \Phi_0$, where $A$ is an isometry of the hyperbolic space and $\Phi_0(x) = \exp_x (\zeta(x))$ is a diffeomorphism asymptotic to the identity. We note that the lower bound
\[
    \tau + \frac{n-1}{p} > \frac{n}{2}
\]
is valid in dimension $3$ implying, by \cite[Lemma 3.6(b)]{LeeFredholm}, that $W^{1, p}_\tau(\bH^n \setminus K'', S_2\bH^n) \hookrightarrow W^{1, 2}_{1/2}(\bH^n \setminus K'', S_2\bH^n)$ (see \cite[Lemma 3.6(b)]{LeeFredholm}), which  guarantees that $g$ is asymptotically hyperbolic of order $1/2$ as in Definition \ref{defAH}.

To avoid repeating the formulas for the charge integrals, we denote the integrands in \eqref{eqChargeCH} and in \eqref{eqChargeAspect} by
\begin{equation}\label{eqChargeIntegrand}
\begin{aligned}
    \bU(b, V, e, \chibar_k)
    &\definedas V \left(\divg_b(e) - d \tr_b(e)\right)(-D^b \chibar_k)\\
    &\qquad + \tr_b(e) dV(-D^b \chibar_k) - e(D^b V, -D^b \chibar_k),
\end{aligned}
\end{equation}
where we highlight the reference metric $b$ as it will subsequently be modified by diffeomorphisms, and where $V \in C^\infty_{-1}(\bH^n, \bR)$ is an eigenfunction of the Laplacian as constructed in Proposition \ref{propEstimateEigenfunction2}. So the mass integrals are given by
\[
    p(e_1, V) = \lim_{k \to \infty} \int_{\bH^n} \bU(b, V, e_1, \chibar_k) d\mu^b
    \quad\text{and}\quad
    p(e_2, V) = \lim_{k \to \infty} \int_{\bH^n} \bU(b, V, e_2, \chibar_k) d\mu^b.
\]
Since $g = (\Phi_1^{-1})_* (b + e_1)$, we have
\[
e_2 = (\Phi_{12})_* (b + e_1) - b = (\Phi_{12})_* (b + e_1 - \Phi_{12}^*b).
\]
As a consequence, by invariance of the integral giving $p(e_2, V)$ under the diffeomorphism $\Phi_{12}$, we can write
\begin{align*}
    p(e_2, V)
     & = \lim_{k \to \infty} \int_{\bH^n} \bU(b, V, e_2, \chibar_k) d\mu^b                                                                               \\
     & = \lim_{k \to \infty} \int_{\bH^n} \bU(\Phi_{12}^* b, \Phi_{12}^* V, \Phi_{12}^* e_2, \Phi_{12}^* \chibar_k) d\mu^{\Phi_{12}^* b}                 \\
     & = \lim_{k \to \infty} \int_{\bH^n} \bU(\Phi_{12}^* b, V \circ \Phi_{12}, e_1 + b - \Phi_{12}^*b, \chibar_k \circ \Phi_{12}) d\mu^{\Phi_{12}^* b}.
\end{align*}
As the limit is independent of the choice of the cutoff functions $\chibar_k$ we choose, we can immediately replace $\chibar_k \circ \Phi_{12}$ by $\chibar_k$ in the last expression:
\begin{equation}\label{eqP2}
    p(e_2, V)
    = \lim_{k \to \infty} \int_{\bH^n} \bU(\Phi_{12}^* b, V \circ \Phi_{12}, e_1 + b - \Phi_{12}^*b, \chibar_k) d\mu^{\Phi_{12}^* b}.
\end{equation}

Now remark that, upon introducing the ``intermediate'' chart at infinity $\Phi_{1.5} \definedas \Phi_0 \circ \Phi_1$ so that $\Phi_{1.5} \circ \Phi_1^{-1} = \Phi_0$ and $\Phi_2 \circ \Phi_{1.5}^{-1}= A$, we can split the discussion of the covariance of the mass aspect into two cases:

$\bullet$ The case $\Phi_{12} = A \in O_{\uparrow}(n, 1)$, for which $\Phi_{12}^* b = b$: In this case, Equation \eqref{eqP2} immediately yields
\[
    p(e_2, V)
    = \lim_{k \to \infty} \int_{\bH^n} \bU(b, V \circ \Phi_{12}, e_1, \chibar_k) d\mu^b
    = p(e_1, V \circ A).
\]

$\bullet$ The case $\Phi_{12} = \Phi_0$ is a diffeomorphism asymptotic to the identity: This case is more complex. Our first task is to prove that we have
\[
    p(e_2, V)
    = \lim_{k \to \infty} \int_{\bH^n} \bU(b, V, e_1 - \cL_\zeta b, \chibar_k) d\mu^b,
\]
removing most of the occurences of $\Phi_{12}^*b$. First note that $b - \Phi_{12}^*b \in W^{1, p}_\tau(\bH^n \setminus K'', S_2\bH^n) \subset W^{1, 2}_{1/2 + \epsilon}(\bH^n \setminus K'', S_2\bH^n)$ for some constant $\epsilon > 0$. Indeed, as we have
\[
    \tau + \frac{n-1}{p} > \frac{1}{2} + \frac{n-1}{2},
\]
we can let $\epsilon > 0$ be so small that the inequality
\[
    \tau + \frac{n-1}{p} > \frac{1}{2} + \epsilon + \frac{n-1}{2}
\]
remains true. Hence, from \cite[Lemma 3.6(b)]{LeeFredholm}, we obtain that
\[
    W^{1, p}_\tau(\bH^n \setminus K'', S_2\bH^n) \subset W^{1, 2}_{1/2 + \epsilon}(\bH^n \setminus K'', S_2\bH^n).
\]
As we also have, from the Sobolev injection \cite[Lemma 3.6(c)]{LeeFredholm},
\[
    W^{1, p}_\tau(\bH^n \setminus K'', S_2\bH^n) \subset L^\infty(\bH^n \setminus K'', S_2\bH^n),
\]
we see that the error terms can be estimated as follows:
\begin{align*}
     & \int_{\bH^n} \left|\bU(\Phi_{12}^* b, V \circ \Phi_{12}, e_1 + b - \Phi_{12}^*b, \chibar_k) \frac{d\mu^{\Phi_{12}^* b}}{d\mu^b} - \bU(b, V \circ \Phi_{12}, e_1 + b - \Phi_{12}^*b, \chibar_k)\right| d\mu^b \\
     & \qquad \lesssim \left\|e_1 + b - \Phi_{12}^*b\right\|_{W^{1, 2}_{1/2}} \|\Phi_{12}^* b - b\|_{W^{1, 2}_{1/2}(\supp D \chibar_k)}                                                                             \\
     & \qquad \lesssim \left\|e_1 + b - \Phi_{12}^*b\right\|_{W^{1, 2}_{1/2}} \|\Phi_{12}^* b - b\|_{W^{1, 2}_{1/2 + \epsilon}} \max_{x \in \supp D\chibar_k} \rho^\epsilon(x)                                      \\
     & \qquad \to_{k \to \infty} 0.
\end{align*}
Next, it follows from Formula \eqref{eqPullback2} that $b - \Phi_{12}^*b = - \cL_\zeta b + O(|\zeta|^2 + |D\zeta|^2)$. So, by an argument similar to the previous one, we conclude that
\begin{align*}
     & \int_{\bH^n} \left|\bU(b, V \circ \Phi_{12}, e_1 + b - \Phi_{12}^*b, \chibar_k) - \bU(b, V \circ \Phi_{12}, e_1 - \cL_\zeta b, \chibar_k)\right| d\mu^b \\
     & \qquad \to_{k \to \infty} 0.
\end{align*}
Replacing $V \circ \Phi_{12}$ by $V$ in \eqref{eqP2} is allowed by the fact that $V \circ \Phi_{12} - V \in W^{1, p}_{\delta-1}(\bH^n \setminus K'', \bR)$. The details of the argument are left to the reader. All in all, we have proven the following:

\begin{align*}
     & \int_{\bH^n} \left|\bU(\Phi_{12}^* b, V \circ \Phi_{12}, e_1 + b - \Phi_{12}^*b, \chibar_k) \frac{d\mu^{\Phi_{12}^* b}}{d\mu^b} - \bU(b, V \circ \Phi_{12}, e_1 - \cL_\zeta b, \chibar_k)\right| d\mu^b \\
     & \qquad \to_{k \to \infty} 0.
\end{align*}
As a consequence, we have proven that, under the assumption that $e_1, e_2 \in W^{1, p}_\tau(\bH^n \setminus K'', S_2\bH^n)$, and $\Phi_{12}$ is asymptotic to the identity, then if the limit
\[
    p(e_2, V)
    = \lim_{k \to \infty} \int_{\bH^n} \bU(\Phi_{12}^* b, V \circ \Phi_{12}, e_1 + b - \Phi_{12}^*b, \chibar_k) d\mu^{\Phi_{12}^* b}
\]
exists, we have
\begin{equation}\label{eqAlmostThere2}
    p(e_2, V)
    = \lim_{k \to \infty} \int_{\bH^n} \bU(b, V, e_1 - \cL_\zeta b, \chibar_k) d\mu^b.
\end{equation}

Finally, we would like to get rid of the Lie derivative $\cL_\zeta b$ in the above formula. This is essentially Bartnik's celebrated ``curious cancellation'' (see \cite{BartnikMass}), that is explained in \cite{MichelMass}. Let $W=W_k$ denote the vector field defined by
\[
    (W_k)_i = S_{ij} (-D\chibar_k)^j, \quad\text{with}~ S_{ij} = (D_i \zeta_j - D_j \zeta_i) V + 2 \zeta_i D_j V - 2 \zeta_j D_i V.
\]
Note that $S_{ij}$ is antisymmetric. Hence,
\begin{align*}
    \divg(W)
    &= D^i W_i\\
    &= (D^iS_{ij}) (-D\chibar_k)^j - S_{ij} \hess^{ij} \chibar_k\\
    &= (D^iS_{ij}) (-D\chibar_k)^j\\
    &= \left[V D^i(D_i \zeta_j - D_j \zeta_i) + D^i V (D_i \zeta_j - D_j \zeta_i)\right] (-D\chibar_k)^j\\
    &\qquad\qquad + 2 \left[D^i \zeta_i D_j V + \zeta^i \hess_{ij} V - D_i V D^i\zeta_j - \zeta_j \Delta V\right] (-D\chibar_k)^j\\
    &= \left[V D^i ( D_i \zeta_j + D_j \zeta_i) - 2 VD^i D_j \zeta_i + D^i V (D_i \zeta_j - D_j \zeta_i)\right] (-D\chibar_k)^j\\
    &\qquad\qquad + 2 \left[D^i \zeta_i D_j V + \zeta^i \cT_{ij} - D_i V D^i\zeta_j\right] (-D\chibar_k)^j - 2(n-1) V \zeta_j,
\end{align*}
where, as before, $\cT \definedas \hess V - V b$. Since
\begin{align*}
    2 D^i D_j \zeta_i
    &= 2 D_j D^i \zeta_i - 2\riemudud{l}{i}{i}{j} \zeta_l\\
    &= D_j (\tr(\cL_\zeta b)) + 2 \ricud{l}{j} \zeta_l\\
    &= D_j (\tr(\cL_\zeta b)) - 2 (n-1) \zeta_j,
\end{align*}
we obtain from Equation \eqref{eqChargeIntegrand} that
\begin{align*}
    D^i W_i
    &= \left[V D^i (\cL_\zeta b)_{ij} - V D_j (\tr(\cL_\zeta b)) - D^i V (D_i \zeta_j + D_j \zeta_i)\right] (-D\chibar_k)^j\\
    &\qquad\qquad + \left[\tr(\cL_\zeta b) D_j V + 2 \zeta^i \cT_{ij}\right] (-D\chibar_k)^j\\
    &= \bU(b, V, \cL_\zeta b, \chibar_k) + 2 \zeta^i \cT_{ij} (-D\chibar_k)^j.
\end{align*}
As $W$ has compact support, the integral of the divergence of $W$ vanishes. As a consequence, we conclude that Equation \eqref{eqAlmostThere2} becomes
\begin{align}
    p(e_2, V)
    &= \lim_{k \to \infty} \int_{\bH^n} \left[\bU(b, V, e_1, \chibar_k) - \bU(b, V, \cL_\zeta b, \chibar_k)\right] d\mu^b\nonumber\\
    &= \lim_{k \to \infty} \int_{\bH^n} \left[\bU(b, V, e_1, \chibar_k) - \divg(W) + 2 \cT(\zeta, -D\chibar_k)\right] d\mu^b\nonumber\\
    &= \lim_{k \to \infty} \int_{\bH^n} \left[\bU(b, V, e_1, \chibar_k) + 2 \cT(\zeta, -D\chibar_k)\right] d\mu^b\label{eqDiffCharge}.
\end{align}
If $V \in \cN$, we have $\cT \equiv 0$ and we obtain
\begin{equation}\label{eqEquality}
p(e_2, V) = p(e_1, V).
\end{equation}
In summary, we have proven:

\begin{theorem}\label{thmCovariance}
Assume given two charts at infinity $\Phi_1$, $\Phi_2$ for $(M, g)$ such that $e_1 \definedas \Phi_{1*} g - b \in W^{1, p}_\tau(\bH^n \setminus K'_1)$ and  $e_2 \definedas \Phi_{2*} g - b \in W^{1, p}_\tau(\bH^n \setminus K'_2)$, where $p > n$ and $\tau > 3/2$ satisfy
\[
\tau + \frac{n-1}{p} \in
\begin{cases}
    [2, n)                      & \quad \text{if } n = 3, \\
    \left(\frac{n}{2}, n\right) & \quad \text{if } n > 3.
\end{cases}
\]
Set, as in the statement of Theorem \ref{thmRigidity}, $\Phi_2 \circ \Phi_1^{-1} = A \circ \Phi_0$ where $A \in O_\uparrow(n, 1)$ is an isometry of $\bH^n$ and $\Phi_0$ is a diffeomorphism asymptotic to the identity. Then we have, for any $V \in \cN$
\[
    p(e_2, V) = p(e_1, V \circ A).
\]
\end{theorem}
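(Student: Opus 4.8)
The plan is to interpolate between the two charts by means of the intermediate chart $\Phi_{1.5} \definedas \Phi_0 \circ \Phi_1$, where $\Phi_2 \circ \Phi_1^{-1} = A \circ \Phi_0$ is the factorisation furnished by Theorem \ref{thmRigidity} (with $A \in O_\uparrow(n, 1)$ and $\Phi_0$ asymptotic to the identity). First I would record the two elementary facts about $\Phi_{1.5}$ that make the reduction work. On the one hand, since $\Phi_0 = A^{-1} \circ \Phi_2 \circ \Phi_1^{-1}$, one has $\Phi_{1.5} = A^{-1} \circ \Phi_2$, so
\[
    e_{1.5} \definedas (\Phi_{1.5})_* g - b = (A^{-1})_*\big((\Phi_2)_* g\big) - b = (A^{-1})_* e_2,
\]
because $A^{-1}$ is a hyperbolic isometry and hence $(A^{-1})_* b = b$. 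As $A^{-1}$ is a fixed element of $O_\uparrow(n, 1)$, pushing forward by it is a bounded isomorphism of $W^{1, p}_\tau$ spaces (the weight $\rho$ being comparable to $\rho \circ A^{-1}$ near infinity, up to a harmless change of the excised compact set), so $e_{1.5} \in W^{1, p}_\tau(\bH^n \setminus K, S_2\bH^n)$ for a suitable compact set. On the other hand, $\Phi_{1.5} \circ \Phi_1^{-1} = \Phi_0$ is asymptotic to the identity, while $\Phi_2 \circ \Phi_{1.5}^{-1} = A$ is a hyperbolic isometry.

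Granting these observations, I would simply invoke the two cases already analysed in the discussion preceding the theorem. Applied to the pair of charts $\Phi_1$, $\Phi_{1.5}$, whose transition map $\Phi_0$ is asymptotic to the identity, the identity \eqref{eqEquality} gives $p(e_{1.5}, V) = p(e_1, V)$ for every $V \in \cN$. Applied to the pair $\Phi_{1.5}$, $\Phi_2$, whose transition map is exactly the isometry $A$, the first bulleted case gives $p(e_2, V) = p(e_{1.5}, V \circ A)$.

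Finally, since $\cN$ is the space of restrictions to the hyperboloid of linear forms on $\bR^{n, 1}$ (Proposition \ref{propLapse}), it is preserved by $O_\uparrow(n, 1)$, so $V \circ A \in \cN$ whenever $V \in \cN$. Hence the first identity may be applied with $V$ replaced by $V \circ A$, and combining it with the second yields
\[
    p(e_2, V) = p(e_{1.5}, V \circ A) = p(e_1, V \circ A),
\]
which is the assertion. I do not expect a genuine obstacle here: all the analytic content — the ``curious cancellation'' producing \eqref{eqEquality} and the invariance argument for the isometry case — has already been established, and what remains is only the bookkeeping identity $\Phi_{1.5} = A^{-1} \circ \Phi_2$ together with the invariance of $W^{1,p}_\tau$ under the fixed isometry $A$.
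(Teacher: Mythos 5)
Your proposal is correct and follows essentially the same route as the paper: the paper itself introduces the intermediate chart $\Phi_{1.5} = \Phi_0 \circ \Phi_1$ and splits the argument into the two bullet cases (isometry versus diffeomorphism asymptotic to the identity) before summarizing the theorem. You have merely spelled out the final bookkeeping—the observation that $e_{1.5}=(A^{-1})_* e_2$ retains the regularity of $e_2$, that $\cN$ is invariant under $O_\uparrow(n,1)$, and the chaining of the two identities—which the paper leaves implicit.
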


We will now focus on proving the covariance of the mass aspect function whenever the later is defined, see Proposition \ref{propMassAspect}. To this end, we need to prove that the $\cT$-term in \eqref{eqDiffCharge} tends to zero as $k$ goes to infinity. We start by a refinement of Lemma \ref{lmDecay}. 

\begin{lemma}\label{lmDecay2}
Under the assumptions of Theorem \ref{thmCovariance}, assume further that $e_1, e_2 \in L^1_\delta(\bH^n \setminus (K'_1 \cup K'_2), S_2\bH^n)$ for some $\delta \in (-1, 1]$. If the charts $\Phi_1$ and $\Phi_2$ satisfy $\Phi_2 \circ \Phi_1^{-1}(x) = \exp_x \zeta(x)$ for some $\zeta \in W^{2, p}_\tau(\bH^n \setminus K'')$, where $\tau$ is as in Theorem \ref{thmCovariance}, we have $\zeta \in W^{1, 1}_{\delta'}(\bH^n \setminus K'', T\bH^n)$ for any $\delta' < \delta$.
\end{lemma}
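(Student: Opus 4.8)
The plan is to upgrade the decay of $\zeta$ from the $W^{2,p}_\tau$ estimate of Lemma \ref{lmDecay} to a $W^{1,1}_{\delta'}$ estimate, exploiting the extra hypothesis that $e_1$ and $e_2$ lie in $L^1_\delta$. The strategy mirrors the bootstrap at the end of the proof of Lemma \ref{lmDecay}: we already know $\btil - b = -(\Phi_2 \circ \Phi_1^{-1})_* e_1 + e_2$ (see \eqref{eqDiffMetrics}), and under the present hypotheses each term on the right belongs to $L^1_\delta(\bH^n \setminus K'', S_2\bH^n)$; hence $\btil - b \in L^1_\delta(\bH^n \setminus K'', S_2\bH^n)$ in addition to being in $W^{1,p}_\tau$. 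From Formula \eqref{eqPullback2}, rewriting it as in Lemma \ref{lmDecay} in the form $\lie_\zeta b = \btil - b + \upsilon$, the correction term $\upsilon$ is quadratic in $\zeta$ and $D\zeta$; since we already know $\zeta \in W^{2,p}_\tau \subset L^\infty_\tau$ and $\tau > 3/2 > 0$, the product $|\zeta|^2 + |D\zeta|^2$ gains weight and we get $\upsilon \in L^1_{\min\{\tau + \tau, \dots\}} \subset L^1_\delta$ (with room to spare, since $\delta \leq 1 < 2\tau$). Therefore $\divg(\lie_\zeta b) \in W^{-1,1}_\delta$, or more precisely we should differentiate carefully: take the divergence with respect to $b$ of $\lie_\zeta b = \btil - b + \upsilon$, obtaining $\divg(\lie_\zeta b) = \divg(\btil - b) + \divg \upsilon$.

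The key analytic input is then an $L^1$-version of the elliptic regularity for the vector Laplacian $\zeta \mapsto \divg(\lie_\zeta b)$, whose indicial radius is $R = \tfrac{n+1}{2}$ (as recalled after Lemma \ref{lmDecay}, via \cite[Proposition G]{LeeFredholm} and \cite[Lemme 6.1]{Gicquaud}). However, $L^1$ is not in the $L^p$-scale for $p > 1$ on which \cite[Theorem C]{LeeFredholm} is stated, so I would not apply that theorem directly to $q = 1$. Instead I would argue by the following two-step reasoning. First, since $\zeta \in W^{2,p}_\tau$ with $p > n$ we have by Sobolev embedding (e.g. \cite[Lemma 3.6(c)]{LeeFredholm}) that $\zeta \in C^{1}_\tau \subset L^\infty_\tau$ and $D\zeta \in L^\infty_\tau$, so $\zeta$ already has \emph{some} positive decay. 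Second, I would run the same integral-based argument that proves Lemma \ref{lmDecay}: decompose $\divg(\lie_\zeta b)$ into the part coming from $\btil - b$ (which is in $L^1_\delta$ by the hypothesis on $e_1, e_2$) plus the quadratic remainder $\divg\upsilon$ (which decays faster, hence lies in $L^1_{\delta'}$ for the relevant range), and then invoke the mapping properties of the vector Laplacian on the weighted \emph{local} Sobolev scale or, more simply, a duality/test-function argument as in Section \ref{secFredholm}: the solution $\zeta$ of an elliptic equation with right-hand side in $L^1_{\delta'}$ for $\delta'$ strictly inside the indicial window $\bigl(-\tfrac{n+1}{2}, \tfrac{n+1}{2}\bigr)$ for the shifted weight inherits the corresponding $W^{1,1}_{\delta'}$ decay. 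Concretely, the condition to check is that $\delta' + (n-1) - (n-1) = \delta'$ lies strictly between $-\tfrac{n+1}{2}$ and $\tfrac{n+1}{2}$, which holds for any $\delta' \leq 1 < \tfrac{n+1}{2}$; and from below $\delta' > -1 > -\tfrac{n+1}{2}$ as well. So no indicial root is crossed, and the bootstrap closes.

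The steps in order: (1) record that $e_1, e_2 \in L^1_\delta$ implies $\btil - b \in L^1_\delta(\bH^n \setminus K'', S_2\bH^n)$ using \eqref{eqDiffMetrics} and the invariance of the $L^1_\delta$-norm under a diffeomorphism asymptotic to the identity (this last point needs the Sobolev control of $\zeta$ from Lemma \ref{lmDecay}, exactly as Lemma \ref{lmCompSobolev} was used); (2) use \eqref{eqPullback2} rewritten as $\lie_\zeta b = \btil - b + \upsilon$ and bound $\upsilon$ and $D\upsilon$ in $L^1_{\delta'}$ using $\zeta \in W^{2,p}_\tau$ and the product/embedding lemmas, observing that the quadratic nature gives $\upsilon \in L^1_{2\tau - \epsilon}$ or so, which is $\subset L^1_{\delta'}$ for $\delta' < \delta \leq 1$; (3) take the $b$-divergence to get $\divg(\lie_\zeta b) \in L^1_{\delta'}$; (4) apply weighted elliptic regularity for the vector Laplacian (indicial radius $\tfrac{n+1}{2}$) in the $L^1$-weighted setting, checking that $\delta'$ avoids all indicial roots, to conclude $\zeta \in W^{1,1}_{\delta'}(\bH^n \setminus K'', T\bH^n)$ — actually $W^{2,1}_{\delta'}$, but $W^{1,1}_{\delta'}$ is all we claim. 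The main obstacle is step (4): justifying elliptic regularity in a weighted $L^1$ space, since the standard references (\cite{LeeFredholm}, \cite{AllenLeeMaxwell}) work with $L^p$, $p > 1$. I expect this can be circumvented either by proving the statement first for $p$ slightly above $1$ with a suitably adjusted weight and then using $L^1_{\delta'} \supset L^p_{\delta''}$ for appropriate $\delta''$ (via \cite[Lemma 3.6(b)]{LeeFredholm}), so that one never actually needs genuine $L^1$ elliptic estimates, only the already-available $L^p$ ones applied with a shifted weight that still lands inside the indicial window — or, alternatively, by the direct integral estimate along annuli in the style of Claims \ref{cl1}–\ref{cl6} above, which is self-contained but lengthier. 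I would pursue the former route, stating it as: choose $q \in (1, p]$ and a weight $\tau'$ with $L^q_{\tau'} \hookrightarrow L^1_{\delta'}$ and $\tau'$ inside the window, apply \cite[Theorem C]{LeeFredholm}, and conclude.
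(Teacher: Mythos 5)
Your plan is right in outline, and you correctly flag the genuine obstacle: the elliptic estimates in \cite{LeeFredholm} and \cite{AllenLeeMaxwell} are not stated for $q=1$, so one cannot feed a pure $L^1_\delta$ right-hand side into them. You also correctly guess that the fix is to work in $L^q$ for some $q$ slightly above $1$ and then embed into $W^{1,1}_{\delta'}$. But there is a gap precisely at the point where you try to supply the $L^q$ estimate on the data, and your proposed circumvention does not actually close it.

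You write that one should ``choose $q \in (1, p]$ and a weight $\tau'$ with $L^q_{\tau'} \hookrightarrow L^1_{\delta'}$ \dots apply Theorem C, and conclude.'' The inclusion $L^q_{\tau'} \hookrightarrow L^1_{\delta'}$ only tells you which space to land in at the end; it does not give you $\divg(\lie_\zeta b) \in L^q_{\tau'}$, which is what \cite[Theorem C]{LeeFredholm} (or its distributional variant \cite[Lemma B.2]{AllenLeeMaxwell}) requires as input. The only weighted control you have established on $\btil - b$ is $L^1_\delta$ from the new hypothesis and $L^\infty_\tau$ from the Sobolev embedding of $W^{1,p}_\tau$; neither alone lands in $L^q_{\tau'}$ with $\tau'$ in a usable range for $q > 1$. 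The missing step is the \emph{interpolation} between these two endpoints: from $\btil - b \in L^\infty_\tau \cap L^1_\delta$ one deduces, by a direct H\"older-type estimate, that $\btil - b \in L^q_\mu$ with $\mu = \tfrac{1}{q}\delta + \bigl(1 - \tfrac{1}{q}\bigr)\tau$ for any $q \in (1,\infty)$. It is then the identity
\[
\mu + \frac{n-1}{q} = \frac{1}{q}\bigl(\delta + (n-1)\bigr) + \Bigl(1 - \frac{1}{q}\Bigr)\tau,
\]
together with $\delta' < \delta$, that guarantees that for $q$ close enough to $1$ the embedding $W^{1,q}_\mu \hookrightarrow W^{1,1}_{\delta'}$ from \cite[Lemma 3.6]{LeeFredholm} holds. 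This interpolation is the real new content of the lemma and your write-up skips it. (A secondary, easily-fixed inaccuracy: the paper actually takes the divergence in the distributional sense, lands in $W^{-1,q}_\mu$, and invokes \cite[Lemma B.2]{AllenLeeMaxwell} rather than the $L^q$ version of Theorem C; your version would require checking that $\divg\upsilon$ lies in $L^q_\mu$, which in fact needs the separate observation that $\upsilon \in L^{p/2}_{2\tau}$ with $2\tau + \tfrac{2(n-1)}{p} > n \geq \mu + \tfrac{n-1}{q}$ for $q \leq p/2$, a verification also absent from your sketch.)
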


\begin{proof}
As $W^{1, p}_\tau(\bH^n \setminus K'_1, S_2\bH^n) \hookrightarrow L^\infty_\tau(\bH^n \setminus K'_1, S_2\bH^n)$, we have
\[
e_1 \in L^\infty_\tau(\bH^n \setminus K'_1, S_2\bH^n) \cap L^1_\delta(\bH^n \setminus K'_1, S_2\bH^n).
\]

By interpolation, we deduce that $e_1 \in L^q_\mu(\bH^n \setminus K'_1, S_2\bH^n)$ for any $q \in (1, \infty)$ and $\displaystyle \mu = \frac{1}{q} \delta + \left(1 - \frac{1}{q}\right) \tau$. Indeed, for $\lambda = 1/q$, we have
\begin{align*}
\|e_1\|_{L^q_{\mu}}^q
& = \int_{\bH^n \setminus K'_1} \rho^{-q\mu} |e_1|^q d\mu^{b}\\
& = \int_{\bH^n \setminus K'_1} \rho^{-(q-1)\tau - \delta} |e_1|^{q-1} |e_1| d\mu^{b}\\
& \leq \left(\sup_{\bH^n \setminus K'_1} \rho^{-\tau} |e_1|\right)^{q-1}\int_{\bH^n \setminus K'_1} \rho^{- \delta} |e_1| d\mu^{b}\\
& = \|e_1\|_{L^\infty_{\tau}}^{q-1} \|e_1\|_{L^1_{\delta}}\\
& < \infty.
\end{align*}
Similarly, $\|e_2\|_{L^q_\mu}(\bH^n \setminus K'_2, S_2\bH^n) < \infty$. Reasoning as in the proof of Lemma \ref{lmDecay}, we have
\[
\cL_\zeta b = \btil - b + v.
\]
Here, however, note that the arguments that led to \eqref{eqDiffMetrics} can be straightforwardly modified to yield $\btil - b \in L^q_\mu(\bH^n \setminus K'', S_2\bH^n)$.

Further, due to the fact that $v$ is quadratic in $\zeta$ and $D \zeta$ with
\[
\zeta , D\zeta \in L^p_\tau(\bH^n \setminus K'', S_2\bH^n) \bigcap L^\infty_\tau(\bH^n \setminus K'', S_2\bH^n),
\]
we deduce that $v \in L^{p/2}_{2\tau} (\bH^n \setminus K'', S_2\bH^n)$. As $\tau + \frac{n-1}{p} > \frac{n}{2}$, we have $2\tau + \frac{2(n-1)}{p} > n$.
On the other hand, since $\delta + n-1 \leq n$, we conclude that
\[
\mu + \frac{n-1}{q} = \tau \left(1 - \frac{1}{q}\right) + \frac{\delta+n-1}{q}
\leq n.
\]
Thus, we have $v \in L^q_\mu (\bH^n \setminus K'', S_2\bH^n)$ as long as $q \leq p/2$. All in all, we conclude that, as long as $q \leq p/2$, we have
\[
\cL_\zeta b \in L^q_\mu(\bH^n \setminus K'', S_2\bH^n).
\]
Taking the divergence, we have $\divg (\cL_\zeta b) \in W^{-1, q}_\mu(\bH^n \setminus K'', S_2\bH^n)$, where the negative exponent Sobolev spaces are defined in \cite[Section 3.1]{AllenLeeMaxwell}\footnote{We chose not to give a precise definition of these function spaces, as they are only used in this proof.}. From \cite[Lemma B.2]{AllenLeeMaxwell}, we conclude that $\zeta \in W^{1, q}_\mu(\bH^n \setminus K'', S_2\bH^n)$.

The last step is to convert this estimate for $\zeta$ into a $W^{1, 1}$-estimate. We fix an arbitrary $\delta'<\delta$ and use again \cite[Lemma 3.6]{LeeFredholm}: the embedding
\[
W^{1, q}_\mu (\bH^n \setminus K'_1\cup K'_2, S_2\bH^n) \hookrightarrow W^{1, 1}_{\delta'}(\bH^n \setminus K'', S_2\bH^n)
\]
holds true provided that $q > 1$, which we already assumed, and
\[
\mu + \frac{n-1}{q} > \delta' + \frac{n-1}{1}.
\]
However, we have
\[
\mu + \frac{n-1}{q}
= \frac{1}{q} \delta + \left(1 - \frac{1}{q}\right) \tau + \frac{n-1}{q}
= \frac{1}{q} \left(\delta + \frac{n-1}{1}\right) + \left(1 - \frac{1}{q}\right) \tau.
\]
Hence, upon choosing $q$ close enough to $1$, we can ensure that
\[
\mu + \frac{n-1}{q} > \delta' + \frac{n-1}{1}.
\]
This concludes the proof of the fact that $\zeta \in W^{1, 1}_{\delta'}(\bH^n \setminus K'', S_2\bH^n)$.
\end{proof}

We are now in a position to prove the covariance of the mass aspect function. It turns out that we cannot prove covariance for all functions $V \in \cE^\alpha$, because of the slight loss of weight we had to impose in Lemma \ref{lmStrongerDecay}, but only for a restricted class $\cE^\alpha_0$ of eigenfunctions $V$ that we now introduce.

The little H\"older space $c^\alpha(\bS_1(0), \bR)$, $\alpha \geq 0$, is defined as the closure of $C^\infty(\bS_1(0), \bR)$ in $C^\alpha(\bS_1(0), \bR)$. For non-integer $\alpha$, this space is strictly contained in $C^\alpha(\bS_1(0), \bR)$. For example, if $\alpha \in (0, 1)$, $c^\alpha(\bS_1(0), \bR)$ is defined as follows:
\[
f \in c^\alpha(\bS_1(0), \bR) \Leftrightarrow \lim_{\epsilon \to 0} \sup_{\substack{x, y \in \bS_1(0),\\ d(x, y) < \epsilon}} \frac{|f(x) - f(y)|}{d(x, y)^\alpha} = 0,
\]
see e.g. \cite[Section 1.1.2, Exercise 5]{LunardiInterpolation}. A similar definition holds for larger values of $\alpha$. We define the set $\cE_0^\alpha$ to be the set of eigenfunctions $V \in \cE^\alpha$ such that $\rho V$ restricts to a function $v \in c^\alpha(\bS_1(0), \bR)$ on the boundary of $\bH^n$.

\begin{theorem}\label{thmCovariance2}
Assume given two charts at infinity $\Phi_i$, $i=1,2$, for $(M, g)$ such that 
\[
e_i \definedas \Phi_{i*} g - b \in W^{1, p}_\tau(\bH^n \setminus K'_i, S_2 \bH^n) \cap L^1_\delta(\bH^n \setminus K_i', S_2 \bH^n),
\] 
where $p > n$ and $\tau > 3/2$ satisfy
\[
\tau + \frac{n-1}{p} \in
\begin{cases}
    [2, n)                      & \quad \text{if } n = 3, \\
    \left(\frac{n}{2}, n\right) & \quad\text{if } n > 3,
\end{cases}
\]
and $\delta \in (-1, 1]$. Set $\Phi_2 \circ \Phi_1^{-1} = A \circ \Phi_0$ where $A \in O_\uparrow(n, 1)$ is an isometry of $\bH^n$ and $\Phi_0$ is a diffeomorphism asymptotic to the identity. Let $V \in \cE_0^{1-\delta}$ be an eigenfunction of the Laplacian, see Definition \ref{defEigenspace}. Then we have
\[
    p(e_2, V) = p(e_1, V \circ A).
\]
\end{theorem}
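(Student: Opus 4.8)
The plan is to push everything back to the identity \eqref{eqDiffCharge}, which was established (for an arbitrary eigenfunction $V$) in the discussion preceding Theorem~\ref{thmCovariance}, and then to show that the residual $\cT$-term vanishes. Under the present hypotheses Theorem~\ref{thmRigidity} applies, so $\Phi_2\circ\Phi_1^{-1} = A\circ\Phi_0$ with $A\in O_\uparrow(n,1)$ and $\Phi_0(x)=\exp_x\zeta(x)$, $\zeta\in W^{2,p}_\tau(\bH^n\setminus K'',T\bH^n)$; and since $e_1,e_2\in L^1_\delta$, Lemma~\ref{lmDecay2} improves this to $\zeta\in W^{1,1}_{\delta'}(\bH^n\setminus K'',T\bH^n)$ for every $\delta'<\delta$. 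Writing $\Phi_{12}=\Phi_2\circ\Phi_1^{-1}$ we have $\Phi_{12}^*b=\Phi_0^*b$ because $A$ is an isometry, so the sequence of reductions leading from \eqref{eqP2} to \eqref{eqDiffCharge} goes through verbatim with the test function $V$ replaced by $V\circ A$ --- here one uses that $V\circ A\in\cE_0^{1-\delta}$ (the isometry $A$ restricts to a M\"obius transformation of $\bS_1(0)$, and $c^{1-\delta}(\bS_1(0),\bR)$ is invariant under smooth diffeomorphisms and under multiplication by positive smooth functions), and that $(V\circ A)\circ\Phi_0-(V\circ A)$ is lower order because $\Phi_0$ is asymptotic to the identity. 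The outcome is
\[
    p(e_2,V) = p(e_1,V\circ A) + \lim_{k\to\infty}\int_{\bH^n} 2\,\cT(\zeta,-D\chibar_k)\,d\mu^b,\qquad \cT\definedas\hess(V\circ A)-(V\circ A)\,b,
\]
so that, renaming $V\circ A$ as $V$, the theorem reduces to proving $\lim_k\int_{\bH^n}2\cT(\zeta,-D\chibar_k)\,d\mu^b=0$ for $V\in\cE_0^{1-\delta}$ and $\zeta$ as above.

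For this I would first note the algebraic identity $\divg\cT=0$: since $\Delta V=nV$ and $\ric_b=-(n-1)b$, one has $\divg(\hess V)=d(\Delta V)+\ric_b(\nabla V,\cdot)=n\,dV-(n-1)\,dV=dV=\divg(Vb)$, which makes the integral $\int\cT(\zeta,-D\chibar_k)\,d\mu^b$ behave well under integration by parts. Then I would exploit the hypothesis $V\in\cE_0^{1-\delta}$: writing $v\definedas(\rho V)|_{\bS_1(0)}\in c^{1-\delta}(\bS_1(0),\bR)$ and choosing smooth $v_j\to v$ in $C^{1-\delta}$, put $V_j\definedas\cD(v_j)$, $\cT_j\definedas\hess V_j-V_j b$. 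For smooth $v_j$ one has $|\cT_j|_b=O(\rho)$ by Lemma~\ref{lmEstimateEigenfunction}, hence for any fixed $\delta'\in(-1,\delta)$,
\[
    \Bigl|\int_{\bH^n}2\cT_j(\zeta,-D\chibar_k)\,d\mu^b\Bigr|
    \lesssim \int_{\supp D\chibar_k}\rho\,|\zeta|\,d\mu^b
    \le \Bigl(\sup_{\supp D\chibar_k}\rho^{1+\delta'}\Bigr)\,\|\zeta\|_{L^1_{\delta'}}\xrightarrow[k\to\infty]{}0,
\]
because $1+\delta'>0$ and $\rho\to0$ on $\supp D\chibar_k$. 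For the remainder, Proposition~\ref{propEstimateEigenfunction2} applied to $v-v_j$ (of $C^{1-\delta}$-norm $\epsilon_j\to0$) gives $|\cT-\cT_j|_b\le C\epsilon_j\rho^{-\delta}$, whence also $|\cT|_b=o(\rho^{-\delta})$; integrating by parts in $\cT-\cT_j$ (legitimate since $\divg(\cT-\cT_j)=0$), and combining this decay with the sharp decay of $\zeta$ from Lemma~\ref{lmDecay2} --- via a Hardy-type estimate $\int_{A_k}|\zeta|\,d\mu^b\lesssim_{\delta'}e^{-\delta' k}\int_{\{r\ge k-1\}}\rho^{-\delta'}|D\zeta|\,d\mu^b$ on the annuli $A_k\supset\supp D\chibar_k$ --- one bounds $\int_{\bH^n}2(\cT-\cT_j)(\zeta,-D\chibar_k)\,d\mu^b$ by a quantity controlled uniformly in $k$ that tends to $0$ as $j\to\infty$. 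Letting $k\to\infty$ first and then $j\to\infty$ gives the claim.

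The crux --- and the reason the statement must be restricted to the smaller class $\cE_0^{1-\delta}$ rather than to all of $\cE^{1-\delta}$ --- is exactly this last step. Lemma~\ref{lmDecay2} furnishes $\zeta\in W^{1,1}_{\delta'}$ only for $\delta'<\delta$, so the naive pointwise bound $|\cT(\zeta,-D\chibar_k)|\lesssim\rho^{-\delta}|\zeta|$ is, by an arbitrarily small margin, not globally integrable; one therefore has to convert this lost sliver of weight into the ``little-$o$'' gain $|\cT|_b=o(\rho^{-\delta})$ which membership in $\cE_0^{1-\delta}$ provides through Proposition~\ref{propEstimateEigenfunction2} and the density of $C^\infty(\bS_1(0))$ in $c^{1-\delta}(\bS_1(0),\bR)$. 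I expect the genuinely delicate point to be carrying out this balancing argument rigorously on the receding supports $\supp D\chibar_k$ --- concretely, obtaining the uniform-in-$k$ control of $\int_{\supp D\chibar_k}\rho^{-\delta}|\zeta|\,d\mu^b$ needed for the remainder term, or circumventing the need for it. The remaining ingredients --- the reduction via \eqref{eqDiffCharge}, the cancellation $\divg\cT=0$, and the behaviour of charges under isometries --- are already essentially in place from Sections~\ref{secADM} and~\ref{secRigidity}.
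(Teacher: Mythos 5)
Your reduction via Equation~\eqref{eqDiffCharge}, the disposal of the isometry case, and your diagnosis of why $\cE_0^{1-\delta}$ is forced are all exactly right: Lemma~\ref{lmDecay2} delivers $\zeta\in W^{1,1}_{\delta'}$ only for $\delta'<\delta$, so that $\rho^{-\delta}|\zeta|$ misses being integrable by an arbitrarily small margin, and the little-H\"older hypothesis must be converted into an actual integrability gain. The difficulty is in how you propose to make that conversion.

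You split $\cT=\cT_j+(\cT-\cT_j)$ and try to show the $\cT$-term in \eqref{eqDiffCharge} vanishes by bounding both pieces. The $\cT_j$-piece is fine. For the remainder, the direct bound by $\epsilon_j\int_{\supp D\chibar_k}\rho^{-\delta}|\zeta|\,d\mu^b$ does not furnish a bound that is \emph{uniform} in $k$, and without that uniformity the scheme of sending $k\to\infty$ and then $j\to\infty$ collapses: since the full $\cT$-integral is independent of $j$, the statement that $\limsup_k$ of the remainder tends to zero as $j\to\infty$ is literally equivalent to the desired conclusion, not a route toward it. The exponents are against you on the receding annuli: writing $\rho^{-\delta}|\zeta|=\rho^{\delta'-\delta}\cdot\rho^{-\delta'}|\zeta|$ with $\rho\sim e^{-k}$ on $\supp D\chibar_k$, you would need the tail of $\|\zeta\|_{L^1_{\delta'}}$ to decay at rate $e^{-(\delta-\delta')k}$, which does not follow from $\zeta\in W^{1,1}_{\delta'}$. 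Your integration by parts (valid --- indeed $\divg\cT=0$ for eigenfunctions of the Laplacian) replaces $\int(\cT-\cT_j)(\zeta,-D\chibar_k)\,d\mu^b$ by $\int\chibar_k\,(\cT-\cT_j)(D\zeta)\,d\mu^b$, where $\chibar_k\to1$: the oscillating factor is gone but the borderline non-integrability persists, and the Hardy estimate you invoke reproduces the same unfavorable exponent $e^{(\delta-\delta')k}$. So the remainder estimate is a genuine gap.

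The paper avoids this entirely by moving the density argument up one level, from the $\cT$-integral to the charge functionals themselves. By Proposition~\ref{propContinuity}, $v\mapsto P(e_2,v)-P(e_1,v\circ A)$ is a continuous linear functional on $C^{1-\delta}(\bS_1(0),\bR)$, so it suffices to show that it vanishes for $v\in C^{1-\delta'}(\bS_1(0),\bR)$ with some fixed $\delta'\in[-1,\delta)$, a dense subspace of $c^{1-\delta}(\bS_1(0),\bR)$. For such $v$, Proposition~\ref{propEstimateEigenfunction2} yields the strictly improved decay $|\cT|_b=O(\rho^{-\delta'})$, which now pairs with $\zeta\in L^1_{\delta'}$ to put $\cT(\zeta,\cdot)$ in $L^1(\bH^n\setminus K'',\bR)$; the vanishing of the $\cT$-term is then an immediate dominated-convergence argument, with no approximation of $\cT$, no integration by parts and no Hardy inequality. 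This is the clean realisation of the ``convert the lost sliver of weight into a little-$o$ gain'' idea you correctly identified as the crux, and it is the reason Proposition~\ref{propContinuity} was set up in advance.
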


\begin{proof}
As mentioned before, the only missing step is to prove that, in the case of a more general eigenfunction $V$, the equality \eqref{eqEquality} pertains, i.e. that the $\cT$-term in \eqref{eqDiffCharge} gives no contribution in the limit $k \to \infty$.

We shall not prove \eqref{eqDiffCharge} for all functions $V \in \cE^{1-\delta}$ but instead only for those $V$ such that $v=\rho V$ is a $C^{1-\delta'}$ function on $\mathbb{S}_1(0)$ for some $\delta' \in [-1, 1)$, $\delta' < \delta$. Indeed, as $C^{1-\delta'}(\bS_1(0), \bR)$ is dense in $c^{1-\delta}(\bS_1(0), \bR)$ and the maps $P(e_1, \cdot)$ and $P(e_2, \cdot)$ are continuous linear forms over $C^{1-\delta}(\bS_1(0), \bR)$ (see Proposition \ref{propContinuity}), if $P(e_1, \cdot)$ and $P(e_2, \cdot)$ agree on $C^{1-\delta'}(\bS_1(0), \bR)$, they agree on the whole of $C^{1-\delta}(\bS_1(0), \bR)$.

We claim that for a function $V$ as described, we have
\begin{equation}\label{eqTheLastOne}
\lim_{k \to \infty} \int_{\bH^n} \cT(\zeta, -D\chibar_k) d\mu^b = 0.
\end{equation}
Indeed, it follows from Lemma \ref{lmDecay2} that $\zeta \in L^1_{\delta'}(\bH^n \setminus K'', T\bH^n)$. As $\cT \in L^\infty_{-\delta'}(\bH^n, S_2\bH^n)$, we have $|\cT(\zeta, \cdot)| \in L^1(\bH^n \setminus K'', \bR)$. Since the sequence of $1$-forms $-D\chibar_k$ is bounded in $L^\infty(\bH^n, T^*\bH^n)$ and converges a.e. to zero, we conclude, by the dominated convergence theorem, that \eqref{eqTheLastOne} holds.
\end{proof}

\providecommand{\bysame}{\leavevmode\hbox to3em{\hrulefill}\thinspace}
\providecommand{\MR}{\relax\ifhmode\unskip\space\fi MR }
\providecommand{\MRhref}[2]{%
  \href{http://www.ams.org/mathscinet-getitem?mr=#1}{#2}
}
\providecommand{\href}[2]{#2}


\begin{thebibliography}{10}

\bibitem{AllenLeeMaxwell}
P.~Allen, J.~Lee, and D.~Maxwell, \emph{{Sobolev-class asymptotically
  hyperbolic manifolds and the Yamabe problem}}, 130pp.,
  \url{https://arxiv.org/abs/2206.12854}, 2022.

\bibitem{AllenIsenbergLeeStavrov}
P.~T. Allen, J.~Isenberg, J.~M. Lee, and I.~Stavrov~Allen, \emph{Weakly
  asymptotically hyperbolic manifolds}, Commun. Anal. Geom. \textbf{26} (2018),
  no.~1, 1--61 (English).

\bibitem{And93}
L.~Andersson, \emph{{Elliptic systems on manifolds with asymptotically negative
  curvature}}, Indiana Univ. Math. J. \textbf{42} (1993), no.~4, 1359--1388.

\bibitem{AnderssonCaiGalloway}
L.~Andersson, M.~Cai, and G.~J. Galloway, \emph{{Rigidity and positivity of
  mass for asymptotically hyperbolic manifolds}}, Ann. Henri Poincar{\'e}
  \textbf{9} (2008), no.~1, 1--33.

\bibitem{AnderssonChrusciel}
L.~Andersson and P.T. Chru{\'s}ciel, \emph{{Solutions of the constraint
  equations in general relativity satisfying ``hyperboloidal boundary
  conditions''}}, Dissertationes Math. (Rozprawy Mat.) \textbf{355} (1996),
  100.

\bibitem{ArnowittDeserMisner}
R.~L. Arnowitt, S.~Deser, and C.~W. Misner, \emph{{Dynamical Structure and
  Definition of Energy in General Relativity}}, Phys. Rev. \textbf{116} (1959),
  1322--1330.

\bibitem{BartnikMass}
R.~{Bartnik}, \emph{{The mass of an asymptotically flat manifold.}}, {Commun.
  Pure Appl. Math.} \textbf{39} (1986), 661--693 (English).

\bibitem{BenedettiPetronio}
R.~Benedetti and C.~Petronio, \emph{{Lectures on hyperbolic geometry}},
  {Universitext}, Springer-Verlag, Berlin, 1992.

\bibitem{BrezisFunctionalAnalysis}
H.~Brezis, \emph{Functional analysis, {Sobolev} spaces and partial differential
  equations}, Universitext, New York, NY: Springer, 2011 (English).

\bibitem{BurtscherKiesslingTahvildarZadeh}
A.~Burtscher, M.~K.-H. Kiessling, and A.~S. Tahvildar-Zadeh, \emph{Weak second
  {B}ianchi identity for static, spherically symmetric spacetimes with timelike
  singularities}, Classical Quantum Gravity \textbf{38} (2021), no.~18, Paper
  No. 185001, 31.

\bibitem{ChruscielDelay2019}
P.~T. Chru{\'s}ciel and E.~Delay, \emph{{The hyperbolic positive energy
  theorem}}, 2019.

\bibitem{ChruscielGallowayNguyenPaetz}
P.~T. Chru{\'s}ciel, G.~J. Galloway, L.~Nguyen, and T.-T. Paetz, \emph{{On the
  mass aspect function and positive energy theorems for asymptotically
  hyperbolic manifolds}}, Classical Quantum Gravity \textbf{35} (2018), no.~11,
  115015, 38. \MR{3801943}

\bibitem{ChruscielHerzlich}
P.~T. Chru{\'s}ciel and M.~Herzlich, \emph{{The mass of asymptotically
  hyperbolic {R}iemannian manifolds}}, Pacific J. Math. \textbf{212} (2003),
  no.~2, 231--264.

\bibitem{ChruscielNagy}
P.~T. Chru{\'s}ciel and G.~Nagy, \emph{{The mass of spacelike hypersurfaces in
  asymptotically anti-de {S}itter space-times}}, Adv. Theor. Math. Phys.
  \textbf{5} (2001), no.~4, 697--754.

\bibitem{CortierDahlGicquaud}
J.~Cortier, M.~Dahl, and M.~Gicquaud, \emph{{Mass-like invariants for
  asymptotically hyperbolic metrics}}, 2016.

\bibitem{DahlGicquaudSakovichSmallMass}
M.~Dahl, R.~Gicquaud, and A.~Sakovich, \emph{{Asymptotically hyperbolic
  manifolds with small mass}}, Comm. Math. Phys. \textbf{325} (2014), no.~2,
  757--801.

\bibitem{DelayFougeirol}
E.~Delay and J.~Fougeirol, \emph{Hilbert manifold structure for weakly
  asymptotically hyperbolic relativistic initial data}, Pure Appl. Math. Q.
  \textbf{17} (2021), no.~1, 443--501.

\bibitem{Friedman}
A.~Friedman, \emph{Foundations of modern analysis}, Dover Publications, Inc.,
  New York, 1982, Reprint of the 1970 original.

\bibitem{GicquaudThesis}
R.~Gicquaud, \emph{{{\'E}tude de quelques probl{\`e}mes d'analyse et de
  g{\'e}om{\'e}trie sur les vari{\'e}t{\'e}s asymptotiquement hyperboliques}},
  Ph.D. thesis, Universit{\'e} Montpellier 2, 2009.

\bibitem{Gicquaud}
\bysame, \emph{{De l'{\'e}quation de prescription de courbure scalaire aux
  {\'e}quations de contrainte en relativit{\'e} g{\'e}n{\'e}rale sur une
  vari{\'e}t{\'e} asymptotiquement hyperbolique}}, J. Math. Pures Appl.
  \textbf{94} (2010), no.~2, 200--227.

\bibitem{GicquaudStability}
\bysame, \emph{{Linearization stability of the {E}instein constraint equations
  on an asymptotically hyperbolic manifold}}, J. Math. Phys. \textbf{51}
  (2010), no.~7, 072501, 14.

\bibitem{GicquaudCompactification}
\bysame, \emph{{Conformal compactification of asymptotically locally hyperbolic
  metrics {II}: weakly {ALH} metrics}}, Comm. Partial Differential Equations
  \textbf{38} (2013), no.~8, 1313--1367.

\bibitem{GicquaudSakovich}
R.~Gicquaud and A.~Sakovich, \emph{{A large class of non constant mean
  curvature solutions of the {E}instein constraint equations on an
  asymptotically hyperbolic manifold}}, Comm. Math. Phys. \textbf{310} (2012),
  no.~3, 705--763.

\bibitem{GilbargTrudinger}
D.~Gilbarg and N.~S. Trudinger, \emph{{Elliptic partial differential equations
  of second order}}, {Classics in Mathematics}, Springer-Verlag, Berlin, 2001,
  Reprint of the 1998 edition.

\bibitem{HerzlichMassFormulae}
M.~Herzlich, \emph{{Mass formulae for asymptotically hyperbolic manifolds}},
  {Ad{S}/{CFT} correspondence: {E}instein metrics and their conformal
  boundaries}, {IRMA Lect. Math. Theor. Phys.}, vol.~8, Eur. Math. Soc.,
  Z{\"u}rich, 2005, pp.~103--121.

\bibitem{HerzlichRicciMass}
\bysame, \emph{Computing asymptotic invariants with the {R}icci tensor on
  asymptotically flat and asymptotically hyperbolic manifolds}, Ann. Henri
  Poincar\'{e} \textbf{17} (2016), no.~12, 3605--3617.

\bibitem{HuangJangMartin}
L.-H. Huang, H.~C. Jang, and D.~Martin, \emph{Mass rigidity for hyperbolic
  manifolds}, Comm. Math. Phys. \textbf{376} (2020), no.~3, 2329--2349.

\bibitem{HudsonLiMarkMerinoReyes}
T.~Hudson, K.~Li, T.~Mark, Dennis M., and Edgar Reyes, \emph{The polar and
  {I}wasawa decompositions of {${\rm SO}_0(n,1)$}}, Matimy\'{a}s Mat.
  \textbf{29} (2006), no.~1-2, 26--36.

\bibitem{KnappLieGroups}
A.~W. Knapp, \emph{{Lie groups beyond an introduction}}, second ed., {Progress
  in Mathematics}, vol. 140, Birkh{\"a}user Boston, Inc., Boston, MA, 2002.

\bibitem{KrantzParks}
S.~G. Krantz and H.~R. Parks, \emph{Geometric integration theory},
  Cornerstones, Birkh\"{a}user Boston, Inc., Boston, MA, 2008.

\bibitem{LeeLeFloch}
D.~A. {Lee} and P.~G. {LeFloch}, \emph{{The positive mass theorem for manifolds
  with distributional curvature}}, {Commun. Math. Phys.} \textbf{339} (2015),
  no.~1, 99--120 (English).

\bibitem{LeeFredholm}
J.~M. Lee, \emph{{Fredholm operators and {E}instein metrics on conformally
  compact manifolds}}, Mem. Amer. Math. Soc. \textbf{183} (2006), no.~864,
  vi+83.

\bibitem{LeeSmoothManifolds}
\bysame, \emph{{Introduction to smooth manifolds}}, second ed., {Graduate Texts
  in Mathematics}, vol. 218, Springer, New York, 2013.

\bibitem{LunardiSemigroups}
A.~Lunardi, \emph{Analytic semigroups and optimal regularity in parabolic
  problems}, Progress in Nonlinear Differential Equations and their
  Applications, vol.~16, Birkh\"{a}user Verlag, Basel, 1995.

\bibitem{LunardiInterpolation}
\bysame, \emph{Interpolation theory}, Appunti. Scuola Normale Superiore di Pisa
  (Nuova Serie) [Lecture Notes. Scuola Normale Superiore di Pisa (New Series)],
  vol.~16, Edizioni della Normale, Pisa, 2018.

\bibitem{MichelMass}
B.~Michel, \emph{{Geometric invariance of mass-like asymptotic invariants}}, J.
  Math. Phys. \textbf{52} (2011), no.~5, 052504, 14.

\bibitem{Nirenberg}
L.~Nirenberg, \emph{On elliptic partial differential equations}, Ann. Scuola
  Norm. Sup. Pisa Cl. Sci. (3) \textbf{13} (1959), 115--162.

\bibitem{SchoenYamabe}
R.~Schoen, \emph{Conformal deformation of a {Riemannian} metric to constant
  scalar curvature}, J. Differ. Geom. \textbf{20} (1984), 479--495 (English).

\bibitem{SchoenYau79b}
R.~Schoen and S.~T. Yau, \emph{Complete manifolds with nonnegative scalar
  curvature and the positive action conjecture in general relativity}, Proc.
  Nat. Acad. Sci. U.S.A. \textbf{76} (1979), no.~3, 1024--1025.

\bibitem{SchoenYau79}
\bysame, \emph{{On the proof of the positive mass conjecture in general
  relativity}}, Comm. Math. Phys. \textbf{65} (1979), no.~1, 45--76.

\bibitem{SOUDSKY2018160}
F. Soudský, A. Molchanova, and T. Roskovec, \emph{Interpolation
  between {H}ölder and {L}ebesgue spaces with applications}, Journal of
  Mathematical Analysis and Applications \textbf{466} (2018), no.~1, 160--168.

\bibitem{WangMass}
X.~Wang, \emph{{The mass of asymptotically hyperbolic manifolds}}, J.
  Differential Geom. \textbf{57} (2001), no.~2, 273--299.

\bibitem{Witten81}
E.~Witten, \emph{{A new proof of the positive energy theorem}}, Comm. Math.
  Phys. \textbf{80} (1981), no.~3, 381--402.

\end{thebibliography}
\end{document}